\newcommand{\+}{\protect\nobreakdash-}
\renewcommand{\:}{\colon}
\newcommand{\rarrow}{\longrightarrow}
\newcommand{\ot}{\otimes}
\renewcommand{\d}{\partial}
\newcommand{\lrarrow}{\mskip.5\thinmuskip\relbar\joinrel\relbar\joinrel
 \rightarrow\mskip.5\thinmuskip\relax}
\newcommand{\llarrow}{\mskip.5\thinmuskip\leftarrow\joinrel\relbar
 \joinrel\relbar\mskip.5\thinmuskip\relax}
\DeclareMathOperator{\Hom}{Hom}
\DeclareMathOperator{\Ext}{Ext}
\DeclareMathOperator{\cHom}{\mathcal{H}\textit{om}}
\DeclareMathOperator{\Spec}{Spec}
\DeclareMathOperator{\Supp}{Supp}
\DeclareMathOperator{\gr}{gr}
\DeclareMathOperator{\Fil}{\mathsf{Fil}}
\DeclareMathOperator{\im}{im}
\DeclareMathOperator{\Sh}{Sh}
\newcommand{\Modl}{{\operatorname{\mathsf{--Mod}}}}
\newcommand{\Modr}{{\operatorname{\mathsf{Mod--}}}}
\newcommand{\Qcoh}{{\operatorname{\mathsf{--Qcoh}}}}
\newcommand{\Ctrh}{{\operatorname{\mathsf{--Ctrh}}}}
\newcommand{\bMod}{{\operatorname{\mathsf{--Mod--}}}}
\newcommand{\tors}{{\operatorname{\mathsf{-tors}}}}
\newcommand{\st}{\mathsf{st}}
\newcommand{\qu}{\mathsf{qu}}
\newcommand{\id}{\mathrm{id}}
\newcommand{\D}{\mathcal D}
\newcommand{\E}{\mathcal E}
\newcommand{\cO}{\mathcal O}
\newcommand{\U}{\mathcal U}
\newcommand{\V}{\mathcal V}
\newcommand{\fP}{\mathfrak P}
\newcommand{\fU}{\mathfrak U}
\newcommand{\fV}{\mathfrak V}
\newcommand{\sS}{\mathsf S}
\newcommand{\p}{\mathfrak p}
\newcommand{\q}{\mathfrak q}
\newcommand{\n}{\mathfrak n}
\newcommand{\boZ}{\mathbb Z}
\newcommand{\Section}[1]{\bigskip\section{#1}\medskip}
\theoremstyle{plain}
\newtheorem{thm}{Theorem}[section]
\newtheorem{prop}[thm]{Proposition}
\newtheorem{lem}[thm]{Lemma}
\newtheorem{cor}[thm]{Corollary}
\theoremstyle{definition}
\newtheorem{rem}[thm]{Remark}
\newtheorem{ex}[thm]{Example}
\newtheorem{exs}[thm]{Examples}
\begin{document}

\title{Torsion modules and differential operators \\
in infinitely many variables}

\author{Leonid Positselski}

\address{Institute of Mathematics, Czech Academy of Sciences \\
\v Zitn\'a~25, 115~67 Prague~1 \\ Czech Republic} 

\email{positselski@math.cas.cz}

\begin{abstract}
 This paper grew out of the author's work on~\cite{Pdomc}.
 Differential operators in the sense of Grothendieck acting between
modules over a commutative ring can be interpreted as torsion elements
in the bimodule of all operators with respect to the diagonal ideal in
the tensor square of the ring.
 Various notions of torsion modules for an infinitely generated ideal
in a commutative ring lead to various notions of differential operators.
 We discuss differential operators of transfinite orders and
differential operators having no global order at all, but only local
orders with respect to specific elements of the ring.
 Many examples are presented.
 In particular, we prove that every ordinal can be realized as
the order of a differential operator acting on the algebra of
polynomials in infinitely many variables over a field.
 We also discuss extension of differential operators to localizations
of rings and modules, and to colocalizations of modules.
\end{abstract}

\maketitle

\tableofcontents

\section*{Introduction}
\medskip

\setcounter{subsection}{-1}
\subsection{{}} \label{introd-beginning-subsecn}
 Let $k$~be a field of characteristic zero and $R=k[x_1,\dotsc,x_m]$
be a ring of polynomials in finitely many variables over~$k$.
 \emph{Differential operators} acting on $k[x_1,\dotsc,x_m]$ can be
defined straightforwardly as elements of the subring in the ring $E$
of $k$\+linear endomorphisms of $R$ spanned by the operators of
multiplication with functions $f\in R$ and the operators of partial
derivatives $\d/\d x_j$, \,$1\le j\le m$.

 In this context, the \emph{order} of a differential operator is defined
as its degree as a polynomial in the partial derivatives.
 So the functions $f\in R$, viewed as differential operators
$R\rarrow R$, have order~$0$, while the partial derivatives
$\d/\d x_j$ have order~$1$.
 For example, $x\frac{d}{dx}\:k[x]\rarrow k[x]$ is a differential
operator of order~$1$, while $\frac{d^2}{dx^2}\:k[x]\rarrow k[x]$ is
a differential operator of order~$2$ on the polynomial ring $k[x]$
in one variable~$x$.

 An abstract coordinate-free definition of \emph{differential operators
in the sense of Grothendieck}~\cite[Proposition~IV.16.8.8(b)]{EGAIV},
\cite[Section Tag~09CH]{SP} is formulated as follows.
 Let $K\rarrow R$ be a homomorphism of commutative rings, and let $U$
and $V$ be two $R$\+modules.
 Consider the $R$\+$R$\+bimodule $E=\Hom_K(U,V)$ of $K$\+linear maps
$U\rarrow V$.
 The $R$\+$R$\+subbimodule $F_n\D_{R/K}(U,V)\subset\Hom_K(U,V)$ of
\emph{$K$\+linear $R$\+differential operators of order\/~$\le n$}
is defined inductively by the rules
\begin{itemize}
\item $F_n\D_{R/K}(U,V)=0$ for $n<0$;
\item for any integer $n\ge0$, a $K$\+linear map $e\:U\rarrow V$ belongs
to $F_n\D_{R/K}(U,V)$ if and only if, for every element $r\in R$,
the $K$\+linear map $re-er\:U\rarrow V$ belongs to
$F_{n-1}\D_{R/K}(U,V)$.
\end{itemize}
 So $F_0\D_{R/K}(U,V)=\Hom_R(U,V)\subset\Hom_K(U,V)$ is
the $R$\+$R$\+subbimodule of $R$\+linear maps
in $\Hom_K(U,V)$, and one has $F_{n-1}\D_{R/K}(U,V)\subset
F_n\D_{R/K}(U,V)$ for all $n\ge0$.

 Let us say that an $R$\+$R$\+bimodule $M$ is
an \emph{$R$\+$R$\+bimodule over~$K$} if the left and right actions
of $K$ in $M$ agree, that is $lm=ml$ for all $m\in M$ and $l\in K$.
 In particular, $\Hom_K(U,V)$ is an $R$\+$R$\+bimodule over~$K$.
 The category of $R$\+$R$\+bimodules over $K$ is naturally equivalent
(in fact, isomorphic) to the category of modules over the ring
$T=R\ot_KR$.
 Denote by $I\subset T$ the kernel ideal of the natural (multiplication)
ring homomorphism $R\ot_KR\rarrow R$.
 Then an element $e\in E=\Hom_K(U,V)$ belongs to $F_n\D_{R/K}(U,V)$
if and only if $I^{n+1}e=0$ in~$E$.
 This observation can be found in the paper~\cite[Section~1.1]{BB}.

 So one can say that \emph{the $R$\+$R$\+subbimodule of differential
operators in\/ $\Hom_K(U,V)$ is the submodule of $I$\+torsion elements
in the $T$\+module $E=\Hom_K(U,V)$}.
 What are the $I$\+torsion elements in a $T$\+module?

\subsection{{}} \label{introd-torsion-modules-subsecn}
 Let $I$ be an ideal in a commutative ring~$T$.
 Two definitions of an \emph{$I$\+torsion $T$\+module} can be found in
the recent literature.
 Porta, Shaul, and Yekutieli~\cite[Section~3]{PSY} say that
a $T$\+module $M$ is $I$\+torsion if for every element $m\in M$ there
exists an integer $n\ge0$ such that $I^{n+1}m=0$ in~$M$.
 In this paper, we call such $T$\+modules \emph{strongly $I$\+torsion}.

 The present author's definition in~\cite[Section~1]{Pmgm} says that
a $T$\+module $M$ is \emph{$I$\+torsion} if for every pair of elements
$m\in M$ and $s\in I$ there exists an integer $n\ge0$ such that
$s^{n+1}m=0$ in~$M$.
 This definition describes a wider class of $T$\+modules than
the one in~\cite{PSY}.
 When the ideal $I\subset T$ is finitely generated, the classes of
$I$\+torsion and strongly $I$\+torsion $T$\+modules coincide.

 For any (possibly infinitely generated) ideal $I\subset T$, the class
of $I$\+torsion $T$\+modules $T\Modl_{I\tors}$ is closed under
subobjects, quotients, extensions, and infinite direct sums in
the module category $T\Modl$.
 The class of strongly $I$\+torsion $T$\+modules $T\Modl_{I\tors}^\st$
is also closed under subobjects, quotients, and infinite direct sums
in $T\Modl$, but it \emph{need not} be closed under extensions
(cf.~\cite[item~(2) in the Erratum]{PSY}).

 Closing the class of strongly $I$\+torsion $T$\+modules under
subobjects, quotients, extensions, and infinite direct sums in $T\Modl$, 
one obtains what we call the class of \emph{quite $I$\+torsion}
$T$\+modules $T\Modl_{I\tors}^\qu$.
 Equivalently, $T\Modl_{I\tors}^\qu$ is the closure of the class of
all $T$\+modules annihilated by $I$ under extensions and filtered
direct limits in $T\Modl$.
 Quite $I$\+torsion $T$\+modules $M$ can be characterized by
the following property: for every element $m\in M$ and every
sequence of elements $s_0$, $s_1$, $s_2$,~\dots~$\in I$ (indexed
by the nonnegative integers), there exists an integer $n\ge0$
such that $s_ns_{n-1}\dotsm s_1s_0m=0$ in~$M$.
 Generally speaking, the class of quite $I$\+torsion $T$\+modules
sits strictly in between the classes of strongly $I$\+torsion
and $I$\+torsion $T$\+modules,
\begin{equation} \label{torsion-modules-inclusions-inequalities}
 T\Modl_{I\tors}^\st\varsubsetneq T\Modl_{I\tors}^\qu
 \varsubsetneq T\Modl_{I\tors}.
\end{equation}

 For any $T$\+module $M$ and any ideal $I\subset T$, the natural
ordinal-indexed increasing filtration $F^{(I)}$ on $M$ is defined
by the rules
\begin{itemize}
\item $F^{(I)}_0M=\{\,m\in M\mid Im=0\,\}$;
\item $F^{(I)}_\alpha M=\{\,m\in M\mid Im\subset
\bigcup_{\beta<\alpha}F^{(I)}_\beta M\,\}$ for every ordinal $\alpha>0$.
\end{itemize}
 When the ideal $I$ is finitely generated, the filtration $F^{(I)}$
does not go beyond the first infinite ordinal~$\omega$; in fact, one
has $F^{(I)}_\alpha M=\bigcup_{n<\omega}F^{(I)}_n M$ for all
ordinals $\alpha\ge\omega$.
 For infinitely generated ideals $I$, this is no longer true.

 In particular, specializing to the case of a principal ideal generated
by an element $s\in T$, we obtain the definition of the natural
increasing filtration $F^{(s)}$ on $M$, indexed by the integers and
defined by the rules
\begin{itemize}
\item $F^{(s)}_n M=0$ for $n<0$;
\item $F^{(s)}_n M=\{\,m\in M\mid sm\in F^{(s)}_{n-1} M\,\}$
for $n\ge0$.
\end{itemize}
 In other words, $F^{(s)}_nM\subset M$ is the submodule of all
elements annihilated by~$s^{n+1}$.
 Similarly, $F^{(I)}_nM\subset M$ is the submodule of all
elements annihilated by $I^{n+1}$ when $n<\omega$ is an integer.

 A $T$\+module $M$ is strongly $I$\+torsion (i.~e., $I$\+torsion in
the sense of~\cite[Section~3]{PSY}) if and only if
$M=\bigcup_{n<\omega}F^{(I)}_nM$.
 A $T$\+module $M$ is quite $I$\+torsion if and only if
$M=\bigcup_\beta F^{(I)}_\beta M$, where the union is taken over
all ordinals~$\beta$ (equivalently, this means that there exists
an ordinal~$\alpha$ such that $F^{(I)}_\alpha M=M$).
 A $T$\+module $M$ is $I$\+torsion (i.~e., $I$\+torsion in
the sense of~\cite[Section~1]{Pmgm}) if and only if, for every
element $s\in I$, one has $M=\bigcup_n F^{(s)}_nM$.

 We say that an element $m\in M$ is \emph{strongly $I$\+torsion}
if $m\in\bigcup_{n<\omega}F^{(I)}_nM$.
 The submodule of all strongly $I$\+torsion elements in $M$ is
denoted by $\Gamma_I^\st(M)=\bigcup_{n<\omega}F^{(I)}_nM$.
 Furthermore, we say that an element $m\in M$ is
\emph{quite $I$\+torsion} if $m\in\bigcup_\beta F^{(I)}_\beta M$ (where
the union is taken over all ordinals~$\beta$).
 The submodule of all quite $I$\+torsion elements in $M$ is denoted
by $\Gamma_I^\qu(M)=\bigcup_\beta F^{(I)}_\beta M$.
 Finally, we say that an element $m\in M$ is \emph{$I$\+torsion}
if, for every element $s\in I$, one has $m\in\bigcup_n F^{(s)}_nM$.
 The submodule of all $I$\+torsion elements in $M$ is denoted by
$\Gamma_I(M)=\bigcap_{s\in I}\bigcup_n F^{(s)}_n M$.

\subsection{{}} \label{introd-diff-operators-again}
 Now we can return to differential operators.
 As in Section~\ref{introd-beginning-subsecn}, let $K\rarrow R$ be
a homomorphism of commutative rings, and let $U$ and $V$ be two
$R$\+modules.
 Consider the $R$\+$R$\+bimodule $E=\Hom_K(U,V)$.
 So $E$ is naturally a $T$\+module, where $T=R\ot_KR$.
 Let $I\subset T$ be the kernel ideal of the natural ring homomorphism
$R\ot_KR\rarrow R$.

 Following the discussion in Section~\ref{introd-beginning-subsecn},
for any integer $n\ge0$, the $R$\+$R$\+subbimodule of differential
operators of order~$\le n$ is $F_n\D_{R/K}(U,V)=F_n^{(I)}E$.
 For every ordinal~$\beta$, we put $F_\beta\D_{R/K}(U,V)=
F_\beta^{(I)}E$, and call the elements of $F_\beta\D_{R/K}(U,V)$
the \emph{$K$\+linear $R$\+differential operators of} (\emph{ordinal})
\emph{order\/~$\le\beta$}.

 Explicitly, the $R$\+$R$\+subbimodules $F_\beta\D_{R/K}(U,V)\subset
\Hom_K(U,V)$ can be defined inductively by the rules
\begin{itemize}
\item $F_0\D_{R/K}(U,V)=\Hom_R(U,V)\subset\Hom_K(U,V)$;
\item $F_\alpha\D_{R/K}(U,V)=\{\,e\in\Hom_K(U,V)\mid
re-er\in\bigcup_{\beta<\alpha}F_\beta\D_{R/K}(U,V)$
for all $r\in R\,\}$ for every ordinal $\alpha>0$.
\end{itemize}

 We call the elements of $\bigcup_{n<\omega}F_n\D_{R/K}(U,V)=
\Gamma_I^\st(E)$ the \emph{$K$\+linear strongly $R$\+differential
operators}, and the elements of
$\bigcup_\beta F_\beta\D_{R/K}(U,V)=\Gamma_I^\qu(E)$
the \emph{$K$\+linear quite $R$\+differential operators}.
 So the strongly differential operators have integer orders, while
the quite differential operators have ordinal orders.
 Let us introduce the notation $\D^\st_{R/K}(U,V)=\Gamma_I^\st(E)$
and $\D^\qu_{R/K}(U,V)=\Gamma_I^\qu(E)$.

 Finally, we define the $R$\+$R$\+subbimodule of \emph{$K$\+linear
$R$\+differential operators} as $\D_{R/K}(U,V)=\Gamma_I(E)=
\bigcap_{r\in R}\bigcup_{n\ge0}F_n^{(r)}(\Hom_K(U,V))$.
 Here $F_n^{(r)}E=F_n^{(r)}(\Hom_K(U,V))\subset\Hom_K(U,V)$ is
a shorthand notation for the $R$\+$R$\+subbimodule
$F_n^{(r\ot1-1\ot r)}E\subset E$, with $r\ot1-1\ot r\in R\ot_KR=T$.
 Thus, generally speaking, our $K$\+linear $R$\+differential operators
$U\rarrow V$ have no orders at all, but only (finite) orders with
respect to specific elements $r\in R$.
 Explicitly, we put {\hbadness=2550
\begin{itemize}
\item $F^{(r)}_n\D_{R/K}(U,V)=0$ for integers $n<0$;
\item $F^{(r)}_n\D_{R/K}(U,V)=\{\,e\in\D_{R/K}(U,V)\mid
re-er\in F^{(r)}_{n-1}\D_{R/K}(U,V)\,\}$ for integers $n\ge0$.
\end{itemize}
 So} one has $\D_{R/K}(U,V)=\bigcup_{n\ge0}F_n^{(r)}\D_{R/K}(U,V)$
for every $r\in R$.
 We call the elements of $F^{(r)}_n\D_{R/K}(U,V)$ the \emph{$K$\+linear
$R$\+differential operators of $r$\+order\/~$\le n$}.

\subsection{{}} \label{introd-examples-subsecn}
 Generally speaking, one has
\begin{equation} \label{diff-operators-inclusions-inequalities}
 \D^\st_{R/K}(U,V)\varsubsetneq\D^\qu_{R/K}(U,V)
 \varsubsetneq\D_{R/K}(U,V).
\end{equation}
 Here the inclusions in~\eqref{diff-operators-inclusions-inequalities}
follow from the inclusions of the classes of torsion modules in
formula~\eqref{torsion-modules-inclusions-inequalities} in
Section~\ref{introd-torsion-modules-subsecn}, while the inequalities
in~\eqref{diff-operators-inclusions-inequalities} imply the inequalities
in~\eqref{torsion-modules-inclusions-inequalities}.

 Let us demonstrate some counterexamples showing that the inclusions
in~\eqref{diff-operators-inclusions-inequalities} are indeed strict.
 In all of these examples, $K=k$ is a field of characteristic zero
and $R$ the ring of polynomials in infinitely many variables.
 Unless otherwise mentioned, the set of variables is countable.
 The operators act from $R$ to $R$, so $U=V=R$.

 Take $R=k[x_1,x_2,x_3,\dotsc]$.
 Then the infinitary Laplace operator
$$
 D_2=\sum_{i=1}^\infty \frac{\d^2}{\d x_i^2}
$$
is obviously well-defined as a $k$\+linear map $R\rarrow R$.
 It is a strongly differential operator of order~$2$.
 See Example~\ref{laplace-operator-example} in the main body of
the paper.

 The infinite sum
$$
 D_\omega=\sum_{i=1}^\infty \frac{\d^i}{\d x_i^i}
$$
is also well-defined as a $k$\+linear map $R\rarrow R$.
 It is a quite differential operator of infinite order~$\omega$.
 Indeed, for any polynomial $f\in R$, the commutator $[f,D_\omega]$
is a finite linear combination of compositions of $x_j$ and $\d/\d x_j$,
\,$j=1$, $2$, $3$,~\dots; however, this finitary differential operator
can have arbitrarily high finite order.
 Clearly, for $f=x_i$, the commutator $[x_i,D_\omega]$ is
a differential operator of order $i-1$.
 See Example~\ref{D-omega-operator-example}.

 Pick an integer $n\ge0$, and put $R=k[x_1,x_2,x_2,\dotsc;y]$.
 Then the infinite sum
$$
 D_{\omega+n}=\sum_{i=1}^\infty
 \frac{\d^n}{\d y^n}\,\frac{\d^i}{\d x_i^i}
 =\frac{\d^n}{\d y^n}\,\sum_{i=1}^\infty \frac{\d^i}{\d x_i^i}
$$
is well-defined as a $k$\+linear map $R\rarrow R$.
 It is a quite differential operator of ordinal order $\omega+n$.
 Indeed, one has $[y,D_{\omega+n}]=-nD_{\omega+n-1}$, so
$[y,[y,\dotsm[y,D_{\omega+n}]\dotsm]]=(-1)^n n!D_\omega$
($n$~nested brackets).
 See Example~\ref{D-omega+n-operator-example}.

 Let $R$ be the ring of polynomials in two countably infinite
families of variables, $R=k[x_1,x_2,x_2,\dotsc;y_1,y_2,y_3,\dotsc]$.
 Then the infinite sum
$$
 D_{\omega+\omega}=\sum_{j=1}^\infty\sum_{i=1}^\infty
 \frac{\d^j}{\d y_j^j}\,\frac{\d^i}{\d x_i^i}
 =\sum_{j=1}^\infty \frac{\d^j}{\d y_j^j}
 \,\sum_{i=1}^\infty \frac{\d^i}{\d x_i^i}
$$
is well-defined as a $k$\+linear map $R\rarrow R$.
 It is a quite differential operator of ordinal order $\omega+\omega$
(see Example~\ref{D-omega+omega-operator-example}).

 Moreover, for each ordinal~$\alpha$ there exists a differential
operator $D_\alpha\:R\rarrow R$ of order~$\alpha$, where
$R=k[(x_i)_{i\in\Lambda}]$ is the ring of polynomials
in variables~$x_i$ indexed by a suitable set~$\Lambda$.
 This is the result of Theorem~\ref{all-ordinal-orders-possible}.
 It follows that the ordinal-indexed filtration $F^{(I)}$ from
Section~\ref{introd-torsion-modules-subsecn} on a module $M$ over
a commutative ring $T$ with an ideal $I$ can have arbitrary large
length.
 This means that for every ordinal~$\alpha$ there exists a commutative
ring $T$ with an ideal $I\subset T$ and a $T$\+module $M$ such that
$\bigcup_{\beta<\alpha}F^{(I)}_\beta M\varsubsetneq F^{(I)}_\alpha M$.

 Take again $R=k[x_1,x_2,x_3,\dotsc]$ to be the ring of polynomials
in a countable family of variables.
 Then the infinite sum
$$
 D_\infty=\frac{\d}{\d x_1}+\frac{\d^2}{\d x_1\,\d x_2}
 +\frac{\d^3}{\d x_1\,\d x_2\,\d x_3}+\dotsb
$$
is well-defined as a $k$\+linear map $R\rarrow R$.
 It is a differential operator but \emph{not} a quite differential
operator.
 So $D_\infty$ does not even have an ordinal order, but only (finite)
orders with respect to specific functions $f\in R$.
 In fact, one has $[x_i,D_\infty]\ne0$ but $[x_i,[x_i,D_\infty]]=0$ for
all $i\ge1$; so $D_\infty$ is a differential operator of order~$1$
with respect to every coordinate function $x_i\in R$.
 See Example~\ref{D-infinity-operator-example}.

 For comparison, let us mention an obvious example of an infinite linear
combination of compositions of derivatives (in one variable) that is
well-defined as an operator $k[x]\rarrow k[x]$ but \emph{is not
a differential operator at all}.
 Put
$$
 \Sh=\sum_{i=0}^\infty \frac{1}{i!}\frac{d^i}{d x^i}.
$$
 By Taylor's formula, one has $(\Sh f)(x)=f(x+1)$ for all $f\in k[x]$.
 So $\Sh$ is \emph{not} a differential operator in any meaningful sense
of the word (see Examples~\ref{nondifferential-operators-examples}).
 This example demonstrates the necessity of the formal definitions of
various classes of differential operators above.

\subsection{{}}
 Let $U$ and $V$ be two modules over a commutative ring $R$,
and let $D\:U\rarrow V$ be (what in our terminology is called)
a strongly $R$\+differential operator of order~$n$.
 According to~\cite[Lemma Tag~0G36]{SP}, for any multiplicative
subset $\Sigma\subset R$, the operator $D\:U\rarrow V$
can be uniquely extended to a strongly $\Sigma^{-1}R$\+differential
operator $\Sigma^{-1}U\rarrow\Sigma^{-1}V$ of order at most~$n$.

 In Section~\ref{localizing-diffops-secn} of this paper, we prove two
different generalizations of this lemma.
 Firstly, if $D\:U\rarrow V$ is an $R$\+differential operator and
$R\rarrow S$ is a flat epimorphism of commutative rings
(in the sense of~\cite[Sections~XI.1\+-3]{Ste}), then the operator $D$
can be uniquely extended to an $S$\+differential operator $S\ot_RU
\rarrow S\ot_RV$.
 This is the result of our Theorem~\ref{localizing-diff-operators-thm}.

 Secondly, if $D\:U\rarrow V$ is a quite $R$\+differential operator
of ordinal order~$\alpha$ and $R\rarrow S$ is a flat epimorphism
of commutative rings, then the operator $D$ can be uniquely extended
to a quite $S$\+differential operator $S\ot_RU\rarrow S\ot_RV$ of
ordinal order at most~$\alpha$.
 This is our Proposition~\ref{localizing-quite-diff-operators-prop}.
 It includes the result of~\cite[Lemma Tag~0G36]{SP} as the special
case for $S=\Sigma^{-1}R$ and $\alpha=n$.

 To illustrate the informal claim that the results of
Theorem~\ref{localizing-diff-operators-thm} and
Proposition~\ref{localizing-quite-diff-operators-prop} are nontrivial,
consider the shift operator $\Sh\:k[x]\rarrow k[x]$ from the end
of Section~\ref{introd-examples-subsecn} above.
 Put $R=k[x]$ and $S=k[x,x^{-1}]$; so $S$ is the ring of rational
functions~$f$ of one variable~$x$ over a field~$k$ such that
the denominator of~$f$ is a power of~$x$.
 Then, of course, one can extend the operator $\Sh\:R\rarrow R$
to some $k$\+linear operator $S\rarrow S$ in many ways.
 But there seems to be no natural, generally applicable way to do it.
 Simply put, given a rational function $f(x)\in k[x,x^{-1}]$,
the rational function $f(x+1)$ usually does not belong to $k[x,x^{-1}]$.

\subsection{{}} \label{introd-localizations-of-diff-operators}
 The question of localizing differential operators is relevant in
the following geometric context.
 Let $f\:X\rarrow T$ be a morphism of schemes, and let $\U$ and $\V$
be quasi-coherent sheaves on~$X$.
 Then the sheaf of $\cO_X$\+modules $\E=\cHom_{\cO_X}(\U,\V)$ on $X$
can be constructed by the rule $\E(Y)=\Hom_{\cO_X(Y)}(\U(Y),\V(Y))$
for all affine open subschemes $Y\subset X$ (though the sheaf $\E$
is \emph{not} in general quasi-coherent).
 This is the sheaf of strongly differential operators $\U\rarrow\V$
of order~$0$.

 One would like to be able to construct the sheaves of differential
operators $\U\rarrow\V$ of higher ordinal orders~$\alpha$, or even
the sheaf of arbitrary differential operators (without order),
linear over~$T$.
 Specifically, let $W\subset T$ and $Y\subset X$ be affine open
subschemes such that $f(Y)\subset W$.
 Put $\D_{X/T}(\U,\V)(Y)=\D_{\cO_X(Y)/\cO_T(W)}(\U(Y),\V(V))$.
 This is, in the notation of Section~\ref{introd-diff-operators-again},
the $\cO_X(Y)$\+$\cO_X(Y)$\+bimodule of all $\cO_T(W)$\+linear
$\cO_X(Y)$\+differential operators $\U(Y)\rarrow\V(Y)$.

 One would like to define $\D_{X/T}(\U,\V)$ as a sheaf of
$\cO_X$\+$\cO_X$\+bimodules on~$X$.
 The first step in this direction would be to construct the restriction
map $\D_{X/T}(\U,\V)(Y)\allowbreak\rarrow\D_{X/T}(\U,\V)(Y')$ for any
pair of affine open subschemes $Y'\subset Y\subset X$.
 This involves extending any given $\cO_T(W)$\+linear
$\cO_X(Y)$\+differential operator $\U(Y)\rarrow\V(Y)$ to
an $\cO_T(W)$\+linear $\cO_X(Y')$\+differential operator
$\U(Y')\rarrow\V(Y')$.

 Notice that $\cO_X(Y)\rarrow\cO_X(Y')$ is a flat epimorphism of
commutative rings, while $\U(Y')=\cO_X(Y')\ot_{\cO_X(Y)}\U(Y)$
and $\V(Y')=\cO_X(Y')\ot_{\cO_X(Y)}\V(Y)$ for quasi-coherent
sheaves $\U$ and~$\V$.
 So our Theorem~\ref{localizing-diff-operators-thm} claims
that the desired natural way of extending $\cO_X(Y)$\+differential
operators $\U(Y)\rarrow\V(Y)$ to $\cO_X(Y')$\+differential operators
$\U(Y')\rarrow\V(Y')$ exists.

 In other words, Theorem~\ref{localizing-diff-operators-thm}
establishes the existence of a presheaf of
$\cO_X$\+$\cO_X$\+bi\-modu\-les $\D_{X/T}(\U,\V)$, defined on
the topology base of $X$ consisting of affine open subschemes.
 Our further result,
Proposition~\ref{sheaf-axiom-differential-operators}
(or Proposition~\ref{sheaf-axiom-quite-differential-operators} for
quite differential operators of any given ordinal order) claims
that the sheaf axiom for affine open coverings of affine open
subschemes of $X$ is satisfied for $\D_{X/T}(\U,\V)$.
 Using the classical technique of extension of sheaves from a topology
base~\cite[Section~0.3.2]{EGAI}, one can construct the desired sheaf
of $\cO_X$\+$\cO_X$\+bimodules $\D_{X/T}(\U,\V)$ on~$X$.

 Let us emphasize that the discussion above pertains to the question
of $\D_{X/T}(\U,\V)$ being a \emph{sheaf of $\cO_X$\+$\cO_X$\+bimodules}
and \emph{not} a quasi-coherent sheaf.
 The quasi-coherence property (with respect to the left, or
equivalently, to the right $\cO_X$\+module structure) for the sheaf
of $\cO_X$\+$\cO_X$\+bimodules $\D_{X/T}(\U,\V)$ was established in
the classical work~\cite[Propositions~IV.16.8.6 and~IV.16.8.8]{EGAIV}
under much more restrictive assumptions.
 To wit, if $X\rarrow T$ is a morphism of schemes locally of finite
presentation, $\U$ is a locally finitely presented quasi-coherent
sheaf, and $\V$ is a quasi-coherent sheaf on $X$, then the sheaf of
(in our terminology) strongly differential operators
$\D_{X/T}^\st(\U,\V)$ on $X$ is quasi-coherent.
 Under such assumptions on the morphism of schemes $X\rarrow T$,
the classes of differential operators, quite differential operators,
and strongly differential operators coincide; so one actually has
$\D_{X/T}(\U,\V)=\D_{X/T}^\qu(\U,\V)=\D_{X/T}^\st(\U,\V)$.
 In the settings with an infinite number of variables, where our
three notions of differential operators differ from each other,
the sheaves of differential operators are usually \emph{not}
quasi-coherent.

\subsection{{}} \label{introd-colocalizations-of-diff-operators}
 In the final Section~\ref{colocalizing-diffops-secn} of this paper,
we consider the dual-analogous question of lifting of differential
operators to \emph{colocalizations} of modules.
 For any $R$\+differential operator $D\:U\rarrow V$ and any flat
epimorphism of commutative rings $R\rarrow S$, we show in
Theorem~\ref{colocalizing-diff-operators-thm} that the operator $D$
can be uniquely lifted to an $S$\+differential operator
$\Hom_R(S,U)\rarrow\Hom_R(S,V)$.
 By Proposition~\ref{colocalizing-quite-diff-operators-prop}, any
quite $R$\+differential operator $D\:U\rarrow V$ of ordinal
order~$\alpha$ can be uniquely lifted to a quite $S$\+differential
operator $\Hom_R(S,U)\rarrow\Hom_R(S,V)$ of ordinal order at
most~$\alpha$.

 From the geometric standpoint, the question about colocalizing
differential operators arises in the context of \emph{contraherent
cosheaves}~\cite{Pcosh,Pphil}.
 For a contraherent cosheaf $\fP$ on a scheme $X$ and a pair of
affine open subschemes $Y'\subset Y\subset X$,
the $\cO_X(Y')$\+module of cosections $\fP[Y']$ is computed as
the colocalization $\fP[Y']=\Hom_{\cO_X(Y)}(\cO_X(Y'),\fP[Y])$.

 Given a morphism of schemes $X\rarrow T$ and two contraherent
cosheaves $\fU$ and $\fV$ on $X$, one would like to be able to
construct a sheaf of $\cO_X$\+$\cO_X$\+bimodules $\D_{X/T}(\fU,\fV)$
on~$X$.
 The bimodules of sections of $\D_{X/T}(\fU,\fV)$ over affine open
subschemes $Y\subset X$ are supposed to be computed as
$\D_{X/T}(\fU,\fV)(Y)=\D_{\cO_X(Y)/\cO_T(W)}(\fU[Y],\fV[Y])$.
 Thus, in order to construct the restriction map
$\D_{X/T}(\fU,\fV)(Y)\rarrow\D_{X/T}(\fU,\fV)(Y')$ for a pair of
affine open subschemes $Y'\subset Y\subset X$, one needs to be able
to lift any given $\cO_T(W)$\+linear $\cO_X(Y)$\+differential
operator $\fU[Y]\rarrow\fV[Y]$ to an $\cO_T(W)$\+linear
$\cO_X(Y')$\+differential operator $\fU[Y']\rarrow\fV[Y']$.

 This means lifting differential operators to colocalizations
of modules.
 In our Theorem~\ref{colocalizing-diff-operators-thm}, we prove that
this can be done in a unique way.
 The similar result for quite differential operators is
Proposition~\ref{colocalizing-quite-diff-operators-prop}.
 We further proceed to check the sheaf axiom for affine open
coverings of affine open subschemes $Y\subset X$, essentially
establishing the existence of the sheaf of $\cO_X$\+$\cO_X$\+bimodules
$\D_{X/T}(\fU,\fV)$ on~$X$ (in
Proposition~\ref{cosheaf-axiom-differential-operators}; or
Proposition~\ref{cosheaf-axiom-quite-differential-operators} for
quite differential operators).

 To avoid any possible confusion, let us \emph{warn} the reader that
the notion of a contraherent cosheaf involves its own set of technical
aspects.
 In particular, while a quasi-coherent sheaf on an affine scheme $Y$
can be assigned to any $\cO(Y)$\+module, contraherent cosheaves on $Y$
correspond to \emph{contraadjusted} $\cO(Y)$\+modules only.
 As far as (co)homological conditions on modules go,
the contraadjustedness condition is quite mild, and contraadjusted
modules over commutative rings are ubiquitous~\cite[Section~1.1]{Pcosh},
\cite[Section~2]{Pcta}, \cite[Section~4.3]{Pphil}; but one \emph{cannot}
assign a contraherent cosheaf on an affine scheme $Y$ to
an $\cO(Y)$\+module that is not contraadjusted.
 More generally, for any contraherent cosheaf $\fU$ on a scheme $X$
and any affine open subscheme $Y\subset X$, the $\cO_X(Y)$\+module
$\fU[Y]$ must be contraadjusted.
 (The word ``contraadjusted'' means ``adjusted to contraherent
cosheaves''.)

 Hence the contraadjustedness assumption in
Propositions~\ref{cosheaf-axiom-differential-operators}
and~\ref{cosheaf-axiom-quite-differential-operators}.
 See Remark~\ref{contraadjustedness-necessary-remark} for
a counterexample showing that this assumption is necessary.
 We will continue this discussion in the next paragraph.

\subsection{{}}
 Another (co)sheaf-theoretic interpretation of the results about
localizations and colocalizations of differential operators can be
formulated as follows.
 Let $Y$ be an affine scheme over a commutative ring~$K$.
 Then it is well-known that the category $Y\Qcoh$ of quasi-coherent
sheaves on $Y$ and $\cO_Y$\+linear morphisms of such sheaves is
equivalent to the category of modules over the commutative
ring~$\cO(Y)$.
 Similarly, the category $Y\Ctrh$ of contraherent cosheaves on $Y$ and
$\cO_Y$\+linear morphisms of such cosheaves is equivalent to
the category of $\cO(Y)$\+modules satisfying a mild additional condition
called ``contraadjustedness''~\cite[Corollary~2.2.3]{Pcosh},
\cite[formulas~(10\+-12) in Section~5.6]{Pphil}.

 One may be interested in extending the category $Y\Qcoh$ to a category
with the same objects and $K$\+linear $\cO_Y$\+differential operators
as morphisms.
 Here a \emph{$K$\+linear $\cO_Y$\+differential operator}
$D\:\U\rarrow\V$ acting between two quasi-coherent sheaves $\U$ and $\V$
on $Y$ is a map of sheaves of $K$\+modules $D\:\U\rarrow\V$ such that,
for every affine open subscheme $Y'\subset Y$, the $K$\+linear map
$D(Y')\:\U(Y')\rarrow\V(Y')$ is an $\cO_Y(Y')$\+differential operator
between the $\cO_Y(Y')$\+modules $\U(Y')$ and $\V(Y')$.
 In this context, the theorem about existence and uniqueness of
localizations of differential operators (our
Theorem~\ref{localizing-diff-operators-thm}) implies that the category
of quasi-coherent sheaves on $Y$ and $K$\+linear $\cO_Y$\+differential
operators between them is equivalent to the category of
$\cO(Y)$\+modules and $K$\+linear $\cO(Y)$\+differential operators.

 Similarly, given two contraherent cosheaves $\fU$ and $\fV$ on $Y$,
one can say that a map of cosheaves of $K$\+modules $D\:\fU\rarrow\fV$
is a \emph{$K$\+linear $\cO_Y$\+differential operator} if, for
every affine open subscheme $Y'\subset Y$, the $K$\+linear map
$D[Y']\:\fU[Y']\rarrow\fV[Y']$ is an $\cO_Y(Y')$\+differential operator
between the $\cO_Y(Y')$\+modules $\fU[Y']$ and $\fV[Y']$.
 Then the theorem of existence and uniqueness of colocalizations of
differential operators (our
Theorem~\ref{colocalizing-diff-operators-thm}) implies that the category
of contraherent cosheaves on $Y$ and $K$\+linear $\cO_Y$\+differential
operators between them is equivalent to the category of contraadjusted
$\cO(Y)$\+modules and $K$\+linear $\cO(Y)$\+differential operators
between them.

\subsection{{}}
 Before we finish this Introduction, let us mention a few additional
references, for the benefit of a reader with background in algebra or
commutative algebra who may be not very familiar with the basic concepts
of set theory.
 The notions of ordinals and cardinals are used throughout this paper,
and particularly in Section~\ref{bounding-realizing-ordinals-secn}.
 For a very basic intuitive introduction to these concepts, we suggest
the book~\cite[Appendix to Chapter~II]{Man}.
 Specific classical constructions and results relevant to the exposition
in this paper can be found in the initial chapters of
the books~\cite{Lev} and~\cite{Kun}.
 See the beginning of Section~\ref{bounding-realizing-ordinals-secn}
for a further discussion.

\subsection*{Acknowledgement}
 I~am grateful to Michal Hrbek for very helpful discussions, and
particularly for communicating Example~\ref{torsion-hrbek-counterex}
to me and giving the permission to include it in this paper.
 I~also wish to thank Jan \v St\!'ov\'\i\v cek for illuminating
conversations.
 Last but not least, I~would like to thank the anonymous reviewer
for careful reading of the manuscript and many helpful suggestions.
 The author is supported by the GA\v CR project 23-05148S
and the Institute of Mathematics, Czech Academy of Sciences
(research plan RVO:~67985840).

\Section{Three Classes of Torsion Modules}

\subsection{Torsion modules} \label{torsion-modules-subsecn}
 Let $T$ be a commutative ring and $M$ be a $T$\+module.
 Let $s\in T$ be an element.
 An element $m\in M$ is said to be \emph{$s$\+torsion} if there exists
an integer $n\ge0$ such that $s^{n+1}m=0$ in~$M$.
 We denote by $F^{(s)}_nM\subset M$ the submodule of all elements
annihilated by $s^{n+1}$ in~$M$, or equivalently, define by induction
\begin{itemize}
\item $F^{(s)}_nM=0$ for all integers $n<0$;
\item $F^{(s)}_nM=\{\,m\in M\mid sm\in F^{(s)}_{n-1}M\,\}$ for all
integers $n\ge0$.
\end{itemize}
 So $F^{(s)}_0M$ is the submodule of all elements annihilated by~$s$
in~$M$, and $F^{(s)}_{n-1}M\subset F^{(s)}_nM$ for all $n\ge0$.
 A $T$\+module $M$ is said to be \emph{$s$\+torsion} if all its
elements are $s$\+torsion, i.~e., if $M=\bigcup_{n\ge0}F^{(s)}_nM$.

 We denote by $T\Modl$ the abelian category of $T$\+modules and
by $T\Modl_{s\tors}\subset T\Modl$ the full subcategory of $s$\+torsion
$T$\+modules.

\begin{lem} \label{s-torsion-extension-closed}
 The class of $s$\+torsion $T$\+modules $T\Modl_{s\tors}$ is closed
under subobjects, quotients, extensions, and infinite direct sums in
$T\Modl$.
\end{lem}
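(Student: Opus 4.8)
The plan is to check the four closure properties directly from the definition $M=\bigcup_{n\ge0}F^{(s)}_nM$, exploiting the fact that for each fixed $n$ the submodule $F^{(s)}_nM$ is simply the annihilator of $s^{n+1}$, i.e.\ $F^{(s)}_nM=\{m\in M\mid s^{n+1}m=0\}$. This characterization makes the verifications almost mechanical.

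For subobjects and quotients: if $N\subset M$ is a submodule and $M$ is $s$\+torsion, then every $m\in N$ already satisfies $s^{n+1}m=0$ for some $n$, so $N$ is $s$\+torsion; and if $\pi\:M\rarrow M/N$ is the quotient map, then for $\bar m=\pi(m)$ we have $s^{n+1}\bar m=\pi(s^{n+1}m)=0$ once $s^{n+1}m=0$. For infinite direct sums: an element of $\bigoplus_{\lambda}M_\lambda$ has only finitely many nonzero components $m_{\lambda_1},\dotsc,m_{\lambda_k}$, each annihilated by some $s^{n_j+1}$; taking $n=\max_j n_j$ annihilates the whole element by $s^{n+1}$.

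The extension case is the one requiring a little thought, though it is still short. Suppose $0\rarrow M'\rarrow M\rarrow M''\rarrow 0$ is exact with $M'$ and $M''$ both $s$\+torsion; let $m\in M$. Its image $\bar m\in M''$ is annihilated by some $s^{p+1}$, so $s^{p+1}m\in M'$, hence $s^{p+1}m$ is annihilated by some $s^{q+1}$, giving $s^{p+q+2}m=0$. Thus $m\in F^{(s)}_{p+q+1}M$, and $M$ is $s$\+torsion. I would phrase this uniformly by noting $F^{(s)}_{n+n'+1}M\supseteq$ the preimage of $F^{(s)}_{n'}M''$ intersected with the condition that the image under $s^{n'+1}$ lands in $F^{(s)}_nM'$.

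I do not anticipate any real obstacle here: the class $T\Modl_{s\tors}$ is the torsion class associated to the element $s$, and these closure properties are exactly the standard ones for such a ``torsion-theoretic'' class over a single element. The only mild subtlety is the bookkeeping of orders in the extension case, where one must add the two orders (plus one) rather than take a maximum; this is where the proof differs from the direct-sum case, and it is worth spelling out so the reader sees why the analogous statement for strongly $I$\+torsion modules with infinitely generated $I$ can fail (cf.\ the discussion in Section~\ref{introd-torsion-modules-subsecn}).
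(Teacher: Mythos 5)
Your argument for the extension case is the same as the paper's: lift the annihilator exponent from the quotient, observe the residue lands in the subobject, and add the two exponents. The paper proves only closure under extensions and declares the other three properties obvious, whereas you spell them out, but the substance is identical.
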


\begin{proof}
 We will only prove the closedness under extensions (all the other
properties being obvious).
 Let $0\rarrow L\rarrow M\rarrow N\rarrow0$ be a short exact sequence
of $T$\+modules with $s$\+torsion $T$\+modules $L$ and $N$.
 Let $m\in M$ be an element.
 Since the $T$\+module $N$ is $s$\+torsion, there exists an integer
$n_1\ge1$ such that the coset $m+L\in N$ is annihilated by~$s^{n_1}$,
that is, $s^{n_1}(m+L)=0$ in~$N$.
 Hence $s^{n_1}m\in L\subset M$.
 Since the $T$\+module $L$ is $s$\+torsion, there exists an integer
$n_2\ge1$ such that the element $s^{n_1}m\in L$ is annihilated
by~$s^{n_2}$, that is, $s^{n_2}(s^{n_1}m)=0$ in~$L$.
 Now we have $s^{n_1+n_2}m=0$ in~$M$; thus $m$~is an $s$\+torsion
element in~$M$.
\end{proof}

\begin{lem} \label{torsion-checked-on-generators-of-ideal}
 Let $I\subset T$ be an ideal generated by some set of elements
$s_j\in I$.
 Let $M$ be a $T$\+module that is $s_j$\+torsion for every~$j$.
 Then $M$ is $s$\+torsion for every element $s\in I$.
\end{lem}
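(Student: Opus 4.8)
The plan is to reduce the statement to a purely combinatorial fact: an element $s\in I$ is, by definition, a finite $T$\+linear combination $s=t_1 s_{j_1}+\dotsb+t_k s_{j_k}$ with $t_\ell\in T$ and $s_{j_\ell}$ among the given generators. So it suffices to prove two things: first, that a finite sum of elements each of which "locally kills" a fixed $m\in M$ after some power again kills $m$ after some power; and second, that multiplying a generator $s_j$ by an arbitrary ring element $t\in T$ does not spoil the $s_j$\+torsion property. The second point is immediate: if $s_j^{n+1}m=0$, then since $T$ is commutative $(t s_j)^{n+1}m=t^{n+1}s_j^{n+1}m=0$, so every element $t s_j$ is $s_j$\+nilpotent on $m$, hence the $T$\+module $M$ is $(ts_j)$\+torsion as well. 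Thus I may assume $s=u_1+\dotsb+u_k$ where each $u_\ell$ satisfies: for every $m\in M$ there is $n_\ell\ge 0$ with $u_\ell^{\,n_\ell+1}m=0$.

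The heart of the argument is then the first point, for which I would use the binomial theorem. Fix $m\in M$. Choose $n_1,\dotsc,n_k\ge 0$ with $u_\ell^{\,n_\ell+1}m=0$ for each $\ell$, and set $N=n_1+\dotsb+n_k+1$. Expand $(u_1+\dotsb+u_k)^N$ by the multinomial formula into a sum of terms $\binom{N}{a_1,\dotsc,a_k}u_1^{a_1}\dotsm u_k^{a_k}$ with $a_1+\dotsb+a_k=N$. By the pigeonhole principle, in each such term at least one exponent $a_\ell$ satisfies $a_\ell\ge n_\ell+1$, since otherwise $a_1+\dotsb+a_k\le n_1+\dotsb+n_k=N-1<N$. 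Because $T$ is commutative, that term can be rewritten with $u_\ell^{a_\ell}$ applied first, and $u_\ell^{a_\ell}m=0$; hence every term annihilates $m$. Therefore $s^N m=(u_1+\dotsb+u_k)^N m=0$, so $m$ is $s$\+torsion. Since $m\in M$ was arbitrary, $M$ is $s$\+torsion.

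The only genuinely delicate point is keeping track of the quantifier order: the exponents $n_\ell$ depend on $m$ (not uniformly on $M$), so $N$ depends on $m$ too — but this is exactly what is needed, since "$M$ is $s$\+torsion" only asks that each individual element be $s$\+torsion. Everything else — commutativity to commute $t$ past $s_j$ and to reorder the $u_\ell$, and the pigeonhole estimate — is routine. One should note that, unlike the case of a finitely generated ideal, here one does \emph{not} get a single integer bounding all elements; the argument is genuinely element-by-element, which is why the hypothesis yields $s$\+torsion in the sense of Section~\ref{torsion-modules-subsecn} and not, say, that $M$ is annihilated by a fixed power of~$s$.
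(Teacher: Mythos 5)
Your proof is correct and takes essentially the same approach as the paper's: write $s=\sum_a t_a s_{j_a}$, choose $n_a$ with $s_{j_a}^{n_a+1}m=0$, set $n=\sum_a n_a$, and conclude $s^{n+1}m=0$ by expanding the power and using commutativity. You simply make explicit the multinomial/pigeonhole reasoning and the observation that $(ts_j)^{n+1}m=t^{n+1}s_j^{n+1}m$, which the paper leaves implicit.
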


\begin{proof}
 We have $s=\sum_{a=1}^b t_as_{j_a}$ for some integer $b\ge1$,
some elements $t_a\in T$, and some indices~$j_a$.
 Let $m\in M$ be an element and $n_a\ge0$ be some integers such that
$s_{j_a}^{n_a+1}m=0$ in~$M$.
 Put $n=\sum_{a=1}^bn_a$.
 Then $s^{n+1}m=0$ in~$M$.
\end{proof}

\begin{lem} \label{torsion-finitely-generated-ideal}
 Let $I\subset T$ be an ideal generated by some \emph{finite} set of
elements $s_1$,~\dots,~$s_b$.
 Let $M$ be a $T$\+module and $m\in M$ be an element that is
$s_j$\+torsion for every~$j$.
 Then there exists an integer $n\ge0$ such that $I^{n+1}m=0$ in~$M$.
\end{lem}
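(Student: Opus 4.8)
The plan is to reduce the statement to the finitely-generated case by a direct counting argument, exactly parallel to the proof of Lemma~\ref{s-torsion-extension-closed} but carried out on the level of a single element rather than a whole module. Since $I$ is generated by $s_1,\dotsc,s_b$, the power $I^{n+1}$ is generated by the monomials $s_1^{k_1}\dotsm s_b^{k_b}$ with $k_1+\dots+k_b=n+1$. So it suffices to find integers $n_1,\dotsc,n_b\ge0$ such that $s_1^{k_1}\dotsm s_b^{k_b}m=0$ in $M$ whenever $k_j\le n_j$ for all $j$ and $\sum_j k_j$ is large enough; then one takes $n+1=n_1+\dots+n_b+1$ or any larger value, since any monomial of total degree $n_1+\dots+n_b+1$ must have $k_j\ge n_j+1$ for at least one index~$j$.

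First I would use the hypothesis to pick, for each $j$, an integer $n_j\ge0$ with $s_j^{n_j+1}m=0$ in~$M$. Now consider any monomial $s_1^{k_1}\dotsm s_b^{k_b}$ with $\sum_j k_j=(n_1+1)+\dots+(n_b+1)$. By the pigeonhole principle there exists at least one index~$a$ with $k_a\ge n_a+1$, and then $s_a^{k_a}$ is divisible by $s_a^{n_a+1}$ in~$T$, so $s_1^{k_1}\dotsm s_b^{k_b}m = \bigl(s_1^{k_1}\dotsm \widehat{s_a^{k_a}}\dotsm s_b^{k_b}\bigr)\,s_a^{k_a-n_a-1}\,\bigl(s_a^{n_a+1}m\bigr)=0$. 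Hence every generating monomial of $I^N$ annihilates~$m$ for $N=(n_1+1)+\dots+(n_b+1)$, and therefore $I^N m=0$. Setting $n=N-1$ gives the claim.

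The argument is entirely routine; the only point that requires a moment's care is the combinatorial bookkeeping with multi-indices, namely the assertion that once the total degree exceeds $n_1+\dots+n_b$, some individual exponent must exceed its threshold. This is just the contrapositive of the observation that if $k_j\le n_j$ for all $j$ then $\sum_j k_j\le\sum_j n_j$. I expect no genuine obstacle here; the lemma is the expected "finite generation makes local torsion into uniform torsion" statement, and the value of $n$ one extracts, $n=\sum_{j=1}^b n_j + b - 1$ (or any larger integer), is visibly non-optimal but sufficient. One could alternatively phrase the proof as an induction on~$b$ using Lemma~\ref{s-torsion-extension-closed}-style reasoning on the chain of ideals $(s_1,\dotsc,s_c)$ for $c=1,\dotsc,b$, but the direct monomial count seems cleaner and is what I would write.
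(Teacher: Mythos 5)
Your proposal is correct and uses the same pigeonhole argument on the exponents of the generating monomials of $I^{n+1}$ as the paper does, with the paper choosing the slightly sharper $n=\sum_j n_j$ (which your first paragraph also notes works) while your final write-up settles for $n=\sum_j n_j + b-1$. The only blemish is the garbled quantifier in your first paragraph (``whenever $k_j\le n_j$ for all $j$ and $\sum_j k_j$ is large enough'' should be its negation), but the second paragraph states the pigeonhole step correctly, so the proof as a whole is sound.
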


\begin{proof}
 Similarly to the previous proof, let $n_a\ge0$ be some integers
such that $s_a^{n_a+1}m=0$ in~$M$.
 Put $n=\sum_{a=1}^bn_a$.
 Then $I^{n+1}m=0$ in~$M$.
\end{proof}

 Let $I\subset T$ be an ideal.
 A $T$\+module $M$ is said to be \emph{$I$\+torsion} (in the sense
of~\cite[Section~1]{Pmgm}) if $M$ is $s$\+torsion for every $s\in I$.
 According to Lemma~\ref{torsion-checked-on-generators-of-ideal},
it suffices to check this condition for any chosen set of
generators~$s_j$ of the ideal $I\subset T$.
 We denote by $T\Modl_{I\tors}\subset T\Modl$ the full subcategory
of $I$\+torsion $T$\+modules.

 By Lemma~\ref{s-torsion-extension-closed}, the class of $I$\+torsion
$T$\+modules $T\Modl_{I\tors}$ is closed under subobjects, quotients,
extensions, and infinite direct sums in $T\Modl$.
 This assertion can be expressed by saying that $T\Modl_{I\tors}$ is
a \emph{localizing subcategory}, or in the terminology
of~\cite[Sections~VI.2\+-3]{Ste}, a \emph{hereditary torsion class}
in $T\Modl$.

 As usual, we denote by $\Spec T$ the prime spectrum of $T$, i.~e.,
the set of all prime ideals $\p\subset T$ (endowed with the Zariski
topology).
 The notation $T_\p=(T\setminus\p)^{-1}T$ stands for the localization
of the commutative ring $T$ at a prime ideal~$\p$.
 For a $T$\+module $M$, we put $M_\p=T_\p\ot_TM$.
 The (\emph{set-theoretic}) \emph{support} $\Supp_T M\subset\Spec T$
of a $T$\+module $M$ is defined as the set of all prime ideals
$\p\in\Spec T$ for which $M_\p\ne0$.
 One says that a $T$\+module $M$ is \emph{supported set-theoretically}
in a closed subset $Z\subset\Spec T$ if $\Supp_TM\subset Z$.

 The following lemma shows that our definition of $I$\+torsion
$T$\+modules is natural from the geometric standpoint.

\begin{lem} \label{torsion-module-support}
 A $T$\+module $M$ is $I$\+torsion if and only if its support is
contained in the closed subset\/ $\Spec T/I\subset\Spec T$.
\end{lem}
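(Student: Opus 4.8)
The plan is to translate both sides of the claimed equivalence into statements about the localizations $M_s$ for elements $s\in I$, using the standard dictionary relating vanishing of a localization to the support.

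First I would observe that the open complement of the closed subset $\Spec T/I$ in $\Spec T$ is the union $\bigcup_{s\in I}D(s)$, where $D(s)=\{\,\p\in\Spec T\mid s\notin\p\,\}$ is the principal open subset attached to~$s$; indeed, a prime $\p\subset T$ fails to contain $I$ exactly when it fails to contain some element $s\in I$. Consequently, $\Supp_T M\subseteq\Spec T/I$ holds if and only if $\Supp_T M\cap D(s)=\emptyset$ for every $s\in I$.

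Next comes the key pointwise equivalence: for a fixed $s\in T$, the $T$\+module $M$ is $s$\+torsion if and only if $M_s=0$, if and only if $\Supp_T M\cap D(s)=\emptyset$. The first equivalence is immediate from the description of the localization, since an element $m\in M$ maps to zero in $M_s=\{1,s,s^2,\dotsc\}^{-1}M$ precisely when $s^n m=0$ for some $n\ge0$, so $M_s=0$ exactly when $M=\bigcup_n F^{(s)}_n M$. For the second equivalence one identifies $\Spec T_s$ with the open subset $D(s)\subseteq\Spec T$, under which $\Supp_{T_s}(M_s)$ corresponds to $\Supp_T M\cap D(s)$ (iterated localization gives $(M_s)_\q\cong M_\p$ for the primes $\q\leftrightarrow\p$), and then invokes the basic fact that a module over a commutative ring vanishes if and only if its support is empty: if $0\ne x\in N$, then $\mathrm{Ann}(x)$ lies in some maximal ideal $\m$, whence $x/1\ne0$ in $N_\m$.

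Finally I would assemble the two ingredients: $M$ is $I$\+torsion $\iff$ $M$ is $s$\+torsion for every $s\in I$ (by definition; cf.\ also Lemma~\ref{torsion-checked-on-generators-of-ideal}) $\iff$ $\Supp_T M\cap D(s)=\emptyset$ for every $s\in I$ (pointwise equivalence) $\iff$ $\Supp_T M\subseteq\Spec T/I$ (first paragraph). I do not anticipate a genuine obstacle here; the only points requiring a little care are the two generalities invoked in the middle step — compatibility of $\Supp$ with localization at a single element, and the vanishing criterion for modules of empty support — neither of which is deep.
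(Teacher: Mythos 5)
Your proof is correct, and it takes a somewhat different route from the paper's. The paper argues elementwise: for each $m\in M$ it introduces the annihilator ideal $J_m\subset T$, observes that $m$ is $I$\+torsion exactly when $I$ is contained in the radical of $J_m$, and then uses the characterization of the radical as the intersection of the primes containing $J_m$ together with the description of $\Supp_T M$ as the set of primes containing some $J_m$. You instead work one element $s\in I$ at a time at the level of the whole module, via the standard chain ``$M$ is $s$\+torsion $\iff M_s=0\iff\Supp_TM\cap D(s)=\emptyset$,'' and then cover the complement of $\Spec T/I$ by the principal opens $D(s)$. Both routes hinge on the same underlying fact (primes detect vanishing: radical equals intersection of primes, equivalently a nonzero module has nonempty support), so neither is more elementary or more general, but your version has the small pedagogical advantage of making the role of the hereditary torsion theory associated to $\{M_s=0\}$ visible, whereas the paper's version is slightly more self-contained in that it never needs to identify $\Supp_{T_s}(M_s)$ with $\Supp_TM\cap D(s)$.
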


\begin{proof}
 Let $m\in M$ be an element and $J_m\subset T$ be its annihilator
ideal in~$T$.
 The element $m\in M$ is $I$\+torsion if and only if, for every
element $s\in I$, the ideal $J_m$ contains some power $s^n$ of
the element~$s$.
 In other words, the element~$m$ is $I$\+torsion if and only if
the radical of the ideal $J_m$ contains~$I$.
 It is well-known that the radical of any ideal $J\subset T$ is
equal to the intersection of all prime ideals $\p\subset T$
containing~$J$.
 So the element~$m$ is $I$\+torsion if and only if every prime ideal
containing $J_m$ contains~$I$.

 On the other hand, the element~$m$ vanishes in the localization
$M_\p$ of the module $M$ at a prime ideal~$\p$ if and only if
there exists $t\in T\setminus\p$ such that $tm=0$ in~$M$.
 In other words, $m$~vanishes in $M_\p$ if and only if $J_m$ is not
contained in~$\p$.
 So a prime ideal~$\p$ belongs to $\Supp_TM$ if and only if there
exists an element $m\in M$ with $J_m\subset\p$.

 Now if all elements $m\in M$ are $I$\+torsion, then $J_m\subset\p$
implies $I\subset\p$, so $\p\in\Spec T/I$.
 If there is an element $m\in M$ that is not $I$\+torsion, then there
exists $\p\in\Spec T$ such that $J_m\subset\p$ but $I\not\subset\p$,
hence $\p\in\Supp_TM$ but $\p\notin\Spec T/I$.
\end{proof}

 For any $T$\+module $M$ and element $s\in T$, we denote by
$\Gamma_s(M)\subset M$ the (obviously unique) maximal $s$\+torsion
submodule in~$M$.
 So we have $\Gamma_s(M)=\bigcup_{n\ge0}F^{(s)}_nM$.
 For any $T$\+module $M$ and ideal $I\subset T$, we denote by
$\Gamma_I(M)\subset M$ the (obviously unique) maximal
$I$\+torsion submodule in~$M$.
 So $\Gamma_I(M)=\bigcap_{s\in I}\Gamma_s(M)$.

 Since the full subcategory $T\Modl_{I\tors}$ is closed under extensions
in $T\Modl$, the quotient module $M/\Gamma_I(M)$ has no nonzero
$I$\+torsion elements.
 In other words, we have $\Gamma_I(M/\Gamma_I(M))=0$ for any
$T$\+module $M$ and any ideal $I\subset T$.
 Let us explain these assertions in some more detail.

 Let $N\subset M/\Gamma_I(M)$ be an $I$\+torsion $T$\+submodule in
$M/\Gamma_I(M)$.
 Denote by $L$ the preimage of $N$ under the surjective $T$\+module
map $M\rarrow M/\Gamma_I(M)$.
 Then we have a short exact sequence of $T$\+modules $0\rarrow
\Gamma_I(M)\rarrow L\rarrow N\rarrow0$.
 As the $T$\+modules $\Gamma_I(M)$ and $N$ are $I$\+torsion, it follows
by virtue of Lemma~\ref{s-torsion-extension-closed} that $L$ is
an $I$\+torsion $T$\+module, too.
 Now we have $\Gamma_I(M)\subset L\subset M$, and $\Gamma_I(M)$ is
the maximal $I$\+torsion submodule of~$M$.
 Thus $L=\Gamma_I(M)$ and $N=0$.
 
 Alternatively, one can simply say that if $m+\Gamma_I(M)$ is
an $I$\+torsion element in $M/\Gamma_I(M)$, then for every $s\in I$
there exists $n\ge1$ such that $s^nm\in\Gamma_I(M)$.
 As $\Gamma_I(M)$ is an $I$\+torsion $T$\+module, there is
another integer $n'\ge1$ such that $s^{n+n'}m=0$ in~$M$.
 Thus $m$~is an $I$\+torsion element of $M$, so $m\in\Gamma_I(M)$.

\begin{lem} \label{Gamma-I-left-exact}
 The functor\/ $\Gamma_I\:T\Modl\rarrow T\Modl$ is left exact.
 In particular, for any $T$\+module $M$ and any $T$\+submodule
$N\subset M$ one has\/ $\Gamma_I(N)=N\cap\Gamma_I(M)$.
\end{lem}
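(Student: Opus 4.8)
The plan is to reduce the statement to its ``in particular'' clause, which is in fact the crux. The only substantive input is Lemma~\ref{s-torsion-extension-closed}: the class $T\Modl_{I\tors}$ is closed under subobjects and quotients. Note first that $\Gamma_I$ is a subfunctor of the identity functor on $T\Modl$: if $f\:M\rarrow M'$ is a $T$\+module homomorphism, then $f(\Gamma_I(M))$, being a quotient of the $I$\+torsion module $\Gamma_I(M)$, is $I$\+torsion, hence contained in the maximal $I$\+torsion submodule $\Gamma_I(M')$; thus $f$ restricts to a map $\Gamma_I(M)\rarrow\Gamma_I(M')$, functorially in~$M$.

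Next I would prove the identity $\Gamma_I(N)=N\cap\Gamma_I(M)$ for a submodule $N\subset M$. For ``$\subseteq$'': $\Gamma_I(N)\subset N$ by definition, while $\Gamma_I(N)$ is an $I$\+torsion $T$\+module sitting inside $M$, so $\Gamma_I(N)\subset\Gamma_I(M)$ by maximality. For ``$\supseteq$'': $N\cap\Gamma_I(M)$ is a submodule of the $I$\+torsion module $\Gamma_I(M)$, hence $I$\+torsion by Lemma~\ref{s-torsion-extension-closed}, and it is contained in $N$, so it lies inside the maximal $I$\+torsion submodule $\Gamma_I(N)$. Neither inclusion uses the element-wise description of $\Gamma_I$, only the closure properties of the torsion class.

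Finally, left exactness follows formally. Given a short exact sequence $0\rarrow L\rarrow M\rarrow N\rarrow0$ of $T$\+modules, identify $L$ with $\ker(M\rarrow N)\subset M$. The map $\Gamma_I(L)\rarrow\Gamma_I(M)$ is the restriction of an injection, hence injective; and by the identity just established, $\Gamma_I(L)=L\cap\Gamma_I(M)=\ker(M\rarrow N)\cap\Gamma_I(M)$, which is exactly $\ker\bigl(\Gamma_I(M)\rarrow\Gamma_I(N)\bigr)$. Hence $0\rarrow\Gamma_I(L)\rarrow\Gamma_I(M)\rarrow\Gamma_I(N)$ is exact, i.e., $\Gamma_I$ is left exact (this is the standard characterization of left exact preradicals). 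I do not anticipate any real obstacle here: all the content sits in Lemma~\ref{s-torsion-extension-closed}. One could equally well observe that $\Gamma_I=\bigcap_{s\in I}\Gamma_s$, that each $\Gamma_s$ is left exact by the identical argument, and that an intersection of left-exact subfunctors of the identity is again left exact; but the argument above is shorter and self\+contained.
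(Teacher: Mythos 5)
Your proof is correct, and it takes a genuinely different route from the paper's. The paper establishes left exactness first, abstractly: $\Gamma_I\:T\Modl\rarrow T\Modl_{I\tors}$ is a right adjoint to the (exact) inclusion functor $T\Modl_{I\tors}\rarrow T\Modl$, hence left exact, and composing with the exact inclusion preserves left exactness; the identity $\Gamma_I(N)=N\cap\Gamma_I(M)$ is then deduced by applying $\Gamma_I$ to $0\rarrow N\rarrow M\rarrow M/N\rarrow0$ and using injectivity of $\Gamma_I(M/N)\rarrow M/N$. You run the argument in the opposite direction: you prove the identity $\Gamma_I(N)=N\cap\Gamma_I(M)$ directly from the closure of $T\Modl_{I\tors}$ under subobjects together with the maximality characterization of $\Gamma_I$, and then obtain left exactness as a formal consequence by identifying $\Gamma_I(L)=L\cap\Gamma_I(M)=\ker\bigl(\Gamma_I(M)\rarrow\Gamma_I(N)\bigr)$. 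Both are valid; the paper in fact concedes your route in its final sentence (``The latter equality is also easy to see directly from the definition''). Your version is the more elementary and self-contained one, needing only the closure properties of the torsion class; the paper's adjunction argument is more conceptual and is the template that the paper then reuses verbatim for $\Gamma_I^\st$ and $\Gamma_I^\qu$ in Lemmas~\ref{Gamma-I-st-left-exact} and~\ref{Gamma-I-qu-left-exact}.
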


\begin{proof}
 The functor $\Gamma_I\:T\Modl\rarrow T\Modl_{I\tors}$ is left exact
as a right adjoint functor to the identity inclusion functor of
abelian categories $T\Modl_{I\tors}\rarrow T\Modl$.
 As the identity inclusion functor is exact, it follows that
the functor $\Gamma_I\:T\Modl\rarrow T\Modl$ is left exact, too.
 In particular, the functor $\Gamma_I$ takes the short exact sequence
$0\rarrow N\rarrow M\rarrow M/N\rarrow0$ to a left exact sequence
$0\rarrow\Gamma_I(N)\rarrow\Gamma_I(M)\rarrow\Gamma_I(M/N)$.
 As the natural map $\Gamma_I(M/N)\rarrow M/N$ is injective,
it follows that $\Gamma_I(N)=N\cap\Gamma_I(M)$.
 The latter equality is also easy to see directly from the definition
(the left-exactness of $\Gamma_I$ is easy to check directly from
the definition, too).
\end{proof}

\subsection{Strongly torsion modules} \label{strongly-torsion-subsecn}
 We will say that an element $m\in M$ is \emph{strongly $I$\+torsion}
if there exists an integer $n\ge0$ such that $I^{n+1}m=0$ in~$M$.
 We denote by $F^{(I)}_nM\subset M$ the submodule of all elements
annihilated by $I^{n+1}$ in $M$, or equivalently, define by induction
\begin{itemize}
\item $F^{(I)}_nM=0$ for all integers $n<0$;
\item $F^{(I)}_nM=\{\,m\in M\mid Im\subset F^{(I)}_{n-1}M\,\}$ for all
integers $n\ge0$.
\end{itemize}
 So $F^{(I)}_0M$ is the submodule of all elements annihilated by~$I$
in $M$, and $F^{(I)}_{n-1}M\subset F^{(I)}_nM$ for all $n\ge0$.
 We will say that a $T$\+module $M$ is \emph{strongly $I$\+torsion}
(or ``$I$\+torsion'' in the sense of~\cite[Section~3]{PSY}) if
all its elements are strongly $I$\+torsion, i.~e., if
$M=\bigcup_{n\ge0}F^{(I)}_nM$.
 We denote the full subcategory of strongly $I$\+torsion $T$\+modules
by $T\Modl_{I\tors}^\st\subset T\Modl$.

 It is clear from Lemma~\ref{torsion-finitely-generated-ideal} that,
for a finitely generated ideal $I\subset T$, the classes of
$I$\+torsion and strongly $I$\+torsion $T$\+modules coincide.
 In particular, it follows that, for a finitely generated ideal $I$,
the class of strongly $I$\+torsion $T$\+modules $T\Modl_{I\tors}^\st$
is closed under extensions in $T\Modl$ \,\cite[Erratum, item~(2)]{PSY}.
 For an infinitely generated ideal $I\subset T$, this \emph{need not}
be the case.

\begin{ex} \label{torsion-hrbek-counterex}
 The following example, communicated to the author by M.~Hrbek, is
reproduced here with his kind permission.
 Let $k$~be a field and $T=k[x_1,x_2,x_3,\dotsc]/J$ be the quotient
ring of the ring of polynomials in countably many variables by
the ideal of relations $J$ spanned by the polynomials $x_i^{i+1}$ for
all $i\ge1$ and $x_ix_j$ for all $i\ne j$, \ $i$, $j\ge1$.
 Let $I$ be the ideal generated by the elements $x_1$, $x_2$,
$x_3$,~\dots\ in~$T$.

 Then we have $I=\bigoplus_{n=1}^\infty I_n$, where $I_n=(x_n)\subset T$
is the principal ideal generated by~$x_n$ in~$T$.
 Each ideal $I_n$ is a $T$\+module of finite length, annihilated
by the power $I^n$ of the ideal~$I$.
 It is clear that both $I$ and $T/I$ are strongly $I$\+torsion
$T$\+modules.
 However, $T$ is \emph{not} a strongly $I$\+torsion $T$\+module,
because $I$ is not a nilpotent ideal: $I^n\ne0$ for all $n\ge1$.
 In fact, $I$ is the maximal strongly $I$\+torsion $T$\+submodule
of~$T$ (in the notation introduced several paragraphs below,
one has $\Gamma_I^\st(T)=I$).

 Now one can also take $T'=k[x_1,x_2,x_3,\dotsc]$ and consider
the ideal $I'\subset T'$ generated by the elements $x_1$, $x_2$,
$x_3$,~\dots\ in~$T'$.
 Then both $I$ and $T/I$ are strongly $I'$\+torsion $T'$\+modules,
but $T$ is not a strongly $I'$\+torsion $T'$\+module.
\end{ex}

 Another counterexample showing that the class of strongly $I$\+torsion
$T$\+modules need not be closed under extensions in $T\Modl$ when
the ideal $I$ is infinitely generated can be extracted from
Example~\ref{D-omega-operator-example} below.

 Clearly, the class of strongly $I$\+torsion $T$\+modules
$T\Modl_{I\tors}^\st$ is closed under subobjects, quotients,
and infinite direct sums in $T\Modl$.
 In other words, $T\Modl_{I\tors}^\st$ is a \emph{hereditary pretorsion
class} in $T\Modl$ (in the terminology similar
to~\cite[Sections~VI.1\+-3]{Ste}).

 For any $T$\+module $M$ and ideal $I\subset T$, we denote by
$\Gamma_I^\st(M)$ the (obviously unique) maximal strongly $I$\+torsion
submodule in~$M$.
 So we have $\Gamma_I^\st(M)=\bigcup_{n\ge0}F^{(I)}_nM$ (where the direct
union is taken over the nonnegative integers~$n$).

 Example~\ref{torsion-hrbek-counterex} shows that the quotient module
$M/\Gamma_I^\st(M)$ can contain nonzero strongly $I$\+torsion elements.
 So the $T$\+module $\Gamma_I^\st(M/\Gamma_I^\st(M))$ may well be
nonzero.

\begin{lem} \label{Gamma-I-st-left-exact}
 The functor\/ $\Gamma_I^\st\:T\Modl\rarrow T\Modl$ is left exact.
 In particular, for any $T$\+module $M$ and any $T$\+submodule
$N\subset M$ one has\/ $\Gamma_I^\st(N)=N\cap\Gamma_I^\st(M)$.
 Moreover, for every integer $n\ge0$ one has
$F^{(I)}_nN=N\cap F^{(I)}_nM$.
\end{lem}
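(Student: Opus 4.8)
The plan is to mimic the proof of Lemma \ref{Gamma-I-left-exact}, but with a crucial extra observation, since the functor $\Gamma_I^\st$ is \emph{not} a right adjoint to the inclusion $T\Modl_{I\tors}^\st\rarrow T\Modl$ in a way that would make left exactness formal — indeed, as noted just before the statement, $T\Modl_{I\tors}^\st$ need not be closed under extensions, so it is not even an exact-category reflection in the usual sense. Instead I would argue directly with the filtration pieces $F^{(I)}_nM$. The key fact to establish first is the formula $F^{(I)}_nN=N\cap F^{(I)}_nM$ for every integer $n\ge0$ and every submodule $N\subset M$; once this is known, taking the union over $n\ge0$ yields $\Gamma_I^\st(N)=N\cap\Gamma_I^\st(M)$ immediately, and then left exactness follows by the same reasoning as in Lemma \ref{Gamma-I-left-exact}.

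For the formula $F^{(I)}_nN=N\cap F^{(I)}_nM$, I would proceed by induction on~$n$. The base case $n=0$ is clear: an element $m\in N$ satisfies $Im=0$ in $N$ if and only if $Im=0$ in $M$, since $N\subset M$ is a submodule, so $F^{(I)}_0N=\{m\in N\mid Im=0\}=N\cap F^{(I)}_0M$. (Equivalently, recall from Section \ref{strongly-torsion-subsecn} that $F^{(I)}_nM$ is just the submodule of elements annihilated by $I^{n+1}$, and being annihilated by $I^{n+1}$ is an intrinsic condition on an element that is insensitive to the ambient module.) For the inductive step, suppose $F^{(I)}_{n-1}N=N\cap F^{(I)}_{n-1}M$. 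Then for $m\in N$ we have $m\in F^{(I)}_nN$ iff $Im\subset F^{(I)}_{n-1}N$; since $Im\subset N$ (as $N$ is a submodule), the condition $Im\subset F^{(I)}_{n-1}N=N\cap F^{(I)}_{n-1}M$ is equivalent to $Im\subset F^{(I)}_{n-1}M$, i.e.\ to $m\in F^{(I)}_nM$. Hence $m\in F^{(I)}_nN$ iff $m\in N$ and $m\in F^{(I)}_nM$, as desired. Honestly, the cleanest route is to invoke the already-noted characterization $F^{(I)}_nM=\{m\in M\mid I^{n+1}m=0\}$ and observe that $I^{n+1}m=0$ in $N$ iff $I^{n+1}m=0$ in $M$, which bypasses the induction; I would present whichever is shorter.

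Passing to the colimit, $\Gamma_I^\st(N)=\bigcup_{n\ge0}F^{(I)}_nN=\bigcup_{n\ge0}(N\cap F^{(I)}_nM)=N\cap\bigcup_{n\ge0}F^{(I)}_nM=N\cap\Gamma_I^\st(M)$, using that intersection with a fixed submodule commutes with directed unions. For left exactness: given a short exact sequence $0\rarrow L\rarrow M\xrightarrow{g} M/L\rarrow 0$, the functor $\Gamma_I^\st$ preserves injectivity since $\Gamma_I^\st(L)=L\cap\Gamma_I^\st(M)$ shows the map $\Gamma_I^\st(L)\rarrow\Gamma_I^\st(M)$ is the inclusion of a submodule, and it is clear that $g$ carries $\Gamma_I^\st(M)$ into $\Gamma_I^\st(M/L)$ (a quotient of a strongly $I$-torsion module is strongly $I$-torsion) with kernel exactly $\Gamma_I^\st(M)\cap L=\Gamma_I^\st(L)$; thus $0\rarrow\Gamma_I^\st(L)\rarrow\Gamma_I^\st(M)\rarrow\Gamma_I^\st(M/L)$ is exact. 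I do not anticipate a genuine obstacle here: the only subtlety is resisting the temptation to argue via adjunction as in Lemma \ref{Gamma-I-left-exact}, since $T\Modl_{I\tors}^\st$ is merely a hereditary pretorsion class (not closed under extensions), so the correct proof is the hands-on one with the filtration $F^{(I)}_\bullet$ above.
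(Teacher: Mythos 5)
Your direct argument is correct and self-contained, but the premise you use to motivate it is mistaken, and the paper in fact takes the adjunction route you dismiss. The functor $\Gamma_I^\st$ \emph{is} right adjoint to the inclusion $T\Modl_{I\tors}^\st\hookrightarrow T\Modl$: for any strongly $I$-torsion $L$ and any $M$, every $T$-module map $L\to M$ lands in $\Gamma_I^\st(M)$, giving the required natural bijection. Moreover, $T\Modl_{I\tors}^\st$ is closed under subobjects and quotients in $T\Modl$, so kernels and cokernels in it agree with those in $T\Modl$; it is therefore an abelian subcategory with exact inclusion. (Closure under extensions is irrelevant here — that is what one needs for a \emph{torsion} class, i.e.\ for $\Gamma_I^\st(M/\Gamma_I^\st(M))=0$, but not for left exactness of the coreflector.) Hence ``right adjoint to an exact functor'' yields left exactness formally, exactly as in Lemma~\ref{Gamma-I-left-exact}, and this is what the paper means by ``completely similar''; the paper also notes in the proof of Lemma~\ref{Gamma-I-qu-left-exact} explicitly that a hereditary pretorsion class is an abelian category for precisely this purpose.

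That said, what you actually prove is fine and matches the alternative the paper itself flags (``easy to see directly from the definition''). Establishing $F^{(I)}_nN=N\cap F^{(I)}_nM$ first — either by induction or, more cleanly, via the intrinsic characterization $F^{(I)}_nM=\{m\mid I^{n+1}m=0\}$ — then taking the directed union and finally checking left exactness by hand on a short exact sequence is a perfectly valid and somewhat more elementary route. The trade-off is that the adjunction argument is one line and generalizes uniformly across Lemmas~\ref{Gamma-I-left-exact}, \ref{Gamma-I-st-left-exact}, and~\ref{Gamma-I-qu-left-exact}, whereas your hands-on argument is more transparent but must be re-run for each filtration. You should correct the opening sentence: the reason to prefer the direct proof, if any, is taste, not because the adjunction fails.
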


\begin{proof}
 The proof of the first two assertions is completely similar to
that of Lemma~\ref{Gamma-I-left-exact}.
 For the final assertion, see the proof of
Lemma~\ref{Gamma-I-qu-left-exact} below.
\end{proof}

\subsection{Quite torsion modules} \label{quite-torsion-subsecn}
 Let $T$ be an associative ring and $M$ be a left $T$\+module.
 Our notation for ordinal-indexed filtrations in this paper
is slightly different from the most conventional one (as
in~\cite[Definition~6.1]{GT}).
 We prefer our somewhat unconventional notation (taken
from~\cite[Section~1.1]{Pcosh}) for the purposes of the present paper,
because it is compatible with what we think is the most natural notation
for ordinal orders of quite differential operators.
 
 An \emph{ordinal-indexed increasing filtration} on a $T$\+module $M$ is
a family of submodules $F_\beta M\subset M$ indexed by ordinals~$\beta$ 
such that $F_\gamma M\subset F_\beta M$ for all ordinals
$\gamma\le\beta$ and $M=\bigcup_\beta F_\beta M$ (where the direct union
is taken over all ordinals~$\beta$).
 These conditions imply existence of an ordinal~$\alpha$ for which
$M=\bigcup_{\beta<\alpha}F_\beta M$; the filtration $F$ is then said to
be \emph{indexed by the ordinal~$\alpha$}.

 The \emph{successive quotient modules} of an increasing filtration $F$
on $M$ indexed by an ordinal~$\alpha$ are the quotient modules
$$
 S_\beta=\gr^F_\beta M=
 F_\beta M\Big/\bigcup\nolimits_{\gamma<\beta}F_\gamma M,
 \qquad 0\le\beta<\alpha.
$$
 In particular, $S_0=F_0M$.
 The $T$\+module $M$ is said to be \emph{filtered by}
the $T$\+modules~$S_\beta$.
 In a different terminology, the $T$\+module $M$ is said to be
a \emph{transfinitely iterated extension} (\emph{in the sense of
the inductive limit}) of the $T$\+modules~$S_\beta$.

 Let $\sS\subset T\Modl$ be a class of $T$\+modules.
 It is convenient to assume that the zero module belongs to~$\sS$.
 Then the class of all $T$\+modules filtered by modules (isomorphic to)
modules from $\sS$ is denoted by $\Fil(\sS)\subset T\Modl$.

\begin{lem} \label{Fil-of-quotient-closed}
 Let $T$ be an associative ring and\/ $\sS\subset T\Modl$ be a class
of $T$\+modules closed under quotients.
 Then the following four classes of $T$\+modules coincide:
\begin{enumerate}
\item the closure of\/ $\sS$ under quotients, extensions, and infinite
direct sums in $T\Modl$;
\item the closure of\/ $\sS$ under extensions and filtered direct limits
in $T\Modl$;
\item the closure of\/ $\sS$ under transfinitely iterated extensions
in $T\Modl$;
\item the class\/ $\Fil(\sS)\subset T\Modl$.
\end{enumerate}
 If the class\/ $\sS$ is also closed under submodules in $T\Modl$, then
the class of $T$\+modules defined by any one of
the rules~\textup{(1\+-4)} is closed under submodules as well.
\end{lem}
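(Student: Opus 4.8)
The strategy is to prove a cycle of inclusions among the four classes, showing $(4)\subseteq(3)\subseteq(2)\subseteq(1)\subseteq(4)$, and then handle the closure-under-submodules addendum separately. The inclusions $(4)\subseteq(3)$ and $(3)\subseteq(2)$ are essentially tautological: a module filtered by modules from $\sS$ is by definition a transfinitely iterated extension of such modules (which is the content of~(3)), and a transfinitely iterated extension in the sense of the inductive limit is built from $\sS$ using only (finite) extensions and filtered direct limits — one runs through the ordinal-indexed filtration, taking extensions at successor steps and filtered (in fact directed) unions at limit steps, so the result lies in the closure described in~(2). The inclusion $(2)\subseteq(1)$ needs the standard observation that a filtered direct limit of modules is a quotient of their direct sum; since $(1)$ is closed under direct sums and quotients, it is closed under filtered direct limits, and it is closed under extensions by hypothesis, so it contains the closure~(2).

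The one substantive inclusion is $(1)\subseteq(4)$, i.e.\ that $\Fil(\sS)$ is already closed under quotients, extensions, and infinite direct sums (here is where the hypothesis that $\sS$ is closed under quotients gets used). Closure under infinite direct sums is easy: given modules $M_i$ each with an ordinal-indexed filtration by modules from $\sS$, one can reindex all the filtrations by a common ordinal $\alpha$ (padding with repetitions) and then filter $\bigoplus_i M_i$ by $F_\beta(\bigoplus_i M_i)=\bigoplus_i F_\beta M_i$; the successive quotients are $\bigoplus_i\gr^F_\beta M_i$, which need not lie in $\sS$, so instead one interleaves: filter $\bigoplus_i M_i$ so that successive quotients run through all the $\gr^F_\beta M_i$ individually, ordered lexicographically by $(\beta,i)$ after choosing a well-ordering of the index set. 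Closure under extensions: given $0\to L\to M\to N\to 0$ with $L,N\in\Fil(\sS)$, one concatenates a filtration of $L$ (by submodules of $M$, via Lemma~\ref{Gamma-I-st-left-exact}-style intersection, or just directly) with the preimage in $M$ of a filtration of $N$; the successive quotients are those of $L$ followed by those of $N$, all in $\sS$. Closure under quotients is the delicate point and is where I expect the main obstacle to lie: given $M\in\Fil(\sS)$ with filtration $F_\beta M$ and successive quotients $S_\beta\in\sS$, and a submodule $K\subseteq M$, one wants to filter $M/K$. The natural candidate is the induced filtration $\bar F_\beta(M/K)=(F_\beta M+K)/K\cong F_\beta M/(F_\beta M\cap K)$; one checks this is an ordinal-indexed increasing filtration of $M/K$, and its $\beta$-th successive quotient is a quotient of $\gr^F_\beta M=S_\beta$ — this is a routine isomorphism-theorem computation, using that $(F_\beta M\cap K)$ meets $\bigcup_{\gamma<\beta}F_\gamma M$ in $\bigcup_{\gamma<\beta}(F_\gamma M\cap K)$, which holds because the filtration union commutes with intersection by $K$. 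Since $\sS$ is closed under quotients, each such successive quotient lies in $\sS$, so $M/K\in\Fil(\sS)$.

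Finally, for the addendum, suppose $\sS$ is also closed under submodules. We must show that $\Fil(\sS)$ (equivalently, the class in~(1)) is closed under submodules. Given $M\in\Fil(\sS)$ with filtration $F_\beta M$ and a submodule $K\subseteq M$, filter $K$ by $F_\beta K=K\cap F_\beta M$; this is an ordinal-indexed increasing filtration of $K$, and its $\beta$-th successive quotient is $(K\cap F_\beta M)/\bigcup_{\gamma<\beta}(K\cap F_\gamma M)=(K\cap F_\beta M)/(K\cap\bigcup_{\gamma<\beta}F_\gamma M)$, which embeds into $S_\beta=\gr^F_\beta M$ and is therefore in $\sS$ by the submodule-closure hypothesis. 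Hence $K\in\Fil(\sS)$. I expect the only place requiring care throughout is the verification that in a transfinitely iterated extension the operations of intersecting with a fixed submodule $K$ and of forming the union $\bigcup_{\gamma<\beta}F_\gamma$ commute — but this is immediate since intersection with $K$ is a right adjoint and hence preserves the directed union.
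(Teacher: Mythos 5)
Your proof is correct and follows the same overall strategy as the paper's: establish the easy chain $(4)\subseteq(3)\subseteq(2)\subseteq(1)$, and then close the cycle by showing $\Fil(\sS)$ is closed under quotients, extensions, and infinite direct sums. The one shortcut the paper takes that you miss is that extensions and infinite direct sums are themselves special cases of transfinitely iterated extensions (concatenation of two filtrations for an extension; interleaving for a direct sum — essentially what you build by hand), so $\Fil(\sS)$ is closed under them for free; only the quotient case uses the hypothesis on $\sS$ or requires any argument. One small error in your justification: the fact that intersection with a fixed submodule $K$ commutes with directed unions of submodules is \emph{not} because $({-})\cap K$ is a right adjoint — right adjoints preserve limits, not colimits. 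The correct reason is elementary: any element of $K\cap\bigcup_\gamma F_\gamma M$ already lies in $K\cap F_\gamma M$ for some single~$\gamma$. This slip does not affect the validity of your proof.
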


\begin{proof}
 The inclusions $(1)\supset(2)\supset(3)\supset(4)$ hold for any class
of $T$\+modules~$\sS$ (containing the zero module).
 Indeed, filtered direct limits can be constructed as quotients of
infinite direct sums, and transfinitely iterated extensions can be
obtained by combining extensions and filtered direct limits.
 Moreover, the class $\Fil(\sS)$ is closed under transfinitely iterated
extensions for any class of $T$\+modules~$\sS$; so one always has
$(3)=(4)$.

 It remains to show that the class $\Fil(\sS)$ is closed under
quotients, extensions, and infinite direct sums whenever the class
$\sS$ is closed under quotients.
 Both extensions and infinite direct sums are special cases of
transfinitely iterated extensions.
 Concerning quotients, any ordinal-indexed filtration $F$ on
a $T$\+module $M$ induces an ordinal-indexed filtration $F$ on any
quotient $T$\+module $N$ of $M$, and the $T$\+modules $\gr^F_\beta N$
are quotient modules of the $T$\+modules $\gr^F_\beta M$.
 Hence the class $\Fil(\sS)$ is closed under quotients whenever
the class $\sS$~is.

 Similarly, any ordinal-indexed filtration $F$ on $M$ induces
an ordinal-indexed filtration $F$ on any submodule $L$ of $M$,
and the $T$\+modules $\gr^F_\beta L$ are submodules of the $T$\+modules
$\gr^F_\beta M$.
 This proves the last assertion of the lemma.
\end{proof}

 Now we can return to our setting of a commutative ring $T$ with
an ideal $I\subset T$.
 For any $T$\+module $M$, we define by transfinite induction
\begin{itemize}
\item $F^{(I)}_0M=\{\,m\in M\mid Im=0\,\}$;
\item $F^{(I)}_\alpha M=\{\,m\in M\mid Im\subset
\bigcup_{\beta<\alpha}F^{(I)}_\beta M\,\}$ for all ordinals $\alpha>0$.
\end{itemize}
 This notation agrees with the notation $F^{(I)}_nM$ for nonnegative
integers~$n$ from Section~\ref{strongly-torsion-subsecn}.
 One has $F^{(I)}_\beta M\subset F^{(I)}_\alpha M$ for all ordinals
$\beta\le\alpha$.

 We will say that an element $m\in M$ is \emph{quite $I$\+torsion} if
there exists an ordinal~$\beta$ such that $m\in F^{(I)}_\beta M$.
 A $T$\+module $M$ is said to be \emph{quite $I$\+torsion} if all
its elements are quite $I$\+torsion, i.~e.,
$M=\bigcup_\beta F^{(I)}_\beta M$.
 If this is the case, then there exists an ordinal~$\alpha$ such that
$M=\bigcup_{\beta<\alpha} F^{(I)}_\beta M$.
 We denote the full subcategory of quite $I$\+torsion $T$\+modules
by $T\Modl_{I\tors}^\qu\subset T\Modl$.

 It will follow from Theorem~\ref{all-ordinal-orders-possible} below
that the minimal ordinal~$\alpha$ satisfying the condition from
the previous paragraph for a given $T$\+module $M$ can be arbitrarily
large, depending on a commutative ring $T$ with an ideal $I\subset T$.
 When the ring $T$ and the ideal $I$ are fixed, the ordinal~$\alpha$
is bounded by the successor cardinal of the cardinality of a set of 
generators of~$I$;
see Proposition~\ref{quite-torsion-module-filtration-bound}.

 Consider the class $T/I\Modl\subset T\Modl$ of all $T$\+modules
annihilated by~$I$.
 By Lemma~\ref{Fil-of-quotient-closed}, the class of quite $I$\+torsion
$T$\+modules $T\Modl_{I\tors}^\qu$ is the closure of the class
$T/I\Modl$ under extensions and filtered direct limits, or equivalently,
under extensions, infinite direct sums, and quotients.
 The class $T\Modl_{I\tors}^\qu$ is closed under subobjects, quotients,
extensions, and infinite direct sums in $T\Modl$.
 In other words, $T\Modl_{I\tors}^\qu$ is always a localizing
subcategory, or equivalently, a hereditary torsion class in $T\Modl$.

 Obviously, any strongly $I$\+torsion $T$\+module is quite $I$\+torsion.
 Example~\ref{torsion-hrbek-counterex} shows that a quite $I$\+torsion
$T$\+module \emph{need not} be strongly $I$\+torsion.

 The following proposition characterizes quite $I$\+torsion modules by
a property resembling the \emph{T\+nilpotence} condition from Bass'
paper~\cite{Bas}.

\begin{prop} \label{quite-torsion-T-nilpotence}
 Let $T$ be a commutative ring and $I\subset T$ be an ideal.
 Then a $T$\+module $M$ is quite $I$\+torsion if and only if, for
every element $m\in M$ and every sequence of elements $s_0$, $s_1$,
$s_2$,~\dots~$\in I$ (indexed by the nonnegative integers) there exists
an integer $n\ge0$ such that $s_ns_{n-1}\dotsm s_1s_0m=0$ in~$M$.
 It suffices to check the latter condition for sequences of
elements~$s_i$, \,$i\ge0$, belonging to any given set of generators
$G\subset I$ of the ideal $I\subset T$.
\end{prop}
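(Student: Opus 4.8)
The plan is to deduce the proposition from two implications, for a fixed commutative ring $T$, an ideal $I\subset T$, and a $T$\+module $M$:
\textbf{(a)}~if $M$ is quite $I$\+torsion, then for every $m\in M$ and every sequence $s_0,s_1,s_2,\dotsc\in I$ some product $s_n\dotsm s_1s_0m$ vanishes in $M$; and
\textbf{(b)}~if $G\subset I$ is a set of generators of the ideal and, for every $m\in M$ and every sequence $g_0,g_1,g_2,\dotsc\in G$, some product $g_n\dotsm g_1g_0m$ vanishes in $M$, then $M$ is quite $I$\+torsion.
Granting (a) and~(b), the proposition follows: the ``only if'' part is~(a); the ``if'' part is~(b) applied with $G=I$; and the closing sentence holds because, for any chosen set of generators $G$, statement~(b) gives the implication (condition on sequences in $G$)~$\Rightarrow$~($M$ is quite $I$\+torsion), while~(a) gives the reverse implication, since a sequence in $G$ is in particular a sequence in $I$. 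So it remains to prove (a) and~(b).

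For~(a), I would set $m_0=m$ and $m_{k+1}=s_km_k$, so that $m_k=s_{k-1}\dotsm s_0m$ for $k\ge1$; one may assume $m\ne0$. Suppose, for contradiction, that $m_k\ne0$ for every~$k$. Since $M=\bigcup_\beta F^{(I)}_\beta M$, let $\beta_k$ be the least ordinal with $m_k\in F^{(I)}_{\beta_k}M$. If $\beta_k=0$ then $Im_k=0$, hence $m_{k+1}=s_km_k=0$, a contradiction; so $\beta_k>0$, and then $m_k\in F^{(I)}_{\beta_k}M$ means $Im_k\subset\bigcup_{\gamma<\beta_k}F^{(I)}_\gamma M$, so $m_{k+1}\in F^{(I)}_\gamma M$ for some $\gamma<\beta_k$ and therefore $\beta_{k+1}<\beta_k$. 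Thus $\beta_0>\beta_1>\beta_2>\dotsb$ would be an infinite strictly decreasing sequence of ordinals, which is impossible. Hence $m_k=0$ for some $k\ge1$, as required.

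For~(b), the idea is to arrange the products into a tree and exploit well-foundedness. Fix $m\in M$, which one may take nonzero, and let $\mathsf T$ be the tree whose vertices are the finite sequences $\sigma=(g_0,\dotsc,g_{k-1})$ with all $g_i\in G$ such that the product $x_\sigma:=g_{k-1}\dotsm g_0m$ (with $x_\varnothing=m$) is nonzero in $M$; the children of a vertex $\sigma$ are those one-step extensions $\sigma g$ --- that is, $(g_0,\dotsc,g_{k-1},g)$ --- with $g\in G$ that again belong to $\mathsf T$, and the root is~$\varnothing$. Note $x_{\sigma g}=gx_\sigma$. The hypothesis of~(b) says precisely that $\mathsf T$ has no infinite branch, i.~e., $\mathsf T$ is well-founded; it therefore carries an ordinal rank function $\rho$, defined by transfinite recursion via $\rho(\sigma)=\sup\{\,\rho(\sigma g)+1\mid g\in G,\,\sigma g\in\mathsf T\,\}$. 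I would then prove, by transfinite induction on $\rho(\sigma)$, that $x_\sigma\in F^{(I)}_{\rho(\sigma)}M$ for every $\sigma\in\mathsf T$. If $\rho(\sigma)=0$, then $\sigma$ is a leaf, so $gx_\sigma=x_{\sigma g}=0$ for every $g\in G$, whence $Ix_\sigma=0$ (because $G$ generates $I$) and $x_\sigma\in F^{(I)}_0M$. If $\rho(\sigma)>0$, then for each $g\in G$ either $\sigma g\notin\mathsf T$, so $gx_\sigma=0$, or $\sigma g\in\mathsf T$ with $\rho(\sigma g)<\rho(\sigma)$, so $gx_\sigma=x_{\sigma g}\in F^{(I)}_{\rho(\sigma g)}M$ by the inductive hypothesis; in either case $gx_\sigma\in\bigcup_{\gamma<\rho(\sigma)}F^{(I)}_\gamma M$. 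Since $G$ generates $I$, every element of $Ix_\sigma$ is a $T$\+linear combination of elements $gx_\sigma$ with $g\in G$, so $Ix_\sigma\subset\bigcup_{\gamma<\rho(\sigma)}F^{(I)}_\gamma M$, and therefore $x_\sigma\in F^{(I)}_{\rho(\sigma)}M$ by the definition of the filtration $F^{(I)}$. Applying this with $\sigma=\varnothing$ gives $m\in F^{(I)}_{\rho(\varnothing)}M$; as $m\in M$ was arbitrary, $M=\bigcup_\beta F^{(I)}_\beta M$ is quite $I$\+torsion.

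The main obstacle is step~(b), and it is genuinely transfinite in nature: when the ideal $I$ is infinitely generated the filtration $F^{(I)}$ can climb far beyond $\omega$, so one cannot argue naively that $Im\subset\bigcup_\beta F^{(I)}_\beta M$ already forces $m\in F^{(I)}_\beta M$ for a single ordinal~$\beta$ --- it is the well-foundedness of the tree of nonzero products that pins down the correct ordinal (and, as a bonus, exhibits $\rho(\varnothing)$ as an explicit upper bound for the order of~$m$). An alternative approach to~(b) is by contradiction, building a bad sequence $g_0,g_1,g_2,\dotsc\in G$ with all products $g_n\dotsm g_0m$ nonzero from the fact that $\Gamma_I^\qu(M/\Gamma_I^\qu(M))=0$ --- which holds because the quite $I$\+torsion $T$\+modules form a localizing subcategory of $T\Modl$, by the same argument already used for $\Gamma_I$ --- but the tree argument is cleaner and more informative.
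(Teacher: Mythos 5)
Your proof is correct. Part~(a) (the ``only if'' direction) is essentially identical to the paper's: fixing $m$ and a sequence, both arguments descend through the ordinals $\beta_0>\beta_1>\dotsb$ attached to the successive products and invoke well-foundedness of the ordinals to force termination. Part~(b) (the ``if'' direction, with generators), however, is organized differently from the paper. The paper argues by contraposition: assuming $m$ is not quite $I$\+torsion, it recursively chooses generators $s_0,s_1,\dotsc\in G$ such that every partial product $s_n\dotsm s_0 m$ again fails to be quite $I$\+torsion (the key step being that if every $s m$, $s\in G$, lay in some $F^{(I)}_{\beta_s}M$, then taking an ordinal $\alpha$ majorizing the set $\{\beta_s\}$ would force $m\in F^{(I)}_\alpha M$). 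You instead argue directly, building the well-founded tree of all $G$\+sequences whose partial products remain nonzero and showing by transfinite induction on the rank $\rho(\sigma)$ that $x_\sigma\in F^{(I)}_{\rho(\sigma)}M$. Both are sound; the same set-of-ordinals supremum that drives the paper's recursion is what makes your $\rho$ well-defined, so the underlying combinatorics is the same, but your phrasing is constructive rather than by contradiction and yields the explicit bound $m\in F^{(I)}_{\rho(\varnothing)}M$, which the paper's version does not make visible. The paper's argument is shorter; yours is more informative.
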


\begin{proof}
 ``If'': for the sake of contradiction, assume that there is
an element $m\in M$ such that $m\notin F^{(I)}_\alpha M$ for every
ordinal~$\alpha$.
 Then there exists an element $s\in G$ such that $sm\notin
F^{(I)}_\alpha M$ for every ordinal~$\alpha$.
 Indeed, if for every $s\in G$ there existed an ordinal~$\beta_s$
such that $sm\in F^{(I)}_{\beta_s}M$ then, choosing an ordinal~$\alpha$
such that $\alpha>\beta_s$ for all $s\in G$, we would have
$sm\in\bigcup_{\beta<\alpha} F^{(I)}_\beta M$ for all $s\in G$.
 This would imply the inclusion $Im\subset\bigcup_{\beta<\alpha}
F^{(I)}_\beta M$, and it would follow that $m\in F^{(I)}_\alpha M$
by the definition.
 Now put $s_0=s$.
 Similarly, there exists an element $s_1\in G$ such that $s_1s_0m\notin
F^{(I)}_\alpha M$ for every ordinal~$\alpha$.
 Proceeding in this way, we construct a sequence of elements $s_0$, 
$s_1$, $s_2$,~\dots~$\in G$ such that $s_ns_{n-1}\dotsm s_1s_0m\ne0$
in $M$ for all $n\ge0$.

 ``Only if'': suppose given an element $m\in M$ and a sequence of
elements $s_0$, $s_1$, $s_2$,~\dots~$\in I$.
 Let~$\alpha$ be an ordinal such that $m\in F^{(I)}_\alpha M$.
 If $\alpha>0$ then, by the definition of $F^{(I)}_\alpha M$, there
exists an ordinal $\beta_0<\alpha$ such that
$s_0m\in F^{(I)}_{\beta_0}M$.
 Similarly, if $\beta_0>0$ then there exists an ordinal
$\beta_1<\beta_0$ such that $s_1s_0m\in F^{(I)}_{\beta_1}M$.
 As any descending chain of ordinals terminates, we can conclude
that there exists an integer $n\ge0$ such that $\beta_n=0$.
 Then $s_ns_{n-1}\dotsm s_0m\in F^{(I)}_0M$, and it follows that
$s_{n+1}s_ns_{n-1}\dotsm s_0m=0$ in~$M$.
\end{proof}

 It follows from Lemma~\ref{s-torsion-extension-closed} or
Lemma~\ref{quite-torsion-T-nilpotence} that all quite $I$\+torsion
$T$\+modules are $I$\+torsion.
 So, in view of Lemma~\ref{torsion-finitely-generated-ideal}, for
a finitely generated ideal $I\subset T$, all the three classes of
$I$\+torsion, quite $I$\+torsion, and strongly $I$\+torsion $T$\+modules
coincide.
 For an infinitely generated ideal $I$, an $I$\+torsion $T$\+module
\emph{need not} be quite $I$\+torsion, as the following counterexample
demonstrates.

\begin{ex} \label{torsion-not-quite-torsion-counterex}
 Let $T=k[x_1,x_2,x_3,\dotsc]$ be the ring of polynomials in countably
many variables over a field~$k$, and let $I\subset T$ be the ideal
generated by the elements $x_1$, $x_2$, $x_3$,~\dots\ in~$T$.
 Consider the ideal $J\subset T$ generated by the elements
$x_1$, $x_2^2$, $x_3^3$,~\dots\ in $T$, or alternatively, the ideal
$J'\subset T$ generated by the elements $x_1^2$, $x_2^2$,
$x_3^2$,~\dots\ in~$T$.
 Then the quotient $T$\+module $M=T/J$ or $M=T/J'$ is $I$\+torsion (as
one can see from Lemma~\ref{torsion-checked-on-generators-of-ideal}).
 However, the $T$\+module $M$ is \emph{not} quite $I$\+torsion.
 In fact, in both the cases $M=T/J$ or $M=T/J'$, \emph{there are no
nonzero elements annihilated by~$I$ in~$M$}.
 So $F_0^{(I)}M=0$, and consequently, $F_\beta^{(I)}M=0$ for all
ordinals~$\beta$.
\end{ex}

 Another counterexample showing that $I$\+torsion $T$\+modules need
not be quite $I$\+torsion when the ideal $I$ is infinitely generated
can be extracted from Example~\ref{D-infinity-operator-example} below.

 Thus we have proved the strict inclusions
\begin{equation} \label{torsion-modules-strict-inclusions-repeated}
 T\Modl_{I\tors}^\st\varsubsetneq T\Modl_{I\tors}^\qu
 \varsubsetneq T\Modl_{I\tors}
\end{equation}
in the general case, as promised
in~\eqref{torsion-modules-inclusions-inequalities}.

 For any $T$\+module $M$ and any ideal $I\subset T$, we denote by
$\Gamma_I^\qu(M)$ the (obviously unique) maximal quite $I$\+torsion
submodule in~$M$.
 So we have $\Gamma_I^\qu(M)=\bigcup_\beta F^{(I)}_\beta M$ (where
the direct union is taken over all ordinals~$\beta$).

 Since the full subcategory $T\Modl_{I\tors}^\qu$ is closed under
extensions in $T\Modl$, the quotient module $M/\Gamma_I^\qu(M)$ has
no nonzero quite $I$\+torsion elements.
 In other words, we have $\Gamma_I^\qu(M/\Gamma_I^\qu(M))=0$ for
any $T$\+module $M$ and any ideal $I\subset T$.

 The argument supporting the assertions in the previous paragraph is
similar to the one in Section~\ref{torsion-modules-subsecn}.
 Any quite $I$\+torsion element in a $T$\+module spans a quite
$I$\+torsion cyclic submodule.
 Let $N\subset M/\Gamma_I^\qu(M)$ be a quite $I$\+torsion
$T$\+submodule in $M/\Gamma_I^\qu(M)$.
 Denote by $L$ the preimage of $N$ under the surjective $T$\+module
map $M\rarrow M/\Gamma_I^\qu(M)$.
 Then the short exact sequence of $T$\+modules $0\rarrow\Gamma_I^\qu(M)
\rarrow L\rarrow N\rarrow0$ with quite $I$\+torsion $T$\+modules
$\Gamma_I^\qu(M)$ and $N$ implies that $L$ is also a quite $I$\+torsion
$T$\+module (following the arguments above in this section based on
Lemma~\ref{Fil-of-quotient-closed}).
 Now we have $\Gamma_I^\qu(M)\subset L\subset M$, and $\Gamma_I^\qu(M)$
is the maximal quite $I$\+torsion submodule of~$M$.
 Thus $L=\Gamma_I^\qu(M)$ and $N=0$.

\begin{lem} \label{Gamma-I-qu-left-exact}
 The functor\/ $\Gamma_I^\qu\:T\Modl\rarrow T\Modl$ is left exact.
 In particular, for any $T$\+module $M$ and any $T$\+submodule
$N\subset M$ one has\/ $\Gamma_I^\qu(N)=N\cap\Gamma_I^\qu(M)$.
 Moreover, for every ordinal~$\beta$ one has
$F^{(I)}_\beta N=N\cap F^{(I)}_\beta M$.
\end{lem}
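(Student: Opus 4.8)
The plan is to dispatch the three assertions in the order in which they depend on one another: first the left exactness, deduced formally exactly as in Lemma~\ref{Gamma-I-left-exact}; then the equality $F^{(I)}_\beta N=N\cap F^{(I)}_\beta M$, proved by transfinite induction on~$\beta$; and finally the equality $\Gamma_I^\qu(N)=N\cap\Gamma_I^\qu(M)$, which will drop out twice over (once from left exactness, once by taking the union over all~$\beta$ in the induction).

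For left exactness I would argue as before. The full subcategory $T\Modl_{I\tors}^\qu\subset T\Modl$ is closed under quotients and direct sums, so for any $T$\+module $M$ the sum of all quite $I$\+torsion submodules of $M$ is again quite $I$\+torsion; hence $\Gamma_I^\qu(M)$ is a well-defined maximal quite $I$\+torsion submodule, and $\Hom_T(X,\Gamma_I^\qu(M))=\Hom_T(X,M)$ for every quite $I$\+torsion $T$\+module~$X$, since the image of any map $X\rarrow M$ is a quotient of~$X$. Thus $\Gamma_I^\qu$ is right adjoint to the exact inclusion functor $T\Modl_{I\tors}^\qu\rarrow T\Modl$, hence left exact. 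Applying $\Gamma_I^\qu$ to $0\rarrow N\rarrow M\rarrow M/N\rarrow0$ yields a left exact sequence $0\rarrow\Gamma_I^\qu(N)\rarrow\Gamma_I^\qu(M)\rarrow\Gamma_I^\qu(M/N)$; since the natural map $\Gamma_I^\qu(M/N)\rarrow M/N$ is injective, this forces $\Gamma_I^\qu(N)=\ker(\Gamma_I^\qu(M)\rarrow M/N)=N\cap\Gamma_I^\qu(M)$.

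The substantive point is $F^{(I)}_\beta N=N\cap F^{(I)}_\beta M$, which I would prove by transfinite induction on~$\beta$. The case $\beta=0$ is immediate, since $\{m\in N\mid Im=0\}=N\cap\{m\in M\mid Im=0\}$. For the inductive step, assume $F^{(I)}_\gamma N=N\cap F^{(I)}_\gamma M$ for all $\gamma<\alpha$; taking the union gives $\bigcup_{\gamma<\alpha}F^{(I)}_\gamma N=N\cap\bigcup_{\gamma<\alpha}F^{(I)}_\gamma M$. The key observation is that for $m\in N$ one automatically has $Im\subset N$, because $N$ is a $T$\+submodule; therefore the condition ``$Im\subset\bigcup_{\gamma<\alpha}F^{(I)}_\gamma N$'' is equivalent to ``$Im\subset\bigcup_{\gamma<\alpha}F^{(I)}_\gamma M$'', i.e., to $m\in F^{(I)}_\alpha M$. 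Hence $F^{(I)}_\alpha N=\{m\in N\mid m\in F^{(I)}_\alpha M\}=N\cap F^{(I)}_\alpha M$, completing the induction. Passing to the union over all ordinals reproves $\Gamma_I^\qu(N)=N\cap\Gamma_I^\qu(M)$; and the same induction restricted to finite ordinals $\beta=n$ establishes the corresponding assertion of Lemma~\ref{Gamma-I-st-left-exact} that was deferred to here.

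I do not expect a genuine obstacle: the argument is a routine transfinite induction, and the only step worth flagging is the elementary remark that $Im\subseteq N$ for $m\in N$, which is precisely what makes the defining condition for $F^{(I)}_\alpha$ ``internal'' to $N$ and hence compatible with intersecting with~$N$.
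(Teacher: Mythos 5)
Your proof is correct and essentially matches the paper's. The left-exactness and the identity $\Gamma_I^\qu(N)=N\cap\Gamma_I^\qu(M)$ are obtained via the same right-adjoint argument the paper invokes by reference to Lemma~\ref{Gamma-I-left-exact}. For the final assertion the paper offers two routes: an adjoint-functor argument (the functor $M\longmapsto\bigcup_{\beta<\alpha}F^{(I)}_\beta M$ is right adjoint to the inclusion of the hereditary pretorsion class of $T$-modules $L$ with $L=\bigcup_{\beta<\alpha}F^{(I)}_\beta L$) and the ``directly from the definition'' route. Your transfinite induction, whose only nontrivial point is the observation that $Im\subset N$ for $m\in N$, is precisely a careful write-up of that direct route. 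Nothing is missing, and your argument also cleanly delivers the deferred assertion of Lemma~\ref{Gamma-I-st-left-exact}, as you note.
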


\begin{proof}
 The proof of the first two assertions is completely similar to
that of Lemmas~\ref{Gamma-I-left-exact}
and~\ref{Gamma-I-st-left-exact}.
 To prove the final assertion, one can observe that, for any
ordinal~$\alpha$, the functor $M\longmapsto\bigcup_{\beta<\alpha}
F^{(I)}_\beta M$ is left exact as the right adjoint functor to
the identity inclusion of the full subcategory of $T$\+modules $L$
for which $L=\bigcup_{\beta<\alpha}F^{(I)}_\beta L$ (which is also
a hereditary pretorsion class and an abelian category) into $T\Modl$.
 Alternatively, the assertion is easy to see directly from
the definition.
\end{proof}

\Section{Three Classes of Quasi-Modules}

 A more common terminology for what we call (\emph{strong})
\emph{quasi-modules} is ``differential
bimodules''~\cite[Section~1.1]{BB}.
 We prefer to use the terms ``quasi-modules'' and ``quasi-algebras''
(with various qualifiers) in order to avoid a confusion with
DG\+modules and DG\+algebras in the contexts such as
in~\cite[Sections~2.3\+-2.4]{Pedg} and~\cite{Pdomc}.

 Let $K\rarrow R$ be a homomorphism of commutative rings.
 We will denote by $R\bMod R$ the abelian category of
$R$\+$R$\+bimodules.

 We will say that an $R$\+$R$\+bimodule $B$ is
an \emph{$R$\+$R$\+bimodule over~$K$} if the left and right actions
of $K$ in $B$ agree.
 The category of $R$\+$R$\+bimodules over $K$ will be denoted by
$R_K\bMod{}_KR=(R\ot_KR)\Modl\subset R\bMod R$.

 Put $T=R\ot_KR$, so $R_K\bMod{}_KR=T\Modl$.
 Denote by $I\subset T$ the kernel ideal of the natural (multiplication)
ring homomorphism $R\ot_KR\rarrow R$.

\begin{lem} \label{diagonal-ideal-generators}
 Let $r_j\in R$ be a set of elements generating $R$ as
a unital $K$\+algebra.
 Then the ideal $I\subset T$ is generated by the elements
$r_j\ot1-1\ot r_j\in R\ot_KR$.
\end{lem}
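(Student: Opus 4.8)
The plan is to prove the lemma in two steps: first recall that the diagonal ideal $I$ is generated, as an ideal of $T$, by the elements $r\ot1-1\ot r$ ranging over \emph{all} $r\in R$; then show that each such element already lies in the ideal $I'\subset T$ generated by the chosen elements $r_j\ot1-1\ot r_j$, using the hypothesis that the $r_j$ generate $R$ as a unital $K$\+algebra. The inclusion $I'\subset I$ is clear, since the multiplication map $R\ot_KR\rarrow R$ sends each $r\ot1-1\ot r$ to $r-r=0$; all the work is in the opposite inclusion.

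For the first step, given $\xi=\sum_i a_i\ot b_i\in I$, so that $\sum_i a_ib_i=0$ in $R$, I would write
$$\xi=\sum_i a_i\ot b_i-\Bigl(\sum_i a_ib_i\Bigr)\ot1=\sum_i(a_i\ot1)\,(1\ot b_i-b_i\ot1),$$
which exhibits $\xi$ as lying in the ideal generated by the elements $r\ot1-1\ot r$ (note $1\ot b_i-b_i\ot1=-(b_i\ot1-1\ot b_i)$). So $I$ is generated by $\{\,r\ot1-1\ot r\mid r\in R\,\}$, and it remains to check that every $r\ot1-1\ot r$ belongs to $I'$.

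For the second step I would argue by induction on the way $r\in R$ is built from the generators $r_j$. If $r$ lies in the image of $K$ in $R$, then $r\ot1=1\ot r$ in $R\ot_KR$ by the very definition of the tensor product over $K$, so $r\ot1-1\ot r=0\in I'$. The assignment $r\longmapsto r\ot1-1\ot r$ is clearly additive and $K$\+linear, so it suffices to treat monomials in the $r_j$; and for a product one has the Leibniz-type identity
$$ab\ot1-1\ot ab=(a\ot1)\,(b\ot1-1\ot b)+(1\ot b)\,(a\ot1-1\ot a),$$
so that $a\ot1-1\ot a\in I'$ and $b\ot1-1\ot b\in I'$ imply $ab\ot1-1\ot ab\in I'$, since $I'$ is an ideal. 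Starting from the generators $r_j$ and the base case of the empty product ($1\ot1-1\ot1=0$), iterating this identity shows $r\ot1-1\ot r\in I'$ for every monomial, hence for every $r\in R$. Together with the first step this gives $I=I'$. There is no genuine obstacle here; the only points needing a little care are the vanishing $r\ot1-1\ot r=0$ for $r$ coming from $K$ — which is precisely why the correct hypothesis is that the $r_j$ generate $R$ as a unital $K$\+algebra rather than merely as a ring — and getting the Leibniz identity above in the right form.
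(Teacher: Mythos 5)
Your proof is correct and follows essentially the same two-step strategy as the paper: first show $I$ is generated by $\{\,r\ot1-1\ot r\mid r\in R\,\}$, then reduce to the chosen generators $r_j$ via $K$\+linearity and the Leibniz-type identity $ab\ot1-1\ot ab=(a\ot1)(b\ot1-1\ot b)+(1\ot b)(a\ot1-1\ot a)$, which is the same identity the paper uses (written there as $pq\ot1-1\ot pq=p(q\ot1-1\ot q)+(p\ot1-1\ot p)q$). The only cosmetic difference is in the first step: you write out $\xi=\sum_i a_i\ot b_i\in I$ explicitly as $\sum_i(a_i\ot1)(1\ot b_i-b_i\ot1)$, whereas the paper invokes the description of the kernel of $R\ot_KR\rarrow R\ot_RR$ as generated by $pq\ot r-p\ot qr$; both yield the same conclusion.
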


\begin{proof}
 By the definition of the tensor product, the kernel of the map
$R\ot_KR\rarrow R\ot_RR=R$, viewed as a $K$\+module (or an abelian
group), is generated by the elements $pq\ot r-p\ot qr\in R\ot_KR$,
where $p$, $q$, $r\in R$.
 Therefore, the $R$\+$R$\+bimodule $I$ is generated by the elements
$q\ot1-1\ot q\in R\ot_KR$, where $q\in R$.
 For any element $l\in K$, we have $l\ot1-1\ot l=0$ in $R\ot_KR$.
 It remains to compute that, for any two elements $p$ and $q\in R$,
the element $pq\ot1-1\ot pq=(pq\ot1-p\ot q)+(p\ot q-1\ot pq)=
p(q\ot1-1\ot q)+(p\ot1-1\ot p)q$ belongs to the $R$\+$R$\+subbimodule
generated by the elements $p\ot1-1\ot p$ and $q\ot1-1\ot q$
in $R\ot_KR$.
\end{proof}

 Given two $R$\+$R$\+bimodules $A$ and $B$, we denote by
$A\ot_RB$ the tensor product of the $R$\+module $A$ with its right
$R$\+module structure and the $R$\+module $B$ with its left $R$\+module
structure.
 The tensor product $A\ot_RB$ is viewed as an $R$\+$R$\+bimodule
with the left action of $R$ in $A\ot_RB$ induced by the left action
of $R$ in $A$ and the right action of $R$ in $A\ot_RB$ induced by
the right action of $R$ in~$B$.
 This is the usual and the most intuitive notation.

\subsection{Quasi-modules} \label{quasi-modules-subsecn}
 Let $R$ be a commutative ring and $B$ be an $R$\+$R$\+bimodule.
 Given an element $r\in R$, we define by induction
\begin{itemize}
\item $F^{(r)}_nB=0$ for all integers $n<0$;
\item $F^{(r)}_nB=\{\,b\in B\mid rb-br\in F^{(r)}_{n-1}B\,\}$
for all integers $n\ge0$.
\end{itemize}
 So $F^{(r)}_nB$ is an $R$\+$R$\+subbimodule in $B$, and
$F^{(r)}_{n-1}B\subset F^{(r)}_nB$ for all $n\ge0$.

 For a pair of elements $r\in R$ and $b\in B$, denote for brevity by
$\theta_r(b)$ the element $rb-br\in B$,
\begin{equation} \label{theta-defined}
 \theta_r(b)=rb-br.
\end{equation}
 Then one has $b\in F^{(r)}_nB$ if and only if $(\theta_r)^{n+1}(b)=0$
in~$B$.

 Given a homomorphism of commutative rings $K\rarrow R$ such that
$B$ is an $R$\+$R$\+bimodule over $K$, we can consider $B$ is a module
over the ring $T=R\ot_KR$.
 Put $s=r\ot1-1\ot r\in T$.
 Then the notation $F^{(r)}_nB$ stands for what would be denoted by
$F^{(s)}_nB$ in the notation of Section~\ref{torsion-modules-subsecn}.

 We will say that an $R$\+$R$\+bimodule $B$ is a \emph{quasi-module}
over $R$ if $B=\bigcup_{n\ge0} F^{(r)}_nB$ for every $r\in R$.
 The same terminology is used in~\cite[Section~1.4]{Pdomc}.
 So $B$ is a quasi-module over $R$ if and only if, for every $b\in B$
and $r\in R$, there exists an integer $n\ge0$ such that
$(\theta_r)^{n+1}(b)=0$.
 In other words, the unique maximal quasi-module subbimodule of
an arbitrary $R$\+$R$\+bimodule $B$ can be constructed as
the intersection $\bigcap_{r\in R}\bigcup_{n\ge0} F^{(r)}_nB\subset B$;
and $B$ is a quasi-module if and only if
$B=\bigcap_{r\in R}\bigcup_{n\ge0} F^{(r)}_nB$.

\begin{cor} \label{quasi-module-torsion-module}
 Let $K\rarrow R$ be a homomorphism of commutative rings and $B$ be
an $R$\+$R$\+bimodule over~$K$.
 Put $T=R\ot_KR$, and let $I\subset T$ be the kernel ideal of
the ring homomorphism $R\ot_KR\rarrow R$.
 Then $B$ is a quasi-module over $R$ if and only if $B$ is
an $I$\+torsion $T$\+module.
\end{cor}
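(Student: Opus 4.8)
The plan is to deduce the corollary by a direct combination of Lemma~\ref{diagonal-ideal-generators} and Lemma~\ref{torsion-checked-on-generators-of-ideal}, once the two notations $F^{(r)}_\bullet$ and $F^{(s)}_\bullet$ have been matched up. The only substantive observation is the one already recorded in Section~\ref{quasi-modules-subsecn}: for $s=r\ot1-1\ot r\in T$ and $b\in B$, the action of $s$ on $b$ in the $T$\+module $B$ is $sb=(r\ot1)b-(1\ot r)b=rb-br=\theta_r(b)$, hence $s^{n+1}b=(\theta_r)^{n+1}(b)$ for all $n\ge0$ by induction. Therefore $F^{(r)}_nB$ coincides with the submodule $F^{(s)}_nB$ of elements of $B$ annihilated by $s^{n+1}$, and the condition $B=\bigcup_{n\ge0}F^{(r)}_nB$ from the definition of a quasi-module is literally the condition that $B$ be an $s$\+torsion $T$\+module for $s=r\ot1-1\ot r$.

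Granting this, I would argue both implications as follows. Fix a set of elements $r_j\in R$ generating $R$ as a unital $K$\+algebra, and put $s_j=r_j\ot1-1\ot r_j\in T$; by Lemma~\ref{diagonal-ideal-generators} these $s_j$ generate the ideal $I\subset T$. For the ``only if'' direction: if $B$ is a quasi-module over $R$, then taking $r=r_j$ in the definition shows that $B$ is $s_j$\+torsion for every $j$, so by Lemma~\ref{torsion-checked-on-generators-of-ideal} the $T$\+module $B$ is $s$\+torsion for every $s\in I$, i.e., $B$ is $I$\+torsion. For the ``if'' direction: if $B$ is an $I$\+torsion $T$\+module, then $B$ is $s$\+torsion for every $s\in I$; since $r\ot1-1\ot r\in I$ for each $r\in R$, it follows that $B=\bigcup_{n\ge0}F^{(r)}_nB$ for every $r\in R$, which is precisely the statement that $B$ is a quasi-module over~$R$.

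There is no real obstacle in this argument: the two lemmas do all the work, and the converse direction of Lemma~\ref{torsion-checked-on-generators-of-ideal} that is used (generators of $I$ lie in $I$) is trivial. The single point that deserves to be written out carefully is the identification $sb=\theta_r(b)$ and the consequent equality $F^{(r)}_nB=F^{(s)}_nB$, but this has already been noted in Section~\ref{quasi-modules-subsecn}, so the proof of the corollary reduces to citing it together with the two lemmas.
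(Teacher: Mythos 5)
Your proof is correct and follows exactly the route the paper takes: the paper's proof of this corollary is a one-line citation of Lemma~\ref{torsion-checked-on-generators-of-ideal} and Lemma~\ref{diagonal-ideal-generators}, relying on the identification $F^{(r)}_nB=F^{(s)}_nB$ for $s=r\ot1-1\ot r$ recorded in Section~\ref{quasi-modules-subsecn}. You have simply written out what that citation leaves implicit.
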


\begin{proof}
 Follows from Lemmas~\ref{torsion-checked-on-generators-of-ideal}
and~\ref{diagonal-ideal-generators}.
\end{proof}

 Notice that the property of an $R$\+$R$\+bimodule to be a quasi-module
over $R$, by the definition, does not depend on any base ring~$K$.
 On the other hand, the criterion in terms of being an $I$\+torsion
$T$\+module, as per Corollary~\ref{quasi-module-torsion-module},
of course, depends on~$K$.
 So one can use any choice of a commutative ring $K$ with a ring
homomorphism $K\rarrow R$ for which $B$ is an $R$\+$R$\+bimodule
over $K$ to check the quasi-module condition
using Corollary~\ref{quasi-module-torsion-module}, and the result
will be the same.

 In view of the discussion in Section~\ref{torsion-modules-subsecn},
it follows from Corollary~\ref{quasi-module-torsion-module}
(e.~g., for the base ring $K=\boZ$) that the full subcategory of
quasi-modules is closed under subobjects, quotients, extensions, and
infinite direct sums in $R\bMod R$.
 So quasi-modules form a localizing subcategory, or in the terminology
of~\cite[Sections~VI.2\+-3]{Ste}, a hereditary torsion class in
$R\bMod R$.

\begin{cor}
 Let $K\rarrow R$ be a homomorphism of commutative rings and $B$ be
an $R$\+$R$\+bimodule over~$K$.
 Put $T=R\ot_KR$, and let $I\subset T$ be the kernel ideal of
the ring homomorphism $R\ot_KR\rarrow R$.
 Then $B$ is a quasi-module over $R$ if and only if $B$ as a $T$\+module
is supported set-theoretically in the image of the diagonal closed
immersion of affine schemes\/ $\Spec R\rarrow
\Spec R\times_{\Spec K}\Spec R=\Spec T$.
\end{cor}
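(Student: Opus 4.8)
The plan is to obtain this statement by combining Corollary~\ref{quasi-module-torsion-module} with Lemma~\ref{torsion-module-support}, the only additional input being the scheme-theoretic identification of the closed subset $\Spec T/I\subset\Spec T$ with the image of the diagonal closed immersion. No genuine difficulty is expected; the entire argument is formal bookkeeping.

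First I would recall that, by Corollary~\ref{quasi-module-torsion-module}, the $R$\+$R$\+bimodule $B$ is a quasi-module over $R$ if and only if $B$ is an $I$\+torsion $T$\+module. Then, by Lemma~\ref{torsion-module-support}, the latter condition holds if and only if the support $\Supp_T B$ is contained in the closed subset $\Spec T/I\subset\Spec T$. So it only remains to identify $\Spec T/I$ with the image of the diagonal morphism $\Spec R\rarrow\Spec R\times_{\Spec K}\Spec R$.

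Next I would observe that, under the standard identification of fibre products of affine schemes $\Spec R\times_{\Spec K}\Spec R=\Spec(R\ot_KR)=\Spec T$, the diagonal morphism $\Delta\:\Spec R\rarrow\Spec T$ is precisely the morphism of affine schemes induced by the multiplication ring homomorphism $\mu\:R\ot_KR\rarrow R$, $a\ot b\longmapsto ab$. The homomorphism $\mu$ is surjective (its image contains all the elements $r\ot1$, $r\in R$), hence $\Delta$ is a closed immersion whose image is the vanishing locus of $\ker\mu=I$, that is $\Spec(T/I)=\Spec T/I\subset\Spec T$. Putting the three observations together yields the corollary. The only point to verify carefully is that the diagonal closed immersion is cut out by the diagonal ideal $I=\ker(R\ot_KR\rarrow R)$, but this is immediate from the construction of the fibre product of affine schemes and the definition of $I$ adopted in Section~\ref{quasi-modules-subsecn}.
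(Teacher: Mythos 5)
Your argument is correct and matches the paper's own proof, which simply cites Lemma~\ref{torsion-module-support} and Corollary~\ref{quasi-module-torsion-module}; you have just spelled out the routine identification of $\Spec T/I$ with the image of the diagonal that the paper leaves implicit.
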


\begin{proof}
 Follows from Lemma~\ref{torsion-module-support}
and Corollary~\ref{quasi-module-torsion-module}.
\end{proof}

\begin{prop} \label{quasi-modules-tensor-product}
 Let $A$ and $B$ be two quasi-modules over~$R$.
 Then the $R$\+$R$\+bimodule of tensor product $A\ot_RB$ is also
a quasi-module over~$R$.
\end{prop}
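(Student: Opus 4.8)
The plan is to work directly with the operators $\theta_r$ of~\eqref{theta-defined} rather than through the torsion-module description of Corollary~\ref{quasi-module-torsion-module}. Fix $r\in R$. On $A\ot_RB$ the operator $\theta_r$ acts by $\theta_r(a\ot b)=ra\ot b-a\ot br$. Writing $\theta_r^A\:A\rarrow A$ and $\theta_r^B\:B\rarrow B$ for the maps $a\longmapsto ra-ar$ and $b\longmapsto rb-br$ on the bimodules $A$ and $B$ themselves, I would introduce the two endomorphisms $\phi=\theta_r^A\ot_R\id_B$ and $\psi=\id_A\ot_R\theta_r^B$ of $A\ot_RB$. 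Here one first checks, using that $R$ is commutative, that $\theta_r^A$ is a homomorphism of right $R$\+modules and $\theta_r^B$ is a homomorphism of left $R$\+modules, so that $\phi$ and $\psi$ are indeed well-defined; then the tensor relation $ar\ot b=a\ot rb$ in $A\ot_RB$ gives $\phi+\psi=\theta_r$ by a one-line computation on simple tensors.

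The key point is that $\phi$ and $\psi$ commute, since $\phi\psi=\theta_r^A\ot_R\theta_r^B=\psi\phi$; moreover $\phi^k=(\theta_r^A)^k\ot_R\id_B$ and $\psi^k=\id_A\ot_R(\theta_r^B)^k$ for all $k\ge0$. Now take an arbitrary element $x=\sum_{i=1}^N a_i\ot b_i\in A\ot_RB$. Since $A$ is a quasi-module over $R$, for each $i$ there is an integer $p_i\ge0$ with $(\theta_r^A)^{p_i+1}(a_i)=0$; setting $p=\max_i p_i$ we obtain $\phi^{p+1}(x)=\sum_i(\theta_r^A)^{p+1}(a_i)\ot b_i=0$. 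Likewise, using that $B$ is a quasi-module, there is an integer $q\ge0$ with $\psi^{q+1}(x)=0$.

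Finally I would expand $\theta_r^{\,p+q+1}=(\phi+\psi)^{p+q+1}=\sum_j\binom{p+q+1}{j}\phi^j\psi^{\,p+q+1-j}$ by the binomial formula, which is legitimate because $\phi$ and $\psi$ commute. In each summand either $j\ge p+1$, in which case $\phi^j\psi^{\,p+q+1-j}(x)=\psi^{\,p+q+1-j}\phi^j(x)=0$ (moving $\phi^j$ past $\psi^{\,p+q+1-j}$ and using $\phi^{p+1}(x)=0$), or $j\le p$, in which case $p+q+1-j\ge q+1$ and $\psi^{\,p+q+1-j}(x)=0$; in either case the summand annihilates $x$. Hence $\theta_r^{\,p+q+1}(x)=0$, i.e.\ $x\in F^{(r)}_{p+q}(A\ot_RB)$. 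As $r\in R$ and $x\in A\ot_RB$ were arbitrary, $A\ot_RB$ is a quasi-module over~$R$.

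I do not expect a serious obstacle; the only points requiring care are the verification that $\theta_r^A$ and $\theta_r^B$ are one-sided $R$\+module maps (so that $\phi$ and $\psi$ are defined on the tensor product over~$R$), and the bookkeeping needed to pass from simple tensors to a general finite sum $\sum a_i\ot b_i$ while bounding the exponents for $\phi$ and for $\psi$ simultaneously. One could instead phrase the argument through Corollary~\ref{quasi-module-torsion-module}, splitting $s=r\ot1-1\ot r$ in $T=R\ot_KR$ as a sum of two commuting ``$s$\+like'' operators on the $T$\+module $A\ot_RB$, but the direct bimodule computation above seems cleanest.
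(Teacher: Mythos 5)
Your argument is correct, and it takes a genuinely different (more elementary) route from the paper's. The paper's proof is structural: it introduces the filtration $G^{(r)}_n(A\ot_RB)=\sum_{i+j=n}\im\bigl(F^{(r)}_iA\ot_RF^{(r)}_jB\to A\ot_RB\bigr)$, notes that each successive quotient $G^{(r)}_n/G^{(r)}_{n-1}$ is a quotient of $\bigoplus_{i+j=n}(F^{(r)}_iA/F^{(r)}_{i-1}A)\ot_R(F^{(r)}_jB/F^{(r)}_{j-1}B)$, on which the left and right actions of~$r$ agree, and concludes $G^{(r)}_n\subset F^{(r)}_n(A\ot_RB)$; exhaustiveness of $F^{(r)}$ on $A$ and $B$ then finishes the proof. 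You instead make the underlying algebraic mechanism explicit: the decomposition $\theta_r=\phi+\psi$ into commuting endomorphisms of $A\ot_RB$, with $\phi$ locally nilpotent thanks to $A$ and $\psi$ locally nilpotent thanks to $B$, whereupon the binomial theorem for commuting operators gives $\theta_r^{p+q+1}(x)=0$ directly. This is the same phenomenon the paper's filtration encodes, but your version needs no auxiliary filtration and yields the bound $p+q$ on the $r$\+order of $a\ot b$ in one line. The paper's phrasing has the advantage of generalizing cleanly to the ordinal-filtered case of Proposition~\ref{quite-quasi-modules-tensor-product}, where no binomial theorem is available and one really must argue through well-ordered filtrations, so the author may have chosen the filtration argument here for uniformity of exposition. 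Your verifications that $\theta_r^A$ is right $R$\+linear and $\theta_r^B$ is left $R$\+linear (both using commutativity of~$R$), and the passage from simple tensors to a general finite sum by taking maxima of the exponents, are exactly the points that need care, and you handle them correctly.
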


\begin{proof}
 Given an element $r\in R$, define an increasing filtration $G^{(r)}$
on the $R$\+$R$\+bi\-mod\-ule $A\ot_RB$ by the rule
$$
 G^{(r)}_n(A\ot_RB)=
 \sum\nolimits_{i+j=n}\im(F^{(r)}_iA\ot_R F^{(r)}_jB
 \to A\ot_RB), \qquad n\in\boZ,
$$
where $\im({-})$ stands for the image of the natural map.
 Then, for every $n\ge0$, there is a natural surjective map of
$R$\+$R$\+bimodules
$$
 \bigoplus\nolimits_{i+j=n}
 (F^{(r)}_iA/F^{(r)}_{i-1}A)\ot_R(F^{(r)}_jB/F^{(r)}_{j-1}B)
 \lrarrow G^{(r)}_n(A\ot_RB)/G^{(r)}_{n-1}(A\ot_RB).
$$
 The successive quotient $R$\+$R$\+bimodules
$F^{(r)}_iA/F^{(r)}_{i-1}A$ and $F^{(r)}_jB/F^{(r)}_{j-1}B$ have
the property that the left and right actions of the element $r\in R$
in each of them agree.
 In follows that the left and right actions of~$r$ in the successive
quotient $R$\+$R$\+bimodule $G^{(r)}_n(A\ot_RB)/G^{(r)}_{n-1}(A\ot_RB)$
agree as well.
 Therefore, $G^{(r)}_n(A\ot_RB)\subset F^{(r)}_n(A\ot_RB)$
for all $n\ge0$.
 Furthermore, we have $A\ot_RB=\bigcup_{n\ge0}G^{(r)}_n(A\ot_RB)$,
since $A=\bigcup_{i\ge0}F^{(r)}_iA$ and $B=\bigcup_{j\ge0}F^{(r)}_jB$.
 Thus $A\ot_RB=\bigcup_{n\ge0}F^{(r)}_n(A\ot_RB)$, as desired.
\end{proof}

\subsection{Strong quasi-modules} \label{strong-quasi-modules-subsecn}
 Let $R$ be a commutative ring and $B$ be an $R$\+$R$\+bi\-mod\-ule.
 We define a natural increasing filtration $F$ on $B$ (indexed by
the integers $n\in\boZ$) by the rules
\begin{itemize}
\item $F_nB=0$ for all integers $n<0$;
\item $F_nB=\{\,b\in B\mid rb-br\in F_{n-1}B$ for all $r\in R\,\}$
for all integers $n\ge0$.
\end{itemize}
 So $F_nB$ is an $R$\+$R$\+subbimodule in $B$, and
$F_{n-1}B\subset F_nB$ for all $n\ge0$.
 In particular, $F_0B$ is the (obviously unique) maximal
$R$\+$R$\+subbimodule in $B$ on which the left and right actions
of $R$ agree.

 In the notation of
formula~\eqref{theta-defined} from Section~\ref{quasi-modules-subsecn},
an element $b\in B$ belongs to $F_nB$ if and only if $\theta_{r_n}
\theta_{r_{n-1}}\dotsm\theta_{r_1}\theta_{r_0}(b)=0$ for all elements
$r_0$, $r_1$,~\dots, $r_n\in R$.

 We will say that $B$ is a \emph{strong quasi-module} over $R$ if
$B=\bigcup_{n\ge0}F_nB$.
 What we call strong quasi-modules in this paper were called simply
``quasi-modules'' in the paper~\cite[Section~2.3]{Pedg}.

 Given a homomorphism of commutative rings $K\rarrow R$ such that
$B$ is an $R$\+$R$\+bimodule over $K$, we can consider $B$ as a module
over the ring $T=R\ot_KR$.
 Let $I\subset R$ be the kernel ideal of the natural ring homomorphism
$R\ot_KR\rarrow R$.
 Then the notation $F_nB$ stands for what would be denoted by
$F^{(I)}_nB$ in the notation of Section~\ref{strongly-torsion-subsecn}.
 This is clear from Lemma~\ref{diagonal-ideal-generators}.
 Thus $B$ is a strong quasi-module over $R$ if and only if $B$ is
a strongly $I$\+torsion $T$\+module.

 Let $B$ be an $R$\+$R$\+bimodule over~$K$.
 Assuming that $R$ is a finitely generated $K$\+algebra,
Lemmas~\ref{torsion-finitely-generated-ideal}
and~\ref{diagonal-ideal-generators} imply that $B$ is a quasi-module
over $R$ if and only if $B$ is a strong quasi-module over~$R$.
 In particular, it follows that, for a finitely generated commutative
$K$\+algebra $R$, the class of strong quasi-modules is closed under
extensions in $R_K\bMod{}_KR$.
 For an infinitely generated commutative $K$\+algebra $R$, this
\emph{need not} be the case.

\begin{ex} \label{quasi-module-hrbek-counterex}
 The following example is based on
Example~\ref{torsion-hrbek-counterex}.
 Let $K=k$ be a field and $R=k[x_1,x_2,x_3,\dotsc]$ be the ring of
polynomials in countably many variables over~$k$.
 The ring $T=R\ot_kR$ can be naturally identified with the ring of
polynomials in two countably infinite families of variables,
$T=k[y_1,y_2,y_3,\dotsc;z_1,z_2,z_3,\dotsc]$; under the identification
of $R$\+$R$\+bimodules over~$k$ with $T$\+modules, the left action of
the elements $x_i\in R$ corresponds to the action of the elements
$y_i\in T$, and the right action of the elements $x_i\in R$ corresponds
to the action of the elements $z_i\in T$, \,$i\ge1$.
 So the ring homomorphism $R\ot_kR\rarrow R$ takes $y_i$ and~$z_i$
to~$x_i$, and the kernel ideal $I\subset T$ of this ring homomorphism
is generated by the elements $t_i=y_i-z_i\in T$.

 Let $J\subset T$ be the ideal generated by the polynomials
$t_i^{i+1}\in T$ for all $i\ge1$ and $t_it_j\in T$ for all $i\ne j$,
\,$i$, $j\ge1$.
 So we have $J\subset I\subset T$.
 Then $T/I=R$ is obviously a strong quasi-module over~$R$.
 Let us check that $I/J$ is a strong quasi-module over $R$, i.~e.,
a strongly $I$\+torsion $T$\+module.
 Indeed, we have a $T$\+module direct sum decomposition
$I/J=\bigoplus_{n=1}^\infty I_n/J$, where $I_n=(t_n)+J\subset T$
is the ideal generated by the element $t_n\in T$ and the ideal~$J$.
 Now the $T$\+module $I_n/J$ is annihilated by the power $I^n$ of
the ideal~$I$.
 A direct sum of strongly $I$\+torsion $T$\+modules is a strongly
$I$\+torsion $T$\+module.

 However, the extension $T/J$ of the $R$\+$R$\+bimodules $T/I$ and $I/J$
is \emph{not} a strong quasi-module over~$R$; i.~e., $T/J$ is not
a strongly $I$\+torsion $T$\+module.
 Indeed, the ideal $I/J$ in the ring $T/J$ is not nilpotent: one has
$(I/J)^n\ne0$ for all $n\ge1$.
 So, for any integer $n\ge0$, the element $1\in T/J$ does not belong
to $F_n(T/J)=F_n^{(I)}(T/J)$.
\end{ex}

 Another counterexample showing that the class of strong quasi-modules
over $R$ need not be closed under extensions in $R_K\bMod{}_KR$ when
$R$ is an infinitely generated commutative $K$\+algebra can be
extracted from Example~\ref{D-omega-operator-example} below.

 Clearly, the class of strong quasi-modules over $R$ is closed under
subobjects, quotients, and infinite direct sums in $R\bMod R$.
 In other words, strong quasi-modules form a hereditary pretorsion
class (in the terminology similar to~\cite[Sections~VI.1\+-3]{Ste}).

\begin{prop} \label{strong-quasi-modules-tensor-product}
 Let $A$ and $B$ be two strong quasi-modules over~$R$.
 Then the $R$\+$R$\+bimodule of tensor product $A\ot_RB$ is also
a strong quasi-module over~$R$.
\end{prop}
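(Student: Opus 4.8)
The plan is to mimic the proof of Proposition~\ref{quasi-modules-tensor-product} above, but now using the integer-indexed filtration $F$ from Section~\ref{strong-quasi-modules-subsecn} in place of the element-wise filtrations $F^{(r)}$. So I would first define, for strong quasi-modules $A$ and $B$, an increasing filtration $G$ on $A\ot_RB$ by the convolution formula
\[
 G_n(A\ot_RB)=\sum\nolimits_{i+j=n}\im(F_iA\ot_R F_jB\to A\ot_RB),
 \qquad n\in\boZ,
\]
and show that $G_n(A\ot_RB)\subset F_n(A\ot_RB)$ for all $n\ge0$, together with $A\ot_RB=\bigcup_{n\ge0}G_n(A\ot_RB)$. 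The second statement is immediate from $A=\bigcup_i F_iA$ and $B=\bigcup_j F_jB$ and the right-exactness of $\ot_R$. The content is in the first inclusion.

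For the inclusion $G_n\subset F_n$, I would argue by induction on $n$, and the key algebraic identity is the Leibniz-type rule for the operators $\theta_r(b)=rb-br$ on a tensor product: for $a\ot b\in A\ot_RB$ one has
\[
 \theta_r(a\ot b)=\theta_r(a)\ot b+a\ot\theta_r(b),
\]
which holds because the middle term $ra\ot b=a\ot rb$ (tensor product is over $R$, acting on the right of $A$ and the left of $B$) cancels. Iterating, $\theta_{r_n}\cdots\theta_{r_0}(a\ot b)$ expands as a sum over subsets $S\subseteq\{0,\dots,n\}$ of terms $(\prod_{i\in S}\theta_{r_i})(a)\ot(\prod_{i\notin S}\theta_{r_i})(b)$. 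If $a\in F_iA$ and $b\in F_jB$ with $i+j=n$, then any such term with $|S|\ge i+1$ has first factor zero, and any term with $n+1-|S|\ge j+1$, i.e. $|S|\le i$... wait, $|S|\le n-j=i$, has second factor zero; since every subset of $\{0,\dots,n\}$ (which has $n+1$ elements) satisfies $|S|\ge i+1$ or $|S|\le i$, we need $n+1$ operators to kill the whole sum — that is, $\theta_{r_n}\cdots\theta_{r_0}(a\ot b)=0$, so $a\ot b\in F_nA\ot_RB$... let me recount: with $n+1$ operators applied, $|S|$ ranges over $0,\dots,n+1$; a term survives only if $|S|\le i$ and $n+1-|S|\le j$, i.e. $|S|\ge n+1-j=i+1$, impossible. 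Hence $\theta_{r_n}\cdots\theta_{r_0}$ annihilates $a\ot b$, giving $a\ot b\in F_n(A\ot_RB)$, and since $G_n(A\ot_RB)$ is spanned by such elementary tensors together with the already-established $G_{n-1}\subset F_{n-1}\subset F_n$, one concludes $G_n\subset F_n$.

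Alternatively, and perhaps more cleanly for the write-up, I would run the same "associated graded" argument as in Proposition~\ref{quasi-modules-tensor-product}: there is a natural surjection
\[
 \bigoplus\nolimits_{i+j=n}(F_iA/F_{i-1}A)\ot_R(F_jB/F_{j-1}B)
 \lrarrow G_n(A\ot_RB)/G_{n-1}(A\ot_RB),
\]
and on each summand $(F_iA/F_{i-1}A)\ot_R(F_jB/F_{j-1}B)$ the operator $\theta_r$ acts by the Leibniz rule as $\theta_r\ot 1+1\ot\theta_r$; since $\theta_r$ vanishes on $F_iA/F_{i-1}A$ when... no — it does not vanish there, only $\theta_{r}$ composed appropriately does. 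So the element-wise $\theta_r$ argument is the honest one. I expect the main obstacle to be purely bookkeeping: making the combinatorics of "$n+1$ nested $\theta$'s kill a tensor of an order-$i$ and an order-$j$ element with $i+j=n$" precise and clean, ideally by phrasing it as: the operator $\theta_{r_n}\cdots\theta_{r_0}$ on $A\ot_RB$ equals $\sum_{S}(\ast)$ and invoking that $F_iA$ is annihilated by any product of $i+1$ operators $\theta_{r}$. Once that lemma-ish identity is cleanly stated, the rest is verbatim parallel to the preceding proposition.
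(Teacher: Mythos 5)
Your primary argument (the Leibniz expansion of $\theta_{r_n}\dotsm\theta_{r_0}(a\ot b)$ over subsets $S\subseteq\{0,\dots,n\}$, with the counting argument showing every term vanishes) is correct and proves the inclusion $G_n(A\ot_RB)\subset F_n(A\ot_RB)$ as required. However, you unnecessarily talk yourself out of the cleaner ``associated graded'' route, and that is in fact the one the paper uses. You write ``$\theta_r$ vanishes on $F_iA/F_{i-1}A$ when\dots no --- it does not vanish there,'' but it does: by definition, $a\in F_iA$ means $\theta_r(a)\in F_{i-1}A$ for every $r\in R$, so $\theta_r$ maps $F_iA$ into $F_{i-1}A$ and is therefore the zero self-map of the quotient $F_iA/F_{i-1}A$. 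Equivalently, the left and right actions of $R$ agree on $F_iA/F_{i-1}A$ (and on $F_jB/F_{j-1}B$). The paper's proof then observes that $G_n(A\ot_RB)/G_{n-1}(A\ot_RB)$ is a quotient of $\bigoplus_{i+j=n}(F_iA/F_{i-1}A)\ot_R(F_jB/F_{j-1}B)$, hence also has agreeing left and right $R$-actions, hence $G_n\subset F_n$; no subset combinatorics is needed. Your Leibniz computation is the elementwise version of the same observation, so the two arguments are really equivalent, but the graded phrasing is shorter and matches the structure of the preceding Proposition~\ref{quasi-modules-tensor-product} word for word, which is why the paper adopts it.
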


\begin{proof}
 This is~\cite[Lemma~2.1]{Pedg}.
 The argument is similar to the proof of
Proposition~\ref{quasi-modules-tensor-product}.
 Define an increasing filtration $G$ on the $R$\+$R$\+bimodule
$A\ot_RB$ by the rule
$$
 G_n(A\ot_RB)=\sum\nolimits_{i+j=n}\im(F_iA\ot_R F_jB\to A\ot_RB),
 \qquad n\in\boZ.
$$
 Then, for every $n\ge0$, there is a natural surjective map of
$R$\+$R$\+bimodules
$$
 \bigoplus\nolimits_{i+j=n} (F_iA/F_{i-1}A)\ot_R(F_jB/F_{j-1}B)
 \lrarrow G_n(A\ot_RB)/G_{n-1}(A\ot_RB).
$$
 Therefore, the left and right actions of $R$ in the successive
quotient $R$\+$R$\+bimodule $G_n(A\ot_RB)/G_{n-1}(A\ot_RB)$ agree.
 It follows that $G_n(A\ot_RB)\subset F_n(A\ot_RB)$
for all $n\ge0$.
 Furthermore, we have $A\ot_RB=\bigcup_{n\ge0}G_n(A\ot_RB)$,
since $A=\bigcup_{i\ge0}F_iA$ and $B=\bigcup_{j\ge0}F_jB$ by
assumptions.
 Thus $A\ot_RB=\bigcup_{n\ge0}F_n(A\ot_RB)$, as desired.
\end{proof}

\subsection{Quite quasi-modules} \label{quite-quasi-modules-subsecn}
 Let $R$ be a commutative ring and $B$ be an $R$\+$R$\+bi\-mod\-ule.
 We define a natural ordinal-indexed increasing filtration $F$ on $B$
by the rules
\begin{itemize}
\item $F_0B=\{\,b\in B\mid rb-br=0$ for all $r\in R\,\}$;
\item $F_\alpha B=\{\,b\in B\mid rb-br\in
\bigcup_{\beta<\alpha}F_\beta B$ for all $r\in R\,\}$ for all
ordinals $\alpha>0$.
\end{itemize}
 This notation agrees with the notation $F_nB$ for nonnegative
integers~$n$ from Section~\ref{strong-quasi-modules-subsecn}.
 One has $F_\beta B\subset F_\alpha B$ for all ordinals
$\beta\le\alpha$.

 We will say that $B$ is a \emph{quite quasi-module} over $R$ if
$B=\bigcup_\beta F_\beta B$, where the direct union is taken over
all ordinals~$\beta$.
 If $B$ is a quite quasi-module over $R$, then there exists
an ordinal~$\alpha$ such that $B=\bigcup_{\beta<\alpha}F_\beta B$.
 It will follow from Theorem~\ref{all-ordinal-orders-possible} below
that the minimal ordinal~$\alpha$ satisfying this condition for a given
$R$\+$R$\+bimodule $B$ can be arbitrarily large, depending on
a ring~$R$.
 When the ring $R$ is fixed, the ordinal $\alpha$~is bounded;
see Corollary~\ref{quite-quasi-module-filtration-bound}.

 Given a homomorphism of commutative rings $K\rarrow R$ such that
$B$ is an $R$\+$R$\+bimodule over $K$, we can consider $B$ is a module
over the ring $T=R\ot_KR$.
 Let $I\subset R$ be the kernel ideal of the natural ring homomorphism
$R\ot_KR\rarrow R$.
 Then the notation $F_\alpha B$ stands for what would be denoted by
$F^{(I)}_\alpha B$ in the notation of
Section~\ref{quite-torsion-subsecn}.
 This follows from Lemma~\ref{diagonal-ideal-generators}.
 Thus $B$ is a quite quasi-module over $R$ if and only if $B$ is
a quite $I$\+torsion $T$\+module.

 Any $R$\+module can be viewed as an $R$\+$R$\+bimodule in which
the left and right actions of the ring $R$ agree; so we have
$R\Modl\subset R_K\bMod{}_KR$.
 In view of the preceding paragraph and the discussion in
Section~\ref{quite-torsion-subsecn}, the class of quite quasi-modules
in $R_K\bMod{}_KR$ is the closure of the class $R\Modl\subset
R_K\bMod{}_KR$ under extensions and filtered direct limits, or
equivalently, under extensions, infinite direct sums, and quotients.
 The class of quite quasi-modules is closed under subobjects,
quotients, extensions, and infinite direct sums in $R\bMod R$.
 In other words, quite quasi-modules form a localizing subcategory, or
equivalently, a hereditary torsion class in $R_K\bMod{}_KR$.

 Obviously, any strong quasi-module is a quite quasi-module over~$R$.
 Example~\ref{quasi-module-hrbek-counterex} shows that a quite
quasi-module need not be a strong quasi-module.

 The following corollary is stated in the notation of
formula~\eqref{theta-defined} from the beginning of
Section~\ref{quasi-modules-subsecn}.

\begin{cor} \label{quite-quasi-module-T-nilpotence}
 Let $R$ be a commutative ring and $B$ be an $R$\+$R$\+bimodule.
 Then $B$ is a quite quasi-module over $R$ if and only if, for every
element $b\in B$ and every sequence of elements $r_0$, $r_1$,
$r_2$,~\dots~$\in R$ (indexed by the nonnegative integers) there
exists an integer $n\ge0$ such that
$\theta_{r_n}\theta_{r_{n-1}}\dotsm\theta_{r_1}\theta_{r_0}(b)=0$
in~$B$.
 If $K\rarrow R$ is a homomorphism of commutative rings such that
$B$ is an $R$\+$R$\+bimodule over~$K$, then it suffices to check
the previous condition for sequences of elements~$r_i$, \,$i\ge0$,
belonging to any given set of generators $G\subset R$ of the unital
$K$\+algebra~$R$.
\end{cor}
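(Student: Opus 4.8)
The plan is to deduce this as a direct corollary of Proposition~\ref{quite-torsion-T-nilpotence}, using the translation between quite quasi-modules over $R$ and quite $I$\+torsion modules over $T=R\ot_KR$ recorded in Section~\ref{quite-quasi-modules-subsecn}. If no homomorphism $K\rarrow R$ is given, one takes $K=\boZ$; in any case put $T=R\ot_KR$ and let $I\subset T$ be the kernel ideal of the multiplication map $R\ot_KR\rarrow R$. As explained in Section~\ref{quite-quasi-modules-subsecn}, the bimodule $B$ becomes a $T$\+module, the filtration $F_\alpha B$ agrees with the filtration $F^{(I)}_\alpha B$ of Section~\ref{quite-torsion-subsecn} (this uses Lemma~\ref{diagonal-ideal-generators}), and $B$ is a quite quasi-module over $R$ if and only if $B$ is a quite $I$\+torsion $T$\+module.

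The next step is the elementary but essential computation that, for $r\in R$, multiplication by the element $r\ot1-1\ot r\in T$ acts on the $T$\+module $B$ exactly as the operator $\theta_r$: indeed $(r\ot1-1\ot r)\cdot b=rb-br=\theta_r(b)$. Consequently, for a finite sequence $r_0$,~\dots,~$r_n\in R$, writing $s_i=r_i\ot1-1\ot r_i\in I$, one has $s_ns_{n-1}\dotsm s_0\cdot b=\theta_{r_n}\theta_{r_{n-1}}\dotsm\theta_{r_0}(b)$ in $B$, where the product $s_n\dotsm s_0$ is formed in the commutative ring $T$ and the right-hand side is the corresponding composition of endomorphisms of the abelian group~$B$.

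Now apply Proposition~\ref{quite-torsion-T-nilpotence}. By Lemma~\ref{diagonal-ideal-generators}, taking $K=\boZ$ and using $R$ itself as a set of $\boZ$\+algebra generators of~$R$, the elements $r\ot1-1\ot r$ for $r\in R$ generate the ideal $I\subset T$; hence the last assertion of Proposition~\ref{quite-torsion-T-nilpotence} says that $B$ is quite $I$\+torsion if and only if, for every $b\in B$ and every sequence $r_0$, $r_1$, $r_2$,~\dots~$\in R$, there is an integer $n\ge0$ with $s_n\dotsm s_0\cdot b=0$, i.e., $\theta_{r_n}\dotsm\theta_{r_0}(b)=0$. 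Together with the first paragraph, this is precisely the stated characterization. For the final sentence of the corollary: if $K\rarrow R$ is given and $G\subset R$ generates $R$ as a unital $K$\+algebra, then Lemma~\ref{diagonal-ideal-generators} shows that the elements $r\ot1-1\ot r$ for $r\in G$ generate $I$ over $T=R\ot_KR$, so the same ``it suffices'' clause of Proposition~\ref{quite-torsion-T-nilpotence} reduces the condition to sequences~$r_i\in G$.

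There is no genuine obstacle here; the argument is a translation once the dictionary of Section~\ref{quite-quasi-modules-subsecn} and the generator description of Lemma~\ref{diagonal-ideal-generators} are in hand. The only point demanding a moment's care is to verify that the $T$\+action identity $s_i\cdot b=\theta_{r_i}(b)$ iterates correctly to give the displayed formula $s_n\dotsm s_0\cdot b=\theta_{r_n}\dotsm\theta_{r_0}(b)$, and that the order of the operators $\theta_{r_i}$ in the composition is consistent with the (order-insensitive, since $T$ is commutative) product of the $s_i$ in~$T$.
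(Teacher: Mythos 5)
Your proof is correct and takes essentially the same route as the paper, which disposes of the corollary in one line (``Follows from Proposition~\ref{quite-torsion-T-nilpotence} and Lemma~\ref{diagonal-ideal-generators}''). You have simply spelled out the dictionary of Section~\ref{quite-quasi-modules-subsecn}, the identity $(r\ot1-1\ot r)\cdot b=\theta_r(b)$ and its iteration, and the application of the ``it suffices'' clause of Proposition~\ref{quite-torsion-T-nilpotence} to the generating set of $I$ supplied by Lemma~\ref{diagonal-ideal-generators}.
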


\begin{proof}
 Follows from Proposition~\ref{quite-torsion-T-nilpotence}
and Lemma~\ref{diagonal-ideal-generators}.
\end{proof}

 All quite quasi-modules over $R$ are quasi-modules over $R$
(by Corollary~\ref{quite-quasi-module-T-nilpotence}; or since all
quite $I$\+torsion $T$\+modules are $I$\+torsion,
see the discussion in Section~\ref{quite-torsion-subsecn}).
 For a commutative ring homomorphism $K\rarrow R$ making $R$
a finitely generated $K$\+algebra, an $R$\+$R$\+bimodule over $K$
is a quasi-module if and only if it is a quite quasi-module and
if and only if it is a strong quasi-module (by
Lemmas~\ref{torsion-finitely-generated-ideal}
and~\ref{diagonal-ideal-generators}, cf.\
Section~\ref{quite-torsion-subsecn}).
 Generally speaking, a quasi-module over $R$ \emph{need not} be
a quite quasi-module, as the following counterexample demonstrates.

\begin{ex}
 This example is based on
Example~\ref{torsion-not-quite-torsion-counterex}.
 Let $K=k$ be a field and $R=k[x_1,x_2,x_3,\dotsc]$ be the ring of
polynomials in countably many variables over~$k$.
 Consider the ring $T=R\ot_kR$; we use the notation
$T=k[y_1,y_2,y_3,\dotsc;z_1,z_2,z_3,\dotsc]$ and $t_i=y_i-z_i$ from
Example~\ref{quasi-module-hrbek-counterex}.
 Consider the ideal $J\subset T$ generated by the elements $t_1$,
$t_2^2$, $t_3^3$,~\dots~$\in T$, or alternatively, the ideal
$J'\subset T$ generated by the elements $t_1^2$, $t_2^2$,
$t_3^2$,~\dots~$\in T$.
 Then the quotient $T$\+module $B=T/J$ or $B=T/J'$ is $I$\+torsion
by Lemma~\ref{torsion-checked-on-generators-of-ideal}; so
the $R$\+$R$\+bimodule $B$ is a quasi-module over~$R$.
 However, the $R$\+$R$\+bimodule $B$ is \emph{not} a quite
quasi-module over~$R$.
 In fact, in both the cases $B=T/J$ or $B=T/J'$, \emph{there are
no nonzero elements $b\in B$ such that $rb-br=0$ for all $r\in R$}.
 So $F_0B=0$, and consequently $F_\beta B=0$ for all
ordinals~$\beta$.
\end{ex}

 Another counterexample showing that quasi-modules over $R$ need
not be quite quasi-modules (outside of the case of $R$\+$R$\+bimodules
over $K$ for a finitely generated commutative $K$\+algebra~$R$)
can be extracted from Example~\ref{D-infinity-operator-example} below.

\begin{prop} \label{quite-quasi-modules-tensor-product}
 Let $A$ and $B$ be two quite quasi-modules over~$R$.
 Then the $R$\+$R$\+bimodule of tensor product $A\ot_RB$ is also
a quite quasi-module over~$R$.
\end{prop}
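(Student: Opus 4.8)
The plan is to imitate the proofs of Propositions~\ref{quasi-modules-tensor-product} and~\ref{strong-quasi-modules-tensor-product}, replacing the integer-indexed filtration used there by an ordinal-indexed one. The computational heart is the Leibniz-type identity
\[
 \theta_r(a\ot b)=\theta_r(a)\ot b+a\ot\theta_r(b)
\]
for $a\in A$, $b\in B$, $r\in R$, which holds in $A\ot_RB$ because the left $R$\+action on $A\ot_RB$ is induced from $A$, the right $R$\+action from $B$, and the tensor product is balanced over $R$ (so that $(ar)\ot b=a\ot(rb)$). I would first record this identity.

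Next, writing $\beta\mathbin{\#}\delta$ for the natural (Hessenberg) sum of two ordinals --- which is strictly increasing in each of its arguments and equals~$0$ only for $\beta=\delta=0$ --- I would define an ordinal-indexed increasing filtration on $A\ot_RB$ by
\[
 G_\gamma(A\ot_RB)=\sum_{\beta\mathbin{\#}\delta\le\gamma}\im\bigl(F_\beta A\ot_RF_\delta B\longrightarrow A\ot_RB\bigr).
\]
These are $R$\+$R$\+subbimodules, one has $G_\gamma\subset G_{\gamma'}$ for $\gamma\le\gamma'$, and since $A=\bigcup_\beta F_\beta A$, \,$B=\bigcup_\delta F_\delta B$, and every element of $A\ot_RB$ is a finite sum of elementary tensors, it follows that $A\ot_RB=\bigcup_\gamma G_\gamma(A\ot_RB)$.

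The main step is to prove, by transfinite induction on~$\gamma$, that $G_\gamma(A\ot_RB)\subset F_\gamma(A\ot_RB)$. As $F_\gamma(A\ot_RB)$ is a subbimodule, it suffices to check that every elementary tensor $a\ot b$ with $a\in F_\beta A$, \,$b\in F_\delta B$, \,$\beta\mathbin{\#}\delta\le\gamma$ belongs to it. For $\gamma=0$ (hence $\beta=\delta=0$) the Leibniz identity gives $\theta_r(a\ot b)=0$ for all $r$, so $a\ot b\in F_0(A\ot_RB)$. For $\gamma>0$ and any $r\in R$: if $\beta>0$ then $\theta_r(a)\in F_{\beta'}A$ for some $\beta'<\beta$, whence $\theta_r(a)\ot b\in G_{\beta'\mathbin{\#}\delta}(A\ot_RB)$ with $\beta'\mathbin{\#}\delta<\beta\mathbin{\#}\delta\le\gamma$, while if $\beta=0$ then $\theta_r(a)\ot b=0$; symmetrically for $a\ot\theta_r(b)$. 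By the induction hypothesis both summands, hence $\theta_r(a\ot b)$, lie in $\bigcup_{\gamma'<\gamma}F_{\gamma'}(A\ot_RB)$; as $r$ was arbitrary, $a\ot b\in F_\gamma(A\ot_RB)$. Combining, $A\ot_RB=\bigcup_\gamma G_\gamma(A\ot_RB)\subset\bigcup_\gamma F_\gamma(A\ot_RB)$, i.e.\ $A\ot_RB$ is a quite quasi-module over~$R$.

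The one point requiring genuine care is the choice to index $G$ by the natural sum of ordinals rather than by ordinary ordinal addition: ordinary addition fails to be strictly monotone in its left argument, so it would not push the term $a\ot\theta_r(b)$ (which lowers the second filtration index) into a strictly earlier level of $G$, and the induction would not close. With the natural sum both summands in the Leibniz identity descend strictly, and everything else parallels the finite-order arguments already given.
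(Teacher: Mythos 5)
Your proof is correct, and it takes a genuinely different route from the paper's. The paper addresses the failure of strict left-monotonicity of ordinal addition by indexing the auxiliary filtration $G$ on $A\ot_RB$ by the order type of the Cartesian product $\alpha\times\beta$ under the reverse-lexicographic order (i.e., by the ordinal product $\alpha\cdot\beta$), and then establishes $G\subset F$ via a surjection of successive-quotient bimodules, as in Propositions~\ref{quasi-modules-tensor-product} and~\ref{strong-quasi-modules-tensor-product}. You instead index $G$ by the natural (Hessenberg) sum $\beta\mathbin{\#}\delta$, which is strictly monotone in each argument, and prove $G_\gamma\subset F_\gamma$ directly by transfinite induction from the Leibniz identity $\theta_r(a\ot b)=\theta_r(a)\ot b+a\ot\theta_r(b)$, reducing to elementary tensors because $F_\gamma$ is a subbimodule. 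Both devices fix exactly the same defect; yours yields a somewhat cleaner computation and, as a bonus, a tighter quantitative bound: your argument shows the tensor of an order-$\le\gamma$ element with an order-$\le\delta$ element has order $\le\gamma\mathbin{\#}\delta$, whereas the paper's construction gives $(\gamma+1)\cdot\delta+\gamma$ (equivalently $\zeta$ with $\zeta+1=(\gamma+1)\cdot(\delta+1)$), which is generally much larger --- e.g.\ for $\gamma=\delta=\omega$ the natural sum gives $\omega\cdot2$ while the paper's bound is $\omega^2+\omega$. This would correspondingly sharpen the ordinal bound stated in Corollary~\ref{composition-of-quite-differential-operators}. The only small caveat is that you should make sure to state (or cite) that the natural sum is strictly increasing in each argument and vanishes only at $(0,0)$, since these facts carry the entire induction; you do invoke them, so the argument is complete.
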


\begin{proof}
 Let $\alpha$~be an ordinal such that $A=\bigcup_{\gamma<\alpha}
F_\gamma A$, and let $\beta$~be an ordinal such that
$B=\bigcup_{\delta<\beta}F_\delta B$.
 Consider the Cartesian product of two ordinals $\alpha\times\beta$,
and order it according to the rule $(\gamma',\delta')<
(\gamma'',\delta'')$ if either $\delta'<\delta''$, or $\delta'=\delta''$
and $\gamma'<\gamma''$.
 This ordering makes $\alpha\times\beta$ a well-ordered set, so it is
order isomorphic to an ordinal~$\eta$.
 We will use the notation presuming that $\alpha\times\beta$
is identified with~$\eta$.

 Define an $\eta$\+indexed filtration $G$ on the $R$\+$R$\+bimodule
$A\ot_RB$ by the rule
$$
 G_\zeta(A\ot_RB)=\sum\nolimits_{(\gamma,\delta)\le\zeta}
 \im(F_\gamma A\ot_R F_\delta B\to A\ot_RB),
 \qquad 0\le\zeta<\eta.
$$
 Then, for every $\zeta=(\gamma,\delta)<\eta$, there is a surjective
map of $R$\+$R$\+bimodules
$$
 \Bigl(F_\gamma A\Big/
 \bigcup\nolimits_{\iota<\gamma}F_\iota A\Bigr)\ot_R
 \Bigl(F_\delta A\Big/
 \bigcup\nolimits_{\epsilon<\delta}F_\epsilon B\Bigr) \lrarrow
 G_\zeta(A\ot_RB)\Big/\bigcup\nolimits_{\xi<\zeta}G_\xi(A\ot_RB).
$$
 Therefore, the left and right actions of $R$ in the successive
quotient $R$\+$R$\+bimodule $G_\zeta(A\ot_RB)\big/
\bigcup_{\xi<\zeta}G_\xi(A\ot_RB)$ agree.
 It follows that $G_\zeta(A\ot_RB)\subset F_\zeta(A\ot_RB)$ for
all $\zeta<\eta$.
 Furthermore, we have $A\ot_RB=\bigcup_{\zeta<\eta}G_\zeta(A\ot_RB)$,
since $A=\bigcup_{\gamma<\alpha}F_\gamma A$ and
$B=\bigcup_{\delta<\beta}F_\delta B$.
 Thus $A\ot_RB=\bigcup_{\zeta<\eta}F_\zeta(A\ot_RB)$, as desired.
 
 Notice the difference between the present proof and the proof of
Proposition~\ref{strong-quasi-modules-tensor-product}, in that
the proof of Proposition~\ref{strong-quasi-modules-tensor-product}
used the \emph{addition} of the nonnegative integers~$i$ and~$j$ in
the indices of the filtrations $F$ on $A$ and $B$, while the proof
of the present proposition uses the \emph{multiplication} of ordinals.
 The point is that the addition of integers is \emph{strictly
monotonous}: $i'<i''$ implies $i'+j<i''+j$, and $j'<j''$
implies $i+j'<i+j''$.
 The addition of ordinals is \emph{not} strictly monotonous,
however: $0+\omega=n+\omega=\omega$ for all $n<\omega$.
 The fact that $\gamma'<\gamma''<\alpha$ and $\delta<\beta$
implies $(\gamma',\delta)<(\gamma'',\delta)\in\alpha\times\beta=\eta$,
while $\gamma<\alpha$ and $\delta'<\delta''<\beta$ implies
$(\gamma,\delta')<(\gamma,\delta'')\in\alpha\times\beta=\eta$
is crucial for the argument above.
\end{proof}

\Section{Three Classes of Differential Operators}

\subsection{Differential operators of finite order}
\label{strongly-differential-operators}
 Let $K\rarrow R$ be a homomorphism of commutative rings, and let
$U$ and $V$ be two $R$\+modules.
 Consider the $R$\+$R$\+bimodule $E=\Hom_K(U,V)$ of all $K$\+linear
maps $U\rarrow V$.
 Here the right $R$\+module structure on $E$ is induced by the action
of $R$ on $U$, while the left $R$\+module structure on $E$ comes
from the action of $R$ on~$V$.

 The following definition goes back to
Grothendieck~\cite[Proposition~IV.16.8.8(b)]{EGAIV}.
 A recent exposition can be found in~\cite[Section Tag~09CH]{SP}.

 Let $n\ge0$ be an integer.
 The $R$\+$R$\+subbimodule $F_n\D_{R/K}(U,V)\subset\Hom_K(U,V)$ of
\emph{$K$\+linear $R$\+differential operators of order\/~$\le n$}
is defined as
$$
 F_n\D_{R/K}(U,V)=F_nE\subset E=\Hom_K(U,V),
$$
where $F$ denotes the natural increasing filtration on
the $R$\+$R$\+bimodule $E$ defined in
Section~\ref{strong-quasi-modules-subsecn}.
 So, given a $K$\+linear map $e\:U\rarrow V$, one has
$e\in F_n\D_{R/K}(U,V)$ if and only if
$\theta_{r_n}\theta_{r_{n-1}}\dotsm\theta_{r_1}\theta_{r_0}(e)=0$
in $E$ for all $r_0$, $r_1$,~\dots, $r_n\in R$.

 We put $\D^\st_{R/K}(U,V)=\bigcup_{n\ge0}F_n\D_{R/K}(U,V)$, where
the direct union is taken over the nonnegative integers~$n$.
 We will call the elements of $\D^\st_{R/K}(U,V)$ the \emph{$K$\+linear
strongly $R$\+differential operators $U\rarrow V$}.
 In the terminology of~\cite[Section~IV.16.8]{EGAIV}
and~\cite[Section Tag~09CH]{SP}, these are called simply ``differential
operators''.
 So the strongly differential operators are the differential operators
of finite order.

 A $K$\+linear map $e\:U\rarrow V$ is a strongly $R$\+differential
operator if and only if it is a strongly $I$\+torsion element of
the $T$\+module~$E$ (in the sense of
Section~\ref{strongly-torsion-subsecn}), where $T=R\ot_KR$ and
$I\subset T$ is the kernel ideal of the natural ring homomorphism
$R\ot_KR\rarrow R$.
 See Section~\ref{strong-quasi-modules-subsecn} for a discussion.

\begin{ex} \label{laplace-operator-example}
 Let $K=k$ be a field of characteristic zero, and let
$R=k[(x_i)_{i\in\Lambda}]$ be the ring of polynomials in an infinite
set of variables indexed by some set~$\Lambda$.
 Consider the infinite sum of second partial derivatives
(the \emph{infinitary Laplace operator})
\begin{equation} \label{D_2-Lambda-operator}
 D_2=\sum_{i\in\Lambda}\frac{\d^2}{\d x_i^2}.
\end{equation}
 Any polynomial $f\in R$ only depends on a finite subset of
variables~$x_i$; so one has $\d f/\d x_i=0$ for all but a finite
subset of indices $i\in\Lambda$.
 So all but a finite number of summands in~\eqref{D_2-Lambda-operator}
annihilate~$f$, and $D_2$ is well-defined as a $k$\+linear map
$R\rarrow R$.

 One can easily see that the infinitary Laplace operator
$D_2\:R\rarrow R$ is a $k$\+linear strongly $R$\+differential operator
of order~$2$ according to the definition above, that is
$D_2\in F_2\D_{R/k}(R,R)$ but $D_2\notin F_1\D_{R/k}(R,R)$.
 Let us only point out that, in view of
Lemma~\ref{diagonal-ideal-generators}, it suffices to check that
$\theta_{x_i}\theta_{x_j}\theta_{x_l}(D_2)=0$ for all $i$, $j$,
$l\in\Lambda$ (while $\theta_{x_i}\theta_{x_i}(D_2)\ne0$ for all
$i\in\Lambda$).
\end{ex}

\begin{cor} \label{composition-of-strongly-differential-operators}
 Let $K\rarrow R$ be a homomorphism of commutative rings, and let
$U$, $V$, and $W$ be three $R$\+modules.
 Then the composition of any two $K$\+linear strongly $R$\+differential
operators $D'\:U\rarrow V$ and $D''\:V\rarrow W$ is a $K$\+linear
strongly $R$\+differential operator $D''\circ D'\:U\rarrow W$.
 In fact, if $D'\in F_{n'}\D_{R/K}(U,V)$ is a differential operator
of order~$n'$ and $D''\in F_{n''}\D_{R/K}(V,W)$ is a differential
operator of order~$n''$, then $D''\circ D'\in F_{n'+n''}\D_{R/K}(U,W)$
is a differential operator of order at most $n'+n''$.
\end{cor}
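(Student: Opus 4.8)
The plan is to reduce everything to the elementary \emph{Leibniz identity}
\[
 \theta_r(D''\circ D')=\theta_r(D'')\circ D'+D''\circ\theta_r(D')
 \qquad\text{for all }r\in R,
\]
where on the left $\theta_r$ denotes the bracket~\eqref{theta-defined} computed in the $R$\+$R$\+bimodule $\Hom_K(U,W)$, the first $\theta_r$ on the right is computed in $\Hom_K(V,W)$, and the second in $\Hom_K(U,V)$. This identity is verified in one line by inserting $\pm\,D''\circ(\text{multiplication by }r\text{ in }V)\circ D'$ into the expression $\theta_r(D''\circ D')=r(D''\circ D')-(D''\circ D')r$ and using associativity of composition of $K$\+linear maps.

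With the identity in hand, I would argue by induction on $n'+n''$. In the base case $n'=n''=0$ the operators $D'$ and $D''$ are $R$\+linear, hence so is their composition, so $D''\circ D'\in F_0\D_{R/K}(U,W)=\Hom_R(U,W)$. For the inductive step, assume $n'+n''\ge1$. For every $r\in R$ one has $\theta_r(D'')\in F_{n''-1}\D_{R/K}(V,W)$ and $\theta_r(D')\in F_{n'-1}\D_{R/K}(U,V)$ directly from the definition of the filtration $F$ in Section~\ref{strong-quasi-modules-subsecn}; here if $n''=0$ then $\theta_r(D'')=0$ and if $n'=0$ then $\theta_r(D')=0$, so in either degenerate situation one of the two summands drops out and the remaining pair has both indices $\ge0$. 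Applying the inductive hypothesis to the pairs $(\theta_r(D''),D')$ and $(D'',\theta_r(D'))$ gives $\theta_r(D'')\circ D'$ and $D''\circ\theta_r(D')$ in $F_{n'+n''-1}\D_{R/K}(U,W)$, so by the Leibniz identity $\theta_r(D''\circ D')\in F_{n'+n''-1}\D_{R/K}(U,W)$ for all $r\in R$. By the definition of the filtration $F$ on $E=\Hom_K(U,W)$, this says precisely that $D''\circ D'\in F_{n'+n''}\D_{R/K}(U,W)$, which is the assertion; the strongly differential case follows by taking the union over~$n'$ and~$n''$.

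The only point requiring care is bookkeeping: one must keep track of which of the three bimodules $\Hom_K(U,V)$, $\Hom_K(V,W)$, $\Hom_K(U,W)$ each occurrence of $\theta_r$ lives in, and one must dispose of the cases $n'=0$ or $n''=0$ so that no filtration index turns negative before the induction is invoked; there is no genuine difficulty. Alternatively, the corollary can be deduced from Proposition~\ref{strong-quasi-modules-tensor-product}: composition of maps defines an $R$\+$R$\+bimodule homomorphism $\Hom_K(V,W)\ot_R\Hom_K(U,V)\rarrow\Hom_K(U,W)$, which restricts to a bimodule homomorphism $F_{n''}\D_{R/K}(V,W)\ot_R F_{n'}\D_{R/K}(U,V)\rarrow\Hom_K(U,W)$; the source is a strong quasi-module whose filtration term of index $n'+n''$ is the whole module (this is exactly what the proof of Proposition~\ref{strong-quasi-modules-tensor-product} shows, since $F_{n''}$ of the first tensor factor and $F_{n'}$ of the second already exhaust those factors), and any bimodule homomorphism carries $F_m$ of the source into $F_m$ of the target because it commutes with every~$\theta_r$.
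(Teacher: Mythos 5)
Your primary argument --- the Leibniz identity
$\theta_r(D''\circ D')=\theta_r(D'')\circ D'+D''\circ\theta_r(D')$
followed by induction on $n'+n''$ --- is correct, and it is genuinely different from the argument the paper chooses to spell out. The paper itself remarks that this corollary is a ``widely known classical result'' (citing EGA~IV.16.8.9 and Stacks Tag~09CJ, whose proofs are precisely your induction) but then deliberately proves it by a more structural route: the composition pairing is an $R$\+$R$\+bimodule map, Proposition~\ref{strong-quasi-modules-tensor-product} shows $B\ot_RA$ is a strong quasi-module when $A$ and $B$ are, and the class of strong quasi-modules is closed under epimorphic images, so the image of $B\ot_RA$ under the composition map is a strong quasi-module; the order bound then falls out of the filtration $G$ constructed in that proposition's proof. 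What the paper's approach buys is uniformity: the identical template handles the strong, quite, and unqualified cases (Corollaries~\ref{composition-of-strongly-differential-operators}, \ref{composition-of-quite-differential-operators}, \ref{composition-of-differential-operators}) with only the tensor-product proposition changed, and in particular it produces the ordinal bound $\zeta+1=(\gamma+1)\cdot(\delta+1)$ in the transfinite case, where a naive Leibniz induction would be awkward to organize. What your primary approach buys is concreteness and self-containedness: it needs no tensor-product machinery, only the one-line identity and a finite induction, and it is the form of the argument one actually finds in the classical references. Your ``alternative'' paragraph at the end is essentially the paper's argument (with the filtration step $F_{n'}$, $F_{n''}$ substituted for the full $\Gamma^{\st}_I$ pieces), so you have in fact recovered both.

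One small point of care, which you handled correctly: in the inductive step, when $n'=0$ (resp.\ $n''=0$) the term $D''\circ\theta_r(D')$ (resp.\ $\theta_r(D'')\circ D'$) vanishes outright rather than requiring the inductive hypothesis at a negative index --- noting this is exactly what keeps the induction well-founded.
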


\begin{proof}
 This widely known classical result~\cite[Proposition~IV.16.8.9]{EGAIV},
\cite[Lemma Tag~09CJ]{SP} is not difficult to prove.
 Let us spell out an argument based on (the proof of)
Proposition~\ref{strong-quasi-modules-tensor-product}.
 As above, put $T=R\ot_KR$, and let $I$ be the kernel ideal of
the natural ring homomorphism $R\ot_KR\rarrow R$.
 Put $A=\Gamma_I^\st(\Hom_K(U,V))$ and
$B=\Gamma_I^\st(\Hom_K(V,W))$ (in the notation of
Section~\ref{strongly-torsion-subsecn}).
 So $A$ and $B$ are strong quasi-modules over~$R$
(in the sense of Section~\ref{strong-quasi-modules-subsecn}).
 We have $D'\in A$ and $D''\in B$.
 The composition of $K$\+linear maps~$\circ$ is an $R$\+$R$\+bimodule
map
$$
 \circ\:\Hom_K(V,W)\ot_R\Hom_K(U,V)\lrarrow\Hom_K(U,W).
$$
 By Proposition~\ref{strong-quasi-modules-tensor-product},
the tensor product $B\ot_RA$ is a strong quasi-module over~$R$.
 As the class of strong quasi-modules over $R$ is closed under
quotients in $R\bMod R$, it follows that the image $\circ(B\ot_RA)$
of the composition
$$
 B\ot_RA\lrarrow\Hom_K(V,W)\ot_R\Hom_K(U,V)\lrarrow\Hom_K(U,W)
$$
is a strong quasi-module over $R$ as well.
 Thus $\circ(B\ot_RA)\subset\Gamma_I^\st(\Hom_K(U,W))$, and it
follows that $D''\circ D'\in\circ(B\ot_RA)$ is a strongly
$R$\+differential operator.

 This proves the first assertion of the corollary.
 To prove the second one, it remains to point out that we have
$D'\in F_{n'}A$ and $D''\in F_{n''}B$, hence $D''\ot D'\in
G_{n'+n''}(B\ot_R\nobreak A)\subset F_{n'+n''}(B\ot_R\nobreak A)$
by the proof of Proposition~\ref{strong-quasi-modules-tensor-product}.
 Hence $D''\circ D'\in F_{n'+n''}(\circ(B\ot_R\nobreak A))$ is
an $R$\+differential operator of order~$\le n'+n''$.
 Here we are also using the fact that the natural increasing filtration
$F$ on $R$\+$R$\+bimodules is preserved by homomorphisms of
$R$\+$R$\+bimodules.
\end{proof}

 In particular, it follows from
Corollary~\ref{composition-of-strongly-differential-operators} that,
for any homomorphism of commutative rings $K\rarrow R$ and any
$R$\+module $U$, the $K$\+linear strongly $R$\+differential operators
$U\rarrow U$ form a subring (in fact, a $K$\+subalgebra)
$\D_{R/K}^\st(U,U)\subset\Hom_K(U,U)$ in the $K$\+algebra of
$K$\+linear maps $U\rarrow U$.

\subsection{Differential operators of transfinite order}
\label{quite-differential-operators-subsecn}
 We keep the notation of Section~\ref{strongly-differential-operators}.
 Let $\alpha$~be an ordinal.
 The $R$\+$R$\+subbimodule $F_\alpha\D_{R/K}(U,V)\subset
\Hom_K(U,V)$ of \emph{$K$\+linear $R$\+differential operators of}
(\emph{ordinal}) \emph{order\/~$\le\alpha$} is defined as
$$
 F_\alpha\D_{R/K}(U,V)=F_\alpha E\subset E=\Hom_K(U,V),
$$
where $F$ denotes the natural ordinal-indexed increasing filtration
on the $R$\+$R$\+bi\-mod\-ule $E$ introduced in
Section~\ref{quite-quasi-modules-subsecn}.

 We put $\D^\qu_{R/K}(U,V)=\bigcup_\beta F_\beta\D_{R/K}(U,V)$,
where the direct union is taken over all ordinals~$\beta$.
 Clearly, for every fixed homomorphism of commutative rings $K\rarrow R$
and $R$\+modules $U$ and $V$, there exists an ordinal~$\alpha$ such
that $\D^\qu_{R/K}(U,V)=\bigcup_{\beta<\alpha}F_\beta\D_{R/K}(U,V)$.
 In fact, one can choose one such ordinal~$\alpha$ for all $R$\+modules
$U$ and $V$, so that $\alpha$~only depends on the ring~$R$; see
Corollary~\ref{quite-differential-operators-ordinal-order-bound}
and Theorem~\ref{all-ordinal-orders-possible} below.
 We will call the elements of $\D^\qu_{R/K}(U,V)$ the \emph{$K$\+linear
quite $R$\+differential operators $U\rarrow V$}.
 So the quite differential operators are the differential operators
of ordinal order (and the zero operator).

 In particular, nonzero $R$\+linear maps $U\rarrow V$ are quite
(in fact, strongly) $R$\+differential operators of order~$0$.
 The zero map $0\:U\rarrow V$ does not have an ordinal order; or one
can say that it is a quite differential operator ``of ordinal
order~$-1$'' (which is less than ordinal order~$0$).

 A $K$\+linear map $e\:U\rarrow V$ is a quite $R$\+differential operator
if and only if it is a quite $I$\+torsion element of the $T$\+module $E$
(in the sense of Section~\ref{quite-torsion-subsecn}).
 See Section~\ref{quite-quasi-modules-subsecn} for a discussion.
 By Corollary~\ref{quite-quasi-module-T-nilpotence}, a $K$\+linear
map $e\:U\rarrow V$ is a quite $R$\+differential operator if and only
if, for every sequence of elements $r_0$, $r_1$, $r_2$,~\dots~$\in R$
(indexed by the nonnegative integers) there exists an integer $n\ge0$
such that $\theta_{r_n}\theta_{r_{n-1}}\dotsm\theta_{r_1}
\theta_{r_0}(e)=0$ in~$E$.

 By the definition, any strongly $R$\+differential operator is a quite
$R$\+differential operator.
 Generally speaking, a quite $R$\+differential operator \emph{need not}
be a strongly $R$\+differential operator, as the following examples
demonstrate.

\begin{ex} \label{D-omega-operator-example}
 Let $K=k$ be a field of characteristic zero, and let
$R=k[x_1,x_2,x_3,\dotsc]$ be the ring of polynomials in countably
many variables over~$k$.
 Consider the infinite sum of powers of partial derivatives
\begin{equation} \label{D-omega-operator}
 D_\omega=\sum_{i=1}^\infty\frac{\d^i}{\d x_i^i}.
\end{equation}
 More generally, let $(g_i\in R)_{i=1}^\infty$ be a sequence of
polynomials in $x_1$, $x_2$, $x_3$,~\dots{}
 Then one can consider the infinite sum
\begin{equation} \label{D-prime-omega-operator}
 D'_\omega=\sum_{i=1}^\infty g_i\frac{\d^i}{\d x_i^i}.
\end{equation}
 Similarly to Example~\ref{laplace-operator-example}, for any
polynomial $f\in R$, all but a finite number of summands
in~\eqref{D-prime-omega-operator} annihilate~$f$; so $D'_\omega$ is
well-defined as a $k$\+linear map $R\rarrow R$.

 Assume that there are infinitely many integers $i\ge1$ for which
$g_i\ne0$.
 Then, given such an index~$i$, the commutator $[x_i,D'_\omega]=
\theta_{x_i}(D'_\omega)\in\Hom_k(R,R)$ is the differential operator
$-ig_i\,\d^{i-1}/\d x_i^{i-1}\:R\rarrow R$ of order~$i-1$.
 Therefore, we have $D'_\omega\notin F_n\D_{R/k}(R,R)$ for every
integer $n\ge0$; so $D'_\omega\:R\rarrow R$ is \emph{not} a strongly
$R$\+differential operator.

 On the other hand, for any polynomial $f\in R$, the commutator
$[f,D'_\omega]=\theta_f(D'_\omega)$ is expressed as a finite sum in
our coordinates~$x_i$, i.~e., it belongs to the subring of
$E=\Hom_k(R,R)$ generated by $R$ and the partial
derivatives~$\d/\d x_i$.
 In fact, the finite set of indices~$i$ for which $\d/\d x_i$ appears in
$[f,D'_\omega]$ is a subset of the finite set of indices~$i$ for which
$x_i$ appears in~$f$.
 So $[f,D'_\omega]$ is an $R$\+differential operator of finite order.
 It follows that $D'_\omega\:R\rarrow R$ is a quite $R$\+differential
operator of ordinal order~$\omega$ whenever the set $\{i\ge1
\mid g_i\ne0\}$ is infinite.
 When the latter set is finite, the differential operator $D'_\omega$
has finite order.

 In particular, $D_\omega$ is a quite $R$\+differential operator of
ordinal order~$\omega$, but \emph{not} a strongly $R$\+differential
operator.
\end{ex}

\begin{ex} \label{D-omega+n-operator-example}
 Let $K=k$ be a field of characteristic zero.
 Consider the ring $R=k[x_1,x_2,x_3,\dotsc;y]$ of polynomials
in countably many variables over~$k$.
 Let $n\ge0$ be an integer.
 Consider the infinite sum of compositions of partial derivatives
\begin{equation} \label{D-omega+n-operator}
 D_{\omega+n}=\sum_{i=1}^\infty
 \frac{\d^n}{\d y^n}\,\frac{\d^i}{\d x_i^i}
 =\frac{\d^n}{\d y^n}\,\sum_{i=1}^\infty \frac{\d^i}{\d x_i^i}.
\end{equation}
 More generally, let $(h_i\in R)_{i=1}^\infty$ be a sequence of
polynomials in $x_1$, $x_2$, $x_3$,~\dots\ and~$y$.
 Then one can consider the infinite sum
\begin{equation} \label{D-prime-omega+n-operator}
 D'_{\omega+n}=\sum_{i=1}^\infty
 h_i\frac{\d^n}{\d y^n}\,\frac{\d^i}{\d x_i^i}.
\end{equation}
 Similarly to Example~\ref{laplace-operator-example}, for any
polynomial $f\in R$, one has $\d f/\d x_i=0$ for all but a finite
subset of indices~$i$.
 So all but a finite number of summands
in~\eqref{D-prime-omega+n-operator} annihilate~$f$,
and $D'_{\omega+n}$ is well-defined as a $k$\+linear map $R\rarrow R$.

 Assume that there are infinitely many integers $i\ge1$ for which
$h_i\ne0$.
 One computes that $\theta_y^n(D'_{\omega+n})=
[y,[y,\dotsm[y,D'_{\omega+n}]\dotsm]]=(-1)^n n!D'_{\omega+0}$
($n$~nested brackets).
 Here $D'_{\omega+0}$ is an operator given by the same formula
as~\eqref{D-prime-omega-operator}, but with the polynomial
coefficients~$g_i$ that can depend on the additional variable~$y$.
 Similarly to Example~\ref{D-omega-operator-example},
\,$D'_{\omega+0}$ is a quite $R$\+differential operator of
ordinal order~$\omega$, and it follows that the operator
$D'_{\omega+n}$ cannot have ordinal order smaller than $\omega+n$.

 On the other hand, for every $i\ge0$, the commutator
$[x_i,D'_{\omega+n}]=-ih_i\,\d^n/\d y^n\,\allowbreak
\d^{i-1}/\d x_i^{i-1}$ is a strongly $R$\+differential operator of
finite order~$n+i-1$ (if $h_i\ne0$; or the zero operator otherwise).
 The commutator $[y,D'_{\omega+n}]$ is equal to $-n D'_{\omega+n-1}$
for $n\ge1$.
 Using Lemma~\ref{diagonal-ideal-generators}, one can prove
by induction on~$n$ that
$ID'_{\omega+n}\subset F_{\omega+n-1}\D_{R/k}(R,R)$ for $n\ge1$
(where $I\subset T$ is the kernel ideal of the ring homomorphism
$R\ot_KR\rarrow R$), hence $D'_{\omega+n}\in F_{\omega+n}\D_{R/k}(R,R)$.

 Thus $D'_{\omega+n}\:R\rarrow R$ is a quite $R$\+differential
operator of ordinal order $\omega+n$ whenever the set $\{i\ge1
\mid h_i\ne0\}$ is infinite.
 When the latter set is finite, the differential operator
$D'_{\omega+n}$ has finite order.

 In particular, $D_{\omega+n}$ is a quite $R$\+differential operator
of ordinal order $\omega+n$.
\end{ex}

\begin{ex} \label{D-omega+omega-operator-example}
 Let $K=k$ be a field of characteristic zero.
 Let $R=k[x_1,x_2,x_2,\dotsc;\allowbreak y_1,y_2,y_3,\dotsc]$ be
the ring of polynomials in two countably infinite families of
variables over~$k$.
 Consider the infinite sum of compositions of partial derivatives
\begin{equation} \label{D-omega+omega-operator}
 D_{\omega+\omega}=\sum_{j=1}^\infty\sum_{i=1}^\infty
 \frac{\d^j}{\d y_j^j}\,\frac{\d^i}{\d x_i^i}
 =\sum_{j=1}^\infty \frac{\d^j}{\d y_j^j}
 \,\sum_{i=1}^\infty \frac{\d^i}{\d x_i^i}.
\end{equation}
 Similarly to the previous example, all but a finite number of
summands in the middle term of~\eqref{D-omega+omega-operator}
(or all but a finite number of summands in each of the two factors
in the right-hand side of~\eqref{D-omega+omega-operator})
annihilate every particular polynomial $f\in R$.
 So $D_{\omega+\omega}$ is well-defined as a $k$\+linear map
$R\rarrow R$.

 For every $j\ge1$, one computes that $[y_j,D_{\omega+\omega}]=
-j\,\d^{j-1}/\d y_j^{j-1}\,\sum_{i=1}^\infty\d^i/\d x_i^i=
-jD_{\infty+j-1}$, which is a quite $R$\+differential operator
of ordinal order $\omega+j-1$ similar to~\eqref{D-omega+n-operator}
(with $n$ replaced by $j-1$ and $y$ replaced by~$y_j$).
 Similarly, for every $i\ge1$, one has $[x_i,D_{\omega+\omega}]=
-i\,\d^{i-1}/\d x_i^{i-1}\,\sum_{j=1}^\infty\d^j/\d y_j^j$, which
is a quite $R$\+differential operator of ordinal order $\omega+i-1$.
 As $i$ and~$j$ can be arbitrarily large positive integers, it
follows that the operator $D_{\infty+\infty}$ cannot have ordinal
order smaller than $\omega+\omega$.

 On the other hand, in view of Lemma~\ref{diagonal-ideal-generators},
we have $ID_{\omega+\omega}\subset\bigcup_{n<\omega}F_{\omega+n}
\D_{R/k}(R,R)$ (where $I\subset T$ is the kernel ideal of
$R\ot_KR\rarrow R$), hence $D_{\omega+\omega}\in F_{\omega+\omega}
\D_{R/k}(R,R)$.
 Thus $D_{\omega+\omega}\:R\rarrow R$ is a quite $R$\+differential
operator of ordinal order $\omega+\omega$.
\end{ex}

 For further constructions of quite differential operators of various
ordinal orders, see Theorem~\ref{all-ordinal-orders-possible} below.

\begin{cor} \label{composition-of-quite-differential-operators}
 Let $K\rarrow R$ be a homomorphism of commutative rings, and let
$U$, $V$, and $W$ be three $R$\+modules.
 Then the composition of any two $K$\+linear quite $R$\+differential
operators $D'\:U\rarrow V$ and $D''\:V\rarrow W$ is a $K$\+linear
quite $R$\+differential operator $D''\circ D'\:U\rarrow W$.
 In fact, if $D'\in F_\gamma\D_{R/K}(U,V)$ is a differential operator
of ordinal order~$\gamma$ and $D''\in F_\delta\D_{R/K}(V,W)$ is
a differential operator of ordinal order~$\delta$, then
$D''\circ D'\in F_\zeta\D_{R/K}(U,W)$ is a differential operator
of ordinal order at most~$\zeta$, where
$\zeta+1=\min((\gamma+1)\cdot(\delta+1),\,\allowbreak
(\delta+1)\cdot(\gamma+1))$.
\end{cor}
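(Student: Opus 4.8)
The plan is to transcribe the proof of Corollary~\ref{composition-of-strongly-differential-operators}, replacing Proposition~\ref{strong-quasi-modules-tensor-product} by Proposition~\ref{quite-quasi-modules-tensor-product} and the integer\+indexed filtrations by ordinal\+indexed ones. First I would set $T=R\ot_KR$, let $I\subset T$ be the kernel ideal of the multiplication map $R\ot_KR\rarrow R$, and put $A=\Gamma_I^\qu(\Hom_K(U,V))$ and $B=\Gamma_I^\qu(\Hom_K(V,W))$; by Section~\ref{quite-quasi-modules-subsecn} these are quite quasi\+modules over~$R$, and $D'\in A$, $D''\in B$, with $F_\gamma\D_{R/K}(U,V)=F_\gamma A$ and $F_\delta\D_{R/K}(V,W)=F_\delta B$ by Lemma~\ref{Gamma-I-qu-left-exact}. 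The composition of $K$\+linear maps is an $R$\+$R$\+bimodule homomorphism $\circ\:\Hom_K(V,W)\ot_R\Hom_K(U,V)\rarrow\Hom_K(U,W)$. By Proposition~\ref{quite-quasi-modules-tensor-product} the tensor product $B\ot_RA$ is a quite quasi\+module over~$R$; since the class of quite quasi\+modules is closed under quotients in $R\bMod R$, the image $\circ(B\ot_RA)$ is a quite quasi\+module as well, whence $\circ(B\ot_RA)\subset\Gamma_I^\qu(\Hom_K(U,W))$. As $D''\circ D'\in\circ(B\ot_RA)$, this already proves that $D''\circ D'$ is a quite $R$\+differential operator.

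For the bound on the ordinal order I would look into the filtration constructed in the proof of Proposition~\ref{quite-quasi-modules-tensor-product}, applied to $B\ot_RA$ (that is, with $B$ playing the role of the first tensor factor and $A$ that of the second). Using $\delta+1$ as the length of the relevant filtration on~$B$ and $\gamma+1$ as that on~$A$, the construction produces an $\eta$\+indexed filtration $G$ on $B\ot_RA$, where $\eta$ is the order type of the set $(\delta+1)\times(\gamma+1)$ well\+ordered with the second coordinate (the one belonging to the factor~$A$) most significant; this order type equals the ordinal product $(\delta+1)\cdot(\gamma+1)$. The element $D''\ot D'$ then lands in $G_\zeta(B\ot_RA)\subset F_\zeta(B\ot_RA)$, where $\zeta$ is the position of the pair $(\delta,\gamma)$ in this well\+ordering, namely $\zeta=(\delta+1)\cdot\gamma+\delta$, so that $\zeta+1=(\delta+1)\cdot(\gamma+1)$. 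Since any homomorphism of $R$\+$R$\+bimodules preserves the natural ordinal\+indexed filtration~$F$ (an immediate transfinite induction), it follows that $D''\circ D'=\circ(D''\ot D')\in F_\zeta\D_{R/K}(U,W)$. Running the same argument but well\+ordering $(\delta+1)\times(\gamma+1)$ with the \emph{other} coordinate most significant --- equally legitimate, since the proof of Proposition~\ref{quite-quasi-modules-tensor-product} exploits only strict monotonicity separately in each of the two coordinates --- yields instead the bound with $\zeta+1=(\gamma+1)\cdot(\delta+1)$.

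The main obstacle, as the closing remark in the proof of Proposition~\ref{quite-quasi-modules-tensor-product} already signals, is the bookkeeping with non\+commutative ordinal arithmetic: one must keep careful track of which of the two quasi\+modules supplies the more significant coordinate in the lexicographic well\+ordering of the product of ordinals, and then correctly identify the ordinal position of $(\delta,\gamma)$ as $(\delta+1)\cdot\gamma+\delta$ (respectively $(\gamma+1)\cdot\delta+\gamma$), using the identity $\mu\cdot\nu+\mu=\mu\cdot(\nu+1)$ to pass to $\zeta+1$. Everything else is a routine line\+by\+line adaptation of Corollary~\ref{composition-of-strongly-differential-operators}.
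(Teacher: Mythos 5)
Your proposal is correct and mirrors the paper's proof: both deduce the first assertion from Proposition~\ref{quite-quasi-modules-tensor-product} applied to $B\ot_RA$ with $A=\Gamma_I^\qu(\Hom_K(U,V))$ and $B=\Gamma_I^\qu(\Hom_K(V,W))$, and both obtain the ordinal bound by tracking $D''\ot D'$ through the $G$\+filtration. The only difference is presentational; you identify $\zeta$ directly as the position $(\delta+1)\cdot\gamma+\delta$ of the pair $(\delta,\gamma)$, whereas the paper works with the bounds $\alpha=\gamma+1$, $\beta=\delta+1$ and observes that $\beta\cdot\alpha$ is a successor ordinal, which amounts to the same computation.
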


\begin{proof}
 Clearly, $(\gamma+1)\cdot(\delta+1)=(\gamma+1)\cdot\delta+\gamma+1$
and $(\delta+1)\cdot(\gamma+1)=(\delta+1)\cdot\gamma+\delta+1$
are successor ordinals.
 So we claim that one can have, at one's choice,
$\zeta=(\gamma+1)\cdot\delta+\gamma$ or $\zeta=(\delta+1)\cdot\gamma
+\delta$, whatever is smaller.

 The argument is similar to the proof of
Corollary~\ref{composition-of-strongly-differential-operators}
and based on (the proof of)
Proposition~\ref{quite-quasi-modules-tensor-product}.
 As above, we consider the ring $T=R\ot_KR$, and let $I$ be the kernel
ideal of the natural ring homomorphism $R\ot_KR\rarrow R$.
 Put $A=\Gamma_I^\qu(\Hom_K(U,V))$ and
$B=\Gamma_I^\qu(\Hom_K(V,W))$ (in the notation of
Section~\ref{quite-torsion-subsecn}).
 So $A$ and $B$ are quite quasi-modules over~$R$
(in the sense of Section~\ref{quite-quasi-modules-subsecn}).
 We have $D'\in A$ and $D''\in B$.
 The composition of $K$\+linear maps~$\circ$ is an $R$\+$R$\+bimodule
map
$$
 \circ\:\Hom_K(V,W)\ot_R\Hom_K(U,V)\lrarrow\Hom_K(U,W).
$$
 By Proposition~\ref{quite-quasi-modules-tensor-product},
the tensor product $B\ot_RA$ is a quite quasi-module over~$R$.
 As the class of quite quasi-modules over $R$ is closed under
quotients in $R\bMod R$, it follows that the image $\circ(B\ot_RA)$
of the composition
$$
 B\ot_RA\lrarrow\Hom_K(V,W)\ot_R\Hom_K(U,V)\lrarrow\Hom_K(U,W)
$$
is a quite quasi-module over $R$ as well.
 Thus $\circ(B\ot_RA)\subset\Gamma_I^\qu(\Hom_K(U,W))$, and it
follows that $D''\circ D'\in\circ(B\ot_RA)$ is a quite
$R$\+differential operator.

 This proves the first assertion of the corollary.
 To prove the second one, let $\alpha$ and~$\beta$ be two ordinals
such that $D'\in\bigcup_{\gamma<\alpha}F_\gamma\D_{R/K}(U,V)$ and
$D''\in\bigcup_{\delta<\beta}F_\delta\D_{R/K}(V,W)$.
 Then we have $D'\in\bigcup_{\gamma<\alpha} F_\gamma A$ and
$D''\in\bigcup_{\delta<\beta}F_\delta B$, hence
$$
 D''\ot D'\in\bigcup\nolimits_{\zeta<\beta\cdot\alpha}G_\zeta(B\ot_RA)
 \subset\bigcup\nolimits_{\zeta<\beta\cdot\alpha}F_\zeta(B\ot_RA)
$$
by the proof of Proposition~\ref{quite-quasi-modules-tensor-product}.
 Hence $D''\circ D'\in\bigcup_{\zeta<\beta\cdot\alpha}
F_\zeta(\circ(B\ot_R\nobreak A))$ is an $R$\+differential
operator of ordinal order smaller than $\beta\cdot\alpha$.
 Here we are also using the fact that the natural ordinal-indexed
increasing filtration $F$ on $R$\+$R$\+bimodules is preserved by
homomorphisms of $R$\+$R$\+bimodules.
 Similarly (e.~g., by switching the left and right sides of
the bimodules in Proposition~\ref{quite-quasi-modules-tensor-product})
one proves that the ordinal order of $D''\circ D'$ is smaller
than $\alpha\cdot\beta$.
\end{proof}

 In particular, it follows from
Corollary~\ref{composition-of-quite-differential-operators} that,
for any homomorphism of commutative rings $K\rarrow R$ and any
$R$\+module $U$, the $K$\+linear quite $R$\+differential operators
$U\rarrow U$ form a subring (in fact, a $K$\+subalgebra)
$\D_{R/K}^\qu(U,U)\subset\Hom_K(U,U)$ in the $K$\+algebra of
$K$\+linear maps $U\rarrow U$.

\subsection{Differential operators having no order}
 We still keep the notation of
Section~\ref{strongly-differential-operators}.
 Given an element $r\in R$ and an integer $n\ge0$, consider
the $R$\+$R$\+subbimodule
$$
 F^{(r)}_nE\subset E=\Hom_K(U,V)
$$
defined in Section~\ref{quasi-modules-subsecn}.
 So, for a $K$\+linear map $e\:U\rarrow V$, one has $e\in F^{(r)}_nE$
if and only if $(\theta_r)^{n+1}(e)=0$ in~$E$.

 We will say that a $K$\+linear map $e\:U\rarrow V$ is
a \emph{$K$\+linear $R$\+differential operator $U\rarrow V$}
if, for every element $r\in R$, one has $e\in\bigcup_{n\ge0}F^{(r)}_nE$.
 In other words, the $R$\+$R$\+subbimodule $\D_{R/k}(U,V)$ of
$R$\+differential operators in $\Hom_K(U,V)$ is defined by the formula
$$
 \D_{R/K}(U,V)=\bigcap\nolimits_{r\in R}\bigcup\nolimits_{n\ge0}
 F_n^{(r)}(\Hom_K(U,V)) = \Gamma_I(\Hom_K(U,V))\subset\Hom_K(U,V),
$$
in the notation of Section~\ref{torsion-modules-subsecn}
(see Corollary~\ref{quasi-module-torsion-module}); and
a $K$\+linear map $e\:U\rarrow V$ is an $R$\+differential operator
if and only if it is an $I$\+torsion element of the $T$\+module~$E$.
 Here, as usual, we put $T=R\ot_KR$, and denote by $I\subset T$
the kernel ideal of the natural ring homomorphism $R\ot_KR\rarrow R$.
 By Lemmas~\ref{torsion-checked-on-generators-of-ideal}
and~\ref{diagonal-ideal-generators}, one has
$e\in\D_{R/K}(U,V)$ whenever $e\in\bigcup_{n\ge0}F^{(r_j)}_nE$
for every element~$r_j$ from some given set of generators of
the unital $K$\+algebra~$R$.

 Given an element $r\in R$ and an integer $n\ge0$,
a $K$\+linear $R$\+differential operator $e\:U\rarrow V$ is said to
have \emph{$r$\+order\/~$\le n$} if it belongs to
the $R$\+$R$\+subbimodule
$$
 F^{(r)}\D_{R/K}(U,V)=F^{(r)}_nE\cap\D_{R/K}(U,V)\subset E.
$$

 All quite $R$\+differential operators are $R$\+differential
operators (see the discussion in Sections~\ref{quite-torsion-subsecn}
and~\ref{quite-quasi-modules-subsecn}).
 For a homomorphism of commutative rings $K\rarrow R$ making $R$
a finitely generated $K$\+algebra, a $K$\+linear map between
two $R$\+modules $U\rarrow V$ is an $R$\+differential operator
if and only if it is a quite $R$\+differential operator, and
if and only if it is a strongly $R$\+differential operator (by
Lemmas~\ref{torsion-finitely-generated-ideal}
and~\ref{diagonal-ideal-generators}, cf.\
Sections~\ref{quite-torsion-subsecn}
and~\ref{quite-quasi-modules-subsecn}).
 When the commutative $K$\+algebra $R$ is infinitely generated,
a $K$\+linear $R$\+differential operator \emph{need not} be
a quite $R$\+differential operator, generally speaking, as
the following counterexample demonstrates.

\begin{ex} \label{D-infinity-operator-example}
 Let $K=k$ be a field, and let $R=k[x_1,x_2,x_3,\dotsc]$ be the ring
of polynomials in countably many variables over~$k$.
 Consider the infinite sum of compositions of partial derivatives
\begin{equation} \label{D-infinity-operator}
 D_\infty=\frac{\d}{\d x_1}+\frac{\d^2}{\d x_1\,\d x_2}
 +\frac{\d^3}{\d x_1\,\d x_2\,\d x_3}+\dotsb
\end{equation}
 Similarly to Example~\ref{laplace-operator-example}, all but
a finite number of summands in~\eqref{D-infinity-operator} annihilate
any particular polynomial $f\in R$, so $D_\infty$ is well-defined
as a $k$\+linear map $R\rarrow R$.

 One computes that
\begin{multline*}
 (-1)^n\theta_{x_n}\theta_{x_{n-1}}\dotsm
 \theta_{x_2}\theta_{x_1}(D_\infty) \\
 =\id+\frac{\d}{\d x_{n+1}}+\frac{\d^2}{\d x_{n+1}\,\d x_{n+2}}
 +\frac{\d^3}{\d x_{n+1}\,\d x_{n+2}\,\d x_{n+3}}+\dotsb\,\ne\,0
\end{multline*}
for every integer $n\ge1$.
 By Corollary~\ref{quite-quasi-module-T-nilpotence},
the $R$\+$R$\+subbimodule spanned by $D_\infty$ in $\Hom_k(R,R)$
is \emph{not} a quite quasi-module over~$R$.
 In other words, by Proposition~\ref{quite-torsion-T-nilpotence},
\,$D_\infty$ is \emph{not} a quite $I$\+torsion element in
the $T$\+module $\Hom_k(R,R)$.
 Thus $D_\infty$ is \emph{not} a quite $R$\+differential operator.

 Still, one obviously has $\theta_{x_i}\theta_{x_i}(D_\infty)=0$
for every $i\ge1$.
 So, by Lemmas~\ref{torsion-checked-on-generators-of-ideal}
and~\ref{diagonal-ideal-generators},
\,$D_\infty$ is a $k$\+linear $R$\+differential operator $R\rarrow R$.
 The $R$\+differential operator $D_\infty$
\emph{does not even have an ordinal order}.
 However, it has finite $x_i$\+order equal to~$1$ for every $i\ge1$,
and finite $f$\+order for every polynomial $f\in R$.
\end{ex}

 Thus we have proved the strict inclusions
\begin{equation} \label{diff-operators-strict-inclusions-repeated}
 \D^\st_{R/K}(U,V)\varsubsetneq\D^\qu_{R/K}(U,V)
 \varsubsetneq\D_{R/K}(U,V)
\end{equation}
in the general case, as promised
in~\eqref{diff-operators-inclusions-inequalities}.

\begin{exs} \label{nondifferential-operators-examples}
 For the sake of completeness of the exposition, it remains to present
examples of infinitary expressions with partial derivatives that are
\emph{not differential operators at all} according to our definition.
 For example, let $K=k$ be a field of characteristic zero and
$R=k[x]$ be the ring of polynomials in one variable.
 Then
\begin{equation} \label{shift-operator}
 \Sh=\sum_{i=0}^\infty \frac{1}{i!}\frac{d^i}{dx^i}
\end{equation}
is a well-defined $k$\+linear map $\Sh\:R\rarrow R$; but $\Sh$
is \emph{not} an $R$\+differential operator, as it does not have
finite order with respect to the element $x\in R$.
 Indeed, $(\theta_x)^n(\Sh)=(-1)^n\Sh\ne0$ for every $n\ge0$.
 Actually, $\Sh(f)(x)=f(x+1)$ is the shift operator; see the discussion
in Section~\ref{introd-examples-subsecn} of the Introduction.

 Similarly, consider the ring of polynomials in countably many
variables $R=k[x;y_0,y_1,y_2,\dotsc]$.
 Then
\begin{equation} \label{another-nondifferential-operator}
 \Sh'=\sum_{i=0}^\infty \frac{1}{i!}\frac{\d^i}{\d x^i}\frac{\d}{\d y_i}
\end{equation}
is a well-defined $k$\+linear map $\Sh'\:R\rarrow R$; but $\Sh'$ is
\emph{not} an $R$\+differential operator, since it does not
have finite order with respect to the element $x\in R$.
 One easily computes that $(\theta_x)^n(\Sh')\ne0$ in
$\Hom_k(R,R)$ for every $n\ge1$.
\end{exs}

\begin{cor} \label{composition-of-differential-operators}
 Let $K\rarrow R$ be a homomorphism of commutative rings, and let
$U$, $V$, and $W$ be three $R$\+modules.
 Then the composition of any two $K$\+linear $R$\+differential
operators $D'\:U\rarrow V$ and $D''\:V\rarrow W$ is a $K$\+linear
$R$\+differential operator $D''\circ D'\:U\rarrow W$.
 In fact, let $r\in R$ be an element.
 If $D'\in F^{(r)}_{n'}\D_{R/K}(U,V)$ is a differential operator
of $r$\+order~$n'$ and $D''\in F^{(r)}_{n''}\D_{R/K}(V,W)$ is
a differential operator of $r$\+order~$n''$, then $D''\circ D'\in
F^{(r)}_{n'+n''}\D_{R/K}(U,W)$ is a differential operator of
$r$\+order at most $n'+n''$.
\end{cor}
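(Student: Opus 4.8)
The plan is to follow verbatim the template of the proofs of Corollaries~\ref{composition-of-strongly-differential-operators} and~\ref{composition-of-quite-differential-operators}, now using Proposition~\ref{quasi-modules-tensor-product} (the tensor product of quasi-modules is a quasi-module) in place of Propositions~\ref{strong-quasi-modules-tensor-product} and~\ref{quite-quasi-modules-tensor-product}. As usual, put $T=R\ot_KR$ and let $I\subset T$ be the kernel ideal of the multiplication map $R\ot_KR\rarrow R$. Set $A=\Gamma_I(\Hom_K(U,V))=\D_{R/K}(U,V)$ and $B=\Gamma_I(\Hom_K(V,W))=\D_{R/K}(V,W)$; by Corollary~\ref{quasi-module-torsion-module} these are quasi-modules over~$R$, and we have $D'\in A$, $D''\in B$.

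Next I would recall that composition of $K$\+linear maps is a homomorphism of $R$\+$R$\+bimodules
$$
 \circ\:\Hom_K(V,W)\ot_R\Hom_K(U,V)\lrarrow\Hom_K(U,W).
$$
By Proposition~\ref{quasi-modules-tensor-product}, the tensor product $B\ot_RA$ is a quasi-module over~$R$; since the class of quasi-modules is closed under quotients in $R\bMod R$, the image $\circ(B\ot_RA)$ of the restriction of $\circ$ to $B\ot_RA$ is a quasi-module over~$R$ as well. Hence $\circ(B\ot_RA)\subset\Gamma_I(\Hom_K(U,W))=\D_{R/K}(U,W)$, and it follows that $D''\circ D'\in\circ(B\ot_RA)$ is a $K$\+linear $R$\+differential operator. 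This establishes the first assertion.

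For the $r$\+order bound, fix $r\in R$. Since $A$ is a submodule of $\Hom_K(U,V)$ and $\theta_r$ is computed inside any submodule by restriction, one has $F^{(r)}_nA=A\cap F^{(r)}_nE=F^{(r)}_n\D_{R/K}(U,V)$ (with $E=\Hom_K(U,V)$), and similarly for $B$; thus $D'\in F^{(r)}_{n'}A$ and $D''\in F^{(r)}_{n''}B$. The proof of Proposition~\ref{quasi-modules-tensor-product} records the containment $G^{(r)}_n(B\ot_RA)\subset F^{(r)}_n(B\ot_RA)$ for the filtration $G^{(r)}_n(B\ot_RA)=\sum_{i+j=n}\im(F^{(r)}_iB\ot_RF^{(r)}_jA\to B\ot_RA)$, so in particular $D''\ot D'\in G^{(r)}_{n'+n''}(B\ot_RA)\subset F^{(r)}_{n'+n''}(B\ot_RA)$. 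Because any homomorphism $\phi$ of $R$\+$R$\+bimodules satisfies $\phi(\theta_r(b))=\theta_r(\phi(b))$, the filtration $F^{(r)}$ is preserved by such maps; applying $\circ$ therefore gives $D''\circ D'\in F^{(r)}_{n'+n''}(\circ(B\ot_RA))\subset F^{(r)}_{n'+n''}\D_{R/K}(U,W)$, as claimed.

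I do not anticipate a real obstacle: the argument is a direct transcription of the finite\+order and ordinal\+order cases, the only genuine input being Proposition~\ref{quasi-modules-tensor-product} together with the (already recorded) integer\+additive indexing $i+j=n$ of the filtration $G^{(r)}$ appearing in its proof. The one point deserving a moment's care is the identification $F^{(r)}_nA=F^{(r)}_n\D_{R/K}(U,V)$ relating the $r$\+order filtration on the bimodule $A=\D_{R/K}(U,V)$ with the one on the ambient $\Hom_K(U,V)$; this is immediate from the definition, but it is where the fact that $\D_{R/K}(U,V)=\Gamma_I(E)$ is an honest submodule of $E$ (ultimately, left exactness of $\Gamma_I$, cf.\ Lemma~\ref{Gamma-I-left-exact}) is used.
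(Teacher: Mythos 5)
Your proposal is correct and follows essentially the same route as the paper's own proof: set $A=\Gamma_I(\Hom_K(U,V))$, $B=\Gamma_I(\Hom_K(V,W))$, use Proposition~\ref{quasi-modules-tensor-product} to see $B\ot_RA$ is a quasi-module, pass to the image under the bimodule map $\circ$, and track the $r$\+order via the filtration $G^{(r)}$ from that proposition's proof together with preservation of $F^{(r)}$ under bimodule homomorphisms. The additional remark about $F^{(r)}_nA=A\cap F^{(r)}_nE$ is a small clarification the paper leaves implicit, but there is no substantive difference.
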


\begin{proof}
 The proof is similar to that of
Corollaries~\ref{composition-of-strongly-differential-operators}
and~\ref{composition-of-quite-differential-operators}, and
based on (the proof of) Proposition~\ref{quasi-modules-tensor-product}.
 As above, put $T=R\ot_KR$, and let $I$ be the kernel ideal of
the natural ring homomorphism $R\ot_KR\rarrow R$.
 Put $A=\Gamma_I(\Hom_K(U,V))$ and
$B=\Gamma_I(\Hom_K(V,W))$ (in the notation of
Section~\ref{torsion-modules-subsecn}).
 So $A$ and $B$ are quasi-modules over~$R$
(in the sense of Section~\ref{quasi-modules-subsecn}).
 We have $D'\in A$ and $D''\in B$.
 The composition of $K$\+linear maps~$\circ$ is an $R$\+$R$\+bimodule
map
$$
 \circ\:\Hom_K(V,W)\ot_R\Hom_K(U,V)\lrarrow\Hom_K(U,W).
$$
 By Proposition~\ref{quasi-modules-tensor-product},
the tensor product $B\ot_RA$ is a quasi-module over~$R$.
 As the class of quasi-modules over $R$ is closed under quotients in
$R\bMod R$, it follows that the image $\circ(B\ot_RA)$
of the composition
$$
 B\ot_RA\lrarrow\Hom_K(V,W)\ot_R\Hom_K(U,V)\lrarrow\Hom_K(U,W)
$$
is a quasi-module over $R$ as well.
 Thus $\circ(B\ot_RA)\subset\Gamma_I(\Hom_K(U,W))$, and it
follows that $D''\circ D'\in\circ(B\ot_RA)$ is
an $R$\+differential operator.

 This proves the first assertion of the corollary.
 To prove the second one, it remains to say that we have
$D'\in F^{(r)}_{n'}A$ and $D''\in F^{(r)}_{n''}B$, hence $D''\ot D'\in
G^{(r)}_{n'+n''}(B\ot_R\nobreak A)\subset
F^{(r)}_{n'+n''}(B\ot_R\nobreak A)$
by the proof of Proposition~\ref{quasi-modules-tensor-product}.
 Hence $D''\circ D'\in F^{(r)}_{n'+n''}(\circ(B\ot_R\nobreak A))$ is
an $R$\+differential operator of $r$\+order~$\le n'+n''$.
 Here we are also using the fact that the increasing filtration
$F^{(r)}$ on $R$\+$R$\+bimodules is preserved by homomorphisms of
$R$\+$R$\+bimodules.
\end{proof}

 In particular, it follows from
Corollary~\ref{composition-of-differential-operators} that, for any
homomorphism of commutative rings $K\rarrow R$ and any $R$\+module $U$,
the $K$\+linear $R$\+differential operators $U\rarrow U$ form a subring
(in fact, a $K$\+subalgebra) $\D_{R/K}(U,U)\subset\Hom_K(U,U)$ in
the $K$\+algebra of $K$\+linear maps $U\rarrow U$.

\Section{Bounding and Realizing Ordinals}
\label{bounding-realizing-ordinals-secn}

 This section uses a bit more of basic set theory than the rest of this
paper.
 We refer to the initial chapters of the books~\cite{Lev} or~\cite{Kun}
for the relevant discussions of cardinals, ordinals, and regular
cardinals.
 The key result for the purposes of our exposition is that
the successor cardinals are regular~\cite[Proposition~IV.3.11]{Lev},
\cite[Lemma~I.10.37]{Kun}.
 This is a corollary of the fact that every infinite set is
equicardinal to its Cartesian square~\cite[Proposition~III.3.22]{Lev},
\cite[Theorem~I.10.12]{Kun}.

 We start with proving upper bounds and then present a construction
showing that these bounds are sharp.

\begin{prop} \label{quite-torsion-module-filtration-bound}
 Let $\kappa$~be a infinite cardinal, $T$ be a commutative ring, and
$I\subset T$ be an ideal generated by fewer than~$\kappa$ elements.
 Let $M$ be a quite $I$\+torsion $T$\+module (as defined in
Section~\ref{quite-torsion-subsecn}).
 Then $M=\bigcup_{\beta<\kappa}F^{(I)}_\beta M$.
\end{prop}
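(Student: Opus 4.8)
The plan is to reduce to the case of a regular cardinal $\kappa$, and then to prove the statement in that case by a straightforward transfinite induction along the filtration $F^{(I)}$.

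For the reduction, I would let $\mu$ be the least cardinality of a generating set of the ideal $I$, so that $\mu<\kappa$. If $\mu$ is finite, then $I$ is finitely generated; since $M$ is quite $I$\+torsion it is in particular $I$\+torsion, hence $s_j$\+torsion for each of the finitely many generators $s_j$, and Lemma~\ref{torsion-finitely-generated-ideal} then gives that every $m\in M$ is annihilated by some power $I^{n+1}$. Thus $M=\bigcup_{n<\omega}F^{(I)}_nM\subseteq\bigcup_{\beta<\kappa}F^{(I)}_\beta M$ and we are done. If $\mu$ is infinite, then $\mu^+$ is a regular cardinal with $\mu^+\le\kappa$, and $I$ is generated by $\mu<\mu^+$ elements; since $\bigcup_{\beta<\mu^+}F^{(I)}_\beta M\subseteq\bigcup_{\beta<\kappa}F^{(I)}_\beta M$, it is enough to prove the assertion for $\mu^+$ in place of $\kappa$. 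So I would henceforth assume $\kappa$ regular.

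Fix a generating set $G\subseteq I$ of $I$ with $|G|<\kappa$, and set $N=\bigcup_{\beta<\kappa}F^{(I)}_\beta M$, a $T$\+submodule of $M$. Since $M$ is quite $I$\+torsion, $M=\bigcup_\gamma F^{(I)}_\gamma M$ with the union over all ordinals, so it suffices to prove by transfinite induction on the ordinal~$\gamma$ that $F^{(I)}_\gamma M\subseteq N$. This is clear for $\gamma<\kappa$. For $\gamma\ge\kappa$, I would assume $F^{(I)}_\delta M\subseteq N$ for all $\delta<\gamma$ and pick $m\in F^{(I)}_\gamma M$. By definition $Im\subseteq\bigcup_{\delta<\gamma}F^{(I)}_\delta M\subseteq N$, so for each $s\in G$ there is an ordinal $\beta_s<\kappa$ with $sm\in F^{(I)}_{\beta_s}M$. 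Now regularity enters: as $|G|<\kappa=\operatorname{cf}(\kappa)$, the ordinal $\beta^*=\sup_{s\in G}\beta_s$ is still below~$\kappa$, hence $sm\in F^{(I)}_{\beta^*}M$ for all $s\in G$. Since $F^{(I)}_{\beta^*}M$ is a $T$\+submodule and $G$ generates $I$ as an ideal, it follows that $Im\subseteq F^{(I)}_{\beta^*}M$, and therefore $m\in F^{(I)}_{\beta^*+1}M\subseteq N$ (here $\beta^*+1<\kappa$ because $\kappa$ is a limit ordinal). This completes the induction, so $M=N$.

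The one genuinely delicate point is the reduction to the regular case: one should not assume $\kappa$ itself is regular, and in the singular case the effective bound on the length of $F^{(I)}$ comes from the successor cardinal of the number of generators, not from $\kappa$. Everything else is routine transfinite bookkeeping whose only real content is the use of regularity to keep $\beta^*$ below~$\kappa$. (An alternative, uniform route avoiding the reduction would be to attach to each $m\in M$ the rank of the tree of finite sequences $(s_0,\dots,s_{n-1})$ in $G$ with $s_{n-1}\cdots s_0m\ne0$ --- this tree is well-founded by Proposition~\ref{quite-torsion-T-nilpotence} --- and to invoke the fact that a well-founded tree with fewer than $\kappa$ nodes has rank less than~$\kappa$; but the reduction-plus-induction above seems shorter to write.)
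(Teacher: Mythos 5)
Your proof is correct and follows essentially the same route as the paper: reduce to a regular cardinal (the paper takes $\kappa'=\aleph_0$ in the finitely generated case and $\kappa'=\lambda^+$ otherwise, while you dispatch the finite case separately via Lemma~\ref{torsion-finitely-generated-ideal}, which is a harmless variation), then use regularity to bound the supremum of the ordinals $\beta_s$ for $s$ in a generating set of size $<\kappa$ and conclude that the filtration stabilizes below~$\kappa$. The paper streamlines the closing step by proving only $F^{(I)}_\kappa M=\bigcup_{\beta<\kappa}F^{(I)}_\beta M$ and noting that the rest follows by an easy induction, whereas you carry out the full transfinite induction explicitly; the content is identical.
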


\begin{proof}
 If the ideal $I\subset T$ is finitely generated, one can take
$\kappa'=\aleph_0$.
 If the ideal $I$ has an infinite set of generators of
the cardinality $\lambda$, one can take $\kappa'=\lambda^+$ to be
the successor cardinal of~$\lambda$.
 In both cases, the cardinal~$\kappa'$ is regular and
$\kappa'\le\kappa$.
 Replacing if necessary $\kappa$ by~$\kappa'$, we can assume without
loss of generality that the cardinal~$\kappa$ in the formulation of
the proposition is regular.

 It suffices to show that $F^{(I)}_\kappa M=\bigcup_{\beta<\kappa}
F^{(I)}_\beta M$ (then one can easily prove by induction that
$F^{(I)}_\alpha M=\bigcup_{\beta<\kappa}F^{(I)}_\beta M$ for all
ordinals $\alpha\ge\kappa$).
 Let $G\subset I$ be a set of generators of the cardinality less
than~$\kappa$, and let $m\in F^{(I)}_\kappa M$ be an element.
 Then, for every $s\in G$, there exists an ordinal $\gamma_s<\kappa$
such that $sm\in F^{(I)}_{\gamma_s}M$.
 Now $\{\,\gamma_s\mid s\in G\,\}$ is a family of ordinals, each of
them smaller than~$\kappa$, and the cardinality of the family is
also smaller than~$\kappa$.
 Since $\kappa$~is a regular cardinal, it follows that there exists
an ordinal $\beta<\kappa$ such that $\gamma_s<\beta$ for all $s\in G$.
 Then we have $m\in F^{(I)}_\beta M$.
\end{proof}

\begin{cor} \label{quite-quasi-module-filtration-bound}
 Let $\kappa$~be an infinite cardinal and $K\rarrow R$ be a homomorphism
of commutative rings such that the $K$\+algebra $R$ is generated by
fewer than~$\kappa$ elements.
 Let $B$ be an $R$\+$R$\+bimodule over~$K$.
 Assume that $B$ is a quite quasi-module over~$R$ (as defined in
Section~\ref{quite-quasi-modules-subsecn}).
 Then $B=\bigcup_{\beta<\kappa} F_\beta B$.
\end{cor}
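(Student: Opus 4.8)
The plan is to deduce this corollary directly from Proposition~\ref{quite-torsion-module-filtration-bound} by passing to the torsion-module interpretation of quite quasi-modules developed in Section~\ref{quite-quasi-modules-subsecn}. First I would set $T=R\ot_KR$ and let $I\subset T$ be the kernel ideal of the multiplication homomorphism $R\ot_KR\rarrow R$. The discussion in Section~\ref{quite-quasi-modules-subsecn} (which rests on Lemma~\ref{diagonal-ideal-generators}) shows that $B$, regarded as a $T$\+module, is a quite $I$\+torsion $T$\+module if and only if $B$ is a quite quasi-module over~$R$, and that under this identification the ordinal-indexed filtration $F_\beta B$ on the $R$\+$R$\+bimodule $B$ agrees with the filtration $F^{(I)}_\beta B$ from Section~\ref{quite-torsion-subsecn}. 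So the statement to be proved becomes literally an instance of $M=\bigcup_{\beta<\kappa}F^{(I)}_\beta M$ for $M=B$.

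The one genuine point to verify is that the ideal $I\subset T$ is generated by fewer than $\kappa$ elements, so that Proposition~\ref{quite-torsion-module-filtration-bound} is applicable. Let $(r_j)_{j\in J}$ be a set of generators of the unital $K$\+algebra $R$ with $|J|<\kappa$. By Lemma~\ref{diagonal-ideal-generators}, the ideal $I$ is generated by the elements $r_j\ot1-1\ot r_j\in T$, \,$j\in J$; hence $I$ is generated by a set of cardinality $|J|<\kappa$. (In the case of a finitely generated $K$\+algebra $R$ this is automatic, since $\kappa\ge\aleph_0$.)

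Finally I would invoke Proposition~\ref{quite-torsion-module-filtration-bound} for the commutative ring $T$, the ideal $I\subset T$ generated by fewer than $\kappa$ elements, and the quite $I$\+torsion $T$\+module $B$, obtaining $B=\bigcup_{\beta<\kappa}F^{(I)}_\beta B$. Translating back through the identification $F_\beta B=F^{(I)}_\beta B$ yields $B=\bigcup_{\beta<\kappa}F_\beta B$, as claimed. I do not expect any real obstacle here; the proof is essentially a dictionary translation, and the only thing requiring a moment's care is the cardinality bookkeeping for a generating set of~$I$, which Lemma~\ref{diagonal-ideal-generators} takes care of.
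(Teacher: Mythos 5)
Your proposal is correct and is essentially identical to the paper's own proof: the paper likewise invokes Proposition~\ref{quite-torsion-module-filtration-bound} for $T=R\ot_KR$ with the diagonal kernel ideal $I$, citing Lemma~\ref{diagonal-ideal-generators} to bound the cardinality of a generating set for~$I$. You have simply spelled out the dictionary between quite quasi-modules and quite $I$-torsion modules a bit more explicitly, which the paper leaves implicit by reference to Section~\ref{quite-quasi-modules-subsecn}.
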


\begin{proof}
 This is the particular case of
Proposition~\ref{quite-torsion-module-filtration-bound} for
the commutative ring $T=R\ot_KR$ and the kernel ideal $I$ of
the multiplication homomorphism $R\ot_KR\rarrow R$.
 Indeed, the kernel ideal $I$ is generated by fewer than~$\kappa$
elements by Lemma~\ref{diagonal-ideal-generators}.
\end{proof}

\begin{cor} \label{quite-differential-operators-ordinal-order-bound}
 Let $\kappa$~be an infinite cardinal and $K\rarrow R$ be a homomorphism
of commutative rings such that the $K$\+algebra $R$ is generated by
fewer than~$\kappa$ elements.
 Let $U$ and $V$ be two $R$\+modules, and let $D\:U\rarrow V$ be
a $K$\+linear quite $R$\+differential operator (see
Section~\ref{quite-differential-operators-subsecn} for the definition).
 Then the ordinal order of the $R$\+differential operator $D$ is
smaller than~$\kappa$.
\end{cor}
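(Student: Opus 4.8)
The plan is to deduce this directly from Corollary~\ref{quite-quasi-module-filtration-bound} applied to an appropriate $R$\+$R$\+bimodule. First I would recall the relevant setup: put $T=R\ot_KR$, let $I\subset T$ be the kernel ideal of the multiplication homomorphism $R\ot_KR\rarrow R$, and consider $E=\Hom_K(U,V)$ as an $R$\+$R$\+bimodule over~$K$, i.~e., as a $T$\+module. By the definition in Section~\ref{quite-differential-operators-subsecn}, the hypothesis that $D$ is a quite $R$\+differential operator says precisely that $D\in\D^\qu_{R/K}(U,V)=\Gamma_I^\qu(E)$, while the ordinal order of $D$ is the least ordinal~$\beta$ for which $D\in F_\beta\D_{R/K}(U,V)=F_\beta E$ (and this bimodule filtration coincides with the ideal-torsion filtration $F^{(I)}_\beta E$ by Lemma~\ref{diagonal-ideal-generators}, cf.\ Section~\ref{quite-quasi-modules-subsecn}).

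Next I would observe that the $T$\+submodule $B=\Gamma_I^\qu(E)\subset E$ is again an $R$\+$R$\+bimodule over~$K$ and, by its very construction, a quite quasi-module over~$R$ (equivalently, a quite $I$\+torsion $T$\+module). Since the $K$\+algebra $R$ is generated by fewer than~$\kappa$ elements, Corollary~\ref{quite-quasi-module-filtration-bound} applies to~$B$ and yields $B=\bigcup_{\beta<\kappa}F_\beta B$. By the last assertion of Lemma~\ref{Gamma-I-qu-left-exact}, applied to the $T$\+submodule $B\subset E$, one has $F_\beta B=B\cap F_\beta E$ for every ordinal~$\beta$; in particular $F_\beta B\subset F_\beta E=F_\beta\D_{R/K}(U,V)$.

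Finally, since $D\in B=\bigcup_{\beta<\kappa}F_\beta B$, there is an ordinal $\beta<\kappa$ with $D\in F_\beta B\subset F_\beta\D_{R/K}(U,V)$; hence the ordinal order of~$D$, being the least such~$\beta$, is smaller than~$\kappa$, as desired. There is no serious obstacle here: this is a short consequence of the bound already proved. The only points requiring a little care are the identification of the bimodule filtration of Section~\ref{quite-quasi-modules-subsecn} with the ideal-torsion filtration of Section~\ref{quite-torsion-subsecn} (which is Lemma~\ref{diagonal-ideal-generators}), and the use of left-exactness of the functor $\Gamma_I^\qu$ to commute the filtration past the submodule $B$ (Lemma~\ref{Gamma-I-qu-left-exact}).
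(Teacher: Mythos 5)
Your proposal is correct and takes essentially the same approach as the paper: the paper's proof is the one-liner ``apply Corollary~\ref{quite-quasi-module-filtration-bound} to the quite quasi-module $B=\D_{R/K}^\qu(U,V)\subset\Hom_K(U,V)$.'' You spell out the same argument in more detail, in particular invoking Lemma~\ref{Gamma-I-qu-left-exact} to justify the (easy) compatibility of the filtration on the submodule $B$ with the one on $E$, which the paper leaves implicit.
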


\begin{proof}
 This is the particular case of
Corollary~\ref{quite-quasi-module-filtration-bound} for the quite
quasi-module $B=\D_{R/K}^\qu(U,V)\subset\Hom_K(U,V)$ over the ring~$R$.
\end{proof}

 The following theorem implies that the upper bound in
Corollary~\ref{quite-differential-operators-ordinal-order-bound}
is sharp (generally speaking).
 It follows that the bounds in
Corollary~\ref{quite-quasi-module-filtration-bound} and
Proposition~\ref{quite-torsion-module-filtration-bound} are sharp, too.

\begin{thm} \label{all-ordinal-orders-possible}
 Let $K=k$ be a field, $\kappa$~be a regular cardinal, and $\alpha$~be
an ordinal of cardinality smaller than~$\kappa$.
 Then there is a set $\Lambda$ of cardinality smaller than~$\kappa$
such that, for the $k$\+algebra $R=k[(x_i)_{i\in\Lambda}]$
of polynomials in the set of variables indexed by~$\Lambda$, there
exists a $k$\+linear quite $R$\+differential operator $D_\alpha\:
R\rarrow R$ of ordinal order~$\alpha$.
\end{thm}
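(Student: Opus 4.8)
The plan is to construct the operator $D_\alpha$ together with the polynomial ring $R_\alpha=k[(x_i)_{i\in\Lambda_\alpha}]$ on which it acts by transfinite induction on~$\alpha$, for a fixed regular cardinal~$\kappa$, distinguishing three cases. If $\alpha=0$, take $\Lambda_0=\varnothing$ and $D_0=\id\colon k\to k$. If $\alpha=\lambda+n$ with $\lambda$~a limit ordinal or~$0$ and $1\le n<\omega$, adjoin $n$~fresh variables $y_1,\dots,y_n$ to $R_\lambda$ and set $D_{\lambda+n}=(\d/\d y_1)\circ\dotsb\circ(\d/\d y_n)\circ\widetilde{D_\lambda}$, where $\widetilde{D_\lambda}$ is $D_\lambda$ extended to the enlarged polynomial ring so as to act only on the variables~$x_i$. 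If $\alpha$~is a limit ordinal, choose for each~$i$ with $1\le i<\alpha$ an operator $D^{(i)}\colon R_i\to R_i$ of ordinal order exactly~$i$ on a polynomial ring $R_i=k[(x_j)_{j\in\Lambda_i}]$ (available by the induction hypothesis, since $|i|\le|\alpha|<\kappa$), with the sets $\Lambda_i$~pairwise disjoint, and put $\Lambda_\alpha=\bigsqcup_{1\le i<\alpha}\Lambda_i$ and $D_\alpha=\sum_{1\le i<\alpha}D^{(i)}$, each summand acting on its own block of variables. Along the way I~would carry the auxiliary invariant that for $\alpha\ge1$ the operator $D_\alpha$ annihilates every polynomial involving none of its own variables; this is preserved in each step and ensures that the infinite sum defining~$D_\alpha$ in the limit case is a well\+defined $k$\+linear map $R_\alpha\to R_\alpha$, since every polynomial lies in the kernel of all but finitely many~$D^{(i)}$. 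The cardinality bound $|\Lambda_\alpha|<\kappa$ is also inductive: in the limit case $|\Lambda_\alpha|=\sum_{1\le i<\alpha}|\Lambda_i|\le|\alpha|\cdot\sup_{i<\alpha}|\Lambda_i|<\kappa$, using that $\kappa$~is regular and $|\alpha|<\kappa$.

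That $D_\alpha$ has ordinal order exactly~$\alpha$ is proved by a matching upper and lower bound on its position in the filtration $F_\bullet\D_{R_\alpha/k}(R_\alpha,R_\alpha)$. For the upper bound, by Lemma~\ref{diagonal-ideal-generators} it suffices to control $\theta_{x_i}(D_\alpha)$ for the polynomial generators~$x_i$ of~$R_\alpha$: since each $F_\beta\D_{R_\alpha/k}(R_\alpha,R_\alpha)$ is an $R_\alpha$\+$R_\alpha$\+subbimodule, so is $\bigcup_{\beta<\gamma}F_\beta$, and the Leibniz rule $\theta_{rs}=r\theta_s+\theta_r s$ propagates the membership $\theta_r(D_\alpha)\in\bigcup_{\beta<\gamma}F_\beta$ from a generating set to all $r\in R_\alpha$, exactly as in Lemma~\ref{torsion-checked-on-generators-of-ideal}. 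In the limit case, operators on disjoint blocks of variables commute, so $\theta_{x_i}(D_\alpha)=\theta_{x_i}(D^{(i_0)})\in\bigcup_{\beta<i_0}F_\beta\subseteq\bigcup_{\beta<\alpha}F_\beta$ for the block~$\Lambda_{i_0}$ containing~$x_i$, whence $D_\alpha\in F_\alpha$. In the successor case, $\theta_{y_j}(D_{\lambda+n})$ equals, up to sign, the operator $D_{\lambda+(n-1)}$ of ordinal order $\lambda+n-1$, while $\theta_{x_i}(D_{\lambda+n})=(\d/\d y_1)\circ\dotsb\circ(\d/\d y_n)\circ\widetilde{\theta_{x_i}(D_\lambda)}$; here $\widetilde{\theta_{x_i}(D_\lambda)}$ has some ordinal order $<\lambda$ (extension does not raise the ordinal order, by a routine check on generators), and Corollary~\ref{composition-of-quite-differential-operators} bounds the order of this composite by an ordinal that is still $<\lambda$ (taking the more favourable of the two products in that corollary and using that $\lambda$~is a limit ordinal, so $(n+1)\cdot\epsilon<\lambda$ for every $\epsilon<\lambda$). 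Hence $D_{\lambda+n}\in F_{\lambda+n}$.

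For the lower bound the central device is a ``restriction to a retract'' lemma: whenever $\pi\colon R\twoheadrightarrow S$ is a surjection of commutative $k$\+algebras admitting a $k$\+algebra section $s\colon S\to R$, the assignment $D\longmapsto\pi\circ D\circ s$ maps $F_\beta\D_{R/k}(R,R)$ into $F_\beta\D_{S/k}(S,S)$ for every ordinal~$\beta$; this follows by transfinite induction on~$\beta$ from the identity $\theta^S_r(\pi\circ D\circ s)=\pi\circ\theta^R_{s(r)}(D)\circ s$ for $r\in S$. In the limit case, applying this with $S=R_i$ and $\pi$ the evaluation at zero of all variables outside~$\Lambda_i$ gives $\pi\circ D_\alpha\circ s=D^{(i)}$ (by the auxiliary invariant), so $D_\alpha\in F_\beta$ would force $D^{(i)}\in F_\beta$, contradicting the induction hypothesis whenever $i>\beta$; as every $\beta<\alpha$ admits such an~$i$ (take $i=\beta+1<\alpha$), we conclude $D_\alpha\notin F_\beta$ for all $\beta<\alpha$. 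In the successor case, the same lemma (with $\pi$ the evaluation $y_1=\dotsb=y_n=0$) shows $\widetilde{D_\lambda}$ still has ordinal order exactly~$\lambda$; combining this with $\theta_{y_1}\circ\dotsb\circ\theta_{y_n}(D_{\lambda+n})=(-1)^n\widetilde{D_\lambda}$ and the elementary fact that, since $\lambda+n$ is a successor, $n$~applications of $\theta$\+operators send any element of $F_\beta$ with $\beta<\lambda+n$ into $\bigcup_{\gamma<\lambda}F_\gamma$, one obtains $D_{\lambda+n}\notin F_\beta$ for all $\beta<\lambda+n$. Together with the upper bounds this yields ordinal order exactly~$\alpha$ in all three cases.

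The step I~expect to be the main obstacle is making the two constructions cooperate: the transfinite-sum construction produces only limit ordinals, the precomposition-with-derivatives construction only climbs through finite successor steps, and linking them at every ordinal forces one to keep the annihilation invariant, the cardinality bound, and the order estimates all consistent across the recursion. The subtlest single point is the upper bound in the successor case, where the composition bound of Corollary~\ref{composition-of-quite-differential-operators} is only barely sufficient and the argument hinges on $\lambda$ being a limit; it may be cleaner to isolate the sharper fact that precomposition with a partial derivative $\d/\d y$ (which commutes with the rest of the operator) raises the ordinal order by at most~$1$. I~would also remark that the construction is characteristic\+free --- no power of a single derivative is ever taken, a product of $i$~first\+order partial derivatives in distinct variables replacing $\d^i/\d x_i^i$ --- so the characteristic\+zero hypothesis of the examples in Section~\ref{introd-examples-subsecn} is not needed here.
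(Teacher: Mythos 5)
Your construction is correct and follows the same high-level strategy as the paper's proof: transfinite induction on~$\alpha$, with the limit case given by an infinite sum of operators acting on pairwise disjoint blocks of variables (the key finiteness being enforced by the $D_\alpha(1)=0$ invariant), and the successor case given by tensoring with a fresh partial derivative. The one genuine technical difference is your ``restriction to a retract'' lemma, which is a clean device the paper leaves implicit: the paper simply asserts that the extension $D'_\beta = D_\beta\otimes_k\id$ has the same ordinal order as $D_\beta$, and that $C_{\beta,i}\otimes_k\d/\d y$ has order $\gamma_i+1$, embedding the justification into a somewhat informal inner transfinite induction on~$\beta$; your lemma that $D\mapsto\pi\circ D\circ s$ respects the filtration $F_\beta$ for any split surjection of $k$\+algebras (proved directly from the identity $\theta^S_r(\pi\circ D\circ s)=\pi\circ\theta^R_{s(r)}(D)\circ s$) gives the lower bound ``extension does not decrease the order'' in one stroke and is arguably a tidier organization of that part of the argument. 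Your handling of the successor case via $\alpha=\lambda+n$ with $\lambda$ a limit and $n$ fresh variables also differs cosmetically from the paper's step-by-step $\alpha=\beta+1$; the use of Corollary~\ref{composition-of-quite-differential-operators} for the upper bound is fine, and as you observe at the end, it is actually sharp here: $(n+1)\cdot(\lambda+1)=\lambda+n+1$, so the corollary already yields $D_{\lambda+n}\in F_{\lambda+n}$ without the detour through the commutators $\theta_{x_i}$. Your closing remark that the construction is characteristic-free is also correct and a small improvement in generality over the heuristic examples in the introduction.
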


 The assertion of the theorem can be rephrased by saying that if
the cardinality of an ordinal~$\alpha$ is equal to~$\lambda$, then
$\alpha$~is the ordinal order of a $k$\+linear quite $R$\+differential
operator $R\rarrow R$, where $R$ is the ring of polynomials in
$\lambda$~variables over any given field~$k$.
 Indeed, the case of a finite cardinal~$\lambda$ is easy, and one can
take the regular cardinal $\kappa=\lambda^+$ in the context of
the theorem if $\lambda$~is infinite.
 In particular, every countable ordinal can be realized as
the ordinal order of a $k$\+linear quite differential operator in
a countable set of variables (take $\kappa=\aleph_1$).

\begin{proof}[Proof of
Theorem~\ref{all-ordinal-orders-possible}]
 We proceed by transfinite induction on~$\alpha$.
 For $\alpha=0$, it suffices to take the empty set of variables
$\Lambda=\varnothing$; so $R=k$.
 Then the identity map $\id\:R\rarrow R$ is a $k$\+linear
$R$\+differential operator of order~$0$.
 For every ordinal $\alpha>0$, we will construct the differential
operator $D_\alpha$ with the additional property that $D_\alpha$
\emph{has no free term with respect to the partial derivatives},
that is, $D_\alpha(1)=0$ in~$R$.

 In the case of a limit ordinal~$\alpha$, we essentially use
the construction from Example~\ref{D-omega-operator-example},
suitably adopted to the situation at hand.
 Pick a family of ordinals
$(0<\beta_\upsilon<\alpha)_{\upsilon\in\Upsilon}$ indexed by
a set $\Upsilon$ of cardinality smaller than~$\kappa$ such
that $\alpha$ is the supremum of~$\beta_\upsilon$ over
$\upsilon\in\Upsilon$.
 For every $\upsilon\in\Upsilon$, pick a set $\Lambda_\upsilon$ of
cardinality smaller than~$\kappa$ and a $k$\+linear
quite $R_\upsilon$\+differential operator $D_{\beta_\upsilon}\:
R_\upsilon\rarrow R_\upsilon$ of ordinal order~$\beta_\upsilon$
acting on the $k$\+algebra of polynomials
$R_\upsilon=k[(x_i)_{i\in\Lambda_\upsilon}]$.
 We presume that $D_{\beta_\upsilon}(1)=0$ in $R_\upsilon$ for all
$\upsilon\in\Upsilon$.

 Let $\Lambda=\coprod_{\upsilon\in\Upsilon}\Lambda_\upsilon$ be
the disjoint union of the sets~$\Lambda_\upsilon$ over
all $\upsilon\in\Upsilon$.
 Then, for every $\upsilon\in\Upsilon$, the $k$\+algebra
$R=k[(x_i)_{i\in\Lambda}]$ decomposes naturally into the tensor product
$R=R_\upsilon\ot_k S_\upsilon$ of two $k$\+algebras of polynomials
$R_\upsilon$ and $S_\upsilon=
k[(x_j)_{j\in\Lambda\setminus\Lambda_\upsilon}]$.
 Denote by $D'_{\beta_\upsilon}\:R\rarrow R$ the tensor product
of $k$\+linear operators $D'_{\beta_\upsilon}=D_{\beta_\upsilon}
\ot\id_{S_\upsilon}$, where $\id_{S_\upsilon}\:S_\upsilon\rarrow
S_\upsilon$ is the identity map.
 Simply put, $D'_{\beta_\upsilon}$ is the notation for (a version of)
the differential operator $D_{\beta_\upsilon}$ acting on the ring $R$
rather than~$R_\upsilon$.
 The assumption that $D_{\beta_\upsilon}(1)=0$ implies that
the restriction of $D'_{\beta_\upsilon}$ to the subalgebra
$S_\upsilon\subset R$ vanishes, $D'_{\beta_\upsilon}({S_\upsilon})=0$.

 Consider the infinite sum
\begin{equation} \label{limit-ordinal-D-alpha}
 D_\alpha=\sum\nolimits_{\upsilon\in\Upsilon} D'_{\beta_\upsilon}.
\end{equation}
 Every polynomial $f\in R$ only depends on a finite number of
variables $x_i$, \,$i\in\Lambda$, which come from a finite number
of subsets $\Lambda_\upsilon\subset\Lambda$, \,$\upsilon\in\Upsilon$.
 So we have $f\in S_\upsilon\subset R$ for all but a finite subset of
indices $\upsilon\in\Upsilon$.
 Therefore, all but a finite subset of summands
in~\eqref{limit-ordinal-D-alpha} annihilate~$f$, and $D_\alpha$
is well-defined as a $k$\+linear operator $R\rarrow R$.

 For every index $i\in\Lambda_\upsilon\subset\Lambda$, denote
by $C_{\beta_\upsilon,i}\:R_\upsilon\rarrow R_\upsilon$
the $k$\+linear map $C_{\beta_\upsilon,i}=[x_i,D_{\beta_\upsilon}]=
\theta_{x_i}(D_{\beta_\upsilon})$.
 The assumption that $D_{\beta_\upsilon}\:R_\upsilon\rarrow R_\upsilon$
is a $k$\+linear quite $R_\upsilon$\+differential operator of
ordinal order~$\beta_\upsilon$ means that $C_{\beta_\upsilon,i}$
is (either the zero map or) a $k$\+linear quite
$R_\upsilon$\+differential operator of ordinal order
$\gamma_{\upsilon,i}<\beta_\upsilon$, and~$\beta_{\upsilon}$ is
the minimal ordinal such that $\gamma_{\upsilon,i}<\beta_\upsilon$
for all $i\in\Lambda_\upsilon$.
 (We are using Lemma~\ref{diagonal-ideal-generators} here.)

 Now we have $[x_i,D_\alpha]=\theta_{x_i}(D_\alpha)=
C'_{\beta_\upsilon,i}\:R\rarrow R$, where $C'_{\beta_\upsilon,i}=
C_{\beta_\upsilon,i}\ot_k\id_{S_\upsilon}$ is (a version of)
the differential operator $C_{\beta_\upsilon,i}$ acting on
the ring $R$ rather than~$R_\upsilon$.
 The map $C'_{\beta_\upsilon,i}\:R\rarrow R$ is a $k$\+linear quite
$R$\+differential operator of ordinal order~$\gamma_{\upsilon,i}$.
 Thus $D_\alpha\:R\rarrow R$ is a $k$\+linear quite $R$\+differential
operator whose ordinal order is the minimal ordinal~$\alpha'$
such that $\gamma_{\upsilon,i}<\alpha'$ for all $\upsilon\in\Upsilon$
and $i\in\Lambda_\upsilon$.
 As $\alpha$~is the supremum of~$\beta_\upsilon$ and $\alpha$~is
a limit ordinal, it follows that $\alpha'=\alpha$.

 In the case of a successor ordinal $\alpha=\beta+1$,
we essentially use the construction from 
Example~\ref{D-omega+n-operator-example} (for $n=1$), suitably
generalized to fit the situation at hand.
 Pick a set $\Delta$ of cardinality smaller than~$\kappa$ and
a $k$\+linear quite $Q$\+differential operator $D_\beta\:Q\rarrow Q$
of ordinal order~$\beta$ acting on the polynomial ring
$Q=k[(x_i)_{i\in\Delta}]$.
 Let $\Lambda=\Delta\sqcup\{*\}$ be the set obtained by adjoining
one element~$*$ to~$\Delta$.
 Denote the related variable by $x_*=y$; so $R=k[(x_i)_{i\in\Lambda}]
=k[(x_i)_{i\in\Delta};y]=Q\ot_kk[y]$.

 Let $D_\alpha\:R\rarrow R$ be the $k$\+linear map
$D_\alpha=D_\beta\ot_k\d/\d y$, or equivalently,
\begin{equation} \label{successor-ordinal-alpha}
 D_\alpha=D'_\beta\frac{\d}{\d y},
\end{equation}
where $D'_\beta=D_\beta\ot_k\id_{k[y]}$ is (a version of)
the differential operator $D_\beta$ acting on the ring $R$ rather
than~$Q$.

 For every index $i\in\Delta$, denote by $C_{\beta,i}\:Q\rarrow Q$
the $k$\+linear map $C_{\beta,i}=[x_i,D_\beta]=\theta_{x_i}(D_\beta)$.
 The assumption that $D_\beta\:Q\rarrow Q$ is a $k$\+linear quite
$Q$\+differential operator of ordinal order~$\beta$ means that
$C_{\beta,i}$ is (either the zero map or) a $k$\+linear quite
$Q$\+differential operator of ordinal order $\gamma_i<\beta$,
and~$\beta$ is the minimal ordinal such that $\gamma_i<\beta$ for all
$i\in\Delta$.

 Proceeding by transfinite induction on~$\beta$, we will prove that
$D_\alpha$ is a quite $R$\+differential operator of ordinal
order $\alpha=\beta+1$.
 Indeed, we have $[y,D_\alpha]=\theta_y(D_\alpha)=-D'_\beta$.
 As $D'_\beta\:R\rarrow R$ is a $k$\+linear quite $R$\+differential
operator of ordinal order~$\beta$, the ordinal order of~$D_\alpha$
cannot be smaller than~$\alpha$.

 On the other hand, for every $i\in\Delta$, we have
$[x_i,D_\alpha]=\theta_{x_i}(D_\alpha)=C_{\beta,i}\ot_k\d/\d y\:
R\rarrow R$.
 Since $C_{\beta,i}\:Q\rarrow Q$ is a $k$\+linear quite
$Q$\+differential operator of ordinal order $\gamma_i<\beta$,
the assumption of transfinite induction on~$\beta$ implies that
the ordinal order of $C_{\beta,i}\ot_k\d/\d y$ is equal
to $\gamma_i+1$.
 It remains to observe that $\gamma_i+1<\beta+1=\alpha$.
 Thus $D_\alpha\:R\rarrow R$ is a $k$\+linear quite
$R$\+differential operator of ordinal order~$\alpha$, as desired.
 (Once again, we are using Lemma~\ref{diagonal-ideal-generators} here.)
\end{proof}

\Section{Flat Epimorphisms of Commutative Rings and Quasi-Modules}

 The aim of this section is to prove
Lemma~\ref{quite-quasi-module-extension-of-scalars}
and Proposition~\ref{all-flat-epis-are-FQM}, which will play a crucial
role in the final Sections~\ref{localizing-diffops-secn}\+-%
\ref{colocalizing-diffops-secn}.
 We also deduce
Corollary~\ref{fl-ring-epi-quasi-module-restriction-of-scalars}
and Lemma~\ref{ring-epi-restr-of-scalars-quite-quasi-modules}.

\subsection{Flat epimorphisms and quite quasi-modules}
\label{flat-epis-and-quite-quasi-modules}
 A homomorphism of commutative ring $\sigma\:R\rarrow S$ is said to be
a \emph{commutative ring epimorphism} if it is an epimorphism in
the category of commutative rings, or equivalently, if $\sigma$~is
an epimorphism in the category of associative rings.
 Equivalently, $\sigma$~is an epimorphism if and only if the related
functor of restriction of scalars $\sigma_*\:S\Modl\rarrow R\Modl$
is fully faithful.
 If this is the case, then the full subcategory $\sigma_*(S\Modl)$ is
closed under kernels, cokernels, infinite direct sums, and infinite
products in $R\Modl$.

 For any homomorphism of commutative rings $\sigma\:R\rarrow S$,
there are two induced homomorphisms of commutative rings $\sigma\ot\id$
and $\id\ot\sigma\:S\rightrightarrows S\ot_RS$, and the multiplication
homomorphism of commutative rings $S\ot_RS\rarrow S$.
 A commutative ring homomorphism $\sigma\:R\rarrow S$ is an epimorphism
if and only if \emph{any one} of these three homomorphisms of
commutative rings is an isomorphism, or equivalently, if and only if
all the three of them are isomorphisms, or equivalently, if and only
if the two ring homomorphisms $S\rightrightarrows S\ot_RS$ are equal to
each other.
 The details can be found in the book~\cite[Section~XI.1]{Ste}.

 An epimorphism of commutative rings $\sigma\:R\rarrow S$ is said to be 
\emph{flat} if $S$ is flat as an $R$\+module with the $R$\+module
structure induced by~$\sigma$.
 A discussion of flat epimorphisms of (not necessarily commutative)
rings can be found in~\cite[Sections~XI.2\+-3]{Ste}.
 For a flat epimorphism~$\sigma$, the full subcategory
$\sigma_*(S\Modl)$ is closed under extensions in $R\Modl$, as one
can see from~\cite[Theorem~4.4]{GL}.

 A particular case of the following lemma was stated
in~\cite[Lemma~2.2]{Pedg}.
 Notice that ``quasi-modules'' in the terminology of~\cite{Pedg} are
what we call strong quasi-modules in
Section~\ref{strong-quasi-modules-subsecn} of this paper; these are
particular cases of quite quasi-modules in the sense of
Section~\ref{quite-quasi-modules-subsecn}.

\begin{lem} \label{quite-quasi-module-extension-of-scalars}
 Let $\sigma\:R\rarrow S$ be a flat epimorphism of commutative rings,
and let $B$ be a quite quasi-module over~$R$ (in the sense of
the definition in Section~\ref{quite-quasi-modules-subsecn}).
 In this context: \par
\textup{(a)} The map~$\sigma$ induces isomorphisms of
$R$\+$R$\+bimodules
$$
 S\ot_RB\lrarrow S\ot_RB\ot_RS\llarrow B\ot_RS
$$
(hence $S\ot_RB\simeq B\ot_RS$ is an $S$\+$S$\+bimodule). \par
\textup{(b)} The $S$\+$S$\+bimodule $S\ot_RB$ is a quite quasi-module
over~$S$.
 Moreover, if $B=\bigcup_{\beta<\alpha}F_\beta B$ for some
ordinal~$\alpha$, then $S\ot_RB=\bigcup_{\beta<\alpha}
F_\beta(S\ot_RB)$.
 Here $F$ denotes the natural ordinal-indexed increasing filtrations on
the $R$\+$R$\+bimodule $B$ and the $S$\+$S$\+bimodule $S\ot_RB$.
\end{lem}

\begin{proof}
 Part~(a): it suffices to show that the $R$\+$S$\+bimodule $B\ot_RS$,
viewed as an $R$\+module with the left action of $R$, belongs to
the full subcategory $\sigma_*(S\Modl)\subset R\Modl$.
 Indeed, put $G_\alpha(B\ot_RS)=F_\alpha B\ot_RS$ for every
ordinal~$\alpha$ (where $F$ is the natural ordinal-indexed increasing
filtration on the $R$\+$R$\+bimodule~$B$).
 Then we have $B\ot_RS=\bigcup_\beta G_\alpha(B\ot_RS)$.
 Furthermore, for every ordinal~$\alpha$, the left and right actions
of $R$ on the successive quotient bimodule
$$
 G_\alpha(B\ot_RS)\Big/\bigcup\nolimits_{\beta<\alpha}
 G_\beta(B\ot_RS)\simeq
 \biggl(F_\alpha B\Big/\bigcup\nolimits_{\beta<\alpha} F_\beta B\biggr)
 \ot_RS
$$
agree, because the left and right actions of $R$ on
$F_\alpha B\big/\bigcup_{\beta<\alpha} F_\beta B$ agree by
the definition of the filtration~$F$.
 Therefore, the left $R$\+module structures on the successive
quotients $G_\alpha(B\ot_RS)\big/\bigcup_{\beta<\alpha}G_\beta(B\ot_RS)$
come via the restriction of scalars from some $S$\+module structures.
 Since the full subcategory $\sigma_*(S\Modl)\subset R\Modl$ is
closed under extensions and direct limits, the desired assertion
follows.

 Part~(b): it follows from the proof of part~(a) that
$G_\beta(S\ot_RB)\subset F_\beta(S\ot_RB)$ for all ordinals~$\beta$,
where $G_\beta(S\ot_RB)=S\ot_RF_\beta B$.
 Therefore, if $B=\bigcup_{\beta<\alpha}F_\beta B$, then
$S\ot_RB=\bigcup_{\beta<\alpha}G_\beta(S\ot_RB)=\bigcup_{\beta<\alpha}
F_\beta(S\ot_RB)$.
\end{proof}

\subsection{Flat epimorphisms and quasi-modules}
\label{flat-epis-and-quasi-modules}
 In this section we prove the following proposition.
 Notice that part~(a) is a generalization of
Lemma~\ref{quite-quasi-module-extension-of-scalars}(a), while
part~(b) has less restrictive assumptions but weaker conclusion.

\begin{prop} \label{all-flat-epis-are-FQM}
 Let $\sigma\:R\rarrow S$ be a flat epimorphism of commutative rings,
and let $B$ be a quasi-module over~$R$ (in the sense of the definition
in Section~\ref{quasi-modules-subsecn}).
 Then \par
\textup{(a)} the $R$\+$R$\+bimodule maps
$$
 S\ot_RB\lrarrow S\ot_RB\ot_RS\llarrow B\ot_RS
$$
induced by~$\sigma$ are isomorphisms (hence $S\ot_RB\simeq B\ot_RS$
is an $S$\+$S$\+bimodule); \par
\textup{(b)} the $S$\+$S$\+bimodule $S\ot_RB$ is a quasi-module
over~$S$.
\end{prop}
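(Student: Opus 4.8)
The plan is to move everything to the ring $T=R\ot_KR$ and reduce part~(a) to a general statement about flat ring epimorphisms and supports. Write $T_1=S\ot_KR$, \ $T_2=R\ot_KS$, \ $T'=S\ot_KS$, with the evident ring homomorphisms $T\rarrow T_1,\,T_2\rarrow T'$. Since $S$ is flat over $R$, tensoring with $S$ over either of the two $R$\+factors is exact, so each of these homomorphisms is flat; and each is an epimorphism, because e.g.\ $(S\ot_KR)\ot_T(S\ot_KR)\cong(S\ot_RS)\ot_KR\cong S\ot_KR$ using $S\ot_RS\cong S$ (and analogously for the others). Under the standard identifications one has $S\ot_RB\cong T_1\ot_TB$, \ $B\ot_RS\cong T_2\ot_TB$, \ $S\ot_RB\ot_RS\cong T'\ot_TB$, and the two maps in~(a) become the localization maps $T_1\ot_TB\rarrow T'\ot_{T_1}(T_1\ot_TB)$ and $T_2\ot_TB\rarrow T'\ot_{T_2}(T_2\ot_TB)$.

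The key auxiliary fact I would isolate is: \emph{if $A\rarrow C$ is a flat epimorphism of commutative rings and $P$ is an $A$\+module with\/ $\Supp_AP$ contained in the image of\/ $\Spec C\rarrow\Spec A$, then the natural map $P\rarrow C\ot_AP$ is an isomorphism.} One proves this by noting that $C\ot_AP$ is a $C$\+module with $C\ot_A(C\ot_AP)\cong C\ot_AP$ (epimorphism), so by flatness the kernel and cokernel of $P\rarrow C\ot_AP$ are annihilated by $C\ot_A({-})$; it therefore suffices to know that an $A$\+module $N$ with $C\ot_AN=0$ and $\Supp_AN$ inside the image of\/ $\Spec C$ must vanish, and this follows by localizing at a prime $\p$ in that image and using the standard fact that a flat ring epimorphism induces an isomorphism $A_\p\cong C_\p$ at every prime of its image (whence $N_\p\cong C_\p\ot_{A_\p}N_\p=(C\ot_AN)_\p=0$), together with $\Supp_AN=\varnothing\Rightarrow N=0$.

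To apply this I must verify the support hypothesis. By Corollary~\ref{quasi-module-torsion-module} the quasi-module $B$ is an $I$\+torsion $T$\+module for $I=\ker(T\rarrow R)$, so $\Supp_TB\subset\Spec T/I$ by Lemma~\ref{torsion-module-support}, the diagonal copy of\/ $\Spec R$. Since the support of a base change is contained in the preimage of the support, $\Supp_{T_1}(T_1\ot_TB)$ lies in the preimage of\/ $\Spec T/I$ under $\Spec T_1\rarrow\Spec T$; a short tensor-product computation (parallel to Lemma~\ref{diagonal-ideal-generators}) identifies this preimage with $\Spec T_1/I_1$, where $I_1=I\cdot T_1=\ker(T_1=S\ot_KR\rarrow S)$, the multiplication map. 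Finally $\Spec T_1/I_1$ is contained in the image of\/ $\Spec T'\rarrow\Spec T_1$ (the ``diagonal'' prime of $T'=S\ot_KS$ over a given prime of $S$ maps onto the corresponding prime of $T_1/I_1$). Hence the auxiliary fact applies with $A=T_1$, $C=T'$, $P=T_1\ot_TB$, giving the first isomorphism in~(a); the second is obtained symmetrically with $T_2$ in place of $T_1$. Transporting the left $S$\+action of $S\ot_RB$ and the right $S$\+action of $B\ot_RS$ across these isomorphisms, and observing that on $S\ot_RB\ot_RS$ these actions on the two outer factors obviously commute, yields the $S$\+$S$\+bimodule structure asserted in~(a).

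For~(b), under the isomorphisms of~(a) the $S$\+$S$\+bimodule $S\ot_RB$ is the $T'$\+module $T'\ot_TB$, and being a quasi-module over $S$ means precisely being an $I'$\+torsion $T'$\+module for $I'=\ker(T'\rarrow S)$. One has $I'=I\cdot T'$, because both composites $T'\rarrow T'/I\cdot T'\cong S$ and $T'\rarrow T'/I'\cong S$ are the multiplication map $S\ot_KS\rarrow S$. So it remains to see that the base change $T'\ot_TB$ of the $I$\+torsion module $B$ is $I\cdot T'$\+torsion: by Lemma~\ref{torsion-checked-on-generators-of-ideal} it is enough to check, for each generator $s$ of $I$, that the image $\bar s$ of $s$ in $T'$ acts locally nilpotently, and indeed $\bar s$ acts through the right factor, $\bar s\cdot(c\ot b)=c\ot sb$, so a sufficiently high power of $\bar s$ kills any given finite sum of simple tensors. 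The main obstacle is the auxiliary fact about flat ring epimorphisms together with the correct identification of the preimage of the diagonal (and of the image of\/ $\Spec T'$ in\/ $\Spec T_1$); granting the local-isomorphism property of flat ring epimorphisms, everything else is bookkeeping with tensor products.
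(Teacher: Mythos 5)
Your proof is correct in its essentials and takes a genuinely different route from the paper's.  The paper recalls from an earlier work that flat epimorphisms of finite presentation satisfy (a)--(b), shows the property passes to filtered direct limits (hence to all localizations $\Sigma^{-1}R$), then reduces the general case to localizations at prime ideals by establishing that a flat epimorphism with local codomain is a localization at a prime (via the Gabriel-filter structure theory, Lemma~\ref{flat-epimorphisms-determined-on-spectra} and Corollary~\ref{flat-epimorphisms-with-local-codomains}).  You instead reduce everything at once to a clean support-theoretic statement over $T=R\ot_KR$ and its extensions $T_1$, $T_2$, $T'$, all of which are flat ring epimorphisms over~$T$, and then invoke: \emph{if $A\rarrow C$ is a flat ring epimorphism and $\Supp_AP$ lies in the image of $\Spec C$, then $P\rarrow C\ot_AP$ is an isomorphism.}  Both routes ultimately rest on the same structural fact (a flat ring epimorphism $A\to C$ induces an isomorphism $A_{\p}\simeq C_{\q}$ at every prime~$\q$ lying over a prime~$\p$ in the image, which is Corollary~\ref{flat-epimorphisms-with-local-codomains} and is indeed standard), but yours avoids the detour through finite presentation and filtered direct limits, and the global support formulation is arguably more transparent.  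Your verification of~(b), that the images of the generators of $I$ act locally nilpotently on $T'\ot_TB$, combined with Lemma~\ref{torsion-checked-on-generators-of-ideal}, is fine.

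One small gap worth closing in the auxiliary fact: you apply ``$C\ot_AN=0$ and $\Supp_AN\subset\operatorname{im}(\Spec C)$ implies $N=0$'' to both the kernel and the cokernel of $P\rarrow C\ot_AP$.  The support condition is immediate for the kernel (a submodule of~$P$), but the cokernel is a quotient of $C\ot_AP$, not of~$P$, so you need the extra observation that $\Supp_A(C\ot_AP)\subset\Supp_AP$ --- which does hold, since $(C\ot_AP)_{\p}\cong C\ot_AP_{\p}$ and so vanishes wherever $P_{\p}$ does.  With that line added, the argument is complete.
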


 Let us say temporarily that a flat epimorphism of commutative rings
$R\rarrow S$ is an \emph{FQM\+epimorphism} if the assertions~(a)
and~(b) of Proposition~\ref{all-flat-epis-are-FQM} hold for all
quasi-modules $B$ over~$R$.
 We will prove that all flat epimorphisms of commutative rings are
FQM\+epimorphisms.

 A morphism of commutative rings $R\rarrow S$ is said to be \emph{of
finite presentation} if it makes $S$ a finitely presented commutative
$R$\+algebra.
 Flat epimorphisms of finite presentation $\sigma\:R\rarrow S$ are
precisely all the morphisms of commutative rings for which the related
morphism of affine schemes $\Spec S\rarrow\Spec R$ is an open
immersion~\cite[Th\'eor\`eme~IV.17.9.1]{EGAIV}.
 The following lemma is proved in~\cite[Section~1.4]{Pdomc}.

\begin{lem} \label{affine-open-immersions-are-FQM}
 All flat epimorphisms of finite presentation (between commutative
rings) are FQM\+epimorphisms.
 In particular, for any commutative ring $R$ and an element
$r\in R$, the localization morphism $R\rarrow R[r^{-1}]$ is
an FQM\+epimorphism.
\end{lem}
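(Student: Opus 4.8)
The plan is to pass to the commutative ring $T=R\ot_{\boZ}R$ and the kernel ideal $I\subset T$ of the multiplication homomorphism $R\ot_{\boZ}R\rarrow R$, and to argue with supports. By Corollary~\ref{quasi-module-torsion-module} a quasi-module $B$ over $R$ is the same thing as an $I$\+torsion $T$\+module, and by Lemma~\ref{torsion-module-support} this means precisely that $\Supp_T B$ lies in the diagonal $\Delta_R=\Spec T/I\subset\Spec T$, i.e., the image of $\Spec R$ under the diagonal map. Let $\sigma\:R\rarrow S$ be a flat epimorphism of finite presentation, so that by \cite[Th\'eor\`eme~IV.17.9.1]{EGAIV} the map $\Spec S\rarrow\Spec R$ is an open immersion. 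The ring homomorphisms $\sigma\ot\id_R$, \ $\id_R\ot\,\sigma$ and $\sigma\ot\sigma$ out of $T$ are again flat epimorphisms (base changes of, respectively products of, $\sigma$), and under the standard identifications $S\ot_{\boZ}R=S\ot_R T$, \ $R\ot_{\boZ}S=T\ot_R S$, \ $S\ot_{\boZ}S=S\ot_R T\ot_R S$ one computes that $(S\ot_{\boZ}R)\ot_T B=S\ot_R B$, \ $(R\ot_{\boZ}S)\ot_T B=B\ot_R S$ and $(S\ot_{\boZ}S)\ot_T B=S\ot_R B\ot_R S$. In other words, the three tensor products appearing in Proposition~\ref{all-flat-epis-are-FQM} are nothing but the restrictions of the single $T$\+module $B$ along these three flat ring epimorphisms.

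The substantive point is then the following sublemma: \emph{if $\phi\:A\rarrow A'$ is a flat epimorphism of commutative rings and $N$ is an $A$\+module with $\Supp_A N$ contained in the image of\/ $\Spec A'\rarrow\Spec A$, then the natural map $N\rarrow A'\ot_A N$ is an isomorphism.} One checks this on localizations at primes $\p\subset A$. If $\p$ is not in the image, then $N_\p=0$, hence also $(A'\ot_A N)_\p=0$. If $\p$ is in the image, then, since a flat ring epimorphism makes $\Spec A'\rarrow\Spec A$ a monomorphism of schemes, one has $A'\ot_A A_\p=A_\p$, so that $N_\p\rarrow(A'\ot_A N)_\p$ is an isomorphism. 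I would then apply this sublemma twice. First, with $A=S\ot_{\boZ}R$ (and, symmetrically, with $A=R\ot_{\boZ}S$) and $A'=S\ot_{\boZ}S$: the support of $S\ot_R B$ is the preimage of $\Supp_T B$, hence is contained in the preimage of $\Delta_R$, which is the graph of\/ $\Spec S\rarrow\Spec R$ inside $\Spec(S\ot_{\boZ}R)$; this graph lies in the image of\/ $\Spec(S\ot_{\boZ}S)\rarrow\Spec(S\ot_{\boZ}R)$ (by construction the second coordinate of such a point lies in the image of $\Spec S\rarrow\Spec R$), so the sublemma gives $S\ot_R B\cong S\ot_R B\ot_R S\cong B\ot_R S$, which is assertion~(a). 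Second, $S\ot_R B\ot_R S=(S\ot_{\boZ}S)\ot_T B$ is the restriction of the diagonally supported $T$\+module $B$ along the open immersion $\Spec(S\ot_{\boZ}S)\rarrow\Spec T$, so its support lies in $\Delta_R\cap\Spec(S\ot_{\boZ}S)=\Delta_S$, the diagonal of $\Spec S$; by Lemma~\ref{torsion-module-support} applied with $S$ in place of $R$, this says exactly that $S\ot_R B$ is a quasi-module over $S$, which is assertion~(b). The final claim of the lemma is the special case $S=R[r^{-1}]$, for which $\Spec S=D(r)\subset\Spec R$ is an affine open immersion.

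The one place where the quasi-module hypothesis on $B$ is genuinely used is in making the localized module operations terminate. This is already visible for $S=R[r^{-1}]$: there the right action of $r^{-1}$ on $S\ot_R B$ is forced to be $b\,r^{-1}=\sum_{k\ge0}r^{-k-1}\,\theta_r^k(b)$ in $S\ot_R B$, which is a \emph{finite} sum exactly because $(\theta_r)^{N+1}(b)=0$ for some integer~$N$, and a short calculation using the identity $\theta_r(rx)=r\,\theta_r(x)$ shows that this well-defines an $S$\+$S$\+bimodule structure on $S\ot_R B$, with $(\theta_{r^{-1}})^{N+1}$ annihilating the sub-bimodule generated by~$b$. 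I expect the main work to be precisely the sublemma of the second paragraph together with the bookkeeping that identifies $S\ot_R B$, $B\ot_R S$ and $S\ot_R B\ot_R S$ with restrictions of the $T$\+module $B$; once these are in hand, (a) and~(b) follow formally. As an alternative to the sublemma in the finite-presentation case, one may cover the quasi-compact open $\Spec S\subset\Spec R$ by finitely many principal opens $D(g_1),\dots,D(g_m)$, reduce to the already-settled localizations $R[g_i^{-1}]$ and $R[(g_ig_j)^{-1}]$, and glue using that the corresponding \v Cech complex of flat $R$\+modules stays exact after applying the functor ${-}\ot_R B$ and that the class of quasi-modules is closed under submodules and under \emph{finite} direct sums.
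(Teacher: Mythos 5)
Your proof is correct, and it takes a genuinely different route from the paper's, which simply cites \cite[Lemmas~1.13 and~1.14]{Pdomc} for this base case and then bootstraps up to Proposition~\ref{all-flat-epis-are-FQM} via filtered colimits and a reduction to local rings. You argue with supports instead: via Corollary~\ref{quasi-module-torsion-module} and Lemma~\ref{torsion-module-support}, the quasi-module condition on $B$ becomes $\Supp_TB\subset V(I)$, and you identify $S\ot_RB$, $B\ot_RS$, and $S\ot_RB\ot_RS$ as base changes of the $T$\+module $B$ along the flat ring epimorphisms $\sigma\ot\id$, $\id\ot\sigma$, $\sigma\ot\sigma$. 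Your sublemma --- that a module supported in the image of $\Spec$ of a flat ring epimorphism is unchanged by extending scalars along it --- is sound: the key ingredient $A'\ot_A A_\p\cong A_\p$ for $\p$ in the image holds because $A_\p\rarrow A'\ot_A A_\p$ is then a flat ring epimorphism which is surjective on spectra (it hits the closed point, and its image is generization-closed by flatness), hence faithfully flat, hence an isomorphism by \cite[Lemma Tag~04VU]{SP}; the remaining computation $I(S\ot_{\boZ}S)=I_S$ that you need for part~(b) uses only that $\sigma$ is a ring epimorphism (so that $S\ot_RS=S$). Notably, no step of your main argument actually uses finite presentation or the resulting open-immersion property from \cite[Th\'eor\`eme~IV.17.9.1]{EGAIV}: the support argument works verbatim for an arbitrary flat epimorphism of commutative rings, so your proof establishes Proposition~\ref{all-flat-epis-are-FQM} directly and makes the paper's intermediate scaffolding (Lemmas~\ref{inductive-limits-FQM-epimorphisms} and~\ref{FQM-epimorphisms-determined-on-local-rings}, Corollaries~\ref{localizations-by-mult-subsets-FQM} and~\ref{flat-epimorphisms-with-local-codomains}) logically unnecessary for that end. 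What the paper's longer route buys is modularity --- Corollary~\ref{flat-epimorphisms-with-local-codomains} in particular has independent interest --- whereas your route concentrates all the commutative algebra into the single sublemma.
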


\begin{proof}
 This is~\cite[Lemmas~1.14 and~1.15]{Pdomc}.
\end{proof}

\begin{lem} \label{inductive-limits-quasi-modules}
 Let $(S_\xi)_{\xi\in\Xi}$ be a filtered diagram of commutative rings,
indexed by a directed poset\/~$\Xi$, and let $(B_\xi)_{\xi\in\Xi}$
be a diagram of $S_\xi$\+bimodules indexed by\/~$\Xi$.
 Assume that the bimodule $B_\xi$ is a quasi-module over $S_\xi$ for
every index $\xi\in\Xi$.
 Consider the direct limits $S=\varinjlim_{\xi\in\Xi}S_\xi$
and $B=\varinjlim_{\xi\in\Xi}B_\xi$.
 Then the $S$\+$S$\+bimodule $B$ is a quasi-module over~$S$.
\end{lem}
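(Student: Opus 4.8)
The plan is to exploit the fact that the quasi-module condition is \emph{finitary}: for $B$ to be a quasi-module over $S$ one must check, one element $b\in B$ and one element $r\in S$ at a time, that $(\theta_r)^{n+1}(b)=0$ for some integer $n\ge0$ (in the notation of formula~\eqref{theta-defined}), and a filtered colimit is assembled out of exactly this kind of finite data.

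First I would record that $B=\varinjlim_{\xi\in\Xi}B_\xi$ carries a natural structure of $S$\+$S$\+bimodule over the ring $S=\varinjlim_{\xi\in\Xi}S_\xi$. Given $s\in S$ and $b\in B$, one uses directedness of $\Xi$ to pick an index $\xi$ so large that $s$ is the image of some $s_\xi\in S_\xi$ and $b$ is the image of some $b_\xi\in B_\xi$, and one sets $sb$ and $bs$ to be the images in $B$ of $s_\xi b_\xi$ and $b_\xi s_\xi$; this is independent of the choices by the standard argument for filtered colimits. By construction, for every $\xi$ the transition map $B_\xi\rarrow B$ is a homomorphism of $S_\xi$\+$S_\xi$\+bimodules, where $B$ is viewed as an $S_\xi$\+$S_\xi$\+bimodule via the ring homomorphism $S_\xi\rarrow S$. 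Consequently, if $r\in S$ is the image of $r_\xi\in S_\xi$ and $c\in B$ is the image of $c_\xi\in B_\xi$, then $\theta_r(c)$ is the image of $\theta_{r_\xi}(c_\xi)$, and, by induction on~$m$, the element $(\theta_r)^m(c)$ is the image of $(\theta_{r_\xi})^m(c_\xi)$ for every $m\ge0$.

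Now fix $b\in B$ and $r\in S$. Using directedness of $\Xi$ once more, I would choose a single index $\zeta$ together with elements $b_\zeta\in B_\zeta$ and $r_\zeta\in S_\zeta$ mapping to $b$ and $r$ respectively. Since $B_\zeta$ is a quasi-module over $S_\zeta$, there is an integer $n\ge0$ with $(\theta_{r_\zeta})^{n+1}(b_\zeta)=0$ in $B_\zeta$. Applying the transition map $B_\zeta\rarrow B$ and the compatibility noted in the previous paragraph, one obtains $(\theta_r)^{n+1}(b)=0$ in $B$, that is, $b\in F^{(r)}_nB$. As $b\in B$ and $r\in S$ were arbitrary, this shows $B=\bigcup_{n\ge0}F^{(r)}_nB$ for every $r\in S$, so $B$ is a quasi-module over~$S$.

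I do not anticipate a genuine obstacle here; the only slightly delicate point is the bookkeeping in the first step, namely that $B$ is genuinely an $S$\+$S$\+bimodule and that forming the colimit is compatible with the operators $\theta_r$, and this is a routine consequence of the exactness of filtered colimits of abelian groups. (Note that one cannot simply invoke a statement about colimits of quasi-modules over a fixed ring, since the ring varies along the diagram; the point of the argument is precisely that an element and a scalar involve only finitely much of the diagram, so they both descend to a common stage $\zeta$ where the hypothesis on $B_\zeta$ applies.)
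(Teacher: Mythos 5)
Your proof is correct and is essentially the same argument as the paper's, exploiting the finitary nature of the quasi-module condition for filtered colimits. The paper phrases it slightly more structurally, showing $F^{(s)}_nB=\varinjlim_{\xi\ge\xi_0}F^{(s_\xi)}_nB_\xi$ (which uses exactness of filtered colimits in both directions), whereas you work element by element and only need the easy direction of compatibility of $\theta_r$ with the transition maps; either way the core step is identical: descend $b$ and $r$ to a common stage $\zeta$, apply the hypothesis there, and push forward.
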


\begin{proof}
 Given an element $s\in S$, we need to prove that the increasing
filtration $F^{(s)}$ on the $S$\+$S$\+bimodule $B$ is exhaustive.
 Let $\xi_0\in\Xi$ be an index such that the element $s\in S$
comes from an element $s_{\xi_0}\in S_{\xi_0}$.
 For every index $\xi\ge\xi_0$, denote by $s_\xi\in S_\xi$
the image of the element $s_{\xi_0}$ under the transition map
$S_{\xi_0}\rarrow S_\xi$.
 Then we have $F^{(s)}_nB=\varinjlim_{\xi\ge\xi_0}F^{(s_\xi)}_nB_\xi$
for every $n\ge0$ (cf.\ the proof of
Lemma~\ref{quasi-module-characterized-by-localizations} below).
 Since the filtrations $F^{(s_\xi)}$ on the $S_\xi$\+$S_\xi$\+bimodules
$B_\xi$ are exhaustive by assumption, so is the filtration $F^{(s)}$
on the $S$\+$S$\+bimodule~$B$.
\end{proof}

\begin{lem} \label{inductive-limits-FQM-epimorphisms}
 Let $(S_\xi)_{\xi\in\Xi}$ be a filtered diagram of commutative rings,
indexed by a directed poset\/~$\Xi$.
 Let $R$ be a commutative ring endowed with a morphism of diagrams
$(\sigma_\xi)_{\xi\in\Xi}\:R\rarrow(S_\xi)_{\xi\in\Xi}$, where $R$ is
viewed as a constant diagram.
 Assume that, for every $\xi\in\Xi$, the commutative ring homomorphism
$\sigma_\xi\:R\rarrow S_\xi$ is an FQM\+epimorphism.
 Then the commutative ring homomorphism
$\sigma=\varinjlim_{\xi\in\Xi}\sigma_\xi\:R\rarrow
\varinjlim_{\xi\in\Xi}S_\xi=S$ is an FQM\+epimorphism, too.
\end{lem}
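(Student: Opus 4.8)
The plan is to reduce every part of the assertion to the standard fact that tensor products commute with filtered colimits, invoking Lemma~\ref{inductive-limits-quasi-modules} for the quasi-module clause. Throughout I will use that, since $\Xi$ is directed, the diagonal embedding $\Xi\rarrow\Xi\times\Xi$ is a cofinal (final) functor, so that $\varinjlim_{(\xi,\eta)}G(\xi,\eta)=\varinjlim_{\xi}G(\xi,\xi)$ for any diagram $G$ on $\Xi\times\Xi$.

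First I would check that $\sigma\:R\rarrow S$ is itself a flat epimorphism of commutative rings. Flatness is immediate, because $S=\varinjlim_{\xi\in\Xi}S_\xi$ is a filtered colimit of flat $R$\+modules, hence flat. For the epimorphism property, I use the criterion recalled in Section~\ref{flat-epimorphisms-subsecn}: $\sigma$ is a ring epimorphism if and only if the multiplication map $S\ot_RS\rarrow S$ is an isomorphism. Since tensor products commute with filtered colimits and the diagonal is cofinal, one has $S\ot_RS=\varinjlim_{\xi}(S_\xi\ot_RS_\xi)$, and the multiplication map $S\ot_RS\rarrow S$ is the filtered colimit of the multiplication maps $S_\xi\ot_RS_\xi\rarrow S_\xi$; each of the latter is an isomorphism because $\sigma_\xi$ is an epimorphism, hence so is their colimit.

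Next, fix a quasi-module $B$ over $R$ and set $B_\xi=S_\xi\ot_RB$. By the FQM\+epimorphism property of $\sigma_\xi$, each $B_\xi$ carries, via the canonical isomorphisms $S_\xi\ot_RB\simeq S_\xi\ot_RB\ot_RS_\xi\simeq B\ot_RS_\xi$, the structure of an $S_\xi$\+$S_\xi$\+bimodule that is a quasi-module over $S_\xi$; and the transition maps $S_\xi\rarrow S_\eta$ induce maps $B_\xi\rarrow B_\eta$ of bimodules over the corresponding ring homomorphisms. Thus $(B_\xi)_{\xi\in\Xi}$ is a diagram of the kind required by Lemma~\ref{inductive-limits-quasi-modules}, with $\varinjlim_{\xi}B_\xi=(\varinjlim_{\xi}S_\xi)\ot_RB=S\ot_RB$. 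Lemma~\ref{inductive-limits-quasi-modules} then gives that $S\ot_RB$ is a quasi-module over $S$, which is assertion~(b) of Proposition~\ref{all-flat-epis-are-FQM} for $\sigma$. For assertion~(a), I again write, using cofinality of the diagonal, $S\ot_RB\ot_RS=\varinjlim_{(\xi,\eta)}(S_\xi\ot_RB\ot_RS_\eta)=\varinjlim_{\xi}(S_\xi\ot_RB\ot_RS_\xi)$; the $R$\+$R$\+bimodule map $S\ot_RB\rarrow S\ot_RB\ot_RS$ induced by $\sigma$ is the filtered colimit of the maps $S_\xi\ot_RB\rarrow S_\xi\ot_RB\ot_RS_\xi$ induced by $\sigma_\xi$, each of which is an isomorphism by the FQM\+epimorphism property of $\sigma_\xi$; hence it is an isomorphism, and the same argument applies to $B\ot_RS\rarrow S\ot_RB\ot_RS$.

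The only points that need genuine care, rather than routine verification, are the cofinality reductions along the diagonal $\Xi\rarrow\Xi\times\Xi$ and the bookkeeping confirming that the filtered colimit of the relevant structure maps (ring multiplication, the canonical maps appearing in~(a), and the left and right actions on the bimodules $B_\xi$) is precisely the intended structure map on the colimit object; once these are in place, the conclusion for $\sigma$ follows formally from Lemma~\ref{inductive-limits-quasi-modules} and the computations above.
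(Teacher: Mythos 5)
Your proof is correct and follows essentially the same route as the paper: establish that $\sigma$ is a flat epimorphism via the tensor-square characterization and preservation of flatness under filtered colimits, deduce condition~(a) for $\sigma$ by passing to the direct limit of the isomorphisms given by condition~(a) for the~$\sigma_\xi$, and obtain condition~(b) from Lemma~\ref{inductive-limits-quasi-modules}. The only difference is that you spell out the cofinality-of-the-diagonal bookkeeping which the paper leaves implicit.
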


\begin{proof}
 The characterization of ring epimorphisms $\sigma\:R\rarrow S$ by
the property that the induced maps $S\rightrightarrows S\ot_RS
\rarrow S$ are isomorphisms clearly implies that the filtered
direct limits of ring epimorphisms are ring epimorphisms.
 The filtered direct limits also preserve flatness.

 Now let $B$ be any given quasi-module over~$R$.
 Then condition~(a) for the quasi-module $B$ and the ring homomorphism
$\sigma\:R\rarrow S=\varinjlim_{\xi\in\Xi}S_\xi$ follows from
condition~(a) for the same quasi-module $B$ and the ring homomorphisms
$\sigma_\xi\:R\rarrow S_\xi$ by passing to the direct limit.
 Finally, condition~(b) for $B$ and~$\sigma$ follows from condition~(b)
for $B$ and~$\sigma_\xi$ by virtue of
Lemma~\ref{inductive-limits-quasi-modules}.
\end{proof}

\begin{cor} \label{localizations-by-mult-subsets-FQM}
 Let $R$ be a commutative ring and\/ $\Sigma\subset R$ be
a multiplicative subset.
 Then the localization morphism $R\rarrow \Sigma^{-1}R$ is
an FQM\+epimorphism.
\end{cor}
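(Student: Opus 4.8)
The plan is to realize $\Sigma^{-1}R$ as a filtered direct limit of localizations of $R$ at single elements, and then appeal to Lemmas~\ref{affine-open-immersions-are-FQM} and~\ref{inductive-limits-FQM-epimorphisms}. Each localization $R\rarrow R[r^{-1}]$ is a flat epimorphism of finite presentation, hence an FQM\+epimorphism by Lemma~\ref{affine-open-immersions-are-FQM}; and FQM\+epimorphisms are stable under filtered direct limits by Lemma~\ref{inductive-limits-FQM-epimorphisms}. So the whole content of the proof is to set up the relevant filtered diagram correctly.

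Let $\Xi$ be the set of all finite subsets of $\Sigma$, partially ordered by inclusion. This is a directed poset, since the union of two finite subsets of $\Sigma$ is again a finite subset of $\Sigma$. For $F\in\Xi$, put $\pi_F=\prod_{s\in F}s\in\Sigma$ (with $\pi_\varnothing=1$), and set $S_F=R[\pi_F^{-1}]$. For $F\subset F'$ in $\Xi$, the element $\pi_F$ divides $\pi_{F'}$ in $R$ (namely $\pi_{F'}=\pi_F\cdot\prod_{s\in F'\setminus F}s$), so there is a canonical localization homomorphism $S_F=R[\pi_F^{-1}]\rarrow R[\pi_{F'}^{-1}]=S_{F'}$; these homomorphisms compose correctly, making $(S_F)_{F\in\Xi}$ a filtered diagram of commutative rings over~$\Xi$. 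All the transition homomorphisms and all the structure homomorphisms $\sigma_F\:R\rarrow S_F$ being localization maps, the maps $\sigma_F$ constitute a morphism of diagrams from the constant diagram $R$ to $(S_F)_{F\in\Xi}$, as required by Lemma~\ref{inductive-limits-FQM-epimorphisms}.

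It remains to identify the direct limit. Every singleton $\{s\}$ with $s\in\Sigma$ lies in $\Xi$ and has $\pi_{\{s\}}=s$, while $\pi_F\in\Sigma$ for all $F\in\Xi$ because $\Sigma$ is multiplicatively closed; by the usual expression of a localization as a filtered colimit, this yields $\varinjlim_{F\in\Xi}S_F=\Sigma^{-1}R$, with $\varinjlim_{F\in\Xi}\sigma_F$ equal to the localization morphism $\sigma\:R\rarrow\Sigma^{-1}R$. By Lemma~\ref{affine-open-immersions-are-FQM}, each $\sigma_F\:R\rarrow R[\pi_F^{-1}]$ is an FQM\+epimorphism; hence, by Lemma~\ref{inductive-limits-FQM-epimorphisms} applied to the diagram $(S_F)_{F\in\Xi}$, the homomorphism $\sigma\:R\rarrow\Sigma^{-1}R$ is an FQM\+epimorphism as well. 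The only delicate point is the bookkeeping just performed, namely presenting $\Sigma^{-1}R$ as a filtered colimit over a genuine \emph{directed poset} whose transition maps are localization homomorphisms, so that the hypotheses of Lemma~\ref{inductive-limits-FQM-epimorphisms} hold on the nose; no new ideas beyond the two cited lemmas are needed.
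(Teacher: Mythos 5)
Your proof is correct and follows essentially the same route as the paper: both present $\Sigma^{-1}R$ as the filtered colimit, over the directed poset of finite subsets of $\Sigma$, of localizations $R[\pi_F^{-1}]$ at single elements, and then invoke Lemmas~\ref{affine-open-immersions-are-FQM} and~\ref{inductive-limits-FQM-epimorphisms}. The only difference is that you spell out the transition maps and the colimit identification a bit more explicitly than the paper does.
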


\begin{proof}
 For any finite subset $\xi=(s_1,\dotsc,s_m)\subset\Sigma$,
denote by $\Sigma_\xi\subset\Sigma$ the multiplicative subset in $R$
generated by $s_1$,~\dots,~$s_m$.
 Then we have $\Sigma_\xi^{-1}R=R[r^{-1}]$ for the element
$r=s_1\dotsm s_m\in R$.
 So the localization map $R\rarrow\Sigma_\xi^{-1}R$ is
an FQM\+epimorphism by Lemma~\ref{affine-open-immersions-are-FQM}.
 Now let $\Xi$ be the directed poset of all finite subsets
$\xi\subset\Sigma$, ordered by inclusion.
 Then we have $\Sigma^{-1}R=\varinjlim_{\xi\in\Xi}\Sigma_\xi^{-1}R$, and
it remains to refer to Lemma~\ref{inductive-limits-FQM-epimorphisms}.
\end{proof}

 Given a prime ideal~$\p$ in a commutative ring $R$, we denote by
$R_\p=(R\setminus\p)^{-1}R$ the localization of $R$ at~$\p$
(as in Section~\ref{torsion-modules-subsecn}).

\begin{lem} \label{quasi-module-characterized-by-localizations}
 Let $R\rarrow S$ be a homomorphism of commutative rings and $B$ be
an $S$\+$S$\+bimodule.
 Then $B$ is a quasi-module over $S$ if and only if, for every prime
ideal\/ $\p\subset R$, the $S$\+$S$\+bimodule $R_\p\ot_RB$ is
a quasi-module over~$S$.
 It suffices to consider maximal ideals\/ $\p\subset R$.
\end{lem}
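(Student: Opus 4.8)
The plan is to reduce the statement to the standard local--global principle that an $R$\+module vanishes if and only if all of its localizations at maximal ideals vanish, applied to the $R$\+modules that measure the failure of the quasi-module condition. The crucial preliminary observation is that, for every $s\in S$, the operator $\theta_s\:B\rarrow B$, $b\mapsto sb-bs$, is a homomorphism of $S$\+$S$\+bimodules: since $S$ is commutative, $\theta_s(s'b)=ss'b-s'bs=s'\theta_s(b)$ and $\theta_s(bs')=\theta_s(b)s'$ for all $s'\in S$ and $b\in B$. In particular $\theta_s$ is $R$\+linear for the $R$\+module structure on $B$ induced by $R\rarrow S$, and each $F^{(s)}_nB=\ker((\theta_s)^{n+1})$ is an $R$\+submodule of $B$ (indeed an $S$\+$S$\+subbimodule, as already noted in Section~\ref{quasi-modules-subsecn}).

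Next I would identify $R_\p\ot_RB$ with the localization $(R\setminus\p)^{-1}B$ of the $R$\+module $B$, equipped with the evident $S$\+$S$\+bimodule structure $s\cdot(b/t)=(sb)/t$ and $(b/t)\cdot s=(bs)/t$ (well-definedness of the left action uses the commutativity of $S$, while that of the right action uses the bimodule axiom for $B$). Under this identification, the operators $\theta_s$ on $R_\p\ot_RB$ are precisely the localizations of the operators $\theta_s$ on $B$. Since localization is exact and commutes with directed unions of submodules, it follows that $F^{(s)}_n(R_\p\ot_RB)=R_\p\ot_RF^{(s)}_nB$ for all $n\ge0$, and hence $\bigcup_{n\ge0}F^{(s)}_n(R_\p\ot_RB)=R_\p\ot_R\bigcup_{n\ge0}F^{(s)}_nB$ (this is the same computation as the one indicated in the proof of Lemma~\ref{inductive-limits-quasi-modules}, with filtered colimits replaced by the localization functor).

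Writing $\Gamma^{(s)}(M)=\bigcup_{n\ge0}F^{(s)}_nM$, the bimodule $B$ is a quasi-module over $S$ exactly when $B/\Gamma^{(s)}(B)=0$ for every $s\in S$, and likewise $R_\p\ot_RB$ is a quasi-module over $S$ exactly when $(R_\p\ot_RB)/\Gamma^{(s)}(R_\p\ot_RB)=0$ for every $s\in S$. By the previous paragraph and exactness of localization, the latter module is canonically isomorphic to $(B/\Gamma^{(s)}(B))_\p$. Applying the local--global vanishing criterion to each of the $R$\+modules $B/\Gamma^{(s)}(B)$, one obtains the chain of equivalences: $B$ is a quasi-module over $S$ $\iff$ $(B/\Gamma^{(s)}(B))_\p=0$ for all $s\in S$ and all maximal ideals $\p\subset R$ $\iff$ $R_\p\ot_RB$ is a quasi-module over $S$ for all maximal ideals $\p$ (a fortiori, for all prime ideals). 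Note that the implication from quasi-module-ness of $B$ to that of $R_\p\ot_RB$ in fact holds for localizations at arbitrary multiplicative subsets of $R$; maximal ideals enter only through the local--global vanishing criterion, which is why it suffices to test maximal ideals.

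I do not expect a genuine obstacle here. The one point that requires some care is the bookkeeping in the second paragraph: verifying that the natural $S$\+$S$\+bimodule structure on $R_\p\ot_RB$ is simultaneously compatible with the $R$\+module localization and with the operators $\theta_s$, so that passage to $R_\p\ot_R({-})$ genuinely commutes with formation of the filtration $F^{(s)}$.
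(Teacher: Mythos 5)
Your proposal is correct and takes essentially the same route as the paper: both arguments commute the filtration $F^{(s)}_n$ with $R_\p\ot_R({-})$ using exactness of localization (the paper phrases this as ``$F^{(s)}_n$ is built by iterated kernels, and $R_\p\ot_R({-})$ preserves kernels and cokernels''), and then invoke the local--global vanishing criterion at maximal ideals of $R$; you merely make the intermediate object $B/\Gamma^{(s)}(B)$ explicit and spell out the compatibility checks that the paper leaves implicit.
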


\begin{proof}
 The point is that, for each fixed element $s\in S$, the construction
of the natural increasing filtration $F^{(s)}$ on an $S$\+$S$\+bimodule
$B$ can be expressed in terms of iterated passages to kernels and
cokernels of natural morphisms.
 In particular, $F^{(s)}_0B\subset B$ is the kernel of the map
$\theta_s\:B\rarrow B$, etc.
 Since the functor $R_\p\ot_R{-}$ preserves kernels and cokernels,
one has $F^{(s)}_n(R_\p\ot_RB)=F_\p\ot_R F^{(s)}_nB$ for all $n\ge0$.
 It remains to recall that $R_\p\ot_RM=0$ for a given $R$\+module $M$
and all maximal ideals $\p\subset R$ implies $M=0$.
 Thus one has $B=\bigcup_{n\ge0}F^{(s)}_nB$ if and only if
$F_\p\ot_RB=\bigcup_{n\ge0}F^{(s)}_n(F_\p\ot_RB)$ for all prime
(or maximal) ideals $\p\subset R$.
\end{proof}

\begin{lem} \label{FQM-epimorphisms-determined-on-local-rings}
 Let $\sigma\:R\rarrow S$ be a flat epimorphism of commutative rings.
 Assume that, for every prime ideal\/ $\q\subset S$, the composition
$R\overset\sigma\rarrow S\rarrow S_\q$ is an FQM\+epimorphism.
 Then $\sigma$~is an FQM\+epimorphism.
 It suffices to consider maximal ideals\/ $\q\subset S$.
\end{lem}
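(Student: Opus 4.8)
The plan is to verify conditions~(a) and~(b) of Proposition~\ref{all-flat-epis-are-FQM} for $\sigma$ and an arbitrary quasi-module $B$ over $R$ by localizing at the maximal ideals of $S$, where the FQM property is available by assumption. Two elementary facts underlie the whole argument: first, a morphism of $S$\+modules is an isomorphism if and only if it becomes one after applying $S_\q\ot_S{-}$ for every maximal ideal $\q\subset S$ (kernel and cokernel commute with this exact functor, and an $S$\+module vanishes iff all its localizations at maximal ideals do); second, for any ring epimorphism $R\rarrow S$ and any left, resp.\ right, $S$\+module $N$, the natural map $N\rarrow S\ot_RN$, resp.\ $N\rarrow N\ot_RS$, sending $n\mapsto 1\ot n$, resp.\ $n\mapsto n\ot 1$, is an isomorphism (invertibility of the counit of the extension/restriction of scalars adjunction; see~\cite[Section~XI.1]{Ste}). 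Note also that for every maximal ideal $\q\subset S$ the composite $R\rarrow S\rarrow S_\q$ is a flat epimorphism, so the hypothesis indeed asserts that it is an FQM\+epimorphism.

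For part~(a), fix a maximal ideal $\q\subset S$. By the FQM hypothesis applied to $R\rarrow S_\q$, the $R$\+$R$\+bimodule $S_\q\ot_RB$ underlies an $S_\q$\+$S_\q$\+bimodule (a quasi-module over $S_\q$), via an $R$\+$R$\+bimodule isomorphism, so restricting scalars along $S\rarrow S_\q$ makes $S_\q\ot_RB$ the restriction of a left $S$\+module and of a right $S$\+module, compatibly with its $R$\+module structures. The map $S\ot_RB\rarrow S\ot_RB\ot_RS$ of~(a) is a morphism of left $S$\+modules; applying $S_\q\ot_S{-}$ turns it into the map $S_\q\ot_RB\rarrow (S_\q\ot_RB)\ot_RS$, $n\mapsto n\ot 1$, which is an isomorphism by the ring\+epimorphism fact applied to the right $S$\+module $N=S_\q\ot_RB$. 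As this holds for all maximal $\q$, the map is an isomorphism. The second map $B\ot_RS\rarrow S\ot_RB\ot_RS$ is handled symmetrically, viewed as a morphism of right $S$\+modules and using the left $S$\+module structure on $S_\q\ot_RB$. This proves~(a), and in particular $S\ot_RB\simeq B\ot_RS$ acquires an $S$\+$S$\+bimodule structure.

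For part~(b), apply Lemma~\ref{quasi-module-characterized-by-localizations} to the identity homomorphism $S\rarrow S$ and the $S$\+$S$\+bimodule $S\ot_RB$: it reduces the claim to showing that $S_\q\ot_S(S\ot_RB)=S_\q\ot_RB$ is a quasi-module over $S$ for every maximal ideal $\q\subset S$. By hypothesis $S_\q\ot_RB$ is a quasi-module over $S_\q$; restricting scalars along $S\rarrow S_\q$ and observing that for $s\in S$ the operator $\theta_s$ on $S_\q\ot_RB$ equals $\theta_{\bar s}$ for the image $\bar s\in S_\q$, every filtration $F^{(s)}$ remains exhaustive, so $S_\q\ot_RB$ is a quasi-module over $S$. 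Together with~(a) and the assumption that $\sigma$ is a flat epimorphism, this shows $\sigma$ is an FQM\+epimorphism.

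The step I expect to require the most care is the bookkeeping at the beginning of part~(a): one must make sure that the $S_\q$\+$S_\q$\+bimodule structure furnished by the FQM hypothesis on $S_\q\ot_RB$ genuinely restricts, along $S\rarrow S_\q$ on each side, to the $R$\+module structures through which the maps of~(a) are being localized, so that the ring\+epimorphism fact is applied to the correct module. Everything else is a routine combination of the exactness of localization with Lemma~\ref{quasi-module-characterized-by-localizations} and the cited basic properties of ring epimorphisms.
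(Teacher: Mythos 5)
Your proof is correct and follows essentially the same route as the paper's: detect isomorphisms and the quasi-module property by localizing at maximal ideals of $S$, use Lemma~\ref{quasi-module-characterized-by-localizations} for condition~(b), and apply the ring-epimorphism isomorphism $N\simeq N\ot_RS$ for right $S$\+modules $N$ to conclude in condition~(a). The one small divergence is how the right $S$\+module structure on $S_\q\ot_RB$ is produced: the paper factorizes the FQM~(a) isomorphism $S_\q\ot_RB\rarrow S_\q\ot_RB\ot_RS_\q$ through $S_\q\ot_RB\ot_RS$ and then uses that the essential image of $\Modr S\rarrow\Modr R$ is closed under direct summands, whereas you read the right $S$\+module structure off directly from the $S_\q$\+$S_\q$\+bimodule structure that FQM~(a) already furnishes on $S_\q\ot_RB$, which is a modest shortcut. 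The compatibility issue you flag is indeed the only point requiring care, and it is settled precisely because FQM~(a) states that its defining maps are $R$\+$R$\+bimodule isomorphisms, so the transported right $S_\q$\+action on $S_\q\ot_RB$ restricts along $R\rarrow S_\q$ to the given right $R$\+action through~$B$.
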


\begin{proof}
 Notice first of all that the compositions of (flat) epimorphisms of
commutative rings are (flat) epimorphisms of commutative rings.
 So the composition $R\rarrow S\rarrow S_\q$ is always a flat
epimorphism.

 Let $B$ be a quasi-module over~$R$.
 Let us prove that the map $f\:S\ot_RB\rarrow S\ot_RB\ot_RS$ is
an isomorphism.
 By assumption, we know that the map $S_\q\ot_RB\rarrow S_\q\ot_RB\ot_R
S_\q$ is an isomorphism for all maximal ideals $\q\subset S$.
 The latter map factorizes into the composition
$$
 S_\q\ot_RB\lrarrow S_\q\ot_RB\ot_R S\lrarrow S_\q\ot_RB\ot_RS_\q.
$$
 Since the composition is an isomorphism, it follows that
$S_\q\ot_RB\rarrow S_\q\ot_RB\ot_RS$ is the embedding of a direct
summand in the category of $S_\q$\+$R$\+bimodules.

 Let us speak of ``right $R$\+modules'' and ``right $S$\+modules''
in order to emphasize that we are interested in the right module
structures on our bimodules over the commutative rings $R$ and~$S$.
 The functor of restriction of scalars $\sigma_*\:\Modr S\rarrow\Modr R$
acting between the categories of right $S$\+modules and right
$R$\+modules is fully faithful, and its essential image
$\sigma_*(\Modr S)$ is closed under kernels and cokernels in $\Modr R$.
 In particular, the full subcategory $\sigma_*(\Modr S)\subset
\Modr R$ is closed under direct summands.
 The right $R$\+module $S_\q\ot_RB$ is a direct summand of the right
$S$\+module $S_\q\ot_RB\ot_RS$, so it follows that the right
$R$\+module structure on $S_\q\ot_RB$ arises from a right $S$\+module
structure.

 For any right $S$\+module $N$, the natural maps $N\rarrow N\ot_RS
\rarrow N$ are isomorphisms (since the natural maps $S\rightrightarrows
S\ot_RS\rarrow S$ are isomorphisms).
 In particular, this applies to $N=S_\q\ot_RB$.
 We have proved that the map
$$
 S_\q\ot_Sf\: S_\q\ot_RB\lrarrow S_\q\ot_RB\ot_RS
$$
is an isomorphism.
 As this holds for all maximal ideals $\q\subset S$, we can conclude
that the map $f\:S\ot_RB\rarrow S\ot_RB\ot_RS$ is an isomorphism.
 Similarly one proves that the map $B\ot_RS\rarrow S\ot_RB\ot_RS$ is
an isomorphism.

 Let us show that $S\ot_RB$ is a quasi-module over~$S$.
 By assumption, $S_\q\ot_RB$ is a quasi-module over~$S_\q$.
 By~\cite[Lemma~1.13]{Pdomc} (see also
Corollary~\ref{fl-ring-epi-quasi-module-restriction-of-scalars} below),
it follows that $S_\q\ot_RB$ is a quasi-module over~$S$.
 It remains to point out once again the isomorphism $S_\q\ot_S(S\ot_RB)
\simeq S_\q\ot_RB$ and refer to
Lemma~\ref{quasi-module-characterized-by-localizations} (for
the identity ring homomorphism $S\rarrow S$).
\end{proof}

 In order to finish the proof of
Proposition~\ref{all-flat-epis-are-FQM}, we now need to recall
some more advanced parts of well-known material about flat epimorphisms
of commutative rings.
 We are interested in flat epimorphisms of commutative rings
$R\rarrow S$ viewed up to isomorphism, where the ring $R$ is fixed
and the ring $S$ varies.
 So we say that $\sigma'\:R\rarrow S'$ is isomorphic to
$\sigma''\:R\rarrow S''$ if there is a ring isomorphism $S'\simeq S''$
forming a commutative triangular diagram with $\sigma'$ and~$\sigma''$.

 A prime ideal $\p''\in\Spec R$ is said to be a \emph{generalization}
of a prime ideal $\p'\in\Spec R$ if $\p''\subset\p'\subset R$.
 The following lemma is a well-known result.

\begin{lem} \label{flat-epimorphisms-determined-on-spectra}
 A flat epimorphism of commutative rings $\sigma\:R\rarrow S$ is
uniquely determined, up to isomorphism, by the image of the induced map
of the spectra\/ $\Spec\sigma\:\Spec S\rarrow\Spec R$.
 Here the image of\/ $\Spec\sigma$ is viewed just as a subset in\/
$\Spec R$.
 In fact, $\Spec\sigma$ is an injective map, and its image is
a generalization-closed subset in\/ $\Spec R$.
\end{lem}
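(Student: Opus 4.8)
The plan is to prove both halves of the statement: that $\Spec\sigma$ is injective with a generalization-closed image, and that these data determine $\sigma$ up to isomorphism. The key observation is a division of labour: injectivity of $\Spec\sigma$ and triviality of the residue field extensions it induces follow from the epimorphism property alone, whereas generalization-closedness of the image and the final uniqueness argument are where flatness enters.

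First I would analyze the fibres of $\Spec\sigma$. Fix $\p\in\Spec R$ with residue field $\kappa(\p)$. Since a ring epimorphism $R\rarrow S$ is inherited by the induced map $R'\rarrow R'\ot_RS$ for any $R\rarrow R'$ (epimorphisms are stable under cobase change), the homomorphism $\kappa(\p)\rarrow S\ot_R\kappa(\p)$ is again a ring epimorphism, and $\Spec(S\ot_R\kappa(\p))$ is the scheme-theoretic fibre of $\Spec\sigma$ over $\p$. Now a ring epimorphism $\kappa\rarrow A$ from a field $\kappa$ to a nonzero ring $A$ forces $A\cong\kappa$: one has $\kappa\hookrightarrow A$, and the epimorphism condition says $a\ot 1=1\ot a$ in $A\ot_\kappa A$ for every $a\in A$, which is impossible for $a\notin\kappa\cdot1$ because then $a\ot1$ and $1\ot a$ are two distinct members of a $\kappa$-basis of $A\ot_\kappa A$. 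Hence $S\ot_R\kappa(\p)$ is either $0$ or $\kappa(\p)$; in the latter case it is a single point whose residue field equals $\kappa(\p)$. This proves that $\Spec\sigma$ is injective and induces isomorphisms on residue fields. For generalization-closedness: a flat ring homomorphism satisfies going-down, so if $\q\in\Spec S$ lies over $\p$ and $\p'\subset\p$, there is a prime $\q'\subset\q$ of $S$ lying over $\p'$; thus $\p'$ again lies in the image.

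For the uniqueness, suppose $\sigma'\:R\rarrow S'$ and $\sigma''\:R\rarrow S''$ are flat epimorphisms whose spectral maps have one and the same image $W\subset\Spec R$. I would form $S'\ot_RS''$ and show that the structure map $S''\rarrow S'\ot_RS''$ is an isomorphism. It is a flat epimorphism, being the cobase change of $\sigma'$ along $\sigma''$. On spectra it is the first projection $\Spec S'\times_{\Spec R}\Spec S''\rarrow\Spec S''$, whose fibre over a point $x''$ lying over $\p:=(\Spec\sigma'')(x'')\in W$ is $\Spec(S'\ot_R\kappa(x''))$; since $\kappa(x'')\cong\kappa(\p)$ as an $R$\+algebra by the residue-field statement above, and $\p$ lies in the image $W$ of $\Spec\sigma'$ as well, this fibre is $\Spec\kappa(\p)\ne\varnothing$. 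So $S''\rarrow S'\ot_RS''$ is surjective on spectra, hence faithfully flat. Finally, a faithfully flat epimorphism $A\rarrow B$ is an isomorphism: by faithfully flat descent $A$ is the equalizer of the two maps $B\rightrightarrows B\ot_AB$, and these coincide by the epimorphism property, so $A\rarrow B$ is bijective. Therefore $S''\rarrow S'\ot_RS''$ is an isomorphism; by symmetry so is $S'\rarrow S'\ot_RS''$, and composing the two yields a ring isomorphism $S'\cong S''$ compatible with the maps from $R$.

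The only step requiring genuine care is the surjectivity of the projection $\Spec S'\times_{\Spec R}\Spec S''\rarrow\Spec S''$, where one must simultaneously invoke all three preliminary facts: the two images coincide, the fibres of the spectral map of an epimorphism are at most single points, and those points have residue field equal to that of the image point. Everything else is either formal (stability of epimorphisms under cobase change, the descent equalizer computation) or standard commutative algebra (going-down for flat maps, faithfully flat descent). As a consistency check, flatness is genuinely used at two places — generalization-closedness of the image and the faithfully-flat-descent step — and the conclusion would fail without it.
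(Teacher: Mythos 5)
Your proof is correct, but it takes a genuinely different route from the paper. The paper proves the uniqueness assertion by appealing to Stenstr\"om's theory of Gabriel filters: it invokes the correspondence~\cite[Theorem~XI.2.1]{Ste} between flat ring epimorphisms out of $R$ and Gabriel filters of ideals of $R$ with a base of finitely generated ideals, then uses~\cite[Section~VI.6.6]{Ste} to reduce such filters to their sets of prime ideals, and finally identifies that set of primes with the complement of the image of $\Spec\sigma$; the injectivity and generalization-closedness claims are simply cited from the Stacks Project. Your argument instead stays within elementary commutative algebra: you analyze the scheme-theoretic fibres via the observation that a ring epimorphism out of a field is zero or an isomorphism (giving both injectivity and the triviality of residue field extensions), obtain generalization-closedness from going-down for flat maps, and then prove uniqueness directly by showing the base-change map $S''\to S'\ot_RS''$ is a faithfully flat epimorphism, hence an isomorphism. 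The paper's route is shorter because it offloads the work onto the localization-theory machinery it already uses elsewhere; your route is more self-contained, avoids Gabriel filters entirely, and along the way establishes the additional fact (implicit but not stated in the paper's proof) that flat ring epimorphisms induce isomorphisms of residue fields. Both the fibre computation and the faithfully-flat-descent step are carried out correctly; the only unstated hypothesis you rely on is that a ring epimorphism is stable under cobase change, which is standard and true.
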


\begin{proof}
 According to~\cite[Theorem~XI.2.1]{Ste}, a flat epimorphism of
commutative rings $\sigma\:R\rarrow S$ is uniquely determined, up to
isomorphism, by the Gabriel filter $F_\sigma$ of all ideals
$I\subset R$ such that $S\sigma(I)=S$ (i.~e., the extension of $I$ in
$S$ is the unit ideal), or equivalently, $S\ot_RR/I=0$.
 It is clear from the former definition of $F_\sigma$ that it has a base
of finitely generated ideals, i.~e., any ideal $I\in F_\sigma$
contains a finitely generated ideal belonging to~$F_\sigma$.

 According to the discussion in~\cite[Section~VI.6.6]{Ste}, a Gabriel
filter $F$ in a commutative ring $R$ such that $F$ has a base of
finitely generated ideals is characterized by the set of all prime
ideals $\p\subset R$ belonging to~$F$.
 Simply put, an ideal $I\subset R$ belongs to $F$ if and only
if all the prime ideals~$\p$ containing $I$ belong to~$F$.
 In particular, this applies to the Gabriel filters~$F_\sigma$.

 Now we return to the map $\Spec\sigma\:\Spec S\rarrow\Spec R$.
 For any commutative ring epimorphism~$\sigma$, the map $\Spec\sigma$
is injective~\cite[Lemma Tag~04VW]{SP}.
 For any commutative ring homomorphism $\sigma\:R\rarrow S$ making $S$
a flat $R$\+module, the image of the map $\Spec\sigma$ is closed under
generalizations.
 To repeat, this means that $\p'=\sigma^{-1}(\q')$ for a prime ideal
$\q'\subset S$ and prime ideals $\p''\subset\p'\subset R$ implies
existence of a prime ideal $\q''\subset S$ such that
$\sigma^{-1}(\q'')=\p''$ \,\cite[Lemma Tag~00HS]{SP}.

 Clearly, if a prime ideal $\p\subset R$ belongs to the image of
$\Spec\sigma$, then the extension of~$\p$ in $S$ is \emph{not}
the unit ideal; so $\p\notin F_\sigma$.
 Conversely, if $\p\notin F_\sigma$, then the extension of~$\p$
in $S$ is contained in some prime ideal $\q'\subset R$, i.~e.,
$S\sigma(\p)\subset\q'$.
 Put $\p'=\sigma^{-1}(\q')$; then we have $\p\subset\p'\subset R$.
 According to the previous paragraph, it follows that $\p$~belongs
to the image of $\Spec\sigma$.
 We have shown that the set of prime ideals in $R$ belonging to
$F_\sigma$ is precisely the complement to the image of $\Spec\sigma$
in $\Spec R$.

 All the assertions of the lemma are proved now, based on
the cited results from~\cite{Ste}.
\end{proof}

 The following key corollary describes flat epimorphisms of commutative
rings whose codomain is a local ring.

\begin{cor} \label{flat-epimorphisms-with-local-codomains}
 Let $S$ be a commutative local ring with the maximal ideal\/~$\n$,
and let $\sigma\:R\rarrow S$ be a flat epimorphism of commutative rings.
 Let\/ $\p=\sigma^{-1}(\n)\subset R$ be the image of\/~$\n$ under
the induced map of spectra\/ $\Spec S\rarrow\Spec R$.
 Then there is a natural isomorphism of commutative rings
$S\simeq R_\p$ forming a commutative triangular diagram with
the map~$\sigma$ and the localization map $R\rarrow R_\p$.
\end{cor}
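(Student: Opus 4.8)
The plan is to apply Lemma~\ref{flat-epimorphisms-determined-on-spectra} to the two flat epimorphisms $\sigma\:R\rarrow S$ and the localization map $\lambda\:R\rarrow R_\p$, once we have checked that the images of the induced maps of spectra coincide. So the first step is to recall that $\lambda$~is indeed a flat epimorphism of commutative rings: localization is flat, and $R_\p\ot_RR_\p=R_\p$, so the two natural maps $R_\p\rightrightarrows R_\p\ot_RR_\p$ are equal, which (by the characterization in Section~\ref{flat-epimorphisms-subsecn}) means $\lambda$~is an epimorphism. The image of $\Spec\lambda\:\Spec R_\p\rarrow\Spec R$ is the set of all prime ideals of $R$ disjoint from $R\setminus\p$, that is, the set of all primes $\q\subset\p$ --- the set of generalizations of~$\p$ in $\Spec R$.

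The second step is to compute the image of $\Spec\sigma\:\Spec S\rarrow\Spec R$ and see that it is the same set. By Lemma~\ref{flat-epimorphisms-determined-on-spectra}, this image is injective in $\Spec S$ and generalization-closed in $\Spec R$. Since $\p=\sigma^{-1}(\n)$, the prime $\p$ lies in the image (it is the image of~$\n$); as the image is closed under generalizations, every prime $\q\subset\p$ lies in it as well. Conversely, since $S$ is local with unique maximal ideal~$\n$, every prime ideal $\q'\subset S$ satisfies $\q'\subset\n$, hence $\sigma^{-1}(\q')\subset\sigma^{-1}(\n)=\p$; so every prime in the image of $\Spec\sigma$ is contained in~$\p$. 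Therefore the image of $\Spec\sigma$ is precisely the set of generalizations of~$\p$, which coincides with the image of $\Spec\lambda$.

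The third step is to invoke Lemma~\ref{flat-epimorphisms-determined-on-spectra}: two flat epimorphisms out of $R$ with equal image of spectra are isomorphic, so there is a ring isomorphism $S\simeq R_\p$ forming a commutative triangle with $\sigma$ and~$\lambda$. This isomorphism is unique with that property, because $\sigma\:R\rarrow S$ is a ring epimorphism: any ring homomorphism $S\rarrow R_\p$ is determined by its composite with~$\sigma$, so the two homomorphisms $S\rarrow R_\p$ making the triangle commute coincide (and similarly in the other direction, using that $\lambda$~is an epimorphism). This is the sense in which the isomorphism $S\simeq R_\p$ is natural. One can also observe as a sanity check that $R_\p$ is local with maximal ideal $\p R_\p$ and $\lambda^{-1}(\p R_\p)=\p$, matching the datum $\sigma^{-1}(\n)=\p$.

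I do not expect a real obstacle in this argument; the only points needing a modicum of care are the verification that $R\rarrow R_\p$ is a flat epimorphism with the stated image of spectra, and the bookkeeping that a generalization-closed subset of $\Spec R$ which contains $\p$ and is contained in the set of generalizations of~$\p$ must equal that set --- both routine. The substance of the statement is entirely carried by the already-proved Lemma~\ref{flat-epimorphisms-determined-on-spectra}.
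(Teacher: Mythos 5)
Your proof is correct and follows essentially the same route as the paper's primary argument: compute the image of $\Spec\sigma$ to be precisely the set of generalizations of $\p$ (using Lemma~\ref{flat-epimorphisms-determined-on-spectra} for generalization-closedness and locality of $S$ for the reverse inclusion), observe this coincides with the image of $\Spec R_\p\rarrow\Spec R$, and invoke Lemma~\ref{flat-epimorphisms-determined-on-spectra} again to conclude. The paper additionally records a simpler alternative proof (attributed to Hrbek) that factors $\sigma$ through $R_\p$ and shows the induced map $R_\p\rarrow S$ is a faithfully flat epimorphism, hence an isomorphism, but your argument matches the paper's first proof in substance.
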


\begin{proof}
 Since $S$ is a local ring with the maximal ideal~$\n$, all
the points of $\Spec S$ are generalizations of~$\n$.
 The map $\Spec\sigma\:\Spec S\rarrow\Spec R$ is continuous, so it
takes generalizations to generalizations.
 Hence all points in the image of $\Spec\sigma$ are generalizations
of $\p=\sigma^{-1}(\n)$.
 By Lemma~\ref{flat-epimorphisms-determined-on-spectra}, the image of
the map $\Spec\sigma$ is a generalization-closed subset in $\Spec R$.
 Therefore, the image of $\Spec\sigma$ is precisely the set of all prime
ideals $\p'\subset R$ such that $\p'\subset\p$.
 This subset of $\Spec R$ is well-known to coincide with the image of
the map $\Spec R_\p\rarrow\Spec R$.
 As both $R\rarrow S$ and $R\rarrow R_\p$ are flat epimorphisms of
commutative rings, we can conclude that these two flat ring
epimorphisms are isomorphic, in view of
Lemma~\ref{flat-epimorphisms-determined-on-spectra}.

 The following simpler alternative proof of the corollary was suggested
to the author by M.~Hrbek.
 Clearly, the ring homomorphism $R\rarrow S$ into the local ring $S$
induces a ring homomorphism $R_\p\rarrow S$ forming a commutative
triangular diagram $R\rarrow R_\p\rarrow S$.
 The map $R_\p\rarrow S$ is a ring epimorphism since the map
$R\rarrow S$ is a ring epimorphism; and $S$ is flat as an $R_\p$\+module
since it is flat as an $R$\+module.
 Moreover, $R_\p\rarrow S$ is a \emph{local} homomorphism of
commutative rings (i.~e., the image of the maximal ideal is contained
in the maximal ideal).
 It remains to show that any local flat epimorphism of commutative local
rings is an isomorphism.
 Indeed, any flat local homomorphism of commutative local rings is
faithfully flat by~\cite[Lemma Tag~00HR]{SP}, and any faithfully flat
epimorphism of commutative rings is an isomorphism
by~\cite[Lemma Tag~04VU]{SP}.
\end{proof}

\begin{proof}[Proof of
Proposition~\ref{all-flat-epis-are-FQM}]
 We need to prove that any flat epimorphism of commutative rings
$\sigma\:R\rarrow S$ is an FQM\+epimorphism.
 By Lemma~\ref{FQM-epimorphisms-determined-on-local-rings},
it suffices to check that the composition $R\rarrow S\rarrow S_\q$
is an FQM\+epimorphism for every prime ideal $\q\subset S$.
 The map $R\rarrow S_\q$ is a flat epimorphism of commutative
rings, as mentioned in the beginning of the proof of
Lemma~\ref{FQM-epimorphisms-determined-on-local-rings}.
 By Corollary~\ref{flat-epimorphisms-with-local-codomains}, we have
$S_\q\simeq R_\p$, where $\p=\sigma^{-1}(\q)$.
 Finally, $R\rarrow R_\p$ is an FQM\+epimorphism by
Corollary~\ref{localizations-by-mult-subsets-FQM}, as
$R_\p=\Sigma^{-1}R$ for $\Sigma=R\setminus\p$.
\end{proof}

\subsection{Restriction of scalars}
 In Sections~\ref{flat-epis-and-quite-quasi-modules}\+-%
\ref{flat-epis-and-quasi-modules} we considered the extension of
scalars of quasi-modules with respect to flat epimorphisms of
commutative rings.
 In this section we briefly discuss the restriction of scalars.

\begin{cor} \label{fl-ring-epi-quasi-module-restriction-of-scalars}
 Let $R\rarrow S$ be a flat epimorphism of commutative rings and $B$ be
an $S$\+$S$\+bimodule.
 Then $B$ is a quasi-module over $S$ if and only if, viewed as
an $R$\+$R$\+bimodule, $B$ is a quasi-module over~$R$.
\end{cor}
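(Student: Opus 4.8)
The plan is to treat the two implications of the ``if and only if'' separately, the first being essentially formal and the second relying on Proposition~\ref{all-flat-epis-are-FQM}.

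For the implication ``$B$ is a quasi-module over $S$ $\implies$ $B$ is a quasi-module over $R$'', I would argue directly from the definitions. Since the $R$-$R$-bimodule structure on $B$ is the restriction of the $S$-$S$-bimodule structure along~$\sigma$, for every $r\in R$ and every $b\in B$ one has $\theta_r(b)=\sigma(r)b-b\sigma(r)=\theta_{\sigma(r)}(b)$, where on the right the commutator is computed in the $S$-$S$-bimodule structure. Hence the increasing filtration $F^{(r)}$ on $B$ as an $R$-$R$-bimodule coincides, for each $r\in R$, with the filtration $F^{(\sigma(r))}$ on $B$ as an $S$-$S$-bimodule. If $B$ is a quasi-module over $S$, then $B=\bigcup_{n\ge0}F^{(\sigma(r))}_nB$, so $B=\bigcup_{n\ge0}F^{(r)}_nB$ for all $r\in R$, i.e.\ $B$ is a quasi-module over~$R$. (This is the direction recorded as \cite[Lemma~1.12]{Pdomc}.)

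For the converse, suppose that $B$, viewed as an $R$-$R$-bimodule, is a quasi-module over~$R$. First I would recall that for a ring epimorphism $\sigma\:R\rarrow S$ and any left (resp.\ right) $S$-module $N$, the natural multiplication map $S\ot_RN\rarrow N$ (resp.\ $N\ot_RS\rarrow N$) is an isomorphism; this follows at once from $S\ot_RS\xrightarrow{\ \sim\ }S$. Applying this to $B$ with its given $S$-$S$-bimodule structure, both $S\ot_RB\rarrow B$ and $B\ot_RS\rarrow B$ are isomorphisms, and therefore the multiplication map $m\:S\ot_RB\ot_RS\rarrow B$, $s'\ot b\ot s''\longmapsto s'bs''$, factors as $S\ot_RB\ot_RS\rarrow S\ot_RB\rarrow B$, a composite of two isomorphisms; hence $m$ is an isomorphism of $S$-$S$-bimodules. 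Now I would invoke Proposition~\ref{all-flat-epis-are-FQM} applied to the quasi-module $B$ over~$R$: by part~(a) the $S$-$S$-bimodule structure that part~(b) puts on $S\ot_RB$ is precisely the one transported via the isomorphism $S\ot_RB\xrightarrow{\ \sim\ }S\ot_RB\ot_RS$, and part~(b) asserts this bimodule is a quasi-module over~$S$. Combining this with the isomorphism $m\:S\ot_RB\ot_RS\xrightarrow{\ \sim\ }B$ of $S$-$S$-bimodules and the fact that the class of quasi-modules over $S$ is closed under isomorphism (it is a localizing subcategory of $S\bMod S$), we conclude that $B$ is a quasi-module over~$S$.

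The only step that requires genuine care is the bookkeeping of bimodule structures in the last paragraph: one must check that the ``multiplication'' maps in question are morphisms of $S$-$S$-bimodules and not merely of one-sided modules, and that the $S$-$S$-bimodule structure supplied by Proposition~\ref{all-flat-epis-are-FQM}(b) on $S\ot_RB$ really is the one obtained, via part~(a), from the evident structure on $S\ot_RB\ot_RS$. Once these identifications are in place, everything else is formal, and no further use of flatness is needed beyond what is already packaged into Proposition~\ref{all-flat-epis-are-FQM}.
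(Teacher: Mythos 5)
Your proof is correct and follows essentially the same route as the paper: the forward direction is the easy observation from \cite[Lemma~1.12]{Pdomc}, and the converse invokes Proposition~\ref{all-flat-epis-are-FQM}(b) together with the $S$\+$S$\+bimodule isomorphism $B\simeq S\ot_RB$ coming from the ring epimorphism property $S\ot_RS\simeq S$. You spell out the bookkeeping of bimodule structures via $S\ot_RB\ot_RS$ in a bit more detail than the paper does, but the substance is identical.
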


\begin{proof}
 One implication holds for any homomorphism of commutative rings
$R\rarrow S$: any quasi-module over $S$ is also a quasi-module
over~$R$.
 This is~\cite[Lemma~1.13]{Pdomc}.

 The converse implication depends on the assumption that $\sigma$~is
a flat ring epimorphism.
 If this is the case and an $S$\+$S$\+bimodule $B$ is a quasi-module
over $R$, then $S\ot_RB$ is a quasi-module over $S$
by Proposition~\ref{all-flat-epis-are-FQM}(b).
 It remains to recall that the isomorphism $S\ot_RS\simeq S$ for
a ring epimorphism $R\rarrow S$ implies a natural isomorphism
$S\ot_RN\simeq N$ for all $S$\+modules~$N$.
 So, in particular, we have an isomorphism of $S$\+$S$\+bimodules
$B\simeq S\ot_RB$.
\end{proof}

\begin{lem} \label{ring-epi-restr-of-scalars-quite-quasi-modules}
 Let $\sigma\:R\rarrow S$ be an epimorphism of commutative rings
and $B$ be an $S$\+$S$\+bimodule.
 Then $B$ is a quite quasi-module over $S$ if and only if, viewed
as an $R$\+$R$\+bimodule, $B$ is a quite quasi-module over~$R$.
 Moreover, denoting by $F$ and $F'$ the natural ordinal-indexed
increasing filtrations on the $R$\+$R$\+bimodule $B$ and on
the $S$\+$S$\+bimodule $B$, one has $F_\alpha B=F'_\alpha B$ for all
ordinals~$\alpha$.
\end{lem}

\begin{proof}
 This is an analogue of
Corollary~\ref{fl-ring-epi-quasi-module-restriction-of-scalars},
with the difference that we do not assume the ring epimorphism~$\sigma$
to be flat in the present lemma.
 One implication and inclusion hold for any homomorphism of
commutative rings $R\rarrow S$: any quite quasi-module over $S$ is also
a quite quasi-module over~$R$.
 In fact, one has $F'_\alpha B\subset F_\alpha B$ for any
$S$\+$S$\+bimodule $B$ in this context.
 Furthermore, both $F_\alpha B$ and $F'_\alpha B$ are
$S$\+$S$\+subbimodules of $B$, as one can see from the construction.

 The converse implication and inclusion depend on the assumption
that $\sigma$~is a ring epimorphism.
 In the case of a flat ring epimorphism, a suitable version of
the argument from the proof of
Corollary~\ref{fl-ring-epi-quasi-module-restriction-of-scalars}
is applicable.
 In the general case we notice that, by the definition, $F_0B$ is
the unique maximal $R$\+$R$\+subbimodule of $B$ on which the left and
right actions of $R$ agree, while $F'_0B$ is the unique maximal
$S$\+$S$\+subbimodule of $B$ on which the left and right actions of
$S$ agree.
 Now, for a ring epimorphism~$\sigma$, the action of $S$ on any
$S$\+module is uniquely determined by its restriction to $R$, as
the functor $\sigma_*\:S\Modl\rarrow R\Modl$ is fully faithful.
 Therefore, the left and right actions of $S$ on a given
$S$\+$S$\+bimodule $A$ agree if and only if the left and right actions
of $R$ on $A$ agree.
 Thus we have $F_0B=F'_0B$.
 Proceeding by induction, one easily concludes that $F_\alpha B=
F'_\alpha B$ for all ordinals~$\alpha$.
\end{proof}

\Section{Localizing Differential Operators}
\label{localizing-diffops-secn}

 In this section we discuss generalizations
of~\cite[Lemma Tag~0G36]{SP}, i.~e., the question of extension of
differential operators to localizations of commutative rings
and modules.
 The assertion of~\cite[Lemma Tag~0G36]{SP} is stated for (what we call)
\emph{strongly differential operators} in the context of localizations
of commutative rings with respect to multiplicative subsets.
 Our versions of this lemma in this paper apply to wider classes
of \emph{differential operators} and \emph{quite differential operators}
in the more general context of \emph{flat ring epimorphisms}.
 In addition, we establish versions of the sheaf axiom for affine
open coverings of affine open subschemes in the context of sheaves of
differential operators acting between quasi-coherent sheaves on schemes.

\subsection{Localizing differential operators without order}
 We start with an easy corollary of
Corollary~\ref{fl-ring-epi-quasi-module-restriction-of-scalars}
before passing to a more difficult theorem.

\begin{cor} \label{fl-ring-epi-restriction-of-scalars-diff-operators}
 Let $K\rarrow R$ be a homomorphism of commutative rings and
$\sigma\:R\rarrow S$ be a flat epimorphism of commutative rings.
 Let $U$ and $V$ be two $S$\+modules.
 Then a $K$\+linear map $U\rarrow V$ is an $S$\+differential
operator if and only if it is an $R$\+differential operator.
\end{cor}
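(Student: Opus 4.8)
The plan is to reduce the statement to comparing two subbimodules of $E=\Hom_K(U,V)$, using the characterization of differential operators as maximal sub-quasi-modules. Via the $S$\+module structures on $U$ and~$V$, the group $E$ is naturally an $S$\+$S$\+bimodule over~$K$, and by restriction of scalars along~$\sigma$ it is also an $R$\+$R$\+bimodule over~$K$. By Corollary~\ref{quasi-module-torsion-module} (see also the discussion in Section~\ref{torsion-modules-subsecn}), a $K$\+linear map $D\:U\rarrow V$ is an $R$\+differential operator if and only if it lies in $\D_{R/K}(U,V)$, which is the largest $R$\+$R$\+subbimodule of~$E$ that is a quasi-module over~$R$; similarly, $D$ is an $S$\+differential operator if and only if it lies in $\D_{S/K}(U,V)$, the largest $S$\+$S$\+subbimodule of~$E$ that is a quasi-module over~$S$. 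Hence it suffices to show that $\D_{R/K}(U,V)$ and $\D_{S/K}(U,V)$ coincide as subsets of~$E$.

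One inclusion is straightforward and does not use that $\sigma$~is a flat epimorphism. The subbimodule $\D_{S/K}(U,V)\subset E$ is an $S$\+$S$\+subbimodule, hence in particular an $R$\+$R$\+subbimodule of~$E$; and, being a quasi-module over~$S$, it is also a quasi-module over~$R$ by the implication of Corollary~\ref{fl-ring-epi-quasi-module-restriction-of-scalars} that holds for any ring homomorphism $R\rarrow S$ (namely, \cite[Lemma~1.12]{Pdomc}). Therefore $\D_{S/K}(U,V)\subset\D_{R/K}(U,V)$, i.e.\ every $S$\+differential operator $U\rarrow V$ is an $R$\+differential operator.

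For the opposite inclusion I would put $B=\D_{R/K}(U,V)$, a quasi-module over~$R$, and invoke Proposition~\ref{all-flat-epis-are-FQM}: it gives that $S\ot_RB$ carries a natural $S$\+$S$\+bimodule structure and is a quasi-module over~$S$. Then I would produce a homomorphism of $S$\+$S$\+bimodules $S\ot_RB\rarrow E$ as follows: the inclusion $B\subset E$ induces $S\ot_RB\rarrow S\ot_RE$, and since $R\rarrow S$ is a ring epimorphism and $E$ is an $S$\+module, the multiplication map $S\ot_RE\rarrow E$, \,$s\ot e\mapsto se$, is an isomorphism of $S$\+$S$\+bimodules; composing the two gives a map $S\ot_RB\rarrow E$ sending $s\ot b$ to~$sb$. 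The image of this map is an $S$\+$S$\+subbimodule of~$E$ and, being a quotient of the quasi-module $S\ot_RB$ over~$S$, is itself a quasi-module over~$S$; hence it is contained in $\D_{S/K}(U,V)$. Since $B$ (identified with $1\ot B$) lies in this image, we obtain $B\subset\D_{S/K}(U,V)$, so $\D_{R/K}(U,V)\subset\D_{S/K}(U,V)$, and the two subbimodules coincide.

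The only real difficulty is this opposite inclusion, and it is concentrated in a single point: the subbimodule $B=\D_{R/K}(U,V)$ is a priori only an $R$\+$R$\+subbimodule of~$E$ and need not be closed under the $S$\+action, so Corollary~\ref{fl-ring-epi-quasi-module-restriction-of-scalars} cannot be applied to it directly. Proposition~\ref{all-flat-epis-are-FQM} is precisely the tool that lets one enlarge $B$ to an $S$\+$S$\+subbimodule of~$E$ which is still a quasi-module over~$S$ and still contains $D$; everything else (that $S\ot_RE\rarrow E$ is an isomorphism of bimodules, that $S\ot_RB\rarrow E$ is a bimodule homomorphism, that a quotient of a quasi-module is a quasi-module) is routine bookkeeping.
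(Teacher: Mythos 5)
Your proposal is correct, but for the hard direction (every $R$\+differential operator between $S$\+modules is an $S$\+differential operator) it takes a different, somewhat longer route than the paper. You correctly pinpoint the apparent obstruction---that $B=\D_{R/K}(U,V)$ is a~priori only an $R$\+$R$\+subbimodule of $E=\Hom_K(U,V)$, so Corollary~\ref{fl-ring-epi-quasi-module-restriction-of-scalars} cannot be invoked on it without further work---and you circumvent it by forming $S\ot_RB$, appealing to Proposition~\ref{all-flat-epis-are-FQM} for the $S$\+$S$\+bimodule structure and the quasi-module property over~$S$, mapping it into~$E$, and observing that the image is an $S$\+$S$\+subbimodule quasi-module over $S$ containing~$B$. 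The paper instead \emph{dissolves} the obstruction rather than circumventing it: it observes that $B=\D_{R/K}(U,V)$ is \emph{already} an $S$\+$S$\+subbimodule of $E$, because for any $s\in S$ the multiplication maps $s\:U\rarrow U$ and $s\:V\rarrow V$ are $R$\+linear (hence strongly $R$\+differential operators of order~$0$), and compositions of $R$\+differential operators are $R$\+differential operators by Corollary~\ref{composition-of-differential-operators}; with that in hand, Corollary~\ref{fl-ring-epi-quasi-module-restriction-of-scalars} applies to $B$ directly, making the proof a one-liner. Your route is valid and makes the reliance on Proposition~\ref{all-flat-epis-are-FQM} explicit, but it essentially re-runs the argument behind Corollary~\ref{fl-ring-epi-quasi-module-restriction-of-scalars} inside $E$ instead of using that corollary as a black box. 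One small point worth making explicit in your write-up: the claim that $S\ot_RB\rarrow E$, $s\ot b\mapsto sb$, is a map of $S$\+$S$\+bimodules is not quite ``routine bookkeeping,'' since the right $S$\+module structure on $S\ot_RB$ is the one transported from $B\ot_RS$ via the isomorphisms of Proposition~\ref{all-flat-epis-are-FQM}(a), not an obvious one; the cleanest verification is to factor your map through the manifestly $S$\+$S$\+bilinear map $S\ot_RB\ot_RS\rarrow E$, $s\ot b\ot s'\mapsto sbs'$, precomposed with the isomorphism $S\ot_RB\simeq S\ot_RB\ot_RS$.
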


\begin{proof}
 One implication is simple and holds for any homomorphism of commutative
rings $\sigma\:R\rarrow S$: any $S$\+differential operator $U\rarrow V$
is an $R$\+differential operator.
 The converse implication depends on the assumption that $\sigma$~is
a flat epimorphism.
 Both the assertions follow from~\cite[Lemma~1.13]{Pdomc} and
Corollary~\ref{fl-ring-epi-quasi-module-restriction-of-scalars} above.
 The point is that $\D_{R/K}(U,V)$ is the maximal $S$\+$S$\+subbimodule
of $\Hom_K(U,V)$ that is a quasi-module over $R$, while $\D_{S/K}(U,V)$
is the maximal $S$\+$S$\+subbimodule of $\Hom_K(U,V)$ that is
a quasi-module over~$S$.
 It is helpful to notice that $\D_{R/K}(U,V)$ is always
an $S$\+$S$\+subbimodule in $\Hom_K(U,V)$, since the operators of
multiplication with elements $s\in S$, being $R$\+linear maps,
are $R$\+differential operators $U\rarrow U$ and $V\rarrow V$, and
compositions of $R$\+differential operators are $R$\+differential
operators by Corollary~\ref{composition-of-differential-operators}.
\end{proof}

 The following theorem is our promised generalization
of~\cite[Lemma Tag~0G36]{SP}.

\begin{thm} \label{localizing-diff-operators-thm}
 Let $K\rarrow R$ be a homomorphism of commutative rings, and let
$U$ and $V$ be two $R$\+modules.
 Let $R\rarrow S$ be a flat epimorphism of commutative rings and
$D_R\:U\rarrow V$ be a $K$\+linear $R$\+differential operator.
 Then there exists a unique $K$\+linear $S$\+differential operator
$D_S\:S\ot_RU\rarrow S\ot_RV$ for which the square diagram
\begin{equation} \label{diffoperator-extension-commutative-diagram}
\begin{gathered}
 \xymatrix{
   U \ar[rr]^{D_R} \ar[d] && V \ar[d] \\
   S\ot_RU \ar@{..>}[rr]^{D_S} && S\ot_RV
 }
\end{gathered}
\end{equation}
is commutative.
\end{thm}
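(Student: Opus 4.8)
The plan is to reduce the theorem to the case of a localization $R\rarrow R[r^{-1}]$, settle that case by the inductive argument of \cite[Lemma Tag~0G36]{SP}, and then bootstrap back to an arbitrary flat epimorphism by means of the results of Section~\ref{flat-epimorphisms-subsecn}. First I would observe that, since $S\ot_RU$ and $S\ot_RV$ are $S$\+modules, Corollary~\ref{fl-ring-epi-restriction-of-scalars-diff-operators} tells us that a $K$\+linear map $S\ot_RU\rarrow S\ot_RV$ is an $S$\+differential operator if and only if it is an $R$\+differential operator. The canonical maps $\iota_U\:U\rarrow S\ot_RU$ and $\iota_V\:V\rarrow S\ot_RV$ are $R$\+linear, so by Corollary~\ref{composition-of-differential-operators} the composite $\iota_V\circ D_R$ is an $R$\+differential operator $U\rarrow S\ot_RV$, and for any $R$\+differential operator $g\:S\ot_RU\rarrow S\ot_RV$ the square~\eqref{diffoperator-extension-commutative-diagram} commutes precisely when $g\circ\iota_U=\iota_V\circ D_R$. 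Hence the theorem is equivalent to saying that the restriction map
\begin{equation*}
 \rho\:\D_{R/K}(S\ot_RU,\,S\ot_RV)\lrarrow\D_{R/K}(U,\,S\ot_RV),
 \qquad g\longmapsto g\circ\iota_U,
\end{equation*}
is injective and has $\iota_V\circ D_R$ in its image.

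For the existence statement I would first treat the basic case $S=R[r^{-1}]$, so that $S\ot_RU=U[r^{-1}]$ and $S\ot_RV=V[r^{-1}]$. Since $D_R$ is an $R$\+differential operator it has finite $r$\+order, say $D_R\in F^{(r)}_n\D_{R/K}(U,V)$, and one extends it to $U[r^{-1}]\rarrow V[r^{-1}]$ by induction on~$n$: for $n=0$ the operator $D_R$ satisfies $D_R(ru)=rD_R(u)$, and one puts $D_S(u/r^m)=D_R(u)/r^m$; for $n>0$ the operator $\theta_r(D_R)$ lies in $F^{(r)}_{n-1}\D_{R/K}(U,V)$ and is already extended by the inductive hypothesis, and the relations $D_S\circ\iota_U=\iota_V\circ D_R$ and $\theta_r(D_S)=\widetilde{\theta_r(D_R)}$ force the values of $D_S$ on $U[r^{-1}]$ by the same Leibniz\+type recursion as in \cite[Lemma Tag~0G36]{SP}. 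Checking that the resulting $K$\+linear map is well defined and is again an $R$\+differential operator --- its $r'$\+orders for $r'\in R$ are under control because the extension commutes with the operators $\theta_{r'}$ --- is a routine computation. Passing to the filtered direct limit over finitely generated localizations, exactly as in Corollary~\ref{localizations-by-mult-subsets-FQM}, and using that the filtrations $F^{(r)}$ commute with the relevant direct limits (cf.\ the proof of Lemma~\ref{inductive-limits-quasi-modules}), one extends $D_R$ along any localization $R\rarrow\Sigma^{-1}R$, in particular along each $R\rarrow R_\p$. For a general flat epimorphism $\sigma\:R\rarrow S$, Corollary~\ref{flat-epimorphisms-with-local-codomains} identifies $S_\q$ with $R_{\sigma^{-1}(\q)}$ for every prime $\q\subset S$, so $D_R$ extends to a $K$\+linear operator $S_\q\ot_RU\rarrow S_\q\ot_RV$ for each~$\q$; these local extensions are mutually compatible by the uniqueness discussed below, and they must be assembled into a single operator $D_S\:S\ot_RU\rarrow S\ot_RV$. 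This assembling step is where I expect the main difficulty to lie: one has to verify that the glued $K$\+linear map takes its values in $S\ot_RV$ itself, not merely in $\prod_{\q}(S\ot_RV)_\q$, and that it belongs to the quasi-module $\D_{R/K}(S\ot_RU,\,S\ot_RV)$. This is exactly what Proposition~\ref{all-flat-epis-are-FQM} is designed to supply: the isomorphism $S\ot_R\D_{R/K}(U,V)\simeq\D_{R/K}(U,V)\ot_RS$ of part~(a), together with the quasi-module conclusion of part~(b), makes the gluing go through, and the pattern of the argument runs parallel to the proof of Lemma~\ref{FQM-epimorphisms-determined-on-local-rings}.

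For uniqueness (the injectivity of~$\rho$), if $D_S$ and $D_S'$ both satisfy the conclusion, then $D:=D_S-D_S'$ is a $K$\+linear $R$\+differential operator $S\ot_RU\rarrow S\ot_RV$ with $D\circ\iota_U=0$, and I want to conclude $D=0$. Since $S\ot_RV$ is an $S$\+module it embeds into $\prod_{\q}(S\ot_RV)_\q$ over the maximal ideals $\q\subset S$, so it is enough to show that the composition of $D$ with each projection to $(S\ot_RV)_\q$ is zero. Applying the local existence already established to the flat epimorphism $S\rarrow S_\q$, the operator $D$ extends to $S_\q\ot_RU\rarrow S_\q\ot_RV$, still annihilating the image of~$U$; over the local ring $S_\q\simeq R_\p$ such an extension is forced to vanish (in the $r$\+order~$0$ case because it is $R_\p$\+linear and kills a generating set, and in higher $r$\+order by the same recursion), which is precisely the uniqueness clause of the \cite[Lemma Tag~0G36]{SP}-style extension. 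Hence every localization of $D$ vanishes, so $D=0$; this establishes both the injectivity of $\rho$ and the compatibility of the local extensions used in the existence argument.
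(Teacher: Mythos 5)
Your proposal takes a genuinely different route from the paper's: you reduce to the case $S=R[r^{-1}]$, adapt the finite-order inductive construction of \cite[Lemma Tag~0G36]{SP} to $r$\+order, pass to filtered direct limits, then attempt to assemble an operator over a general flat epimorphism by localizing at primes of $S$ and gluing. The paper's proof is much more direct and never localizes at primes of $S$: it packages the given $R$\+differential operator $D_R$ as an element $b$ of the quasi-module $B=\D_{R/K}(U,V)$, uses the tensor--hom correspondence between $R$\+$R$\+bimodule maps $B\rarrow\Hom_K(U,V)$ and left $R$\+module maps $B\ot_RU\rarrow V$, tensors this map up along $R\rarrow S$ using Proposition~\ref{all-flat-epis-are-FQM}(a) to get an $S$\+$S$\+bimodule map $g\:S\ot_RB\rarrow\Hom_K(S\ot_RU,\,S\ot_RV)$, and then defines $D_S$ as the image of $b$ under $B\rarrow S\ot_RB\overset{g}\rarrow\Hom_K(S\ot_RU,\,S\ot_RV)$, with Proposition~\ref{all-flat-epis-are-FQM}(b) supplying that $D_S$ is an $S$\+differential operator. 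No gluing is needed.

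The gap in your argument is exactly the one you flag yourself: you obtain a compatible family of operators $D_{S_\q}\:S_\q\ot_RU\rarrow S_\q\ot_RV$, hence a $K$\+linear map $S\ot_RU\rarrow\prod_\q(S\ot_RV)_\q$, and you need the image to land in $S\ot_RV$. You assert that Proposition~\ref{all-flat-epis-are-FQM} "makes the gluing go through" in a pattern parallel to Lemma~\ref{FQM-epimorphisms-determined-on-local-rings}, but you do not explain how, and it is not a formal consequence: Proposition~\ref{all-flat-epis-are-FQM} gives isomorphisms of bimodules and a quasi-module property, not a mechanism for recognizing that a pointwise-defined map into a product of localizations factors through the global sections. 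To make the gluing work one essentially has to produce the map $g\:S\ot_RB\rarrow\Hom_K(S\ot_RU,\,S\ot_RV)$ and observe that your glued operator agrees with $g(b)$ --- at which point you have recreated the paper's direct construction and the detour through $R[r^{-1}]$ and prime localizations was unnecessary. Your uniqueness argument, by contrast, is sound (and similar in spirit, though not in detail, to the paper's: the paper shows that a certain right $R$\+module map $h\:S\rarrow\D_{R/K}(U,V')$ is automatically a right $S$\+module map because both sides carry $S$\+module structures and $R\rarrow S$ is a ring epimorphism, then uses $h(1)=0$; you localize and invoke local uniqueness, which also works once one has the extension results for $S\rarrow S_\q$).

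What the paper's route buys you over yours: it gives existence in one stroke for all flat ring epimorphisms, with no base case, no induction on $r$\+order, no passage to filtered limits, and no gluing. What your route would buy, were the gap closed, is essentially nothing extra --- it is longer and recovers a known lemma along the way. The sound core of your proposal is the observation that a single $r$\+order suffices to run the Stacks Project recursion; that is a genuine insight (it is what lets differential operators with no global order still localize), but it is more naturally used as a sanity check on the abstract argument than as a substitute for it.
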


\begin{proof}
 To prove existence, notice that, for any $R$\+$R$\+bimodule $B$
over $K$, there is a natural bijective correspondence between
$R$\+$R$\+bimodule maps $B\rarrow\Hom_K(U,V)$ and left $R$\+module
maps $B\ot_RU\rarrow V$.
 Put $B=\D_{R/K}(U,V)\subset\Hom_K(U,V)$; then $B$ is
an $R$\+$R$\+bimodule over $K$ and a quasi-module over~$R$.
 The correspondence mentioned above provides a left $R$\+module map
$f\:B\ot_RU\rarrow V$.
 Furthermore, the $K$\+linear $R$\+differential operator
$D_R\:U\rarrow V$ corresponds to an element of~$B$; let us denote
this element by $b\in B$.

 By Proposition~\ref{all-flat-epis-are-FQM}(a), the tensor
product $S\ot_RB$ is naturally an $S$\+$S$\+bimodule.
 Therefore, we have
$$
 (S\ot_RB)\ot_S(S\ot_RU)\simeq S\ot_RB\ot_RU,
$$
and the left $R$\+module map $f\:B\ot_RU\rarrow V$ induces
a left $S$\+module map
$$
 (S\ot_RB)\ot_S(S\ot_RU)\simeq S\ot_RB\ot_RU
 \xrightarrow{S\ot_Rf}S\ot_RV.
$$
 The left $S$\+module map $S\ot_Rf$ corresponds to
an $S$\+$S$\+bimodule map
$$
 g\:S\ot_RB\lrarrow\Hom_K(S\ot_RU,\>S\ot_RV).
$$

 Now the ring homomorphism $R\rarrow S$ induces an $R$\+$R$\+bimodule
map $B\rarrow S\ot_RB$.
 The image of the element $b\in B$ under the composition
$$
 B\lrarrow S\ot_RB\overset g\lrarrow\Hom_K(S\ot_RU,\>S\ot_RV)
$$
provides an element of $\Hom_K(S\ot_RU,\>S\ot_RV)$, i.~e.,
a $K$\+linear map $D_S\:S\ot_RU\rarrow S\ot_RV$.
 It is straightforward to check that
the diagram~\eqref{diffoperator-extension-commutative-diagram}
is commutative.

 Finally, by Proposition~\ref{all-flat-epis-are-FQM}(b) we know that
$S\ot_RB$ is a quasi-module over~$S$.
 One can observe that the class of all quasi-modules over $S$ is closed
under homomorphic images in $S\bMod S$, and any $S$\+$S$\+subbimodule
in $\Hom_K(S\ot_RU,\>S\ot_RV)$ that is a quasi-module over $S$ is
contained in $\D_{S/K}(S\ot_RU,\>S\ot_RV)$.
 Therefore, the inclusion $g(S\ot_RB)\subset
\D_{S/K}(S\ot_RU,\>S\ot_RV)$ holds, and it follows that $D_S$ is
an $S$\+differential operator $S\ot_RU\rarrow S\ot_RV$.

 To prove uniqueness, it suffices to consider a $K$\+linear
$S$\+differential operator $D_S\:S\ot_RU\rarrow S\ot_RV$ such that
the composition $U\rarrow S\ot_RU\overset{D_S}\rarrow S\ot_RV$
vanishes.
 We need to show that $D_S=0$.
 
 For this purpose, put $V'=S\ot_RV$.
 Consider the $R$\+$R$\+bimodule $B=\D_{R/K}(U,V')$ of $K$\+linear
$R$\+differential operators $U\rarrow V'$.

 By (the simple implication in)
Corollary~\ref{fl-ring-epi-restriction-of-scalars-diff-operators},
all $K$\+linear $S$\+differential operators $S\ot_RU\rarrow S\ot_RV$
are also $R$\+differential operators.
 The natural $R$\+module map $U\rarrow S\ot_RU$ is an $R$\+differential
operator (in fact, a strongly $R$\+differential operator of ordinal 
order~$0$, as any $R$\+module map).
 By Corollary~\ref{composition-of-differential-operators},
the compositions of $R$\+differential operators are
$R$\+differential operators.
 So, for any $K$\+linear $S$\+differential operator
$D\:S\ot_RU\rarrow S\ot_RV$, the composition $U\rarrow S\ot_RU
\overset D\rarrow S\ot_RV$ is a $K$\+linear $R$\+differential operator
$U\rarrow V'$.
 We denote the resulting map by
$$
 f\:\D_{S/K}(S\ot_RU,\>S\ot_RV)\lrarrow\D_{R/K}(U,V').
$$

 Furthermore, $S$\+differential operators $S\ot_R U\rarrow
S\ot_RV$ form an $S$\+$S$\+sub\-bi\-mod\-ule in
$\Hom_K(S\ot_RU,\>S\ot_RV)$.
 In particular, for every element $s\in S$, the map $D_S\circ s\:
S\ot_R U\rarrow S\ot_RV$ is a $K$\+linear $S$\+differential operator.
 Here $s\:S\ot_RU\rarrow S\ot_RU$ is the operator of multiplication
with~$s$.

 Consider the map $h\:S\rarrow\Hom_K(U,V')$ assigning to each element
$s\in S$ the $K$\+linear map $u\longmapsto D_S(s\ot u)\:U\rarrow V'$.
 We have shown that $h(s)=f(D_S\circ s)$ is a $K$\+linear
$R$\+differential operator $U\rarrow V'$.
 So we have a map
$$
 h\:S\lrarrow B=\D_{R/K}(U,V').
$$
 One can easily see from the construction that $h$~is a right
$R$\+module morphism.
 Indeed, $h(sr)(u)=D_S(sr\ot u)=D_S(s\ot ru)=h(s)(ru)\in V'$ for all
$s\in S$, \ $r\in R$, and $u\in U$.
 (We recall that, by the definition, the right $R$\+module structure
on $\Hom_K(U,V')$ is induced by the action of $R$ on~$U$.)

 The operators $s\:V'\rarrow V'$ of multiplication with elements
$s\in S$ are $R$\+linear maps $V'\rarrow V'$, so they are strongly
$R$\+differential operators of order~$0$.
 Once again, by
Corollary~\ref{composition-of-differential-operators},
the compositions of $R$\+differential operators are
$R$\+differential operators.
 Hence, postcomposing an $R$\+differential operator
$U\rarrow V'$ with the map $s\:V'\rarrow V'$, we obtain another
$R$\+differential operator $U\rarrow V'$.
 Therefore, the left $R$\+module structure on $\D_{R/K}(U,V')$
can be extended to a left $S$\+module structure.
 So $B$ is naturally an $S$\+$R$\+bimodule.

 On the other hand, the $R$\+$R$\+bimodule $B=\D_{R/K}(U,V')$ is
a quasi-module over~$R$.
 Recall that $S\ot_RS\simeq S$, since $R\rarrow S$ is a ring
epimorphism.
 Hence, for any $S$\+module $N$, we have a natural isomorphism
of $S$\+modules $S\ot_RN\simeq N$.
 Using Proposition~\ref{all-flat-epis-are-FQM}(a),
we now have $R$\+$R$\+bimodule isomorphisms $B\simeq S\ot_RB\simeq
B\ot_RS$.
 Thus $B$ is actually an $S$\+$S$\+bimodule.
 In particular, the right $R$\+module structure on $B$ can be also
extended to a right $S$\+module structure.

 Finally, we return to our right $R$\+module morphism $h\:S\rarrow B$.
 Both the right $R$\+module structures on $S$ and $B$ arise from
right $S$\+module structures.
 Since $R\rarrow S$ is a ring epimorphism, the functor of restriction
of scalars $\Modr S\rarrow\Modr R$ is fully faithful.
 It follows that $h$~is a right $S$\+module morphism.

 Now the map $h(1)\:U\rarrow V'$ is, by the definition, given by
the rule $h(1)(u)=D_S(1\ot u)$ for all $u\in U$.
 By assumption, we have $D_S(1\ot u)=0$ for all $u\in U$; so $h(1)=0$.
 As $h$~is a right $S$\+module map $S\rarrow B$, it follows that
the whole map~$h$ vanishes, $h=0$.
 We have proved that $D_S(s\ot u)=h(s)(u)=0$ for all $s\in S$ and
$u\in U$.
 Thus $D_S=0$, as desired.
\end{proof}

\begin{lem} \label{differential-operator-restricted-to-submodules}
 Let $K\rarrow R$ be a homomorphism of commutative rings, and let
$f\:U\rarrow U'$ and $g\:V\rarrow V'$ be two homomorphisms of
$R$\+modules.
 Assume that the map~$g$ is injective, and suppose given a commutative
diagram of $K$\+linear maps
$$
 \xymatrix{
  U \ar[r]^D \ar[d]_f & V \ar@{>->}[d]^g \\
  U' \ar[r]^{D'} & V'
 }
$$
 In this setting, if $D'\:U'\rarrow V'$ is an $R$\+differential
operator, then $D\:U\rarrow V$ is an $R$\+differential operator.
\end{lem}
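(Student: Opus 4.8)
The plan is to reduce the statement to the behaviour of the commutator operators $\theta_r$ under pre- and post-composition with $R$\+linear maps, and then invoke the injectivity of~$g$. Recall from Section~\ref{quasi-modules-subsecn} that, writing $\theta_r(e)=re-er$ for $e\in\Hom_K(U,V)$ and $r\in R$ (where $re$ is post-composition of~$e$ with multiplication by~$r$ on~$V$ and $er$ is pre-composition with multiplication by~$r$ on~$U$), a $K$\+linear map $e\:U\rarrow V$ is an $R$\+differential operator if and only if, for every $r\in R$, there is an integer $n\ge0$ with $(\theta_r)^{n+1}(e)=0$.

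First I would record two intertwining identities. Post-composition with~$g$ is a $K$\+linear map $g_*\:\Hom_K(U,V)\rarrow\Hom_K(U,V')$, and since $g$, being $R$\+linear, commutes with the multiplication operators by~$r$, one has $g_*\theta_r=\theta_r g_*$. Symmetrically, pre-composition with~$f$ is a $K$\+linear map $f^*\:\Hom_K(U',V')\rarrow\Hom_K(U,V')$ with $f^*\theta_r=\theta_r f^*$. (In fact $g_*$ and $f^*$ are homomorphisms of $T$\+modules for $T=R\ot_KR$, and $g_*$ is injective because $g$~is.) Now the commutativity of the square in the statement is the equality $g_*(D)=g\circ D=D'\circ f=f^*(D')$ in $\Hom_K(U,V')$. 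Given $r\in R$, choose $n\ge0$ with $(\theta_r)^{n+1}(D')=0$; applying $(\theta_r)^{n+1}$ to this equality and using the two intertwining identities yields $g\circ(\theta_r)^{n+1}(D)=g_*\bigl((\theta_r)^{n+1}(D)\bigr)=f^*\bigl((\theta_r)^{n+1}(D')\bigr)=0$. Injectivity of~$g$ forces $(\theta_r)^{n+1}(D)=0$, and as $r\in R$ was arbitrary, $D$ is an $R$\+differential operator.

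There is no serious obstacle; the only point that needs care is the bookkeeping of left versus right module structures on the Hom-bimodules---remembering that the right $R$\+action on $\Hom_K$ is induced by the source module and the left action by the target---so that $g$ is used on the target side and $f$ on the source side in the two intertwining identities. Alternatively, one can phrase the argument entirely module-theoretically over $T=R\ot_KR$: the element $f^*(D')$ is $I$\+torsion because $D'$~is and $f^*$ is a $T$\+module map, hence so is $g_*(D)$; since $g_*$ is an injective $T$\+module map and the functor $\Gamma_I$ is left exact, taking submodules to intersections (Lemma~\ref{Gamma-I-left-exact}), it follows that $D\in\Gamma_I(\Hom_K(U,V))$, i.~e., $D$ is an $R$\+differential operator.
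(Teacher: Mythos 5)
Your proof is correct and, in its module-theoretic phrasing, matches the paper's argument almost exactly: form $g_*(D)=f^*(D')$ in the $T$\+module $\Hom_K(U,V')$, observe this element is $I$\+torsion, and conclude via the left exactness of $\Gamma_I$ (Lemma~\ref{Gamma-I-left-exact}) together with the injectivity of~$g_*$. The only cosmetic difference is that the paper justifies $f^*(D')=D'\circ f\in\Gamma_I(\Hom_K(U,V'))$ by invoking Corollary~\ref{composition-of-differential-operators}, whereas you simply note that $f^*$ is a $T$\+module map and $\Gamma_I$ is a subfunctor of the identity; either is fine, and your direct commutator computation in the first paragraph is just an explicit unwinding of the same fact.
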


\begin{proof}
 The map $f\:U\rarrow U'$ is $R$\+linear, so it is a strongly
$R$\+differential operator of order~$0$.
 By Corollary~\ref{composition-of-differential-operators},
the composition $D'\circ f\:U\rarrow V'$ is an $R$\+differential
operator.
 Now consider two $R$\+$R$\+bimodules $E=\Hom_K(U,V)$ and
$E'=\Hom_K(U,V')$.
 The injective $R$\+module map~$g$ induces an injective
$R$\+$R$\+bimodule map $g_*\:E\rarrow E'$.
 For convenience of notation, put $T=R\ot_KR$, and denote by $I$
the kernel ideal of the multiplication map $R\ot_KR\rarrow R$.
 Then we have $g_*(D)=D'\circ f\in\Gamma_I(E')$, and by
Lemma~\ref{Gamma-I-left-exact} it follows that $D\in\Gamma_I(E)$.
\end{proof}

 The next proposition establishes the sheaf axiom for affine open
coverings of affine schemes in the context of the construction of
a sheaf $\D_{X/T}(\U,\V)$ for quasi-coherent sheaves $\U$ and $\V$
on a scheme $X$, as per the discussion in
Section~\ref{introd-localizations-of-diff-operators}.

\begin{prop} \label{sheaf-axiom-differential-operators}
 Let $K\rarrow R$ be a homomorphism of commutative rings, and let
$R\rarrow S_l$, \,$1\le l\le n$, be a finite collection of
homomorphisms of commutative rings such that the collection of induced
maps of the spectra\/ $\Spec S_l\rarrow\Spec R$ is an affine open
covering of the affine scheme\/ $\Spec R$.
 Let $U$ and $V$ be two $R$\+modules, and let $D_l\: S_l\ot_RU\rarrow
S_l\ot_R V$ be $K$\+linear $S_l$\+differential operators, defined for
all indices\/ $1\le l\le n$.
 For every pair of indices $j$ and~$l$, put $S_{jl}=S_j\ot_RS_l$.
 Assume that, for every pair of indices $j$ and~$l$,
the $S_{jl}$\+differential operator $D_{jl}\:S_{jl}\ot_RU\rarrow
S_{jl}\ot_RV$ induced by $D_l$, as per the construction of
Theorem~\ref{localizing-diff-operators-thm}, is equal to
the $S_{lj}$\+differential operator $D_{lj}\:S_{lj}\ot_RU\rarrow
S_{lj}\ot_RV$ induced by~$D_j$.
 Then there exists a unique $R$\+differential operator $D\:U\rarrow V$
such that, for every index~$l$, the $S_l$\+differential operator
$D_l$ is induced by $D$ as per the construction of
Theorem~\ref{localizing-diff-operators-thm}.
\end{prop}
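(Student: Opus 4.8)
The plan is to prove the two halves of the sheaf axiom — separatedness and gluing — relying on the fact that the family $R\rarrow S_k$ is jointly faithfully flat (each $S_k$ is flat over~$R$ and the $\Spec S_k$ cover $\Spec R$), on the fact that each $R\rarrow S_k$ and each $S_k\rarrow S_{jk}=S_j\ot_RS_k$ is a flat ring epimorphism (so that Theorem~\ref{localizing-diff-operators-thm}, together with its uniqueness clause, is available throughout), and on the classical faithfully flat descent exact sequence $0\rarrow V\rarrow\bigoplus_k S_k\ot_RV\rightrightarrows\bigoplus_{j,k}S_{jk}\ot_RV$ for the $R$\+module~$V$.

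First I would dispose of separatedness. Since $\D_{R/K}(U,V)=\Gamma_I(\Hom_K(U,V))$ is an $R$\+$R$\+subbimodule of $\Hom_K(U,V)$ and the induced-operator assignment of Theorem~\ref{localizing-diff-operators-thm} is additive, the difference of two solutions is an $R$\+differential operator $D\:U\rarrow V$ inducing the zero operator over every~$S_k$; it suffices to show such a $D$ vanishes. The commutative square~\eqref{diffoperator-extension-commutative-diagram} defining the induced operator says that the composition of $D$ with the localization map $V\rarrow S_k\ot_RV$ equals $D_k\circ(U\rarrow S_k\ot_RU)$; if $D_k=0$, then $D(u)$ maps to $0$ in $S_k\ot_RV$ for every $u$ and every~$k$, hence $D(u)=0$ by injectivity of $V\rarrow\bigoplus_k S_k\ot_RV$.

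For the gluing, I would restrict each $D_k$ along the localization and set $\tilde D_k\:U\rarrow S_k\ot_RV$, \ $\tilde D_k(u)=D_k(1\ot u)$; each $\tilde D_k$ is $K$\+linear. The key step is to verify that $(\tilde D_k)_k$ is a descent datum, i.e.\ that the images of $\tilde D_j(u)$ and $\tilde D_k(u)$ in $S_{jk}\ot_RV$ agree for all $j$, $k$, and~$u$. This is a diagram chase: because $D_{jk}$ is, by Theorem~\ref{localizing-diff-operators-thm}, the \emph{unique} extension of $D_k$ along the flat epimorphism $S_k\rarrow S_{jk}$, the square for $D_k$ and $D_{jk}$ identifies the image of $\tilde D_k(u)=D_k(1\ot u)$ in $S_{jk}\ot_RV$ with $D_{jk}(1\ot u)$; symmetrically, the image of $\tilde D_j(u)$ in $S_{kj}\ot_RV$ is $D_{kj}(1\ot u)$; and the hypothesis $D_{jk}=D_{kj}$, under the canonical identification $S_{jk}\simeq S_{kj}$, makes the two equal. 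The descent exact sequence then produces a unique $D(u)\in V$ whose image in each $S_k\ot_RV$ is $\tilde D_k(u)$; as each $\tilde D_k$ is $K$\+linear and the descent identification of $V$ with the equalizer is an isomorphism of $K$\+modules, $D\:U\rarrow V$ is a well-defined $K$\+linear map.

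It remains to see that $D$ is an $R$\+differential operator and that it induces each~$D_k$. The localization $U\rarrow S_k\ot_RU$ is $R$\+linear, hence a strongly $R$\+differential operator of order~$0$, while $D_k$, being an $S_k$\+differential operator, is an $R$\+differential operator by Corollary~\ref{fl-ring-epi-restriction-of-scalars-diff-operators}; so $\tilde D_k=D_k\circ(U\rarrow S_k\ot_RU)$ is an $R$\+differential operator $U\rarrow S_k\ot_RV$ by Corollary~\ref{composition-of-differential-operators}. Since $\Gamma_I$ is additive and left exact (Lemma~\ref{Gamma-I-left-exact}), a $K$\+linear map into a finite direct sum is an $R$\+differential operator precisely when all its components are; thus $(\tilde D_k)_k\:U\rarrow\bigoplus_k S_k\ot_RV$ is an $R$\+differential operator, and as it factors through the injection $V\hookrightarrow\bigoplus_k S_k\ot_RV$ via~$D$, Lemma~\ref{differential-operator-restricted-to-submodules} shows that $D$ itself is an $R$\+differential operator. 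Finally, by construction the square~\eqref{diffoperator-extension-commutative-diagram} with $D$ on top and $D_k$ on the bottom commutes, so the uniqueness in Theorem~\ref{localizing-diff-operators-thm} identifies $D_k$ with the operator induced by~$D$. The hard part, I expect, is the descent-datum step: matching the equality $D_{jk}=D_{kj}$ on double overlaps with the cocycle condition of faithfully flat descent and keeping careful track of the identification $S_j\ot_RS_k\simeq S_k\ot_RS_j$ and of all the structure maps; everything else is assembled from Theorem~\ref{localizing-diff-operators-thm}, the corollaries on compositions and restrictions of differential operators, and standard faithfully flat descent.
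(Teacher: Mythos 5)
Your proof is correct and takes essentially the same approach as the paper. The paper writes down the Čech coresolution $0\rarrow M\rarrow\bigoplus_k S_k\ot_RM\rarrow\bigoplus_{j,k}S_j\ot_RS_k\ot_RM\rarrow\dotsb$ for the $R$-modules $U$ and $V$, forms the commutative diagram with vertical arrows $(D_k)$ and $(D_{jk})$, and passes to kernels to get $D$; then it invokes Corollary~\ref{fl-ring-epi-restriction-of-scalars-diff-operators} and Lemma~\ref{differential-operator-restricted-to-submodules} exactly as you do, with uniqueness from injectivity of $V\rarrow\bigoplus_k S_k\ot_RV$. Your splitting into separatedness and gluing, and your explicit verification that $(\tilde D_k)_k$ is a descent datum (using the extension squares from Theorem~\ref{localizing-diff-operators-thm} together with the hypothesis $D_{jk}=D_{kj}$), merely spells out what the paper compresses into the phrase ``the rightmost square is commutative by assumption.''
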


\begin{proof}
 For any $R$\+module $M$, the \v Cech coresolution
\begin{multline}
 0\lrarrow M\lrarrow\bigoplus\nolimits_{1\le l\le n}S_l\ot_RM\lrarrow
 \bigoplus\nolimits_{1\le j<l\le n}S_j\ot_RS_l\ot_RM \\
 \lrarrow\dotsb\lrarrow S_1\ot_R\dotsb\ot_R S_n\ot_RM\lrarrow0
\end{multline}
is a finite exact sequence of $R$\+modules.
 In the situation at hand, consider the diagram of
$K$\+linear maps
\begin{equation}
\begin{gathered}
 \xymatrix{
  0 \ar[r] & U \ar[r] \ar@{..>}[d]^D
  & \bigoplus\nolimits_{1\le l\le n}S_l\ot_RU \ar[r] \ar[d]^{(D_l)}
  & \bigoplus\nolimits_{1\le j<l\le n}S_j\ot_RS_l\ot_RU
  \ar[d]^{(D_{jl})} \\
  0 \ar[r] & V \ar[r]
  & \bigoplus\nolimits_{1\le l\le n}S_l\ot_RV \ar[r]
  & \bigoplus\nolimits_{1\le j<l\le n}S_j\ot_RS_l\ot_RV
 }
\end{gathered}
\end{equation}
where the middle and rightmost vertical arrows are the direct sums
of the operators $D_l$ and~$D_{jl}$.
 The rightmost square is commutative by assumption, so passing to
the kernels provides a $K$\+linear map $D\:U\rarrow V$.
 Put $U'=\bigoplus_{1\le l\le n}S_l\ot_RU$ and
$V'=\bigoplus_{1\le l\le n}S_l\ot_RV$.
 For every index~$l$, the map $D_l\:S_l\ot_RU\rarrow S_l\ot_R V$
is an $S_l$\+differential operator; hence by (the simple implication in)
Corollary~\ref{fl-ring-epi-restriction-of-scalars-diff-operators}
it is also an $R$\+differential operator.
 It follows that the direct sum $(D_l)_{l=1}^n\:U'\rarrow V'$ is
an $R$\+differential operator.
 Applying Lemma~\ref{differential-operator-restricted-to-submodules},
we conclude that $D$ is also an $R$\+differential operator.
 This proves the existence; the uniqueness follows immediately from
injectivity of the natural map $V\rarrow V'$.
\end{proof}

\subsection{Localizing differential operators of transfinite order}
 We start with an easy corollary of
Lemma~\ref{ring-epi-restr-of-scalars-quite-quasi-modules}.

\begin{cor} \label{ring-epi-restr-of-scalars-quite-diff-operators}
 Let $K\rarrow R$ be a homomorphism of commutative rings and
$\sigma\:R\rarrow S$ be an epimorphism of commutative rings.
 Let $U$ and $V$ be two $S$\+modules.
 Then a $K$\+linear map $U\rarrow V$ is a quite $S$\+differential
operator of ordinal order~$\alpha$ if and only if it is
a quite $R$\+differential operator of ordinal order~$\alpha$.
\end{cor}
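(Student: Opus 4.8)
The plan is to reduce the statement directly to Lemma~\ref{ring-epi-restr-of-scalars-quite-quasi-modules}. Put $E=\Hom_K(U,V)$. Since $U$ and $V$ are $S$\+modules, $E$ is naturally an $S$\+$S$\+bimodule (with the right action induced by the $S$\+action on $U$ and the left action by the $S$\+action on $V$), and it is an $S$\+$S$\+bimodule over $K$, since the left and right actions of $K$ on $E$ both factor through the structure map $K\rarrow R\rarrow S$. Restricting scalars along $\sigma\:R\rarrow S$, we may view $E$ as an $R$\+$R$\+bimodule over $K$.

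First I would recall that, by the definition in Section~\ref{quite-differential-operators-subsecn}, for every ordinal~$\alpha$ one has $F_\alpha\D_{R/K}(U,V)=F_\alpha E$ and $F_\alpha\D_{S/K}(U,V)=G_\alpha E$, where $F$ denotes the natural ordinal-indexed increasing filtration on $E$ regarded as an $R$\+$R$\+bimodule and $G$ denotes the natural ordinal-indexed increasing filtration on $E$ regarded as an $S$\+$S$\+bimodule, both as introduced in Section~\ref{quite-quasi-modules-subsecn}. Applying Lemma~\ref{ring-epi-restr-of-scalars-quite-quasi-modules} to the $S$\+$S$\+bimodule $B=E$, we get $F_\alpha E=G_\alpha E$ for all ordinals~$\alpha$; in particular $\bigcup_{\beta<\alpha}F_\beta E=\bigcup_{\beta<\alpha}G_\beta E$ for every ordinal~$\alpha$.

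It then remains to unwind the notion of ordinal order. A $K$\+linear map $e\:U\rarrow V$ is a quite $R$\+differential operator of ordinal order~$\alpha$ precisely when $e\in F_\alpha E$ and $e\notin\bigcup_{\beta<\alpha}F_\beta E$, and likewise for $S$ with $G$ in place of $F$. Since the two filtrations and their partial unions agree at every ordinal level, these two conditions are equivalent, which proves the corollary. There is essentially no obstacle here: all the substance was already established in Lemma~\ref{ring-epi-restr-of-scalars-quite-quasi-modules}. The only point worth emphasizing is that, in contrast with Corollary~\ref{fl-ring-epi-restriction-of-scalars-diff-operators}, no flatness hypothesis on~$\sigma$ is needed --- an arbitrary epimorphism of commutative rings suffices, and moreover the ordinal order is preserved exactly, not just up to coincidence of the classes of operators.
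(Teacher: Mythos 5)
Your proof is correct and follows essentially the same route as the paper: both set $E=\Hom_K(U,V)$, observe that the two relevant filtrations are the $R$- and $S$-bimodule filtrations on $E$, and deduce the claim from the equality $F_\alpha E=G_\alpha E$ established in Lemma~\ref{ring-epi-restr-of-scalars-quite-quasi-modules}.
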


\begin{proof}
 Once again, the simpler implication and inequality hold for any
homomorphism of commutative rings $\sigma\:R\rarrow S$: any quite
$S$\+differential operator $U\rarrow V$ of ordinal order~$\alpha$
is a quite $R$\+differential operator of ordinal order at most~$\alpha$.
 The converse implication and inequality depend on the assumption that
$\sigma$~is a ring epimorphism.
 All these assertions follow from the respective versions of
the second assertion of
Lemma~\ref{ring-epi-restr-of-scalars-quite-quasi-modules}
applied to the $S$\+$S$\+bimodule $B=E=\Hom_K(U,V)$.
\end{proof}

 The next proposition is our second generalization
of~\cite[Lemma Tag~0G36]{SP}.

\begin{prop} \label{localizing-quite-diff-operators-prop}
 Let $K\rarrow R$ be a homomorphism of commutative rings, and let
$U$ and $V$ be two $R$\+modules.
 Let $R\rarrow S$ be a flat epimorphism of commutative rings and
$D_R\:U\rarrow V$ be a $K$\+linear quite $R$\+differential operator
of ordinal order~$\alpha$.
 Then there exists a unique $K$\+linear quite $S$\+differential operator
$D_S\:S\ot_RU\rarrow S\ot_RV$ for which the square diagram
\begin{equation} \label{diffoperator-extension-commutative-diagram-II}
\begin{gathered}
 \xymatrix{
   U \ar[rr]^{D_R} \ar[d] && V \ar[d] \\
   S\ot_RU \ar@{..>}[rr]^{D_S} && S\ot_RV
 }
\end{gathered}
\end{equation}
is commutative.
 The ordinal order of the differential operator $D_S$ does not
exceed~$\alpha$.
\end{prop}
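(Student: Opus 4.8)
The plan is to follow the proof of Theorem~\ref{localizing-diff-operators-thm} step by step, substituting the ordinal-graded Lemma~\ref{quite-quasi-module-extension-of-scalars} for Proposition~\ref{all-flat-epis-are-FQM} wherever the latter is used, and to deduce the uniqueness clause directly from Theorem~\ref{localizing-diff-operators-thm} itself.

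Uniqueness is essentially for free. Every quite $S$\+differential operator is in particular an $S$\+differential operator (see the discussion in Sections~\ref{quite-torsion-subsecn} and~\ref{quite-quasi-modules-subsecn}); so a quite $S$\+differential operator $D_S\:S\ot_RU\rarrow S\ot_RV$ fitting into the square~\eqref{diffoperator-extension-commutative-diagram-II} is, \emph{a fortiori}, a $K$\+linear $S$\+differential operator fitting into that square, and by Theorem~\ref{localizing-diff-operators-thm} there is only one such operator. Hence at most one quite $S$\+differential operator has the required property.

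For existence together with the bound on the ordinal order, I would run the construction from the proof of Theorem~\ref{localizing-diff-operators-thm}, but starting from the $R$\+$R$\+subbimodule $B=F_\alpha\D_{R/K}(U,V)\subset\Hom_K(U,V)$ rather than the full bimodule $\D_{R/K}(U,V)$ of $R$\+differential operators. This $B$ is an $R$\+$R$\+bimodule over~$K$, it contains the given operator $D_R$ (which now corresponds to an element $b\in B$), and it is a quite quasi-module over~$R$ with $B=\bigcup_{\beta\le\alpha}F_\beta B$; here one uses that $F_\beta B=B\cap F_\beta\Hom_K(U,V)=F_{\min(\beta,\alpha)}\Hom_K(U,V)$, which follows from Lemma~\ref{Gamma-I-qu-left-exact} applied to $\Hom_K(U,V)$ viewed as a module over $T=R\ot_KR$ (or directly from the definition). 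The usual adjunction, i.e.\ the bijective correspondence between $R$\+$R$\+bimodule maps $B\rarrow\Hom_K(U,V)$ and left $R$\+module maps $B\ot_RU\rarrow V$, turns the inclusion $B\hookrightarrow\Hom_K(U,V)$ into a left $R$\+module map $f\:B\ot_RU\rarrow V$. By Lemma~\ref{quite-quasi-module-extension-of-scalars}(a), the tensor product $S\ot_RB$ is naturally an $S$\+$S$\+bimodule and $(S\ot_RB)\ot_S(S\ot_RU)\simeq S\ot_RB\ot_RU$, so $S\ot_Rf$ corresponds to an $S$\+$S$\+bimodule map $g\:S\ot_RB\rarrow\Hom_K(S\ot_RU,\>S\ot_RV)$. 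Composing the $R$\+$R$\+bimodule map $B\rarrow S\ot_RB$ induced by $R\rarrow S$ with~$g$ and evaluating at~$b$ produces the $K$\+linear map $D_S\:S\ot_RU\rarrow S\ot_RV$; commutativity of the square~\eqref{diffoperator-extension-commutative-diagram-II} is checked exactly as in the proof of Theorem~\ref{localizing-diff-operators-thm}.

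It then remains to read off the ordinal order. By Lemma~\ref{quite-quasi-module-extension-of-scalars}(b), $S\ot_RB$ is a quite quasi-module over~$S$ and $S\ot_RB=\bigcup_{\beta\le\alpha}F_\beta(S\ot_RB)$. Since the natural ordinal-indexed increasing filtration $F$ on bimodules is preserved by bimodule homomorphisms, the $S$\+$S$\+bimodule map~$g$ satisfies $g(F_\beta(S\ot_RB))\subset F_\beta\Hom_K(S\ot_RU,\>S\ot_RV)$ for every ordinal~$\beta$, and therefore $g(S\ot_RB)\subset\bigcup_{\beta\le\alpha}F_\beta\Hom_K(S\ot_RU,\>S\ot_RV)=F_\alpha\D_{S/K}(S\ot_RU,\>S\ot_RV)$. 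As $D_S$ lies in $g(S\ot_RB)$, this shows that $D_S$ is a quite $S$\+differential operator of ordinal order at most~$\alpha$, as desired. I expect the only genuine work here to be the routine bookkeeping — checking that the adjunction isomorphisms and the base change from $B$ to $S\ot_RB$ interact correctly with the various (bi)module structures — but this step is literally the one already carried out in Theorem~\ref{localizing-diff-operators-thm}, and the one new ingredient, the transfinite filtration bookkeeping, is packaged in the second assertion of Lemma~\ref{quite-quasi-module-extension-of-scalars}(b).
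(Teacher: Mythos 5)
Your proposal is correct and follows essentially the same route as the paper's own proof: uniqueness is deduced directly from Theorem~\ref{localizing-diff-operators-thm}, and existence together with the order bound is obtained by rerunning that theorem's construction with $B=F_\alpha\Hom_K(U,V)=F_\alpha\D_{R/K}(U,V)$ in place of $\D_{R/K}(U,V)$, invoking Lemma~\ref{quite-quasi-module-extension-of-scalars} instead of Proposition~\ref{all-flat-epis-are-FQM}. The additional bookkeeping you spell out (that $F_\beta B=F_{\min(\beta,\alpha)}\Hom_K(U,V)$ by Lemma~\ref{Gamma-I-qu-left-exact}, and that $S$\+$S$\+bimodule maps preserve the ordinal-indexed filtration $F$) is exactly what the paper's terser proof appeals to.
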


\begin{proof}
 The uniqueness follows from
Theorem~\ref{localizing-diff-operators-thm}.
 The existence is provable similarly to the proof of
Theorem~\ref{localizing-diff-operators-thm}, using
Lemma~\ref{quite-quasi-module-extension-of-scalars}
instead of Proposition~\ref{all-flat-epis-are-FQM}.
 Instead of the $R$\+$R$\+bimodule $B=\D_{R/K}(U,V)$, consider
the $R$\+$R$\+bimodule $B=F_\alpha\Hom_K(U,V)\subset\Hom_K(U,V)$.
 Then the quite $R$\+differential operator $D_R$ corresponds to
an element $b\in B$.
 By Lemma~\ref{quite-quasi-module-extension-of-scalars}(b), we have
$S\ot_RB=F_\alpha(S\ot_RB)$.
 The fact that the natural ordinal-indexed increasing filtration $F$ on
$S$\+$S$\+bimodules is preserved by $S$\+$S$\+bimodule maps
needs to be used in order to conclude that the operator $D_S$
constructed as in Theorem~\ref{localizing-diff-operators-thm}
belongs to $F_\alpha\Hom_K(S\ot_RU,\>S\ot_RV)$.
\end{proof}

\begin{lem} \label{quite-diff-operator-restricted-to-submodules}
 Let $K\rarrow R$ be a homomorphism of commutative rings, and let
$f\:U\rarrow U'$ and $g\:V\rarrow V'$ be two homomorphisms of
$R$\+modules.
 Assume that the map~$g$ is injective, and suppose given a commutative
diagram of $K$\+linear maps
$$
 \xymatrix{
  U \ar[r]^D \ar[d]_f & V \ar@{>->}[d]^g \\
  U' \ar[r]^{D'} & V'
 }
$$
 In this setting, if $D'\:U'\rarrow V'$ is a quite $R$\+differential
operator of ordinal order~$\alpha$, then $D\:U\rarrow V$ is a quite
$R$\+differential operator of ordinal order at most~$\alpha$.
\end{lem}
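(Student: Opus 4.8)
The plan is to follow the proof of Lemma~\ref{differential-operator-restricted-to-submodules} while keeping track of the ordinal order throughout. First I would note that the $R$-module map $f\:U\rarrow U'$ is a strongly, hence quite, $R$-differential operator of ordinal order~$0$ (any $R$-linear map lies in $F_0\D_{R/K}$). So I can apply Corollary~\ref{composition-of-quite-differential-operators} to the composition $D'\circ f\:U\rarrow V'$, with $f$ in the role of the inner operator (ordinal order $\gamma=0$) and $D'$ in the role of the outer operator (ordinal order $\delta=\alpha$). The corollary yields that $D'\circ f$ is a quite $R$-differential operator of ordinal order at most~$\zeta$ with $\zeta+1=(\gamma+1)\cdot(\delta+1)=\alpha+1$, i.e., $\zeta=\alpha$. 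Thus $D'\circ f\in F_\alpha\D_{R/K}(U,V')$.

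Next I would transfer the commutative square to the level of bimodules. Put $E=\Hom_K(U,V)$, \ $E'=\Hom_K(U,V')$, \ $T=R\ot_KR$, and let $I\subset T$ be the kernel ideal of the multiplication map $R\ot_KR\rarrow R$. Since $g$ is an injective $R$-module map, postcomposition with~$g$ defines an injective $R$-$R$-bimodule map $g_*\:E\rarrow E'$; thus $E$ may be regarded as a $T$-submodule of~$E'$. Commutativity of the given square says exactly that $g_*(D)=D'\circ f$. Recalling that $F_\beta\D_{R/K}({-},{-})$ is by definition $F_\beta^{(I)}$ of the corresponding $\Hom_K$-bimodule (via Lemma~\ref{diagonal-ideal-generators}), the previous paragraph gives $g_*(D)\in F_\alpha^{(I)}E'$.

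Finally I would invoke the last assertion of Lemma~\ref{Gamma-I-qu-left-exact}: for the $T$-submodule $E\subset E'$ one has $F_\alpha^{(I)}E=E\cap F_\alpha^{(I)}E'$. Since $g_*(D)$ lies both in $g_*(E)$ and in $F_\alpha^{(I)}E'$, it lies in $g_*\bigl(F_\alpha^{(I)}E\bigr)$, and therefore $D\in F_\alpha^{(I)}E=F_\alpha\D_{R/K}(U,V)$. Hence $D$ is a quite $R$-differential operator of ordinal order at most~$\alpha$, as claimed.

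I do not foresee a real obstacle; the only place demanding a moment's care is the ordinal bookkeeping in the first step --- one must check that composing with an $R$-linear operator does not increase the ordinal order, which is precisely what the sharp bound $\zeta+1=(\gamma+1)\cdot(\delta+1)$ of Corollary~\ref{composition-of-quite-differential-operators} delivers when $\gamma=0$. (The degenerate case $D=0$, which occurs for instance when $f=0$, needs no separate treatment: the zero map lies in $F_0^{(I)}E\subset F_\alpha^{(I)}E$, and in any case it is subsumed by the argument above.)
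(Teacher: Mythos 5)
Your proof is correct and follows essentially the route the paper intends: the paper's own proof just says it is ``similar to the proof of Lemma~\ref{differential-operator-restricted-to-submodules} and based on Lemma~\ref{Gamma-I-qu-left-exact},'' and expanding that template gives precisely your argument --- viewing $f$ as an $R$-linear, hence order-$0$, quite differential operator, invoking Corollary~\ref{composition-of-quite-differential-operators} with $\gamma=0$ and $\delta=\alpha$ to keep the composite $D'\circ f$ within $F_\alpha$, and then pulling $D$ back along the injective $T$-module map $g_*$ via the final assertion of Lemma~\ref{Gamma-I-qu-left-exact}. Your ordinal bookkeeping $\zeta+1=(0+1)\cdot(\alpha+1)=\alpha+1$ is right, and your remark about the degenerate zero-map case is sound but indeed unnecessary, since the filtration formalism already subsumes it.
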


\begin{proof}
 Similar to the proof of
Lemma~\ref{differential-operator-restricted-to-submodules}
and based on Lemma~\ref{Gamma-I-qu-left-exact}.
\end{proof}

 The following proposition claims the sheaf axiom for affine open
coverings of affine schemes in the context of the construction of
an ordinal-filtered sheaf $\D_{X/T}^\qu(\U,\V)$ of quite differential
operators $\U\rarrow\V$ for quasi-coherent sheaves $\U$ and $\V$
on a scheme~$X$
(see Section~\ref{introd-localizations-of-diff-operators}).

\begin{prop} \label{sheaf-axiom-quite-differential-operators}
 Let $K\rarrow R$ be a homomorphism of commutative rings, and let
$R\rarrow S_l$, \,$1\le l\le n$, be a finite collection of
homomorphisms of commutative rings such that the collection of induced
maps of the spectra\/ $\Spec S_l\rarrow\Spec R$ is an affine open
covering of the affine scheme\/ $\Spec R$.
 Let $\alpha$~be an ordinal.
 Let $U$ and $V$ be two $R$\+modules, and let $D_l\: S_l\ot_RU\rarrow
S_l\ot_R V$ be $K$\+linear quite $S_l$\+differential operators of
ordinal order at most~$\alpha$, defined for all indices\/ $1\le l\le n$.
 For every pair of indices $j$ and~$l$, put $S_{jl}=S_j\ot_RS_l$.
 Assume that, for every pair of indices $j$ and~$l$,
the quite $S_{jl}$\+differential operator $D_{jl}\:S_{jl}\ot_RU\rarrow
S_{jl}\ot_RV$ induced by $D_l$, as per the construction of
Proposition~\ref{localizing-quite-diff-operators-prop}, is equal to
the quite $S_{lj}$\+differential operator $D_{lj}\:S_{lj}\ot_RU\rarrow
S_{lj}\ot_RV$ induced by~$D_j$.
 Then there exists a unique quite $R$\+differential operator
$D\:U\rarrow V$ of ordinal order at most~$\alpha$ such that, for every
index~$l$, the quite $S_l$\+differential operator $D_l$ is induced
by $D$ as per the construction of
Proposition~\ref{localizing-quite-diff-operators-prop}.
\end{prop}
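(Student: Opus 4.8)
The plan is to follow the proof of
Proposition~\ref{sheaf-axiom-differential-operators} step by step,
keeping track of ordinal orders by replacing the appeal to
Lemma~\ref{differential-operator-restricted-to-submodules} with
Lemma~\ref{quite-diff-operator-restricted-to-submodules}, and the appeal
to Corollary~\ref{fl-ring-epi-restriction-of-scalars-diff-operators}
with Corollary~\ref{ring-epi-restr-of-scalars-quite-diff-operators}.
First I would recall that, for any $R$\+module $M$, the \v Cech
coresolution associated with the covering is a finite exact sequence of
$R$\+modules; in particular $M$ is the kernel of the \v Cech
differential $\bigoplus_{1\le k\le n}S_k\ot_RM\rarrow
\bigoplus_{1\le j,k\le n}S_j\ot_RS_k\ot_RM$.
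Forming the two-row commutative diagram with vertical arrows
$(D_k)_{k=1}^n$ and $(D_{jk})_{j,k=1}^n$ exactly as in the proof of
Proposition~\ref{sheaf-axiom-differential-operators}, the rightmost
square commutes by the compatibility hypothesis on the~$D_{jk}$, so
passing to kernels yields a $K$\+linear map $D\:U\rarrow V$ making
the leftmost square commute.
Uniqueness of a map $D$ with the required property is immediate from
injectivity of the natural map $V\rarrow\bigoplus_k S_k\ot_RV$.

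It remains to check that $D$ is a quite $R$\+differential operator of
ordinal order at most~$\alpha$, and that it induces each~$D_k$.
Put $U'=\bigoplus_{1\le k\le n}S_k\ot_RU$ and
$V'=\bigoplus_{1\le k\le n}S_k\ot_RV$.
By Corollary~\ref{ring-epi-restr-of-scalars-quite-diff-operators},
each $D_k$ is a quite $R$\+differential operator of ordinal order at
most~$\alpha$, i.e.\ $D_k\in F_\alpha\Hom_K(S_k\ot_RU,\>S_k\ot_RV)$.
Now I would observe that the natural ordinal-indexed increasing
filtration $F$ on $R$\+$R$\+bimodules commutes with finite direct sums
--- this is a short transfinite induction, using that an element of
a direct sum has finite support, so that finitely many ordinals below
a given ordinal~$\beta$ admit a common strict upper bound below~$\beta$
--- and that the block-diagonal embedding
$\bigoplus_k\Hom_K(S_k\ot_RU,\>S_k\ot_RV)\hookrightarrow\Hom_K(U',V')$
is a homomorphism of $R$\+$R$\+bimodules, hence preserves~$F_\alpha$.
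Consequently the diagonal operator $(D_k)_{k=1}^n\:U'\rarrow V'$ lies
in $F_\alpha\Hom_K(U',V')$, that is, it is a quite $R$\+differential
operator of ordinal order at most~$\alpha$.
Applying Lemma~\ref{quite-diff-operator-restricted-to-submodules} to
the inclusions $U\hookrightarrow U'$ and $V\hookrightarrow V'$ and the
restricted map $D$, we conclude that $D$ is a quite $R$\+differential
operator of ordinal order at most~$\alpha$.

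Finally, projecting the commutative leftmost square of the \v Cech
diagram onto its $k$\+th summand shows that $D_k$ fits into the
commutative square~\eqref{diffoperator-extension-commutative-diagram-II}
for the flat epimorphism $R\rarrow S_k$ and the operator~$D$.
Since $D_k$ is a quite $S_k$\+differential operator of ordinal order at
most~$\alpha$, the uniqueness clause of
Proposition~\ref{localizing-quite-diff-operators-prop} (equivalently,
of Theorem~\ref{localizing-diff-operators-thm}) forces $D_k$ to
coincide with the quite $S_k$\+differential operator induced by~$D$.
The only point requiring care beyond the formal parallel with
Proposition~\ref{sheaf-axiom-differential-operators} is the
bookkeeping of ordinal orders through the direct sum $U'\rarrow V'$
and the restriction to the subobject $U\subset U'$; this is exactly
what the filtration--direct-sum compatibility and
Lemma~\ref{quite-diff-operator-restricted-to-submodules} supply.
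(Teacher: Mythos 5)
Your proof is correct and follows exactly the route the paper intends: mimic the proof of Proposition~\ref{sheaf-axiom-differential-operators}, replacing Corollary~\ref{fl-ring-epi-restriction-of-scalars-diff-operators} with (the easy implication of) Corollary~\ref{ring-epi-restr-of-scalars-quite-diff-operators} and Lemma~\ref{differential-operator-restricted-to-submodules} with Lemma~\ref{quite-diff-operator-restricted-to-submodules}. The extra care you take with the compatibility of the filtration $F$ with finite direct sums is a detail the paper leaves implicit, and your justification of it is sound.
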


\begin{proof}
 Similar to Proposition~\ref{sheaf-axiom-differential-operators}, and
using (the simple implication in)
Corollary~\ref{ring-epi-restr-of-scalars-quite-diff-operators} together
with Lemma~\ref{quite-diff-operator-restricted-to-submodules}.
\end{proof}

\Section{Colocalizing Differential Operators}
\label{colocalizing-diffops-secn}

 In this section we prove dual-analogous versions of our generalizations
of~\cite[Lemma Tag~0G36]{SP}, pertaining to the colocalizations of
modules rather than the localizations.
 We also establish versions of the sheaf axiom for affine open coverings
of affine open subschemes in the context of sheaves of differential
operators acting between contraherent cosheaves on schemes.

\subsection{Colocalizing differential operators without order}
 The following theorem is dual-analogous to
Theorem~\ref{localizing-diff-operators-thm}.

\begin{thm} \label{colocalizing-diff-operators-thm}
 Let $K\rarrow R$ be a homomorphism of commutative rings, and let
$U$ and $V$ be two $R$\+modules.
 Let $R\rarrow S$ be a flat epimorphism of commutative rings and
$D_R\:U\rarrow V$ be a $K$\+linear $R$\+differential operator.
 Then there exists a unique $K$\+linear $S$\+differential operator
$D_S\:\Hom_R(S,U)\rarrow\Hom_R(S,V)$ for which the square diagram
\begin{equation} \label{diffoperator-coextension-commutative-diagram}
\begin{gathered}
 \xymatrix{
   \Hom_R(S,U) \ar@{..>}[rr]^{D_S} \ar[d] && \Hom_R(S,V) \ar[d] \\
   U \ar[rr]^{D_R} && V
 }
\end{gathered}
\end{equation}
is commutative.
\end{thm}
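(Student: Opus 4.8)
The plan is to run the proof of Theorem~\ref{localizing-diff-operators-thm} ``dually,'' systematically replacing the extension-of-scalars functor $S\ot_R{-}$ by the coinduction functor $\Hom_R(S,{-})$, with Proposition~\ref{all-flat-epis-are-FQM} again serving as the main technical input. The vertical arrows in~\eqref{diffoperator-coextension-commutative-diagram} are understood to be the evaluation-at-$1$ maps $\mathrm{ev}_1\:\Hom_R(S,U)\rarrow U$ and $\mathrm{ev}_1\:\Hom_R(S,V)\rarrow V$ induced by $R\rarrow S$ (equivalently, the counits of the adjunction between restriction of scalars $S\Modl\rarrow R\Modl$ and $\Hom_R(S,{-})$).

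For existence, I would put $B=\D_{R/K}(U,V)\subset\Hom_K(U,V)$; this is an $R$\+$R$\+bimodule over $K$ and a quasi-module over~$R$, and $D_R$ corresponds to an element $b\in B$. By Proposition~\ref{all-flat-epis-are-FQM}(a) the bimodule $\widehat B=S\ot_RB\simeq B\ot_RS$ is naturally an $S$\+$S$\+bimodule, and by part~(b) it is a quasi-module over~$S$. Write $P=\Hom_R(S,U)$ and $Q=\Hom_R(S,V)$, viewed as $S$\+modules. Combining the tensor--hom adjunction for bimodules with the restriction/coinduction adjunction, $S$\+$S$\+bimodule maps $\widehat B\rarrow\Hom_K(P,Q)$ correspond to left $R$\+module maps $\widehat B\ot_SP\rarrow V$; and there is a natural isomorphism of left $R$\+modules $\widehat B\ot_SP\simeq(B\ot_RS)\ot_SP\simeq B\ot_RP$. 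Composing $\id_B\ot\,\mathrm{ev}_1\:B\ot_RP\rarrow B\ot_RU$ with the counit $f\:B\ot_RU\rarrow V$, $f(b\ot u)=b(u)$, of the bimodule $B$, one gets a left $R$\+module map $\beta\:B\ot_RP\rarrow V$, $\beta(b\ot\phi)=b(\phi(1))$, which is straightforward to check is well defined over $R$. Let $g\:\widehat B\rarrow\Hom_K(P,Q)$ be the $S$\+$S$\+bimodule map corresponding to $\beta$, and set $D_S=g(\bar b)$ where $\bar b\in\widehat B$ is the image of~$b$ under $B\rarrow S\ot_RB=\widehat B$. Unwinding the adjunctions gives $\mathrm{ev}_1\circ D_S=D_R\circ\mathrm{ev}_1$, so the square commutes. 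Since $\widehat B$ is a quasi-module over~$S$, its epimorphic image $g(\widehat B)$ is one as well, and any $S$\+$S$\+subbimodule of $\Hom_K(P,Q)$ that is a quasi-module over~$S$ lies in $\D_{S/K}(P,Q)$; hence $D_S$ is an $S$\+differential operator $\Hom_R(S,U)\rarrow\Hom_R(S,V)$.

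For uniqueness, by additivity it suffices to show that an $S$\+differential operator $D_S\:\Hom_R(S,U)\rarrow\Hom_R(S,V)$ with $\mathrm{ev}_1\circ D_S=0$ vanishes. I would set $P=\Hom_R(S,U)$ and consider $B=\D_{R/K}(P,V)$, a quasi-module over~$R$. For each $s\in S$ the map $\mathrm{ev}_s\:\Hom_R(S,V)\rarrow V$ is $R$\+linear, so a strongly $R$\+differential operator of order~$0$; as $D_S$ is in particular an $R$\+differential operator (the easy half of Corollary~\ref{fl-ring-epi-restriction-of-scalars-diff-operators}), Corollary~\ref{composition-of-differential-operators} shows that $h(s):=\mathrm{ev}_s\circ D_S\:P\rarrow V$ is an $R$\+differential operator, giving a map $h\:S\rarrow B$ that one checks is left $R$\+linear. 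The multiplication operators by elements of $S$ on the source $P=\Hom_R(S,U)$ are $R$\+linear, so the right $R$\+module structure on $B=\D_{R/K}(P,V)$ extends to a right $S$\+module structure; together with the bimodule isomorphisms $B\simeq B\ot_RS\simeq S\ot_RB$ of Proposition~\ref{all-flat-epis-are-FQM}(a), this turns $B$ into an $S$\+$S$\+bimodule, so its left $R$\+module structure likewise extends to a left $S$\+module structure. Thus $h$ is a left $R$\+module map between modules whose left $R$\+module structures are restrictions of left $S$\+module structures; since $R\rarrow S$ is a ring epimorphism and restriction $S\Modl\rarrow R\Modl$ is therefore fully faithful, $h$ is left $S$\+linear. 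Then $h(1)=\mathrm{ev}_1\circ D_S=0$ and $S$ is generated by~$1$ as a left $S$\+module, so $h=0$; that is, $(D_S\phi)(s)=h(s)(\phi)=0$ for all $s\in S$ and all $\phi\in\Hom_R(S,U)$, and therefore $D_S=0$.

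The main obstacle is organizational rather than conceptual: one must keep careful track of the several left and right $R$\+ and $S$\+module structures carried by $B=\D_{R/K}(U,V)$ (respectively by $\D_{R/K}(\Hom_R(S,U),V)$), and verify that the chain of adjunction isomorphisms used above is compatible with all of them. Everything of substance --- that base change of a quasi-module over~$R$ along a flat ring epimorphism yields a quasi-module over~$S$, together with the accompanying bimodule isomorphisms --- has already been established in Proposition~\ref{all-flat-epis-are-FQM}, so no genuinely new difficulty arises.
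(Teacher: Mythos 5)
Your proposal is correct and follows essentially the same route as the paper's proof. For existence, both arguments set $B=\D_{R/K}(U,V)$, invoke Proposition~\ref{all-flat-epis-are-FQM} to make $S\ot_RB\simeq B\ot_RS$ an $S$\+$S$\+bimodule quasi-module over~$S$, and then play tensor--hom adjunctions to produce $D_S$; the only cosmetic difference is that you phrase the adjunction via left $R$\+module maps $\widehat B\ot_SP\rarrow V$ (i.e.\ $B\ot_RP\rarrow V$), while the paper phrases it via left $R$\+module maps $U\rarrow\Hom_R(B,V)$ and the chain $\Hom_S(B\ot_RS,\Hom_R(S,V))\simeq\Hom_R(B\ot_RS,V)\simeq\Hom_R(S,\Hom_R(B,V))$ --- these are equivalent bookkeepings of the same construction. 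For uniqueness, your argument (set $P=\Hom_R(S,U)$, form $B=\D_{R/K}(P,V)$, define $h(s)=\mathrm{ev}_s\circ D_S$, show $h$ is $R$\+linear, extend the module structures on $B$ to $S$\+structures via Proposition~\ref{all-flat-epis-are-FQM}(a), use the ring-epimorphism property to upgrade $h$ to an $S$\+linear map, conclude from $h(1)=0$) is the paper's argument verbatim. The one place you wave a hand --- ``one checks'' that $h$ is left $R$\+linear --- the paper spells out; it is a one-line computation using that the left $R$\+module structure on $\Hom_K(P,V)$ comes from $V$.
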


\begin{proof}
 To prove existence, notice that, for any $R$\+$R$\+bimodule $B$ over
$K$, there is a natural bijective correspondence between
$R$\+$R$\+bimodule maps $B\rarrow\Hom_K(U,V)$ and left $R$\+module
maps $U\rarrow\Hom_R(B,V)$.
 Here $\Hom_R(B,V)$ denotes the abelian group of all maps $B\rarrow V$
that are $R$\+linear with respect to the left $R$\+module structure
on~$B$ (and the only given $R$\+module structure on~$V$).
 The left $R$\+module structure on $\Hom_R(B,V)$ is induced by
the right $R$\+module structure on~$B$.

 As in the proof of existence in
Theorem~\ref{localizing-diff-operators-thm}, we put
$B=\D_{R/K}(U,V)\subset\Hom_K(U,V)$; so $B$ is an $R$\+$R$\+bimodule
over $K$ and a quasi-module over~$R$.
 The correspondence mentioned above provides a left $R$\+module map
$f\:U\rarrow\Hom_R(B,V)$.
 The $K$\+linear $R$\+differential operator $D_R\:U\rarrow V$
corresponds to an element of~$B$; we denote this element by $b\in B$.

 By Proposition~\ref{all-flat-epis-are-FQM}(a), the tensor
product $B\ot_RS$ is naturally an $S$\+$S$\+bimodule.
 Therefore, we have isomorphisms of left $S$\+modules
$$
 \Hom_S(B\ot_RS,\>\Hom_R(S,V))\simeq\Hom_R(B\ot_RS,\>V)
 \simeq\Hom_R(S,\Hom_R(B,V)).
$$
 Here all the symbols $\Hom_R$ and $\Hom_S$ denote the groups of
homomorphisms of modules with respect to the left module structures,
while the left $X$\+module structure on $\Hom_Y(Z,W)$ is always induced
by the right $X$\+module structure on~$Z$.
 The left $R$\+module map $f\:U\rarrow\Hom_R(B,V)$ induces a left
$S$\+module map
$$
 \Hom_R(S,U)\xrightarrow{\Hom_R(S,f)}\Hom_R(S,\Hom_R(B,V))\simeq
 \Hom_S(B\ot_RS,\>\Hom_R(S,V)).
$$
 The left $S$\+module map $\Hom_R(S,f)$ corresponds to
an $S$\+$S$\+bimodule map
$$
 g\:B\ot_RS\lrarrow\Hom_K(\Hom_R(S,U),\Hom_R(S,V)).
$$

 As in the proof of Theorem~\ref{localizing-diff-operators-thm},
the ring homomorphism $R\rarrow S$ induces an $R$\+$R$\+bimodule map
$B\rarrow B\ot_RS$.
 The image of the element $b\in B$ under the composition
$$
 B\lrarrow B\ot_RS\overset g\lrarrow
 \Hom_K(\Hom_R(S,U),\Hom_R(S,V))
$$
provides an element of $\Hom_K(\Hom_R(S,U),\Hom_R(S,V))$, i.~e.,
a $K$\+linear map $D_S\:\Hom_R(S,U)\rarrow\Hom_R(S,V)$.
 It is straightforward to check that
the diagram~\eqref{diffoperator-coextension-commutative-diagram}
is commutative.

 Finally, by Proposition~\ref{all-flat-epis-are-FQM}(b) we know that
$B\ot_RS$ is a quasi-module over~$S$.
 Similarly to the proof of Theorem~\ref{localizing-diff-operators-thm},
it follows that the inclusion $g(B\ot_RS)\subset
\D_{R/K}(\Hom_R(S,U),\Hom_R(S,V))$ holds.
 Thus $D_S$ is an $S$\+differential operator $\Hom_R(S,U)\rarrow
\Hom_R(S,V)$.  {\emergencystretch=1em\par}

 To prove uniqueness, it suffices to consider a $K$\+linear
$S$\+differential operator $D_S\:\Hom_R(S,U)\rarrow\Hom_R(S,V)$
such that the composition $\Hom_R(S,U)\overset{D_S}\rarrow
\Hom_R(S,V)\rarrow V$ vanishes.
 We need to show that $D_S=0$.

 For this purpose, put $U'=\Hom_R(S,U)$.
 Denote by $p\:\Hom_R(S,V)\rarrow V$ the natural $R$\+module map.
 Consider the $R$\+$R$\+bimodule $B=\D_{R/K}(U',V)$ of $K$\+linear
$R$\+differential operators $U'\rarrow V$.

 By (the simple implication in)
Corollary~\ref{fl-ring-epi-restriction-of-scalars-diff-operators},
all $K$\+linear $S$\+differential operators $\Hom_R(S,U)\rarrow
\Hom_R(S,V)$ are also $R$\+differential operators.
 The natural $R$\+module map~$p$ is a strongly $R$\+differential
operator of ordinal order~$0$.
 By Corollary~\ref{composition-of-differential-operators},
the compositions of $R$\+differential operators are
$R$\+differential operators.
 So, for any $K$\+linear $S$\+differential operator
$D\:\Hom_R(S,U)\rarrow\Hom_R(S,V)$, the composition $\Hom_R(S,U)
\overset D\rarrow\Hom_R(S,V)\overset p\rarrow V$ is a $K$\+linear
$R$\+differential operator $U'\rarrow V$.
 We denote the resulting map by
$$
 f\:\D_{S/K}(\Hom_R(S,U),\Hom_R(S,V))\lrarrow\D_{R/K}(U',V).
$$

 Furthermore, $S$\+differential operators $\Hom_R(S,U)\rarrow
\Hom_R(S,V)$ form an $S$\+$S$\+subbimodule in
$\Hom_K(\Hom_R(S,U),\Hom_R(S,V))$.
 In particular, for every element $s\in S$, the map
$s\circ D_S\:\Hom_R(S,U)\rarrow\Hom_R(S,V)$ is a $K$\+linear
$S$\+differential operator.
 Here $s\:\Hom_R(S,V)\rarrow\Hom_R(S,V)$ is the operator of
multiplication with~$s$.

 Consider the map $h\:S\rarrow\Hom_K(U',V)$ assigning to each
element $s\in S$ the $K$\+linear map $p\circ s\circ D_S\:U'
\rarrow\Hom_R(S,V)\rarrow V$.
 We have shown that $h(s)=f(s\circ D_S)$ is a $K$\+linear
$R$\+differential operator $U'\rarrow V$.
 So we have a map
$$
 h\:S\rarrow B=\D_{R/K}(U',V).
$$
 One can easily see from the construction that $h$~is
a left $R$\+module morphism.
 Indeed, $h(rs)(u')=p(rsD_S(u'))=rp(sD_S(u'))$ for all $r\in R$,
\ $s\in S$, and $u'\in U'$.
 (We recall that, by the definition, the left $R$\+module structure
on $\Hom_K(U',V)$ is induced by the action of $R$ on~$V$.)

 The operators $s\:U'\rarrow U'$ of multiplication with elements
$s\in S$ are $R$\+linear maps $U'\rarrow U'$, so they are strongly
$R$\+differential operators of order~$0$.
 Once again, by
Corollary~\ref{composition-of-differential-operators},
the compositions of $R$\+differential operators are
$R$\+differential operators.
 Hence, precomposing an $R$\+differential operator
$U'\rarrow V$ with the map $s\:U'\rarrow U'$, we obtain another
$R$\+differential operator $U'\rarrow V$.
 Therefore, the right $R$\+module structure on $\D_{R/K}(U',V)$
can be extended to a right $S$\+module structure.
 So $B$ is naturally an $R$\+$S$\+bimodule.

 On the other hand, the $R$\+$R$\+bimodule $B=\D_{R/K}(U',V)$ is
a quasi-module over~$R$.
 Similarly to the proof of Theorem~\ref{localizing-diff-operators-thm},
one can use the isomorphisms $B\simeq B\ot_RS\simeq S\ot_RB$ in order
to show that the left $R$\+module structure on $B$ can be also extended
to a left $S$\+module structure.
 As $R\rarrow S$ is a ring epimorphism, it follows that $h$~is a left
$S$\+module morphism.

 Finally, the map $h(1)\:U'\rarrow V$ is, by the definition, given
by the formula $h(1)=p\circ D_S$.
 By assumption, we have $h(1)=0$.
 As $h$~is a left $S$\+module map $S\rarrow B$, it follows that
the whole map~$h$ vanishes, $h=0$.
 Recall that $v'(s)=(sv')(1)=p(sv')\in V$ for all $v'\in\Hom_R(S,V)$.
 We have proved that $D_S(u')(s)=(s\circ D_S)(u')(1)=
p\circ s\circ D_S(u')=h(s)(u')=0$ for all $s\in S$ and $u'\in U'$.
 Thus $D_S=0$, as desired.
\end{proof}

\begin{lem} \label{differential-operator-corestricted-to-quotients}
 Let $K\rarrow R$ be a homomorphism of commutative rings, and let
$f\:U'\rarrow U$ and $g\:V'\rarrow V$ be two homomorphisms of
$R$\+modules.
 Assume that the map~$f$ is surjective, and suppose given a commutative
diagram of $K$\+linear maps
$$
 \xymatrix{
  U' \ar[r]^{D'} \ar@{->>}[d]_f & V' \ar[d]^g \\
  U \ar[r]^D & V
 }
$$
 In this setting, if $D'\:U'\rarrow V'$ is an $R$\+differential
operator, then $D\:U\rarrow V$ is an $R$\+differential operator.
\end{lem}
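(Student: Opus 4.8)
The plan is to mirror the proof of Lemma~\ref{differential-operator-restricted-to-submodules}, exchanging the roles of domain and codomain. First I would use that $g\colon V'\to V$, being $R$\+linear, is a strongly $R$\+differential operator of order~$0$; hence, by Corollary~\ref{composition-of-differential-operators}, the composition $g\circ D'\colon U'\rarrow V$ is a $K$\+linear $R$\+differential operator. The commutativity of the square gives $D\circ f=g\circ D'$, so this latter map lies in $\D_{R/K}(U',V)=\Gamma_I(\Hom_K(U',V))$, where as usual $T=R\ot_KR$ and $I\subset T$ is the kernel ideal of the multiplication map $R\ot_KR\rarrow R$.

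Next I would introduce the two $R$\+$R$\+bimodules $E=\Hom_K(U,V)$ and $E'=\Hom_K(U',V)$ together with the precomposition map $f^*\colon E\rarrow E'$, $\varphi\longmapsto\varphi\circ f$. A routine check shows that $f^*$ is a homomorphism of $R$\+$R$\+bimodules: left $R$\+linearity is immediate, while right $R$\+linearity uses the $R$\+linearity of~$f$. Moreover $f^*$ is injective precisely because $f$ is surjective (if $\varphi\circ f=0$ then $\varphi=0$). So $f^*$ is an injective homomorphism of $T$\+modules, and $f^*(D)=D\circ f=g\circ D'\in\Gamma_I(E')$.

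Finally, by the left exactness of the functor $\Gamma_I$ (Lemma~\ref{Gamma-I-left-exact}), which yields $\Gamma_I(E)=E\cap\Gamma_I(E')$ when $E$ is regarded as a $T$\+submodule of $E'$ via the injection $f^*$, I would conclude $D\in\Gamma_I(E)=\D_{R/K}(U,V)$; that is, $D\colon U\rarrow V$ is an $R$\+differential operator. I do not anticipate a serious obstacle here: the only point requiring genuine (though elementary) care is the verification that precomposition with a surjective $R$\+linear map is an injective morphism of $R$\+$R$\+bimodules, after which the statement follows at once from the left exactness of $\Gamma_I$, exactly as in Lemma~\ref{differential-operator-restricted-to-submodules}.
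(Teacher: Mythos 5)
Your proof is correct and follows essentially the same route as the paper's: use Corollary~\ref{composition-of-differential-operators} to see $g\circ D'$ is an $R$\+differential operator, observe that the surjection $f$ induces an injective $R$\+$R$\+bimodule (hence $T$\+module) map $f^*\colon\Hom_K(U,V)\rarrow\Hom_K(U',V)$, and then invoke the left exactness of $\Gamma_I$ from Lemma~\ref{Gamma-I-left-exact}. The only difference is that you spell out the verification that $f^*$ is an injective bimodule homomorphism, which the paper leaves implicit.
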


\begin{proof}
 This is dual-analogous to
Lemma~\ref{differential-operator-restricted-to-submodules}.
 The map $g\:V'\rarrow V$ is $R$\+linear, so it is a strongly
$R$\+differential operator of order~$0$.
 By Corollary~\ref{composition-of-differential-operators},
the composition $g\circ D'\:U'\rarrow V$ is an $R$\+differential
operator.
 Now consider two $R$\+$R$\+bimodules $E=\Hom_K(U,V)$ and
$E'=\Hom_K(U',V)$.
 The surjective $R$\+module map~$f$ induces an injective
$R$\+$R$\+bimodule map $f^*\:E\rarrow E'$.
 Put $T=R\ot_KR$, and denote by $I$ the kernel ideal of
the multiplication map $R\ot_KR\rarrow R$.
 Then we have $f^*(D)=g\circ D'\in\Gamma_I(E')$, and by
Lemma~\ref{Gamma-I-left-exact} it follows that $D\in\Gamma_I(E)$.
\end{proof}

 Let $R$ be a commutative ring and $C$ be an $R$\+module.
 One says that the $R$\+module $C$ is \emph{contraadjusted} if
$\Ext^1_R(R[r^{-1}],C)=0$ for every $r\in R$.
 Here the commutative $R$\+algebra $R[r^{-1}]$ is viewed as
an $R$\+module.
 Notice that the projective dimension of the $R$\+module $R[r^{-1}]$
can never exceed~$1$ \,\cite[proof of Lemma~2.1]{Pcta}; that is
the reason why the $\Ext_R^n$ vanishing condition for $n\ge2$
is not imposed.
 It follows that any quotient module of a contraadjusted $R$\+module
is contraadjusted.
 The class of contraadjusted $R$\+modules is also closed under
extensions and infinite direct products, and contains all injective
$R$\+modules.
 So any $R$\+module $M$ admits a two-term coresolution $0\rarrow M
\rarrow C^0\rarrow C^1\rarrow0$ with contraadjusted $R$\+modules
$C^0$ and~$C^1$.
 We refer to~\cite[Section~1.1]{Pcosh}, \cite[Section~2]{Pcta},
and~\cite[Section~4.3]{Pphil} for further details on contraadjusted
modules over commutative rings.

 The following proposition establishes the sheaf axiom for affine open
coverings of affine schemes in the context of the construction of
a sheaf $\D_{X/T}(\fU,\fV)$ for contraherent cosheaves $\fU$ and $\fV$
on a scheme $X$, as per the discussion in
Section~\ref{introd-colocalizations-of-diff-operators}.

\begin{prop} \label{cosheaf-axiom-differential-operators}
 Let $K\rarrow R$ be a homomorphism of commutative rings, and let
$R\rarrow S_l$, \,$1\le l\le n$, be a finite collection of
homomorphisms of commutative rings such that the collection of induced
maps of the spectra\/ $\Spec S_l\rarrow\Spec R$ is an affine open
covering of the affine scheme\/ $\Spec R$.
 Let $U$ be a contraadjusted $R$\+module and $V$ be an $R$\+module.
 Let $D_l\:\Hom_R(S_l,U)\rarrow\Hom_R(S_l,V)$ be $K$\+linear
$S_l$\+differential operators, defined for all indices\/ $1\le l\le n$.
 For every pair of indices $j$ and~$l$, put $S_{jl}=S_j\ot_RS_l$.
 Assume that, for every pair of indices $j$ and~$l$,
the $S_{jl}$\+differential operator $D_{jl}\:\Hom_R(S_{jl},U)\rarrow
\Hom_R(S_{jl},V)$ induced by $D_j$, as per the construction of
Theorem~\ref{colocalizing-diff-operators-thm}, is equal to
the $S_{lj}$\+differential operator $D_{lj}\:\Hom_R(S_{lj},U)\rarrow
\Hom_R(S_{lj},V)$ induced by~$D_l$.
 Then there exists a unique $R$\+differential operator $D\:U\rarrow V$
such that, for every index~$l$, the $S_l$\+differential operator
$D_l$ is induced by $D$ as per the construction of
Theorem~\ref{colocalizing-diff-operators-thm}.
\end{prop}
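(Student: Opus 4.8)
The plan is to argue dual-analogously to Proposition~\ref{sheaf-axiom-differential-operators}, using in place of the \v Cech coresolution of modules the \v Cech complex of $\Hom$'s that governs descent of cosections of a contraherent cosheaf. For a finite set of indices $j_0<\dots<j_p$, put $S_{j_0\dotsm j_p}=S_{j_0}\ot_R\dotsb\ot_R S_{j_p}$. Since $\Spec R$ is affine, hence separated, every finite intersection $\Spec S_{j_0}\cap\dotsb\cap\Spec S_{j_p}$ is an affine open subscheme of $\Spec R$, equal to $\Spec S_{j_0\dotsm j_p}$, and the augmented \v Cech complex
$$
 0\rarrow R\rarrow\bigoplus\nolimits_k S_k\rarrow
 \bigoplus\nolimits_{j<k}S_{jk}\rarrow\dotsb\rarrow S_{1\dotsm n}\rarrow0
$$
is exact (it computes \v Cech cohomology of the quasi-coherent sheaf $\widetilde R$ with respect to an affine open covering of the affine scheme $\Spec R$). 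Applying $\Hom_R({-},U)$, one obtains a complex
$$
 0\rarrow\Hom_R(S_{1\dotsm n},U)\rarrow\dotsb\rarrow
 \bigoplus\nolimits_{j<k}\Hom_R(S_{jk},U)\overset{\d_U}\rarrow
 \bigoplus\nolimits_k\Hom_R(S_k,U)\overset{\varepsilon_U}\rarrow U\rarrow0,
$$
which is again exact because $U$ is contraadjusted: each ring $S_{j_0\dotsm j_p}$ is the ring of functions on a quasi-compact open subscheme of $\Spec R$, hence admits a finite \v Cech resolution over $R$ by finite products of localizations $R[r^{-1}]$, each of projective dimension at most~$1$ with $\Ext^1_R(R[r^{-1}],U)=0$; a dimension-shift argument then yields $\Ext^{\ge1}_R(S_{j_0\dotsm j_p},U)=0$, so that $\Hom_R({-},U)$ is exact on the displayed \v Cech coresolution of $R$ (cf.\ \cite{Pcosh,Pcta}). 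The analogous formulas with $U$ replaced by $V$ still define a complex, whose two rightmost maps I denote $\d_V$ and $\varepsilon_V$ (here $\varepsilon_V\circ\d_V=0$ simply because $\Hom_R({-},V)$ is applied to a complex), although it need not be exact.

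Next I would check that $\bigl(\bigoplus_k D_k,\ \bigoplus_{j<k}D_{jk}\bigr)$ is a chain map between the last two terms of these complexes, i.e.\ $\d_V\circ\bigl(\bigoplus_{j<k}D_{jk}\bigr)=\bigl(\bigoplus_k D_k\bigr)\circ\d_U$. For $j<k$, the maps $S_j\rarrow S_{jk}$ and $S_k\rarrow S_{jk}$ are flat epimorphisms of commutative rings (base changes of $R\rarrow S_k$ and $R\rarrow S_j$ under the identification $S_j\ot_R S_k\simeq S_k\ot_R S_j$), and under the adjunction isomorphisms $\Hom_{S_j}(S_{jk},\Hom_R(S_j,M))\simeq\Hom_R(S_{jk},M)\simeq\Hom_{S_k}(S_{jk},\Hom_R(S_k,M))$ the operator $D_{jk}$ is, by the definition in the statement, the colocalization of $D_j$ along $S_j\rarrow S_{jk}$ supplied by Theorem~\ref{colocalizing-diff-operators-thm}, and, by the hypothesis $D_{jk}=D_{kj}$, also the colocalization of $D_k$ along $S_k\rarrow S_{jk}$. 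The defining commutative squares of Theorem~\ref{colocalizing-diff-operators-thm} for these two epimorphisms say precisely that the corestriction maps $\Hom_R(S_{jk},M)\rarrow\Hom_R(S_j,M)$ and $\Hom_R(S_{jk},M)\rarrow\Hom_R(S_k,M)$ intertwine $D_{jk}$ with $D_j$ and with $D_k$; forming the alternating sums that make up $\d_U$ and $\d_V$ gives the identity. Hence $\varepsilon_V\circ\bigl(\bigoplus_k D_k\bigr)\circ\d_U=\varepsilon_V\circ\d_V\circ\bigl(\bigoplus_{j<k}D_{jk}\bigr)=0$; since $\ker\varepsilon_U=\im\d_U$ and $\varepsilon_U$ is surjective (exactness for the contraadjusted~$U$), the map $\varepsilon_V\circ\bigl(\bigoplus_k D_k\bigr)$ factors uniquely as $D\circ\varepsilon_U$ for a $K$\+linear map $D\:U\rarrow V$.

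It remains to verify that $D$ is an $R$\+differential operator and that it induces each~$D_k$. Every $D_k$ is an $S_k$\+differential operator, hence an $R$\+differential operator by Corollary~\ref{fl-ring-epi-restriction-of-scalars-diff-operators}, and a finite direct sum of $R$\+differential operators is again an $R$\+differential operator (the class of $I$\+torsion elements, $I=\ker(R\ot_KR\rarrow R)$, being closed under finite direct sums); thus $\bigoplus_k D_k$ is an $R$\+differential operator. As $\varepsilon_U$ is surjective, Lemma~\ref{differential-operator-corestricted-to-quotients} applied to the commutative square formed by $\varepsilon_U$, $\varepsilon_V$, $\bigoplus_k D_k$ and $D$ shows that $D$ is an $R$\+differential operator. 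Restricting the identity $D\circ\varepsilon_U=\varepsilon_V\circ\bigl(\bigoplus_k D_k\bigr)$ to the $k$\+th summand, the composition $\Hom_R(S_k,U)\rarrow U\overset D\rarrow V$ equals $\Hom_R(S_k,U)\overset{D_k}\rarrow\Hom_R(S_k,V)\rarrow V$, so by the uniqueness clause of Theorem~\ref{colocalizing-diff-operators-thm} the operator $D_k$ is exactly the one induced by~$D$. Conversely, any $R$\+differential operator inducing all the $D_k$ satisfies this same relation with $\varepsilon_U$ and $\varepsilon_V$, and is therefore determined since $\varepsilon_U$ is surjective; this gives uniqueness.

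The one ingredient that is not purely formal is the exactness of the \v Cech complex $\Hom_R({-},U)$ above, i.e.\ the vanishing $\Ext^{\ge1}_R(S_{j_0\dotsm j_p},U)=0$ for a contraadjusted module~$U$; this is where the theory of contraadjusted modules and contraherent cosheaves \cite{Pcosh,Pcta,Pphil} is used, together with the observation that finite intersections of affine open subschemes of the affine (hence separated) scheme $\Spec R$ are again affine and quasi-compact. Granting this, the rest is a routine diagram chase, and I do not anticipate further difficulties.
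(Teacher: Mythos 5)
Your proposal is correct and follows essentially the same route as the paper: apply $\Hom_R({-},U)$ and $\Hom_R({-},V)$ to the \v Cech coresolution of $R$ by the rings $S_k$, use the contraadjustedness of $U$ to obtain an exact \v Cech resolution ending in $U$ (which the paper cites directly from~\cite[Lemma~1.2.6(b)]{Pcosh} rather than rederiving via $\Ext$-vanishing and dimension shift as you sketch), pass to cokernels to produce $D$, invoke Lemma~\ref{differential-operator-corestricted-to-quotients} to conclude $D$ is an $R$-differential operator, and deduce uniqueness from surjectivity of $\varepsilon_U$. The only cosmetic difference is that you use the alternating \v Cech complex indexed by $j<k$ where the paper uses the unreduced one over all pairs $(j,k)$; both work, and your extra detail on the chain-map verification and on the $\Ext^{\ge1}_R(S_{j_0\dotsm j_p},U)=0$ argument fills in steps the paper leaves to the cited reference.
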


\begin{proof}
 This proposition is dual-analogous to
Proposition~\ref{sheaf-axiom-differential-operators}.
 For any contraadjusted $R$\+module $C$, the \v Cech resolution
\begin{multline} \label{cech-resolution}
 0\lrarrow\Hom_R(S_1\ot_R\dotsb\ot_RS_n,\>C)\lrarrow\dotsb \\
 \lrarrow\bigoplus\nolimits_{1\le j<l\le n}\Hom_R(S_j\ot_RS_l,\>C) \\
 \lrarrow\bigoplus\nolimits_{1\le l\le n}\Hom_R(S_l,C)\lrarrow
 C\lrarrow0
\end{multline}
is a finite exact sequence of $R$\+modules~\cite[formula~(1.3)
in Lemma~1.2.6(b)]{Pcosh}.
 For an arbitrary (not necessarily contraadjusted) $R$\+module $C$,
the sequence~\eqref{cech-resolution} is a (not necessarily exact)
complex of $R$\+modules.

 In the situation at hand, consider the diagram of $K$\+linear maps
\begin{equation}
\begin{gathered}
 \xymatrix{
  \bigoplus\nolimits_{1\le j<l\le n}\Hom_R(S_j\ot_RS_l,\>U)
  \ar[r] \ar[d]^{(D_{jl})}
  & \bigoplus\nolimits_{1\le l\le n}\Hom_R(S_l,U)
  \ar[r] \ar[d]^{(D_l)}
  & U \ar[r] \ar@{..>}[d]^D & 0 \\
  \bigoplus\nolimits_{1\le j<l\le n}\Hom_R(S_j\ot_RS_l,\>V) \ar[r]
  & \bigoplus\nolimits_{1\le l\le n}\Hom_R(S_l,V) \ar[r]
  & V
 }
\end{gathered}
\end{equation}
where the leftmost and middle vertical arrows are the direct sums of
the operators $D_{jl}$ and~$D_l$.
 The leftmost square is commutative by assumption, so passing to
the cokernel provides a $K$\+linear map $D\:U\rarrow V$.
 Put $U'=\bigoplus_{1\le l\le n}\Hom_R(S_l,U)$ and
$V'=\bigoplus_{1\le l\le n}\Hom_R(S_l,V)$.
 For every index~$l$, the map $D_l\:\Hom_R(S_l,U)\rarrow\Hom_R(S_l,V)$
is an $S_l$\+differential operator; hence by (the simple implication in)
Corollary~\ref{fl-ring-epi-restriction-of-scalars-diff-operators}
it is also an $R$\+differential operator.
 It follows that the direct sum $(D_l)_{l=1}^n\:U'\rarrow V'$ is
an $R$\+differential operator.
 Applying Lemma~\ref{differential-operator-corestricted-to-quotients},
we conclude that $D$ is also an $R$\+differential operator.
 This proves the existence; the uniqueness follows immediately from
surjectivity of the natural map $U'\rarrow U$.
\end{proof}

\begin{rem} \label{contraadjustedness-necessary-remark}
 The following counterexample shows that the contraadjustedness
assumption on the $R$\+module $U$ in
Proposition~\ref{cosheaf-axiom-differential-operators}
\emph{cannot} be dropped.
 Let $K=k$ be a field and $R=k[x]$ be the ring of polynomials in
one variable~$x$ over~$k$.
 Let $f_1$ and $f_2\in R$ be two coprime polynomials of
degrees~$\ge\nobreak1$ in~$x$; e.~g., one can take
$f_1=x$ and $f_2=x+1$.
 Consider the rings $S_1=R[f_1^{-1}]$ and $S_2=R[f_2^{-1}]$.
 Then the natural inclusions of commutative rings $R\rarrow S_1$
and $R\rarrow S_2$ have the property that the induced maps of
the spectra $\Spec S_1\rarrow\Spec R$ and $\Spec S_2\rarrow R$
form a covering of the affine scheme $\Spec R$ by two affine open
subschemes.
 Consider the free $R$\+module $U=V=R$, which is \emph{not}
contraadjusted.
 Then we have $\Hom_R(S_1,U)=\Hom_R(S_2,U)=0$ (as there are no nonzero
infinitely $f_1$\+divisible or $f_2$\+divisible elements in~$U$).
 Consider two $R$\+linear maps $D'$ and $D''\:U\rarrow V$, viz.,
the identity map $D'=\id_U$ and the zero map $D''=0$.
 Then both $D'$ and $D''$ are $K$\+linear strongly $R$\+differential
operators of order~$\le0$.
 The induced maps $D'_1$ and $D''_1\:\Hom_R(S_1,U)\rarrow\Hom_R(S_1,V)$,
as well as $D'_2$ and $D''_2\:\Hom_R(S_2,U)\rarrow\Hom_R(S_2,V)$, are
all zero maps in this example, as these are maps of zero modules.
 So we have $D'_1=D''_1$ and $D'_2=D''_2$ while $D'\ne D''$.
 Thus, in this example with a noncontraadjusted $R$\+module $U$,
the uniqueness assertion of
Proposition~\ref{cosheaf-axiom-differential-operators}
does \emph{not} hold.
\end{rem}

\subsection{Colocalizing differential operators of transfinite order}
 The following proposition is dual-analogous to
Proposition~\ref{localizing-quite-diff-operators-prop}.

\begin{prop} \label{colocalizing-quite-diff-operators-prop}
 Let $K\rarrow R$ be a homomorphism of commutative rings, and let
$U$ and $V$ be two $R$\+modules.
 Let $R\rarrow S$ be a flat epimorphism of commutative rings and
$D_R\:U\rarrow V$ be a $K$\+linear quite $R$\+differential operator
of ordinal order~$\alpha$.
 Then there exists a unique $K$\+linear quite $S$\+differential operator
$D_S\:\Hom_R(S,U)\rarrow\Hom_R(S,V)$ for which the square diagram
\begin{equation} \label{diffoperator-coextension-commutative-diagram-II}
\begin{gathered}
 \xymatrix{
   \Hom_R(S,U) \ar@{..>}[rr]^{D_S} \ar[d] && \Hom_R(S,V) \ar[d] \\
   U \ar[rr]^{D_R} && V
 }
\end{gathered}
\end{equation}
is commutative.
 The ordinal order of the differential operator $D_S$ does not
exceed~$\alpha$.
\end{prop}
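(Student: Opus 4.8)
The plan is to combine the construction in the proof of Theorem~\ref{colocalizing-diff-operators-thm} with the device used in the proof of Proposition~\ref{localizing-quite-diff-operators-prop}. For uniqueness, I would simply note that a quite $S$\+differential operator $\Hom_R(S,U)\rarrow\Hom_R(S,V)$ is in particular an $S$\+differential operator, so uniqueness follows at once from Theorem~\ref{colocalizing-diff-operators-thm}.

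For existence, I would run the argument proving Theorem~\ref{colocalizing-diff-operators-thm} almost verbatim, with a single substitution: in place of the $R$\+$R$\+bimodule $B=\D_{R/K}(U,V)$, take $B=F_\alpha\Hom_K(U,V)\subset\Hom_K(U,V)$, the $R$\+$R$\+subbimodule of quite $R$\+differential operators $U\rarrow V$ of ordinal order at most~$\alpha$. This $B$ is an $R$\+$R$\+bimodule over~$K$ and a quite quasi-module over~$R$ with $B=F_\alpha B$, and $D_R$ corresponds to an element $b\in B$. Then I would invoke Lemma~\ref{quite-quasi-module-extension-of-scalars}(a) to get that $B\ot_RS$ is naturally an $S$\+$S$\+bimodule, so that the chain of natural isomorphisms of left $S$\+modules
$$
 \Hom_S(B\ot_RS,\>\Hom_R(S,V))\simeq\Hom_R(B\ot_RS,\>V)
 \simeq\Hom_R(S,\Hom_R(B,V))
$$
goes through as in the cited theorem; starting from the left $R$\+module map $f\:U\rarrow\Hom_R(B,V)$ adjoint to the inclusion $B\hookrightarrow\Hom_K(U,V)$, this yields an $S$\+$S$\+bimodule map $g\:B\ot_RS\rarrow\Hom_K(\Hom_R(S,U),\Hom_R(S,V))$, and $D_S$ is defined as the image of $b$ under $B\rarrow B\ot_RS\overset{g}{\rarrow}\Hom_K(\Hom_R(S,U),\Hom_R(S,V))$. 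Commutativity of the square~\eqref{diffoperator-coextension-commutative-diagram-II} is checked exactly as in Theorem~\ref{colocalizing-diff-operators-thm}.

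To bound the ordinal order of $D_S$, I would appeal to Lemma~\ref{quite-quasi-module-extension-of-scalars}(b): from $B=F_\alpha B$ it gives $B\ot_RS=F_\alpha(B\ot_RS)$, and since the natural ordinal-indexed increasing filtration $F$ on $S$\+$S$\+bimodules is preserved by $S$\+$S$\+bimodule maps (as recalled in the proof of Proposition~\ref{localizing-quite-diff-operators-prop}), the image $g(B\ot_RS)$ lies in $F_\alpha\Hom_K(\Hom_R(S,U),\Hom_R(S,V))$; in particular $D_S\in F_\alpha\Hom_K(\Hom_R(S,U),\Hom_R(S,V))$, so $D_S$ is a quite $S$\+differential operator of ordinal order at most~$\alpha$. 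I do not anticipate a genuinely new difficulty here --- the work is pure bookkeeping: verifying that the substitution $B=F_\alpha\Hom_K(U,V)$ leaves every step of the proof of Theorem~\ref{colocalizing-diff-operators-thm} intact, and that Lemma~\ref{quite-quasi-module-extension-of-scalars} supplies exactly the two inputs (the $S$\+$S$\+bimodule structure on $B\ot_RS$ and the preservation of filtration degree) that Proposition~\ref{all-flat-epis-are-FQM} supplied in the order-free case. The one point I would be most careful about is that the vertical maps in~\eqref{diffoperator-coextension-commutative-diagram-II} are the evaluation-at-$1$ maps $\Hom_R(S,{-})\rarrow{-}$, so that the relevant adjunctions are genuinely $R$\+linear.
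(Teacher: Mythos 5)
Your argument is exactly the one the paper uses: uniqueness is deduced from Theorem~\ref{colocalizing-diff-operators-thm}, and existence is obtained by rerunning the proof of that theorem with $B=F_\alpha\Hom_K(U,V)$ in place of $\D_{R/K}(U,V)$, replacing Proposition~\ref{all-flat-epis-are-FQM} by Lemma~\ref{quite-quasi-module-extension-of-scalars} and controlling the ordinal order via part~(b) of that lemma. The paper states this only as a pointer to the proofs of Theorem~\ref{colocalizing-diff-operators-thm} and Proposition~\ref{localizing-quite-diff-operators-prop}; you have simply written out the same steps in full.
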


\begin{proof}
 The uniqueness follows from
Theorem~\ref{colocalizing-diff-operators-thm}.
 The existence is provable similarly to the proof of
Theorem~\ref{colocalizing-diff-operators-thm}, using
Lemma~\ref{quite-quasi-module-extension-of-scalars}
instead of Proposition~\ref{all-flat-epis-are-FQM}.
 We refer to the proof of
Proposition~\ref{localizing-quite-diff-operators-prop} for the details.
\end{proof}

\begin{lem} \label{quite-diff-operator-corestricted-to-quotients}
 Let $K\rarrow R$ be a homomorphism of commutative rings, and let
$f\:U'\rarrow U$ and $g\:V'\rarrow V$ be two homomorphisms of
$R$\+modules.
 Assume that the map~$f$ is surjective, and suppose given a commutative
diagram of $K$\+linear maps
$$
 \xymatrix{
  U' \ar[r]^{D'} \ar@{->>}[d]_f & V' \ar[d]^g \\
  U \ar[r]^D & V
 }
$$
 In this setting, if $D'\:U'\rarrow V'$ is a quite $R$\+differential
operator of ordinal order~$\alpha$, then $D\:U\rarrow V$ is a quite
$R$\+differential operator of ordinal order at most~$\alpha$.
\end{lem}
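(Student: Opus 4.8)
The plan is to follow the proof of Lemma~\ref{differential-operator-corestricted-to-quotients} (the dual-analogous statement without tracking orders), substituting Corollary~\ref{composition-of-quite-differential-operators} for Corollary~\ref{composition-of-differential-operators} and Lemma~\ref{Gamma-I-qu-left-exact} for Lemma~\ref{Gamma-I-left-exact}, while keeping a careful eye on ordinal orders. First I would observe that the $R$\+module homomorphism $g\:V'\rarrow V$ is a strongly $R$\+differential operator of order~$0$, and hence a quite $R$\+differential operator of ordinal order~$0$. Composing it with the quite $R$\+differential operator $D'\:U'\rarrow V'$ of ordinal order~$\alpha$ and invoking Corollary~\ref{composition-of-quite-differential-operators}, we find that $g\circ D'\:U'\rarrow V$ is a quite $R$\+differential operator of ordinal order at most~$\zeta$, where $\zeta+1=(\alpha+1)\cdot(0+1)=\alpha+1$, that is, $\zeta=\alpha$. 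So $g\circ D'\in F_\alpha\D_{R/K}(U',V)$.

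Next I would set up the bimodules $E=\Hom_K(U,V)$ and $E'=\Hom_K(U',V)$, put $T=R\ot_KR$, and let $I\subset T$ be the kernel ideal of the multiplication map $R\ot_KR\rarrow R$, so that $F_\alpha\D_{R/K}(U,V)=F^{(I)}_\alpha E$ and $F_\alpha\D_{R/K}(U',V)=F^{(I)}_\alpha E'$ in the notation of Section~\ref{quite-torsion-subsecn}. Since $f\:U'\rarrow U$ is a surjective $R$\+module map, precomposition with~$f$ defines an \emph{injective} homomorphism of $R$\+$R$\+bimodules $f^*\:E\rarrow E'$ (injectivity uses surjectivity of~$f$, and $R$\+$R$\+bilinearity uses $R$\+linearity of~$f$). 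The commutativity of the given square says exactly that $D\circ f=g\circ D'$, that is, $f^*(D)=g\circ D'$ as elements of~$E'$.

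Finally I would combine these: $f^*(D)=g\circ D'\in F^{(I)}_\alpha E'$, and by the last assertion of Lemma~\ref{Gamma-I-qu-left-exact}, applied to the $R$\+$R$\+subbimodule $f^*(E)\subset E'$ and the ordinal~$\alpha$, one has $F^{(I)}_\alpha(f^*(E))=f^*(E)\cap F^{(I)}_\alpha E'$. Hence $f^*(D)\in F^{(I)}_\alpha(f^*(E))$, and since $f^*\:E\rarrow f^*(E)$ is an isomorphism of $R$\+$R$\+bimodules (which necessarily preserves the natural ordinal-indexed filtration), we conclude $D\in F^{(I)}_\alpha E=F_\alpha\D_{R/K}(U,V)$, as desired. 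The only points needing a modicum of care are the ordinal-order bookkeeping in the first step and the verification that $f^*$ is a genuine bimodule map so that Lemma~\ref{Gamma-I-qu-left-exact} applies; neither is a real obstacle.
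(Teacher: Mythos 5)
Your proof is correct and follows essentially the same route as the paper, which simply points to the argument of Lemma~\ref{differential-operator-corestricted-to-quotients} together with Lemma~\ref{Gamma-I-qu-left-exact}. One small simplification worth noting: instead of invoking Corollary~\ref{composition-of-quite-differential-operators} (whose $\zeta+1=(\alpha+1)\cdot(0+1)$ computation is a bit heavyweight for a degree-zero factor), you could just observe that postcomposition with the $R$-linear map~$g$ is an $R$-$R$-bimodule homomorphism $\Hom_K(U',V')\rarrow\Hom_K(U',V)$ and therefore preserves the natural ordinal-indexed filtration~$F$, which immediately gives $g\circ D'\in F_\alpha\Hom_K(U',V)$.
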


\begin{proof}
 This is similar to
Lemma~\ref{differential-operator-corestricted-to-quotients}
and dual-analogous to
Lemma~\ref{quite-diff-operator-restricted-to-submodules}.
 The argument is based on Lemma~\ref{Gamma-I-qu-left-exact}.
\end{proof}

 Our final proposition is dual-analogous to
Proposition~\ref{sheaf-axiom-quite-differential-operators}.

\begin{prop} \label{cosheaf-axiom-quite-differential-operators}
 Let $K\rarrow R$ be a homomorphism of commutative rings, and let
$R\rarrow S_l$, \,$1\le l\le n$, be a finite collection of
homomorphisms of commutative rings such that the collection of induced
maps of the spectra\/ $\Spec S_l\rarrow\Spec R$ is an affine open
covering of the affine scheme\/ $\Spec R$.
 Let $\alpha$ be an ordinal, $U$ be a contraadjusted $R$\+module,
and $V$ be an $R$\+module.
 Let $D_l\:\Hom_R(S_l,U)\rarrow\Hom_R(S_l,V)$ be $K$\+linear quite
$S_l$\+differential operators of ordinal order at most~$\alpha$,
defined for all indices\/ $1\le l\le n$.
 For every pair of indices $j$ and~$l$, put $S_{jl}=S_j\ot_RS_l$.
 Assume that, for every pair of indices $j$ and~$l$,
the $S_{jl}$\+differential operator $D_{jl}\:\Hom_R(S_{jl},U)\rarrow
\Hom_R(S_{jl},V)$ induced by $D_j$, as per the construction of
Proposition~\ref{colocalizing-quite-diff-operators-prop}, is equal to
the $S_{lj}$\+differential operator $D_{lj}\:\Hom_R(S_{lj},U)\rarrow
\Hom_R(S_{lj},V)$ induced by~$D_l$.
 Then there exists a unique quite $R$\+differential operator
$D\:U\rarrow V$ of ordinal order at most~$\alpha$ such that, for every
index~$l$, the $S_l$\+differential operator $D_l$ is induced by $D$
as per the construction of
Proposition~\ref{colocalizing-quite-diff-operators-prop}.
\end{prop}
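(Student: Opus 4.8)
The plan is to follow the proof of Proposition~\ref{cosheaf-axiom-differential-operators} essentially verbatim, inserting the tracking of ordinal orders in the same way that Proposition~\ref{sheaf-axiom-quite-differential-operators} does relative to Proposition~\ref{sheaf-axiom-differential-operators}. First I would recall that, since $U$ is contraadjusted, the \v Cech resolution~\eqref{cech-resolution} with $C=U$ is a finite exact sequence, so $U$ is the cokernel of the map $\bigoplus_{1\le j,k\le n}\Hom_R(S_j\ot_RS_k,U)\rarrow\bigoplus_{1\le k\le n}\Hom_R(S_k,U)$; for $C=V$ the same sequence is only a complex, which is all that is needed on the target side. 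Setting $U'=\bigoplus_{1\le k\le n}\Hom_R(S_k,U)$ and $V'=\bigoplus_{1\le k\le n}\Hom_R(S_k,V)$, I would write down the map between the last two terms of the two \v Cech complexes with vertical arrows the direct sums $(D_{jk})$ and $(D_k)$; the leftmost square commutes by the compatibility hypothesis on the $D_{jk}$, so passing to cokernels yields a $K$\+linear map $D\:U\rarrow V$ together with a commutative square relating the diagonal operator $(D_k)_{k=1}^n\:U'\rarrow V'$, the map $D$, and the canonical surjection $f\:U'\rarrow U$.

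Next I would handle the ordinal order. Each $D_k$ is a quite $S_k$\+differential operator of ordinal order at most~$\alpha$, hence by (the easy implication in) Corollary~\ref{ring-epi-restr-of-scalars-quite-diff-operators} it is a quite $R$\+differential operator of ordinal order at most~$\alpha$. Since $\Hom_K(U',V')$ decomposes as the finite direct sum of the $R$\+$R$\+subbimodules $\Hom_K(\Hom_R(S_k,U),\Hom_R(S_l,V))$, and since the natural ordinal-indexed filtration $F$ is compatible with direct sums of $R$\+$R$\+bimodules, $F_\alpha(\bigoplus_iB_i)=\bigoplus_iF_\alpha B_i$ (which one checks from the definition by transfinite induction), the block-diagonal operator $(D_k)_{k=1}^n$ lies in $F_\alpha\Hom_K(U',V')$, i.e.\ it is a quite $R$\+differential operator of ordinal order at most~$\alpha$. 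Then I would apply Lemma~\ref{quite-diff-operator-corestricted-to-quotients} to the surjection $f\:U'\rarrow U$ and the commutative square produced above to conclude that $D\:U\rarrow V$ is a quite $R$\+differential operator of ordinal order at most~$\alpha$.

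Finally I would verify that $D$ actually induces each $D_k$ in the sense of the construction of Proposition~\ref{colocalizing-quite-diff-operators-prop}, and that it is the unique operator doing so. Restricting the commutativity identity relating $(D_k)$, $D$ and $f$ to the $k$\+th direct summand of $U'$ recovers exactly the square~\eqref{diffoperator-coextension-commutative-diagram-II} with $D_R=D$ and $D_S=D_k$; by the uniqueness clause of Proposition~\ref{colocalizing-quite-diff-operators-prop}, $D_k$ is therefore the operator induced by $D$. Uniqueness of $D$ is immediate from surjectivity of $f$, since any $D''$ inducing all the $D_k$ satisfies $D''\circ f=D\circ f$. I do not expect a genuine obstacle: the only points needing a little care are the already-flagged one that passing to the \v Cech cokernel requires $U$ (but not $V$) to be contraadjusted, and the observation that forming the block-diagonal direct sum does not raise the ordinal order, which is why Corollary~\ref{ring-epi-restr-of-scalars-quite-diff-operators} and Lemma~\ref{quite-diff-operator-corestricted-to-quotients} must be invoked in their ``with ordinal order'' forms rather than the bare forms used in Proposition~\ref{cosheaf-axiom-differential-operators}.
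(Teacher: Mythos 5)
Your proof is correct and takes essentially the same route as the paper, which simply says ``Similar to Proposition~\ref{cosheaf-axiom-differential-operators}, and using (the easy implication in) Corollary~\ref{ring-epi-restr-of-scalars-quite-diff-operators} together with Lemma~\ref{quite-diff-operator-corestricted-to-quotients}.'' You have merely unpacked what that one-line proof leaves implicit — in particular the observation that the block-diagonal direct sum does not increase the ordinal order because the ordinal-indexed filtration commutes with finite direct sums of bimodules — and everything checks out.
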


\begin{proof}
 Similar to Proposition~\ref{cosheaf-axiom-differential-operators}, and
using (the simple implication in)
Corollary~\ref{ring-epi-restr-of-scalars-quite-diff-operators} together
with Lemma~\ref{quite-diff-operator-corestricted-to-quotients}.
\end{proof}

 The counterexample from
Remark~\ref{contraadjustedness-necessary-remark} shows that
the contraadjustedness assumption on $U$ cannot be dropped in
Proposition~\ref{cosheaf-axiom-quite-differential-operators}.

\bigskip

\end{document}